\numberwithin{equation}{section}
\numberwithin{figure}{section}
\theoremstyle{plain}
\newtheorem{thm}{\protect\theoremname}
  \theoremstyle{plain}
  \newtheorem{prop}[thm]{\protect\propositionname}
  \theoremstyle{plain}
  \newtheorem{cor}[thm]{\protect\corollaryname}
  \theoremstyle{plain}
  \newtheorem{lem}[thm]{\protect\lemmaname}
  \theoremstyle{remark}
  \newtheorem{rem}[thm]{\protect\remarkname}
  \providecommand{\corollaryname}{Corollary}
  \providecommand{\lemmaname}{Lemma}
  \providecommand{\propositionname}{Proposition}
  \providecommand{\remarkname}{Remark}
\providecommand{\theoremname}{Theorem}
\begin{document}
\global\long\def\Frf{\Lambda_{Z}}
\global\long\def\FrfF{\Lambda_{F}}
\global\long\def\FrfFt{\Lambda_{F,2}}
\global\long\def\Cun{D}
\global\long\def\Fstar{F_{*}}
\global\long\def\Indicator{\mathbf{1}}
\global\long\def\ftail{\eta_{0}}
\global\long\def\ftailbeta{\eta_{0,j}}

\title{The geometry of the Gibbs measure of pure spherical spin glasses}

\author{Eliran Subag}
\begin{abstract}
We analyze the statics for pure $p$-spin spherical spin glass models
with $p\geq3$, at low enough temperature. With $F_{N,\beta}$ denoting
the free energy, we compute the second order (logarithmic) term of
$NF_{N,\beta}$ and prove that, for an appropriate centering $c_{N,\beta}$,
$NF_{N,\beta}-c_{N,\beta}$ is a tight sequence. We establish the
absence of temperature chaos and analyze the transition rate to disorder
chaos of the Gibbs measure and ground state. Those results follow
from the following geometric picture we prove for the Gibbs measure,
of interest by itself: asymptotically, the measure splits into infinitesimal
spherical `bands' centered at deep minima, playing the role of so-called
`pure states'. For the pure models, the latter makes precise the so-called
picture of `many valleys separated by high mountains' and significant
parts of the TAP analysis from the physics literature.
\end{abstract}

\maketitle

\section{Introduction}

The Hamiltonian of the pure $p$-spin spherical spin glass model is
given by 
\begin{equation}
H_{N}\left(\boldsymbol{\sigma}\right)=H_{N,p}\left(\boldsymbol{\sigma}\right)=\frac{1}{N^{\left(p-1\right)/2}}\sum_{i_{1},...,i_{p}=1}^{N}J_{i_{1},...,i_{p}}\sigma_{i_{1}}\cdots\sigma_{i_{p}},\quad\boldsymbol{\sigma}\in\mathbb{S}^{N-1},\label{eq:Hamiltonian}
\end{equation}
where $\boldsymbol{\sigma}=\left(\sigma_{1},...,\sigma_{N}\right)$
, $\mathbb{S}^{N-1}\triangleq\{\boldsymbol{\sigma}\in\mathbb{R}^{N}:\left\Vert \boldsymbol{\sigma}\right\Vert _{2}=\sqrt{N}\}$,
and $(J_{i_{1},...,i_{p}})$ are i.i.d standard normal variables.
Unless said otherwise, in the sequel we will always assume that $p\geq3$.
In terms of the overlap function $R\left(\boldsymbol{\sigma},\boldsymbol{\sigma}'\right)$,
the covariance of $H_{N}\left(\boldsymbol{\sigma}\right)$ is expressed
by 
\[
\mathbb{E}\left\{ H_{N}(\boldsymbol{\sigma})H_{N}(\boldsymbol{\sigma}')\right\} =N\left(R\left(\boldsymbol{\sigma},\boldsymbol{\sigma}'\right)\right)^{p}\mbox{\,\,\,\ with\,\,\,\,}R\left(\boldsymbol{\sigma},\boldsymbol{\sigma}'\right)=\frac{\boldsymbol{\sigma}\cdot\boldsymbol{\sigma}'}{\left\Vert \boldsymbol{\sigma}\right\Vert \left\Vert \boldsymbol{\sigma}'\right\Vert }.
\]

In this paper we carry out a rather thorough analysis of the statics
for pure $p$-spin spherical models with $p\geq3$, at low enough
temperature. As is well known, their free energy is given by the spherical
version of the Parisi formula discovered by Crisanti and Sommers \cite{pSPSG},
proved by Talagrand \cite{Talag} and extended by Chen \cite{Chen}.
We shall compute the free energy by a different method, improve the
latter by computing a logarithmic second order term, and prove that
the fluctuations of the free energy are tight (Theorem \ref{thm:free-energy}).
We will further prove the absence of temperature chaos (Theorem \ref{thm:temp_chaos})
and analyze the transition rate to disorder chaos of the Gibbs measure
and ground state (see Section \ref{sec:discussion}). 

Those results will follow from the following geometric picture for
the Gibbs measure, of interest by itself: asymptotically, the measure
splits into infinitesimal spherical `bands' centered at deep minima,
playing the role of so-called `pure states'. On those bands (the restriction
of) $H_{N}\left(\boldsymbol{\sigma}\right)$ will be interpreted as
a replica symmetric mixed spherical model, closely related to the
$2$-spin model at an (effective) high temperature. We note that the
disorder $(J_{i_{1},...,i_{p}})$ determines the locations of the
bands through those of the minima, and the radius of the bands is
determined by the temperature. This relates systems at different temperatures
or with perturbed disorder, a fact that will be crucial for us when
we investigate chaos phenomena.

We point out that the genericity of mixed $p$-spin models%
\footnote{\label{fn:generic}A mixed model $H_{N}(\boldsymbol{\sigma})=\sum_{p\geq2}\gamma_{p}H_{N,p}(\boldsymbol{\sigma})$,
either with spherical or Ising spins, is generic if and only if $\sum p^{-1}\Indicator\left(\gamma_{p}\neq0\right)=\infty$.%
} (see \cite{PanchenkoBook}) and the assumption that interactions
are even have been found to be very useful properties and were essential
ingredients in several recent works. The pure models we consider are
`as far as can be' from being generic and we do not assume $p$ is
even. In view of the very recent proof by Panchenko \cite{PanchenkoChaos}
that generic mixed models with even interactions exhibit temperature
chaos,%
\footnote{His proof dealt with Ising spin models, but his method is expected
to apply to spherical models as well.%
} the absence of chaos in pure models expresses a significant difference
between the pure and generic models (and is somewhat surprising).
On the other hand, the spherical pure models are known to exhibit
1-step replica symmetry breaking (RSB) in the low temperature phase
\cite[Proposition 2.2]{multipleoverlap}, and are simpler in this
respect than general mixed models.

For measurable $B\subset\mathbb{S}^{N-1}$, define the \emph{relative
partition function }or\emph{ relative mass }and the Gibbs measure,
respectively, by
\begin{equation}
Z_{N,\beta}\left(B\right)=\int_{B}\exp\left\{ -\beta H_{N}\left(\boldsymbol{\sigma}\right)\right\} d\mu_{N}\left(\boldsymbol{\sigma}\right)\mbox{\,\,\,\ and\,\,\,\,\,}G_{N,\beta}\left(B\right)=\frac{Z_{N,\beta}\left(B\right)}{Z_{N,\beta}\left(\mathbb{S}^{N-1}\right)},\label{eq:rel partition}
\end{equation}
where $\mu_{N}$ is the uniform probability measure on $\mathbb{S}^{N-1}$.
We also denote by $Z_{N,\beta}\triangleq Z_{N,\beta}\left(\mathbb{S}^{N-1}\right)$
the usual partition function. For a given point $\boldsymbol{\sigma}_{0}\in\mathbb{S}^{N-1}$
and overlaps $-1\leq q\leq q'\leq1$, define the spherical band
\begin{equation}
{\rm Band}\left(\boldsymbol{\sigma}_{0},q,q'\right)\triangleq\left\{ \boldsymbol{\sigma}\in\mathbb{S}^{N-1}:q\leq R\left(\boldsymbol{\sigma},\boldsymbol{\sigma}_{0}\right)\leq q'\right\} .\label{eq:band}
\end{equation}
A point $\boldsymbol{\sigma}_{0}$ is a critical point if $\nabla H_{N}\left(\boldsymbol{\sigma}_{0}\right)=0$
with respect to the standard differential structure on the sphere.
For odd $p$, let $\boldsymbol{\sigma}_{0}^{i}$, $i=1,2,..$, be
an enumeration of the critical points of $H_{N}\left(\boldsymbol{\sigma}\right)$
ordered so that $H_{N}(\boldsymbol{\sigma}_{0}^{i})\leq H_{N}(\boldsymbol{\sigma}_{0}^{i+1})$.
When $p$ is even, for any critical point $\boldsymbol{\sigma}_{0}$,
$-\boldsymbol{\sigma}_{0}$ is also a critical point with the same
critical value. In this case, let $\boldsymbol{\sigma}_{0}^{i}$,
$i=\pm1,\pm2,..$, be an enumeration such that $\boldsymbol{\sigma}_{0}^{-i}=-\boldsymbol{\sigma}_{0}^{i}$
and $H_{N}(\boldsymbol{\sigma}_{0}^{i})$ increases for $i\geq1$.%
\footnote{We note that though we work with critical points, by Corollary \ref{cor:CrtMin}
we could have replaced everywhere in the results $\boldsymbol{\sigma}_{0}^{i}$
by the corresponding enumeration of local minima instead of general
critical points.%
} In Section \ref{sec:overlapsVals} we will define the overlap value
$q_{*}:=q_{*}\left(\beta\right)$, see (\ref{eq:39}). We use it to
define
\begin{equation}
{\rm Band}_{i}\left(\epsilon\right):={\rm Band}_{i,\beta}\left(\epsilon\right)={\rm Band}\left(\boldsymbol{\sigma}_{0}^{i},q_{*}-\epsilon,q_{*}+\epsilon\right).\label{eq:B_sigma}
\end{equation}
We define the conditional measure of $G_{N,\beta}$ given ${\rm Band}_{i}\left(cN^{-1/2}\right)$,
\[
G_{N,\beta}^{c,i}\left(\cdot\right)=G_{N,\beta}\left(\,\cdot\,\cap{\rm Band}_{i}\left(cN^{-1/2}\right)\right)/G_{N,\beta}\left({\rm Band}_{i}\left(cN^{-1/2}\right)\right).
\]
Let $G_{N,\beta}^{c,i}\otimes G_{N,\beta}^{c,j}\left\{ \left(\boldsymbol{\sigma},\boldsymbol{\sigma}^{\prime}\right)\in\cdot\right\} $
denote the product measure of $G_{N,\beta}^{c,i}$ and $G_{N,\beta}^{c,j}$.
For odd $p$ define $[k]_{p}=\left\{ 1,...,k\right\} $ and for even
$p$ define $[k]_{p}=\left\{ \pm1,...,\pm k\right\} $. By an abuse
of notation, we will simply write $[k]$ in the sequel. 
\begin{thm}
\label{thm:Geometry}(Geometry of the Gibbs measure) For large enough
$\beta$ we have
\begin{enumerate}
\item \label{enu:Geometry1}Asymptotic support: 
\begin{equation}
\lim_{k,c\to\infty}\liminf_{N\to\infty}\mathbb{E}\left\{ G_{N,\beta}\left(\cup_{i\in[k]}{\rm Band}_{i}\left(cN^{-1/2}\right)\right)\right\} =1.\label{eq:thm1_1}
\end{equation}

\item \label{enu:Geometry2}States are pure: for any $i$ and $c>0$, for
even $p$, 
\begin{equation}
\lim_{\rho\to\infty}\limsup_{N\to\infty}\mathbb{P}\left\{ G_{N,\beta}^{c,i}\otimes G_{N,\beta}^{c,\pm i}\left\{ \left|R\left(\boldsymbol{\sigma},\boldsymbol{\sigma}^{\prime}\right)-\left(\pm q_{*}^{2}\right)\right|>\rho N^{-1/2}\right\} \right\} =0.\label{eq:2601}
\end{equation}
For odd $p$, the same holds with the $\pm$ signs removed.
\item \label{enu:Geometry3}Orthogonality of states: for any $i\neq\pm j$,
$c,\,\delta>0$
\begin{equation}
\lim_{N\to\infty}\mathbb{P}\left\{ G_{N,\beta}^{c,i}\otimes G_{N,\beta}^{c,j}\left\{ \left|R\left(\boldsymbol{\sigma},\boldsymbol{\sigma}^{\prime}\right)\right|>\delta\right\} \right\} =0.\label{eq:1402}
\end{equation}

\end{enumerate}
\end{thm}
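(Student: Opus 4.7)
The plan is to combine a Kac-Rice type analysis of the deepest critical points of $H_N$ with a conditional study of the Hamiltonian restricted to the thin bands around them. The key structural input is that, conditional on $\boldsymbol{\sigma}_0^i$ being a critical point and on the pair $(H_N(\boldsymbol{\sigma}_0^i),\nabla^2 H_N(\boldsymbol{\sigma}_0^i))$, the field $H_N$ on ${\rm Band}_i(cN^{-1/2})$, after subtracting its value and the quadratic approximation at $\boldsymbol{\sigma}_0^i$, has a covariance that to leading order on the band coincides with that of a \emph{mixed} spherical Hamiltonian on the transverse sphere whose coefficients are dominated by the $2$-spin part. The overlap $q_{*}$ from (\ref{eq:39}) is precisely the radius at which this effective model sits in a replica-symmetric, high-temperature-like regime, and this is the mechanism that produces both purity and orthogonality.

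For part (\ref{enu:Geometry1}), the goal is to show that $\mathbb{E}\{Z_{N,\beta}\}$ is captured, up to relative error vanishing as $k,c\to\infty$, by $\sum_{i\in[k]}\mathbb{E}\{Z_{N,\beta}({\rm Band}_i(cN^{-1/2}))\}$. By Kac-Rice, the $i$-th term is an integral over critical values of $e^{-\beta H_N(\boldsymbol{\sigma}_0)}$ against a conditional Gaussian band integral, which is evaluated by Laplace's method around the saddle $q_{*}$. Comparing with the free-energy upper bound (obtained by the same Laplace computation at the ground-state energy level) shows that near-minimal critical points account for the full expected mass, and concentration of $\log Z_{N,\beta}$ upgrades this to (\ref{eq:thm1_1}).

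For part (\ref{enu:Geometry2}), write $\boldsymbol{\sigma}=q\boldsymbol{\sigma}_0^i+\sqrt{1-q^2}\,\boldsymbol{\tau}$ with $\boldsymbol{\tau}$ on the orthogonal sphere, and similarly for $\boldsymbol{\sigma}'$, yielding
\begin{equation*}
R(\boldsymbol{\sigma},\boldsymbol{\sigma}') = qq' + \sqrt{(1-q^2)(1-q'^2)}\,R(\boldsymbol{\tau},\boldsymbol{\tau}').
\end{equation*}
Concentration of $R(\boldsymbol{\sigma},\boldsymbol{\sigma}')$ at $q_{*}^2$ then reduces to (i) a saddle-point statement that the marginal of $q=R(\boldsymbol{\sigma},\boldsymbol{\sigma}_0^i)$ under $G_{N,\beta}^{c,i}$ lies within $O(N^{-1/2})$ of the strict maximizer $q_{*}$ of the relevant rate function, and (ii) replica-symmetric concentration of the transverse overlap $R(\boldsymbol{\tau},\boldsymbol{\tau}')$ at zero, obtained by a Guerra/cavity argument for the effective high-temperature model described above. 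The $\pm$ variant for even $p$ is inherited from the antipodal symmetry $\boldsymbol{\sigma}_0^{-i}=-\boldsymbol{\sigma}_0^i$. Part (\ref{enu:Geometry3}) reduces, via the same identity, to showing that $|R(\boldsymbol{\sigma}_0^i,\boldsymbol{\sigma}_0^j)|$ is $o(1)$ with high probability for $i\neq\pm j$ among the near-minimal critical points, which follows from the strict negativity, away from zero overlap, of the two-point complexity at the ground-state energy level.

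The main obstacle is to promote this first-moment picture into a high-probability statement about the Gibbs measure, which demands a conditional second moment bound for $Z_{N,\beta}({\rm Band}_i(cN^{-1/2}))$ given that $\boldsymbol{\sigma}_0^i$ is a deep critical point. The global second moment method fails in the low-temperature phase; its local analogue on a thin band should succeed precisely because the residual field there is effectively replica-symmetric, and making this rigorous, together with the matching Laplace estimates near the saddle $q_{*}$, is the technical heart of the proof.
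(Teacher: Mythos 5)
The structural picture you describe (conditioning at a deep critical point, the effective mixed model on the band dominated by its $2$-spin part, Kac--Rice transfer, a conditional second moment on the band) is indeed the paper's mechanism, but your route for part (\ref{enu:Geometry1}) contains a genuine flaw. Comparing $\mathbb{E}\{Z_{N,\beta}\}$ with $\sum_{i\in[k]}\mathbb{E}\{Z_{N,\beta}({\rm Band}_{i}(cN^{-1/2}))\}$ cannot work in the low-temperature phase: $\frac{1}{N}\log\mathbb{E}\{Z_{N,\beta}\}=\beta^{2}/2$ strictly exceeds the typical value $\Frf\left(E_{0},q_{*}\right)$ of $\frac{1}{N}\log Z_{N,\beta}$, so the annealed mass is carried by exponentially rare disorder configurations and is not captured by the bands; and even if an annealed comparison held, Markov's inequality would only bound the off-band mass by $\epsilon\,\mathbb{E}\{Z_{N,\beta}\}$, which dwarfs the typical total mass, while concentration of $\log Z_{N,\beta}$ around $\mathbb{E}\log Z_{N,\beta}$ cannot bridge the order-$N$ gap between $\mathbb{E}\log Z$ and $\log\mathbb{E}Z$. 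The paper is structured precisely to avoid this: the far region is controlled by the deterministic level-set bound (\ref{eq:LSbd}) together with the covering of sub-level sets by caps (Lemma \ref{lem:LSnegligible}), annealed upper bounds are used only region by region against a \emph{quenched}, high-probability lower bound on the contribution of ${\rm Reg}_{*}$ (see (\ref{eq:Reg*bd})), and for overlaps in $(q_{{\rm LS}},q_{**})$ the first-moment/Jensen bound is explicitly insufficient, forcing a quenched free-energy bound via $\FrfFt$ plus Gaussian concentration and critical-point counting (Lemmas \ref{lem:Lambda_F_bound}--\ref{lem:devFE-RegIV} and the proof of (\ref{eq:RegIVbd})). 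Your outline has no counterpart to these steps; moreover the lower bound on band masses needed to beat the $e^{c_{p}\kappa}$ entropy of critical points near $m_{N}$ requires lower-tail control well beyond a second moment bound, which the paper obtains from the Baik--Lee CLT for the effective $2$-spin free energy (Proposition \ref{prop:Gaussflucts}).

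For part (\ref{enu:Geometry2}), your step (i) is vacuous since $G_{N,\beta}^{c,i}$ is by definition supported on $|q-q_{*}|<cN^{-1/2}$; the substance is (ii), and there the statement to prove is at scale $\rho N^{-1/2}$, whereas a Guerra/cavity replica-symmetry argument yields concentration of the transverse overlap only at macroscopic scale $|R|>\epsilon$. The paper reaches the CLT scale through the explicit conditional second-moment computation (Lemma \ref{lem:Z_2nd_mom}, parts (\ref{enu:3_lem2nd})--(\ref{enu:4_lem2nd})), whose Gaussian factor $e^{-\frac{1}{2}C_{*}N\varrho^{2}}$ produces the $N^{-1/2}$ window, combined with the band-mass lower bound and a Kac--Rice transfer (Lemma \ref{lem:16}) to make the statement simultaneous over the random centers $\boldsymbol{\sigma}_{0}^{i}$. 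Finally, in part (\ref{enu:Geometry3}) the identity you invoke decomposes overlaps relative to a single center; deducing (\ref{eq:1402}) from $R(\boldsymbol{\sigma}_{0}^{i},\boldsymbol{\sigma}_{0}^{j})\approx0$ also requires that a typical sample from band $i$ have small overlap with the fixed external direction $\boldsymbol{\sigma}_{0}^{j}$, which does not follow from purity of each band alone without the deterministic geometric argument of Lemma \ref{lem:center}; that step is absent from your reduction.
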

The decomposition of Theorem \ref{thm:Geometry} is closely related
to the works of Talagrand \cite{TalagrandPstates} and Jagannath \cite{JagannathApxUlt}
who proved certain abstract `pure states' decompositions, assuming
that the Ghirlanda-Guerra identities are satisfied in a limiting sense.
The critical points and values of the Hamiltonian $H_{N}\left(\boldsymbol{\sigma}\right)$
have been recently investigated by Auffinger, Ben Arous and {\v{C}}ern{\'y}
\cite{A-BA-C}, the author \cite{2nd}, and Zeitouni and the author
\cite{pspinext}; see Section \ref{sub:crtpts}. In particular, in\textcolor{red}{{}
}\cite{pspinext} the law of the (deep) critical values $H_{N}(\boldsymbol{\sigma}_{0}^{i})$
was described in terms of a limiting point process (see Theorem \ref{thm:ext proc}),
which complements Theorem \ref{thm:Geometry}. Further, a key in proving
(\ref{eq:1402}) is that the deep critical points are either antipodal
or approximately orthogonal as vectors in the Euclidean space, see
Corollary \ref{cor:orth} which builds on \cite{2nd}.

In the physics literature, pure states are often described by the
so-called picture of `many valleys separated by high mountains' (see
e.g. \cite{ParisiOrderPar}). Our results (see Corollary \ref{cor:orth}
and (\ref{eq:08038})) indeed allow one to interpret the neighborhoods
of (exponentially many) critical points corresponding to critical
values deeper than a certain fraction of the global minimum of $H_{N}(\boldsymbol{\sigma})$
as valleys. Therefore, Theorem \ref{thm:Geometry} validates the prediction
of multiple valleys, at least in the current setting, and further
identifies the valleys around the deepest critical points as the relevant
ones and bands as the relevant regions inside them. 

The Thouless-Anderson-Palmer (TAP) approach \cite{TAP} suggests that
the pure states are related to the solutions of the so-called TAP
equations, and that by correctly attributing mass to states at a given
energy  and estimating how many states there are at any energy --
i.e., computing the so-called TAP free energy and complexity, respectively
-- one can calculate the free energy. Kurchan, Parisi and Virasoro
\cite{KurchanParisiVirasoro} and Crisanti and Sommers \cite{CrisantiSommersTAPpspin}
carried out the TAP analysis of pure spherical models (their analysis
is not rigorous, and neither is claimed to be). Interestingly, for
pure spherical spin glasses, TAP solutions are nothing but the critical
points of the Hamiltonian (see \cite[Section 6]{A-BA-C}).%
\footnote{A rigorous computation of the annealed TAP complexity was performed
by Auffinger, Ben Arous and {\v{C}}ern{\'y} \cite{A-BA-C}; the fact
that it is valid quenched for low energies follows from \cite{2nd}.%
} One may therefore wonder whether the mass of bands we compute coincides
with the TAP free energy of \cite{KurchanParisiVirasoro,CrisantiSommersTAPpspin}.
As we shall see, this is indeed the case, at least in the relevant
range of overlaps; see Remark \ref{rem:18}.

As part of our investigation of the weights and structure of the Gibbs
measure on the thin bands of (\ref{eq:thm1_1}) we will study the
conditional law of the restriction of the Hamiltonian $H_{N}\left(\boldsymbol{\sigma}\right)$
to the sub-sphere $\left\{ \boldsymbol{\sigma}:R\left(\boldsymbol{\sigma},\boldsymbol{\sigma}_{0}\right)=q_{*}\right\} $
of co-dimension $1$, conditional on $\nabla H_{N}\left(\boldsymbol{\sigma}_{0}\right)=0$
and $H_{N}\left(\boldsymbol{\sigma}_{0}\right)=u$ for some level
$u\in\mathbb{R}$. The latter, we shall see, is identical in distribution
to a certain mixed spherical model involving $k$-spin interactions
with $2\leq k\leq p$ only (see Corollary \ref{cor:conditional laws}),
shifted by a constant. The Onsager reaction term added in \cite{KurchanParisiVirasoro,CrisantiSommersTAPpspin}
to what they call the `naive' free energy will arise in our calculation
as the free energy of this mixture. Moreover, we will see that the
fluctuations of the free energy of the original pure $p$-spin model
on the sphere are intimately related to those of the $2$-spin component
of the mixture. The convergence of the free energy for the spherical
$2$-spin model proved by Baik and Lee \cite{BaikLee} (see Theorem
\ref{thm:BaikLee}) will be crucial to analyzing the fluctuations
of the latter. Apart from the results stated in Section \ref{sec:earlierworks}
(which include the convergence result of \cite{BaikLee} and results
on the critical points from \cite{A-BA-C,2nd,pspinext}), our analysis
is essentially self contained.

\subsection*{\label{sub:free-energy}The free energy}

The free energy is defined by $F_{N,\beta}=\frac{1}{N}\log\left(Z_{N,\beta}\right)$.
The following theorem gives the second order correction of $NF_{N,\beta}$
and shows that, appropriately centered, $NF_{N,\beta}$ is tight. 
\begin{thm}
\label{thm:free-energy}For large enough $\beta$, with $\Frf\left(E,q\right)$,
$E_{0}$ and $c_{p}$ defined by (\ref{eq:6}), (\ref{eq:E0}) and
(\ref{eq:c_p}), 
\begin{equation}
\lim_{t\to\infty}\limsup_{N\to\infty}\mathbb{P}\left\{ \left|NF_{N,\beta}-N\Frf\left(E_{0},q_{*}\right)+\frac{\beta q_{*}^{p}}{2c_{p}}\log N\right|>t\right\} =0.\label{eq:F}
\end{equation}

\end{thm}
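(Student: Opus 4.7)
The strategy is to exploit the band decomposition of Theorem~\ref{thm:Geometry}, identify the mass of each band with the partition function of a mixed spherical model on a sub-sphere via Corollary~\ref{cor:conditional laws}, and then extract the logarithmic correction from the $2$-spin component of this mixed model using the Baik--Lee result (Theorem~\ref{thm:BaikLee}). As a preliminary reduction, part~(\ref{enu:Geometry1}) of Theorem~\ref{thm:Geometry} yields, for any $\varepsilon>0$ and $k,c$ large enough,
\[
Z_{N,\beta} \;=\; \bigl(1+o_{\mathbb{P}}(1)\bigr)\sum_{i\in[k]} Z_{N,\beta}\bigl({\rm Band}_i(cN^{-1/2})\bigr),
\]
so that proving \eqref{eq:F} reduces to estimating the sum of the band masses up to an additive $O_{\mathbb{P}}(1)$ error.

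For a fixed $i$, I would condition on $\nabla H_N(\boldsymbol{\sigma}_0^i)=0$ and $H_N(\boldsymbol{\sigma}_0^i)=u_i$ and use Corollary~\ref{cor:conditional laws} to represent the restriction of $H_N$ to the sub-sphere $\{R(\boldsymbol{\sigma},\boldsymbol{\sigma}_0^i)=q\}$, for $q$ close to $q_*$, as a deterministic function of $(q,u_i)$ plus an independent mixed spherical model on the sub-sphere with components of degrees $2,\dots,p$. After passing to the coordinate $q=q_*+sN^{-1/2}$, $s\in[-c,c]$, the radial integration is a Laplace integral about the maximiser $q_*$ and produces a factor $O_{\mathbb{P}}(1)$, while the angular integration over the sub-sphere gives
\[
Z_{N,\beta}({\rm Band}_i(cN^{-1/2})) \;=\; \exp\bigl\{-\beta q_*^p u_i + NA(q_*,\beta) + \log Z^{(2)}_{N-1,i} + O_{\mathbb{P}}(1)\bigr\},
\]
where $Z^{(2)}_{N-1,i}$ is the partition function of the $2$-spin component of the induced mixed model on the sub-sphere and $NA(q_*,\beta)$ absorbs the deterministic pieces together with the contributions of the $k$-spin components with $k\geq3$. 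The point is that on the sub-sphere, and at the effective inverse temperature induced by the conditional covariance, these higher-degree components are in the replica-symmetric regime, so their free-energy fluctuations around the annealed mean are $O_{\mathbb{P}}(1)$, contributing only to the residual error.

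The $2$-spin quantity $\log Z^{(2)}_{N-1,i}$ is then analysed by Theorem~\ref{thm:BaikLee}, giving $\log Z^{(2)}_{N-1,i}=N\FrfFt(q_*)-\tfrac{\beta q_*^p}{2c_p}\log N+O_{\mathbb{P}}(1)$, the coefficient of $\log N$ being exactly the one appearing in \eqref{eq:F} once the effective $2$-spin variance on the sub-sphere is plugged into the Baik--Lee formula (this is the content of definition~\eqref{eq:c_p}). Combining with the point-process convergence of the deep critical values $H_N(\boldsymbol{\sigma}_0^i)+NE_0$ (Theorem~\ref{thm:ext proc}), the centred quantities $\{-\beta q_*^p u_i-N[\Frf(E_0,q_*)-A(q_*,\beta)-\FrfFt(q_*)]\}_{i\in[k]}$ form a tight family. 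Summing a tight family of $k$ terms with asymptotically independent $O_{\mathbb{P}}(1)$ noise then yields a tight $\log$-sum, which is precisely \eqref{eq:F}. The asymptotic independence of the $Z^{(2)}_{N-1,i}$'s across $i$ is obtained from Corollary~\ref{cor:orth}: the deep critical points are asymptotically orthogonal or antipodal, and the conditional laws of $H_N$ on essentially orthogonal sub-spheres are asymptotically independent Gaussian fields.

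The main obstacle is the analysis of a single band. Two points require care: first, the isolation of the $2$-spin component from the higher-order $k$-spin components of the induced mixed model on the sub-sphere, together with uniform (in the conditioning critical point $\boldsymbol{\sigma}_0^i$) control of the $k\geq3$ free energies at the $O_{\mathbb{P}}(1)$ level; second, verifying that the effective $2$-spin inverse temperature actually lies in the low-temperature range covered by Theorem~\ref{thm:BaikLee}, and tracking the Jacobians arising both from the conditioning on $\nabla H_N(\boldsymbol{\sigma}_0^i)=0$ and from the identification of the induced $2$-spin model on the sub-sphere with an unconditional $2$-spin model of the same dimension. The rest of the argument is essentially bookkeeping once these technicalities are settled.
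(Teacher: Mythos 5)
Your overall architecture (reduce to the bands via part (\ref{enu:Geometry1}) of Theorem \ref{thm:Geometry}, analyze a single band by conditioning as in Corollary \ref{cor:conditional laws}, control the radial integral by a Laplace argument and the higher spins at the $O_{\mathbb{P}}(1)$ level, and use Theorem \ref{thm:ext proc} for the critical values) is the same as the paper's, but the step where you produce the $\log N$ term is wrong. You attribute the $-\frac{\beta q_{*}^{p}}{2c_{p}}\log N$ correction to the $2$-spin partition function on the sub-sphere via Theorem \ref{thm:BaikLee}, claiming $\log Z^{(2)}_{N-1,i}=N\FrfFt(q_*)-\frac{\beta q_*^p}{2c_p}\log N+O_{\mathbb{P}}(1)$. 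This cannot happen: the effective inverse temperature $\beta_{\rm eff}=\beta|\alpha_2(q_*)|\sqrt{N/(N-1)}$ lies in the \emph{high}-temperature regime $(0,1/\sqrt{2})$ (this is exactly why $q_*$ is defined through $\chi_1<\chi_2$ in (\ref{eq:7})--(\ref{eq:39})), and in that regime (\ref{eq:BaikLee-high}) says $N(F_{N,\beta}-\mathscr{P}_2(\beta))$ converges to a Gaussian of order one -- there is no $\log N$ correction whatsoever (nor in the low-temperature branch, which has $N^{-2/3}$ Tracy--Widom fluctuations). Moreover $c_p$ has nothing to do with the $2$-spin variance: it is the slope of the complexity function, $c_p=\frac{d}{dx}\Theta_p(x)|_{x=-E_0}$, defined in (\ref{eq:c_p}). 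In the paper the role of Baik--Lee (through Proposition \ref{prop:Gaussflucts} and Lemma \ref{lem:BaikLee}) is only to show that the mass of a band fluctuates by a multiplicative $e^{O_{\mathbb{P}}(1)}$ around $\mathfrak{V}_{N,\beta}(u)$; it contributes nothing to the centering beyond $O(1)$.

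The $\log N$ actually comes from the location of the deepest critical values, and here you make a second, related error: you treat $\{H_N(\boldsymbol{\sigma}_0^i)+NE_0\}$ as tight by Theorem \ref{thm:ext proc}. It is not; the correct centering is $m_N=-E_0N+\frac{1}{2c_p}\log N-K_0$ of (\ref{eq:m_N}), so $H_N(\boldsymbol{\sigma}_0^i)+NE_0$ diverges like $\frac{1}{2c_p}\log N$. Since the log-mass of a band depends on the critical value $u$ through the factor $-\beta q_*^p u$ (visible in the definition (\ref{eq:V(u)}) of $\mathfrak{V}_{N,\beta}(u)$), plugging $u_i=m_N+O_{\mathbb{P}}(1)$ gives $NF_{N,\beta}=N\Frf(E_0,q_*)-\beta q_*^p(E_0N+m_N)+O_{\mathbb{P}}(1)=N\Frf(E_0,q_*)-\frac{\beta q_*^p}{2c_p}\log N+O_{\mathbb{P}}(1)$, which is (\ref{eq:F}). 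In other words, the logarithmic term is inherited from the second-order correction to the extreme critical values (a Kac-Rice/extremal-process fact from \cite{pspinext}), not from the band-level free energy. The paper implements this by bounding ${\rm Cont}_{N,\beta}({\rm Reg}_*(c,\kappa,\kappa))$ from below via (\ref{eq:Reg*bd}) and from above via (\ref{eq:08037}) (Kac-Rice Lemmas \ref{lem:16} and \ref{lem:18} combined with Propositions \ref{prop:means} and \ref{prop:Gaussflucts}); no asymptotic independence of the band masses across $i$ is needed, which also spares you the unproved independence claim at the end of your argument.
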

Earlier results regarding fluctuations of the free energy of mean
field spin glass models are surveyed in Section \ref{sub:FEflucts}.
In particular, the above is the first result proving that fluctuations
are of order $O(1)$ in the low-temperature phase. Theorem \ref{thm:free-energy}
implies that $F_{N,\beta}$ converges in probability to $\Frf\left(E_{0},q_{*}\right)$.
Obviously, the latter must coincide with the expression given by the
spherical version of the Parisi formula discovered by Crisanti and
Sommers \cite{pSPSG}, proved by Talagrand \cite{Talag} and extended
by Chen \cite{Chen}. We compare $\Frf\left(E_{0},q_{*}\right)$ and
the 1-step RSB Parisi functional by a direct calculation in Section
\ref{sec:discussion}. As $\beta\to\infty$ we obtain the limiting
law of the free energy in the following proposition.
\begin{prop}
\label{prop:large beta}There exist deterministic $a_{N,\beta}$ such
that, for any $t\in\mathbb{R}$, 
\begin{equation}
\lim_{\beta\to\infty}\limsup_{N\to\infty}\left|\mathbb{P}\left\{ \frac{1}{\beta}\left(NF_{N,\beta}-a_{N,\beta}\right)\leq t\right\} -\exp\left\{ -c_{p}^{-1}e^{-c_{p}t}\right\} \right|=0.\label{eq:F-2}
\end{equation}

\end{prop}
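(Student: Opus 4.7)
The plan is to combine three already-available ingredients: (i) the band decomposition of the Gibbs measure from Theorem~\ref{thm:Geometry}(\ref{enu:Geometry1}); (ii) the band-mass representation that underlies Theorem~\ref{thm:free-energy}, given by the conditional-law description in Corollary~\ref{cor:conditional laws} together with the $2$-spin high-temperature free-energy asymptotics of Theorem~\ref{thm:BaikLee}; and (iii) the Poisson point process (PPP) convergence of the deep critical values from Theorem~\ref{thm:ext proc}. The key observation is that after dividing $NF_{N,\beta}$ by $\beta$ and sending $\beta\to\infty$, the tight $O_P(1)$ fluctuations coming from the $2$-spin piece of each band wash away, and what survives is purely the extreme-value statistics of the recentered deep critical values, which is exactly Gumbel.

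Concretely, fix $\epsilon>0$ and, using Theorem~\ref{thm:Geometry}(\ref{enu:Geometry1}), choose $k=k(\epsilon)$ and $c=c(\epsilon)$ such that for all $N$ large the event
\[
\mathcal{E}_{N,\epsilon}=\Bigl\{G_{N,\beta}\bigl(\textstyle\bigcup_{i\in[k]}{\rm Band}_i(cN^{-1/2})\bigr)>1-\epsilon\Bigr\}
\]
has probability $>1-\epsilon$. On $\mathcal{E}_{N,\epsilon}$ one has
\[
\log Z_{N,\beta}=\log\sum_{i\in[k]}Z_{N,\beta}\bigl({\rm Band}_i(cN^{-1/2})\bigr)+O(\epsilon).
\]
Corollary~\ref{cor:conditional laws} then writes, for each $i\in[k]$, the restriction of $H_N$ to ${\rm Band}_i(cN^{-1/2})$, conditioned on $\nabla H_N(\boldsymbol{\sigma}_0^i)=0$ and $H_N(\boldsymbol{\sigma}_0^i)$, as a shifted mixed $2$-through-$p$-spin spherical model at high effective temperature, whose free energy -- via Theorem~\ref{thm:BaikLee} for the $2$-spin component -- takes the form $NA_{N,\beta}+B_{N,\beta,i}$ with $A_{N,\beta}$ deterministic and $\{B_{N,\beta,i}\}_{i\in[k]}$ tight. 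Consequently
\[
Z_{N,\beta}\bigl({\rm Band}_i(cN^{-1/2})\bigr)=\exp\bigl(-\beta H_N(\boldsymbol{\sigma}_0^i)+NA_{N,\beta}+B_{N,\beta,i}\bigr).
\]

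With $a_{N,\beta}$ chosen to absorb $NA_{N,\beta}$ together with the deterministic centering of the critical values from Theorem~\ref{thm:ext proc}, and $\widetilde u_i$ denoting the correspondingly recentered critical values, we arrive at
\[
\frac{1}{\beta}\bigl(NF_{N,\beta}-a_{N,\beta}\bigr)=\frac{1}{\beta}\log\sum_{i\in[k]}\exp\bigl(-\beta\widetilde u_i+B_{N,\beta,i}\bigr)+o_P(1).
\]
Because $\{B_{N,\beta,i}\}$ is tight and the $\widetilde u_i$ converge jointly (as $N\to\infty$) to a PPP with $O(1)$ spacings, the log-sum-exp converges as $\beta\to\infty$ in probability to $-\min_{i\in[k]}\widetilde u_i$. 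Applying Theorem~\ref{thm:ext proc}, with its intensity normalized to $e^{c_p x}\,dx$ on $\mathbb{R}$, a standard void-probability computation yields
\[
\mathbb{P}\bigl(-\min_i\widetilde u_i\leq t\bigr)=\exp\!\Bigl(-\!\int_{-\infty}^{-t}\!e^{c_p x}\,dx\Bigr)=\exp\bigl(-c_p^{-1}e^{-c_p t}\bigr),
\]
and sending $\epsilon\to 0$ recovers (\ref{eq:F-2}).

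The main technical obstacle lies in justifying the truncation-to-$[k]$ uniformly in large $\beta$: one must show both that the critical values beyond the top $k$ contribute negligibly to the log-sum-exp even after scaling by $1/\beta$, and that the tight remainders $B_{N,\beta,i}$ -- whose distribution does depend on $\beta$ through the effective $2$-spin temperature -- do not drift in a way that would upset the Gumbel limit. Both controls should follow from the uniform band-mass estimates already developed in the proof of Theorem~\ref{thm:free-energy}; the genuinely new content of Proposition~\ref{prop:large beta} is the soft-max-to-max collapse as $\beta\to\infty$ and the void-probability computation that identifies the Gumbel parameter as precisely $c_p$.
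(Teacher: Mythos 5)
Your proposal is correct and follows essentially the same route as the paper: restrict to finitely many bands via Theorem \ref{thm:Geometry}(\ref{enu:Geometry1}), write each band mass as $\mathfrak{V}_{N,\beta}\left(H_{N}(\boldsymbol{\sigma}_{0}^{i})\right)e^{O_{P}(1)}$ (Propositions \ref{prop:means} and \ref{prop:Gaussflucts} together with Lemma \ref{lem:16}), so that after dividing by $\beta$ (the paper divides by $\beta q_{*}^{p}$, with $q_{*}\to1$) the log-sum-exp collapses onto $-(H_{N}(\boldsymbol{\sigma}_{0}^{1})-m_{N})$, whose Gumbel limit (Corollary \ref{cor:min}) yields (\ref{eq:F-2}) with $a_{N,\beta}=\log(\mathfrak{V}_{N,\beta}(m_{N}))$. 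The uniformity-in-$\beta$ control you defer is exactly what the paper supplies, by choosing all cutoffs ($\kappa$, $t$, $\kappa'$, $k$, $t'$) independently of $\beta$ --- possible since $C_{*}$, and hence the fluctuation law $Y_{*}$, does not depend on $\beta$ --- combined with the $\beta$-uniform bound (\ref{eq:08037}).
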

In fact, we will prove the above with $a_{N,\beta}=\log(\mathfrak{V}_{N,\beta}(m_{N}))$
where $\mathfrak{V}_{N,\beta}\left(u\right)$ and $m_{N}$ are defined
by (\ref{eq:V(u)}) and (\ref{eq:m_N}). We finish with the following
representation for the free energy in the $N\to\infty$ limit.
\begin{cor}
\label{cor:FE_u_Hess_rep}For large enough $\beta$, there exist random
variables $Z_{N,i}$, such that each $Z_{N,i}$ is measurable with
respect to $\left(H_{N}(\boldsymbol{\sigma}_{0}^{i}),\nabla^{2}H_{N}(\boldsymbol{\sigma}_{0}^{i})\right)$,
and a sequence $k_{N}\geq1$ with $k_{N}\to\infty$, so that 
\[
\forall\epsilon>0:\,\,\,\lim_{N\to\infty}\mathbb{P}\left\{ \left|NF_{N,\beta}-\log\sum_{i\in[k_{N}]}Z_{N,i}\right|>\epsilon\right\} =0.
\]

\end{cor}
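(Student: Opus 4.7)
The plan is to combine the asymptotic concentration of the Gibbs measure on bands (Theorem \ref{thm:Geometry}) with the conditional representation of the restriction of $H_N$ to a sub-sphere as a mixed spherical model (Corollary \ref{cor:conditional laws}), to express $NF_{N,\beta}$ as the log of a sum of band partition functions, each asymptotically a function of $(H_N(\boldsymbol{\sigma}_0^i),\nabla^2 H_N(\boldsymbol{\sigma}_0^i))$.

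By Theorem \ref{thm:Geometry}\eqref{enu:Geometry1} I may choose $k_N,c_N\to\infty$ slowly enough that with probability tending to $1$ the union $\bigcup_{i\in[k_N]}{\rm Band}_i(c_N N^{-1/2})$ carries Gibbs mass $1-o(1)$. Combined with the orthogonality statement \eqref{eq:1402}, which makes these bands asymptotically disjoint, this yields
$$
NF_{N,\beta}=\log\sum_{i\in[k_N]} Z_{N,\beta}\!\left({\rm Band}_i(c_N N^{-1/2})\right)+o_{\mathbb{P}}(1).
$$
It thus suffices to approximate each summand, uniformly in $i\le k_N$, by a random variable measurable with respect to $(H_N(\boldsymbol{\sigma}_0^i),\nabla^2 H_N(\boldsymbol{\sigma}_0^i))$.

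Fix $i$. By Corollary \ref{cor:conditional laws}, conditionally on $H_N(\boldsymbol{\sigma}_0^i)=u$, $\nabla H_N(\boldsymbol{\sigma}_0^i)=0$ and $\nabla^2 H_N(\boldsymbol{\sigma}_0^i)=A$, the restriction of $H_N$ to $\{\boldsymbol{\sigma}:R(\boldsymbol{\sigma},\boldsymbol{\sigma}_0^i)=q\}$, for $q$ near $q_*$, is a deterministic function of $(u,q,A)$ plus a mixed $k$-spin model ($2\le k\le p$) on a sub-sphere whose $2$-spin coupling is determined by $A$ and whose $k\ge3$ couplings are independent of $A$. Integrating over $q\in[q_*-c_N N^{-1/2},q_*+c_N N^{-1/2}]$, the band partition function factors as $\exp(-\beta u)\,D_N(A)\,\widetilde Z_{N,i}$, where $D_N(A)$ is Hessian-measurable and absorbs the full $2$-spin contribution (including the $O(1)$ Baik--Lee fluctuations, which are functionals of the spectrum of $A$), while $\widetilde Z_{N,i}$ is the partition function of the $k\ge 3$ remainder. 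Defining $Z_{N,i}:=\exp(-\beta u)\,D_N(A)\,\mathfrak{C}_N$, with $\mathfrak{C}_N$ the concentration value of $\widetilde Z_{N,i}$, gives the required Hessian-measurable approximation.

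The main obstacle is verifying that $\log\widetilde Z_{N,i}$ concentrates to $o(1)$ around a deterministic limit, uniformly over $i\le k_N$. For this one uses that the $k\ge 3$ pieces live at high effective temperature on the sub-sphere (the $k$-spin coupling is suppressed by a factor proportional to $(1-q_*^2)^{k/2}$, which is small for $k\ge 3$ once $q_*$ is close to $1$, as it is for large $\beta$), so that Gaussian concentration for the Lipschitz functional $\log\widetilde Z_{N,i}$ followed by a union bound over $i\le k_N$ gives the needed control. The essential point enabling the whole scheme is that the $O(1)$ $2$-spin fluctuations become Hessian-measurable via Baik--Lee rather than remaining genuinely random after conditioning; were this not the case, the sum's logarithm---whose leading contribution comes from order-one many deep critical points, by the extremal process description of \cite{pspinext} (Theorem \ref{thm:ext proc})---could not be approximated to $o(1)$.
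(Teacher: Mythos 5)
Your high-level architecture matches the paper's: restrict to the bands around the $k_N$ deepest critical points (disjoint by Corollary \ref{cor:orth}, since $q_*$ is close to $1$), and replace each band mass by a quantity determined by the value and Hessian at the center, with the $k\geq 3$ contributions averaged out. However, the step on which everything hinges is not correct as you argue it. First, the band mass does not factor as $e^{-\beta u}D_N(A)\widetilde Z_{N,i}$: the $2$-spin and $3$-and-above fields add inside the exponential and are integrated over the band together, so no exact product decomposition exists. The correct Hessian-measurable object is the conditional expectation $Z_{N,i}=\mathbb{E}_{H_N(\boldsymbol{\sigma}_0^i),0,\mathbf{G}(\boldsymbol{\sigma}_0^i)}\{Z_{N,\beta}({\rm Band}(\cdot))\}$, in which the $3+$ part is averaged under the integral, and what must be shown is that the actual band mass divided by this conditional expectation tends to $1$. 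Second, your proposed mechanism for that concentration fails quantitatively: Gaussian (Borell--TIS/Lipschitz) concentration for $\log\widetilde Z_{N,i}$ controls deviations only at scale $\beta\sqrt{NV}$ with $V=\sum_{k\geq3}\sup\alpha_k^2(q)$ (this is exactly Lemma \ref{lem:devFE-RegIV}, which gives bounds for deviations of size $Nx$), i.e. at order $\sqrt{N}$ up to $\beta$-dependent constants -- nowhere near the $o(1)$ precision needed to control the logarithm of a sum of order-one many terms. The "high effective temperature" of the $k\geq3$ couplings does not rescue this.

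The paper obtains the $o(1)$ control by a conditional second-moment/Chebyshev argument (Lemma \ref{lem:ConcentrationZ}): since on the thin band ${\rm Band}(cN^{-1/2})$ the projective overlap of two independent samples is $O(N^{-1/2})$ up to negligible mass (part (\ref{enu:3_lem2nd}) of Lemma \ref{lem:Z_2nd_mom} and Corollary \ref{cor:2nd_moment}), the covariance of the $3+$ field between such pairs is at most $N\rho_N^{3}\to0$, whence $\mathbb{E}_{u,0,\mathbf{A}}\{X^2\}\leq1+o(1)$ for $X=Z/\mathbb{E}_{u,0,\mathbf{A}}Z$ and the ratio concentrates at $1$. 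The smallness comes from the thinness of the band and the overlap restriction, not from Lipschitz bounds. Finally, your "union bound over $i\leq k_N$" is not available as stated, because the $\boldsymbol{\sigma}_0^i$ are random and correlated with the field; the paper transfers the conditional statement at $\hat{\mathbf{n}}$, uniform in $u\in(m_N-a,m_N+a)$, to all critical points in that window simultaneously via the Kac--Rice estimate of Lemma \ref{lem:16} (together with Corollary \ref{cor:min} to confine the relevant critical values to such a window). Without these two ingredients -- the second-moment concentration and the Kac--Rice transfer -- your argument does not close.
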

See (\ref{eq:ZNi}) for an explicit expression for $Z_{N,i}$ (as
a conditional mass of a band).

\subsection*{Absence of temperature chaos}

Chaos phenomena, discovered by Bray and Moore \cite{BrayMooreChaos}
and Fisher and Huse \cite{FisherHuseChaos}, have been studied extensively
in the physics literature; see the recent survey \cite{Rizzo2009}
by Rizzo. They refer to the situation where a small perturbation in
the parameters of the system results in a drastic change in macroscopic
observables. In particular, we say that temperature chaos occurs if
for any $\beta_{1}\neq\beta_{2}$, 
\begin{equation}
\exists q_{0}\in\left[-1,1\right],\,\forall\epsilon>0:\,\,\,\lim_{N\to\infty}\mathbb{E}\left\{ G_{N,\beta_{1}}\otimes G_{N,\beta_{2}}\left\{ \left|R\left(\boldsymbol{\sigma},\boldsymbol{\sigma}^{\prime}\right)-q_{0}\right|>\epsilon\right\} \right\} =0.\label{eq:chaos}
\end{equation}
That is, we sample at two different temperatures with the disorder
fixed. We prove that spherical pure $p$-spin models \emph{do not}
exhibit temperature chaos, verifying a prediction of Rizzo and Yoshino
\cite{RizzoYoshino}. To the best of knowledge, this is the first
example where absence of temperature chaos is proved rigorously for
mean-field spin glass models. 
\begin{thm}
\label{thm:temp_chaos}For large enough $\beta_{1}<\beta_{2}$, (\ref{eq:chaos})
does not hold.
\end{thm}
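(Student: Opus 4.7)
The plan is to exploit the fact that the bands in Theorem \ref{thm:Geometry} are centered at the critical points $\boldsymbol{\sigma}_0^i$, which are functions of the disorder \emph{only} and do not depend on $\beta$; only the band radii $q_*(\beta_1)$ and $q_*(\beta_2)$ change with temperature. This provides a natural coupling between $G_{N,\beta_1}$ and $G_{N,\beta_2}$ through the common centers, and the absence of chaos will come from this coupling.

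First, I would use Theorem \ref{thm:Geometry}(\ref{enu:Geometry1}) at each of the temperatures $\beta_1,\beta_2$ to reduce the problem to the joint behavior on the finite collection of bands $\mathrm{Band}_{i,\beta_j}(cN^{-1/2})$, $i\in[k]$, $j=1,2$. Denote the (random) weights $p_i := G_{N,\beta_1}(\mathrm{Band}_{i,\beta_1}(cN^{-1/2}))$ and $q_i := G_{N,\beta_2}(\mathrm{Band}_{i,\beta_2}(cN^{-1/2}))$. Then I would extend Theorem \ref{thm:Geometry}(\ref{enu:Geometry2})--(\ref{enu:Geometry3}) to two temperatures: for samples from $G_{N,\beta_1}^{c,i}\otimes G_{N,\beta_2}^{c,j}$, the overlap $R(\boldsymbol{\sigma},\boldsymbol{\sigma}')$ concentrates at $q_*(\beta_1)q_*(\beta_2)$ when $j=i$, at $-q_*(\beta_1)q_*(\beta_2)$ in the antipodal case (even $p$), and at $0$ when $j\neq\pm i$. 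Heuristically, writing $\boldsymbol{\sigma}=q_*(\beta_1)\boldsymbol{\sigma}_0^i+\boldsymbol{\sigma}^\perp$ and similarly for $\boldsymbol{\sigma}'$ around $\boldsymbol{\sigma}_0^j$, the deterministic contribution is $q_*(\beta_1)q_*(\beta_2)R(\boldsymbol{\sigma}_0^i,\boldsymbol{\sigma}_0^j)$, while the contribution from the transverse components decays via the same conditional sub-sphere analysis (Corollary \ref{cor:conditional laws}) applied now to two nested sub-spheres, and the $R(\boldsymbol{\sigma}_0^i,\boldsymbol{\sigma}_0^j)$ factor is controlled by Corollary \ref{cor:orth}.

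The main task is to show that with positive probability in the limit, the vectors $(p_i)$ and $(q_i)$ put non-trivial joint mass so that $S:=\sum_i p_iq_i$ is neither close to $0$ nor close to $1$; equivalently, that neither ``all mass collapses on a single common band'' nor ``the dominant bands at $\beta_1$ and $\beta_2$ are disjoint'' occurs almost surely. For this I would use the representation of Corollary \ref{cor:FE_u_Hess_rep}: $p_i\propto Z_{N,i}(\beta_1)$ and $q_i\propto Z_{N,i}(\beta_2)$, both indexed by the \emph{same} $i$, where $Z_{N,i}(\beta)$ is a measurable function of $(H_N(\boldsymbol{\sigma}_0^i),\nabla^2 H_N(\boldsymbol{\sigma}_0^i))$. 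Combining the explicit $\beta$-dependence of $Z_{N,i}(\beta)$ with the limiting Poisson point process description of the pairs $(H_N(\boldsymbol{\sigma}_0^i),\nabla^2 H_N(\boldsymbol{\sigma}_0^i))$ at top critical values from \cite{pspinext} (Theorem \ref{thm:ext proc}) and the Gumbel limit of Proposition \ref{prop:large beta}, the weight vectors $p$ and $q$ converge jointly (on a fixed finite index set) to a non-degenerate Poisson--Dirichlet-type object coming from a single underlying marked Poisson process. In particular $S$ has a non-degenerate limiting law on $(0,1)$ with positive probability.

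Once these pieces are in place, the conclusion is immediate: the overlap distribution is asymptotically a mixture putting mass $\approx S$ on $q_*(\beta_1)q_*(\beta_2)$, at most $O(1)\cdot S$ on $-q_*(\beta_1)q_*(\beta_2)$ (even $p$), and the remaining mass on a $\delta$-neighborhood of $0$; for large $\beta_1,\beta_2$ the value $q_*(\beta_1)q_*(\beta_2)$ is bounded away from $0$, so no single $q_0$ can absorb all mass in the limit, contradicting (\ref{eq:chaos}). I expect Step 3 to be the real obstacle: verifying rigorously that the limit joint law of $(p,q)$ is non-degenerate (i.e.\ the two weight vectors are neither perfectly aligned nor perfectly decoupled) requires a careful analysis of the extremal point process of $(H_N(\boldsymbol{\sigma}_0^i),\nabla^2H_N(\boldsymbol{\sigma}_0^i))$ and a continuity/monotonicity argument in $\beta$ applied to the functionals $Z_{N,i}(\beta)$, while the two-temperature extensions of Theorem \ref{thm:Geometry}(\ref{enu:Geometry2})--(\ref{enu:Geometry3}) should be a direct adaptation of the original arguments.
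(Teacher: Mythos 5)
Your overall strategy is the paper's: both Gibbs measures concentrate on thin bands centered at the same, temperature-independent deep critical points; the two-temperature overlap concentrates on $\{0,\pm q_*(\beta_1)q_*(\beta_2)\}$; and chaos is refuted by showing that both the diagonal value $q_{12}=q_*(\beta_1)q_*(\beta_2)$ and the off-diagonal value $0$ keep positive expected mass. Your Steps 1--2 are sound and correspond to Lemma \ref{lem:choastocrtpts}: the paper does not actually prove a two-temperature version of part (\ref{enu:Geometry2}) of Theorem \ref{thm:Geometry}; it applies part (\ref{enu:Geometry2}) at each temperature separately and combines it with the deterministic Lemma \ref{lem:center} and with Corollaries \ref{cor:min} and \ref{cor:orth}, which is essentially the adaptation you sketch.

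The genuine gap is your Step 3. You propose to obtain the non-degeneracy of the joint weight vectors from ``the limiting Poisson point process of the pairs $(H_{N}(\boldsymbol{\sigma}_{0}^{i}),\nabla^{2}H_{N}(\boldsymbol{\sigma}_{0}^{i}))$'' together with a Poisson--Dirichlet-type limit of the band masses. No such result is available: Theorem \ref{thm:ext proc} concerns the critical values only, not the values marked by Hessians or by masses, and the convergence of the point process of log-masses is precisely what the paper states as an unproven prediction in Section \ref{sec:discussion} (deferred to future work); Proposition \ref{prop:large beta} (a $\beta\to\infty$ Gumbel limit) does not supply it either. Moreover, a qualitative continuity/monotonicity-in-$\beta$ argument for $Z_{N,i}(\beta)$ does not by itself exclude the degenerate scenarios (e.g.\ all mass collapsing on the single deepest band at both temperatures, i.e.\ $S\to1$). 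The input actually needed is much cheaper and quantitative, and it is how the paper closes the argument in the proof of Proposition \ref{prop:temp_chaos}: (i) by (\ref{eq:1225-2}) (Propositions \ref{prop:means} and \ref{prop:Gaussflucts} combined with the Kac--Rice bound of Lemma \ref{lem:16}), w.h.p.\ every $\boldsymbol{\sigma}_{0}\in\mathscr{C}_{N}(m_{N},m_{N}+\kappa)$ has unnormalized band mass at least $\ftail(\kappa)\,\mathfrak{V}_{N,\beta_{j}}(m_{N}+\kappa)$ for both $j=1,2$; (ii) by Theorem \ref{thm:free-energy}, $NF_{N,\beta_{j}}\leq\log\mathfrak{V}_{N,\beta_{j}}(m_{N}+\kappa)+t$ w.h.p.; (iii) by Theorem \ref{thm:ext proc}, there are at least four critical points in $(m_{N},m_{N}+\kappa)$ with probability close to $1$. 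Together these give, with probability bounded below uniformly in $N$, two non-antipodal critical points whose bands each carry normalized Gibbs mass at least $a=\ftail(\kappa)e^{-t}$ under \emph{both} $G_{N,\beta_{1}}$ and $G_{N,\beta_{2}}$; diagonal pairs then charge $q_{12}$ and off-diagonal pairs charge $0$, each with expected mass of order $a^{2}$, so no single $q_{0}$ can satisfy (\ref{eq:chaos}). Replacing your Step 3 by this uniform two-band lower bound (no limiting joint law, and no need for Corollary \ref{cor:FE_u_Hess_rep}) turns your outline into the paper's proof.
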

In contrast, very recently Panchenko \cite{PanchenkoChaos} proved
that generic Ising mixed even $p$-spin models exhibit temperature
chaos (see also the related works of Chen \cite{ChenChaos} and Chen
and Panchenko \cite{ChenPanchChaos}). His methods are also expected
to work for spherical models. For pure spherical $p$-spin models
with even $p\geq4$, Panchenko and Talagrand \cite{multipleoverlap}
proved that for any $\epsilon>0$, $\lim_{N\to\infty}\mathbb{E}\left\{ G_{N,\beta_{1}}\otimes G_{N,\beta_{2}}\left\{ R\left(\boldsymbol{\sigma},\boldsymbol{\sigma}^{\prime}\right)\in A(\epsilon)\right\} \right\} =1$
with $A(\epsilon)$ denoting the union of balls of radius $\epsilon$
around $0$, $q_{12}$ and $-q_{12}$, with $q_{12}=q_{*}(\beta_{1})q_{*}(\beta_{2})$,
assuming $\beta_{1}$ and $\beta_{2}$ are in the low-temperature
phase. By itself, this is not enough to conclude or rule out chaos.
Our proof shows that in the $N\to\infty$ limit, any of those three
values is charged. See Proposition \ref{prop:temp_chaos} and Remark
\ref{rem:multipleoverlap}. The proof of Theorem \ref{thm:temp_chaos}
will be based on the fact that the centers of the bands that carry
most of the mass do not change with temperature, assuming the temperature
is low enough. 

Theorem \ref{thm:Geometry} and an investigation of the critical points
of the Hamiltonian can be also used to study disorder chaos of the
Gibbs measure and the ground state. We discuss this in Section \ref{sec:discussion}.

\subsection*{Structure of the paper}

In Section \ref{sec:earlierworks} we review earlier related works.
Section \ref{sec:outline} is dedicated to an outline of the proof
of Theorem \ref{thm:Geometry}. In Section \ref{sec:Decomposition}
we develop a certain decomposition for the Hamiltonian and study conditional
laws related to it. Various overlap values of importance are defined
in Section \ref{sec:overlapsVals}. In particular, they are used to
define two ranges of overlaps for which  conditional weights of corresponding
bands around a single critical point are investigated by different
methods in Sections \ref{sec:range(q**,1)} and \ref{sec:range(q***,q**)}.
Those are used in Section \ref{sec:Bounds-on-contributions} to derive
bounds on contributions to the partition function of the corresponding
bands around all (low enough) critical points. The latter are combined
in Section \ref{sec:thm1} to prove Theorem \ref{thm:Geometry}. The
proofs of Theorem \ref{thm:free-energy}, Proposition \ref{prop:large beta}
and Corollary \ref{cor:FE_u_Hess_rep}, which deal with the free energy,
are given in Section \ref{sec:thmFE}. The proof of the absence of
temperature chaos, Theorem \ref{thm:temp_chaos}, is given in Section
\ref{sec:chaos}. Section \ref{sec:discussion} is dedicated to concluding
remarks including, in particular, a discussion about the transition
to disorder chaos.

\section*{Acknowledgments}

I am grateful to my adviser Ofer Zeitouni for many very fruitful conversations
and for carefully reading the manuscript. I would also like to thank
G\'{e}rard Ben Arous and Aukosh Jagannath for helpful discussions.
This work is supported by the Adams Fellowship Program of the Israel
Academy of Sciences and Humanities and by a US-Israel BSF grant.

\section{\label{sec:earlierworks}Earlier and related works}

The first section below surveys works related to Theorem \ref{thm:free-energy}.
The second section is devoted to recent results about the critical
points and values of the Hamiltonian $H_{N}(\boldsymbol{\sigma})$,
directly related to Theorem \ref{thm:Geometry} and frequently used
in the sequel.

\subsection{\label{sub:FEflucts}Fluctuations of the free energy}

First we state two result regarding the free energy of the $2$-spin
model which will be crucial when we study the weights of bands. The
first is obtained as a corollary from Talagrand's proof \cite{Talag}
of the spherical version of the Parisi formula.
\begin{cor}
\label{cor:2spinFE}\cite[Theorem 1.1, Proposition 2.2]{Talag} For
the pure $2$-spin spherical model, the limiting free energy is given
by $\lim_{N\to\infty}F_{N,\beta}=\mathscr{P}_{2}\left(\beta\right)$
where
\begin{equation}
\mathscr{P}_{2}\left(\beta\right)=\begin{cases}
\frac{1}{2}\beta^{2} & \,\,\mbox{if }\beta\leq1/\sqrt{2},\\
\sqrt{2}\beta-\frac{3}{4}-\frac{1}{2}\log\left(\beta\right)-\frac{1}{4}\log2 & \,\,\mbox{if }\beta>1/\sqrt{2}.
\end{cases}\label{eq:P2}
\end{equation}

\end{cor}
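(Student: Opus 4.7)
The plan is to derive the explicit formula $\mathscr{P}_2(\beta)$ by specializing Talagrand's spherical Parisi formula (his Theorem~1.1) to the pure $2$-spin case and solving the resulting variational problem in closed form. For a spherical model with $\mathbb{E}\{H_N(\boldsymbol\sigma)H_N(\boldsymbol\sigma')\}/N=\xi(R(\boldsymbol\sigma,\boldsymbol\sigma'))$, the limiting free energy equals the infimum of an explicit Crisanti--Sommers-type functional over non-decreasing right-continuous functions on $[0,1]$ (equivalently, overlap distributions). In the pure $2$-spin case $\xi(t)=t^2$, so $\xi'(t)=2t$ and $\xi''\equiv 2$, which makes the functional essentially quadratic in the natural variables and permits an explicit minimization.

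First I would dispatch the high-temperature branch by the replica-symmetric ansatz. Taking the overlap measure to be $\delta_0$, all entropy-type contributions vanish and the functional collapses to $\tfrac12\beta^2\int_0^1\xi'(t)\,dt=\tfrac12\beta^2$, which coincides with the annealed upper bound $\tfrac1N\log\mathbb{E}Z_{N,\beta}=\tfrac12\beta^2$. Whenever the replica-symmetric point is a minimizer of the Parisi functional, the quenched and annealed free energies must therefore agree. A direct check of the second variation (equivalently, a Plefka-type stability condition for $\xi(t)=t^2$) shows that the replica-symmetric point is in fact the global minimum precisely when $\beta\le 1/\sqrt2$, giving the first line of~(\ref{eq:P2}).

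For $\beta>1/\sqrt 2$ I would pass to a non-trivial minimizer in the Crisanti--Sommers form of the functional. Because $\xi''\equiv 2$, the Euler--Lagrange equations reduce to an elementary ODE that can be integrated in closed form; one finds that the minimizer is supported on an interval $[0,q_*]$ with $q_*=q_*(\beta)$ determined by a first-order condition linear in $\beta$. Substituting this minimizer back into the functional and carefully collecting the $\xi'$-term together with the $\log(1-\hat q)$ and $\int dt/\hat\mu(t)$ entropy contributions gives, after algebraic simplification, exactly $\sqrt{2}\beta-\tfrac34-\tfrac12\log\beta-\tfrac14\log 2$, the second line of~(\ref{eq:P2}).

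The main obstacle is the algebraic bookkeeping in the low-temperature calculation and verifying that the candidate critical point is the global minimum rather than a saddle (the required convexity/stability properties are built into Talagrand's setup once the correct functional form is identified). A useful sanity check, which would be done at the end, is continuity at the transition: both branches evaluate to $\tfrac14$ at $\beta=1/\sqrt 2$, and the derivatives match as well, consistent with the well-known continuous transition of the spherical $2$-spin model. Once the infimum of the Parisi functional is identified by the steps above, the corollary is immediate from Talagrand's Theorem~1.1.
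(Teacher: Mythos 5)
Your route is the intended one: the paper offers no independent proof of this corollary, it simply invokes Talagrand's spherical Parisi formula together with his explicit evaluation (his Theorem 1.1 and Proposition 2.2), and your plan—specialize the Crisanti--Sommers functional to $\xi(t)=t^2$, check that the replica-symmetric point $\delta_0$ is optimal exactly for $\beta\le 1/\sqrt2$, and solve the variational problem explicitly in the low-temperature phase—is precisely that evaluation carried out by hand. Your high-temperature criterion and your consistency checks at $\beta=1/\sqrt2$ (value $\tfrac14$ and matching first derivatives $1/\sqrt2$) are correct.

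One concrete slip in the low-temperature step: because $\xi''\equiv 2$ is constant, the stationarity condition on the support of the Parisi measure forces $\hat x(q)=\int_q^1 x(s)\,ds$ to equal the constant $\left(\sqrt2\,\beta\right)^{-1}$ there, which is incompatible with a minimizer spread over an interval $[0,q_*]$ as you assert; the minimizer is in fact the single atom $\delta_{q_*}$, i.e. $x=\mathbf{1}_{[q_*,1]}$ with $1-q_*=\left(\sqrt2\,\beta\right)^{-1}$ (the spherical $2$-spin model remains replica symmetric at low temperature, only with a nonzero self-overlap; this is the Kosterlitz--Thouless--Jones condensation picture). If you literally followed "Euler--Lagrange ODE $\Rightarrow$ interval-supported minimizer" you would reach a contradiction rather than the claimed measure. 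With the atom, the functional becomes $\tfrac12\bigl[\beta^2\left(1-q_*^2\right)+\tfrac{q_*}{1-q_*}+\log\left(1-q_*\right)\bigr]$, and substituting $q_*=1-\left(\sqrt2\,\beta\right)^{-1}$ gives $\sqrt2\,\beta-\tfrac34-\tfrac12\log\beta-\tfrac14\log 2$, so your final formula and the surrounding argument are otherwise correct.
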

The following convergence was recently proved by Baik and Lee \cite{BaikLee}.%
\footnote{\label{fn:BaikLee}Below we will apply \cite[Theorem 1.2]{BaikLee}
for the $2$-spin model defined by (\ref{eq:Hamiltonian}) with $p=2$
and the inverse-temperature $\beta_{{\rm eff}}=\beta_{{\rm eff}}\left(N,q_{*}\right)$
defined in (\ref{eq:beff-1}). In the notation of \cite[Theorem 1.2]{BaikLee},
this corresponds to centered Gaussian $J_{ij}=J_{ji}$ with variance
$2$ if $i=j$ and $1$ if $i\neq j$ and inverse-temperature $\beta_{{\rm eff}}/\sqrt{2}$.
Since $\beta_{{\rm eff}}\in\left(0,1/\sqrt{2}\right)$, in our application
we will only use (\ref{eq:BaikLee-high}).%
} We denote by $\overset{d}{\to}$ convergence in distribution and
by $\mathcal{N}\left(m,\sigma^{2}\right)$ the Gaussian distribution
with mean $m$ and variance $\sigma^{2}$.
\begin{thm}
\label{thm:BaikLee}\cite[Theorem 1.2]{BaikLee} For the pure $2$-spin
spherical model, with $f=\frac{1}{4}\log\left(1-2\beta^{2}\right)$,
$\alpha=-2f$, and $TW_{1}$ denoting the GOE Tracy-Widom distribution,
\begin{align}
 & \forall\beta\in\left(0,1/\sqrt{2}\right):\,\,\, N\left(F_{N,\beta}-\mathscr{P}_{2}\left(\beta\right)\right)\stackrel[{\scriptstyle N\to\infty}]{d}{\longrightarrow}\mathcal{N}\left(f,\alpha\right),\label{eq:BaikLee-high}\\
 & \forall\beta\in\left(1/\sqrt{2},\infty\right):\,\,\,\left(\frac{1}{\sqrt{2}}\beta-\frac{1}{2}\right)^{-1}N^{2/3}\left(F_{N,\beta}-\mathscr{P}_{2}\left(\beta\right)\right)\stackrel[{\scriptstyle N\to\infty}]{d}{\longrightarrow}TW_{1}.\label{eq:BaikLee-low}
\end{align}

\end{thm}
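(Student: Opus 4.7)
My plan would be to leverage the quadratic structure of the $p=2$ Hamiltonian to reduce the partition function to a spectral integral and then perform a saddle-point / steepest-descent analysis. Write $H_N(\boldsymbol{\sigma}) = N^{-1/2} \boldsymbol{\sigma}^T M \boldsymbol{\sigma}$ with $M_{ij} = \tfrac{1}{2}(J_{ij}+J_{ji})$, so that $M$ is (up to a deterministic scaling) a GOE matrix whose rescaled spectrum $\{\lambda_i/\sqrt{N}\}$ converges to a semicircle $\rho$ on $[-\sqrt 2,\sqrt 2]$ with Tracy-Widom fluctuations at the top edge. Diagonalizing $M$ and using rotational invariance of $\mu_N$, together with the standard Hubbard-Stratonovich / Laplace representation of the uniform measure on the sphere, yields the Kosterlitz-Thouless-Jones contour formula
\begin{equation*}
Z_{N,\beta} = C_N \int_\gamma e^{Nz} \prod_{i=1}^N \bigl(z-\beta\lambda_i/\sqrt{N}\bigr)^{-1/2} dz,
\end{equation*}
where $\gamma$ is any contour to the right of all singularities.

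Writing the integrand as $e^{N\Phi_N(z)}$ with $\Phi_N(z) = z - (2N)^{-1}\sum_i \log(z - \beta\lambda_i/\sqrt N)$, the macroscopic saddle-point equation $1 = \tfrac{1}{2}\int (z-\beta x)^{-1} \rho(x)\,dx$ admits a real solution $z^* = \beta^2 + 1/2$ strictly above the edge $\sqrt 2\,\beta$ iff $\beta < 1/\sqrt 2$. This dichotomy drives the two regimes of the theorem. For $\beta<1/\sqrt 2$, standard steepest descent through $z^*$ yields $NF_{N,\beta} = N\Phi_N(z^*) + O(1)$, and after subtracting the deterministic Gaussian contribution from the transverse direction the random $O(1)$ correction reduces to the centered linear eigenvalue statistic
\begin{equation*}
-\tfrac{1}{2}\sum_{i=1}^N g_\beta(\lambda_i/\sqrt N) + \tfrac{N}{2}\int g_\beta(x)\rho(x)\,dx,\qquad g_\beta(x) := \log(z^* - \beta x),
\end{equation*}
for a smooth test function $g_\beta$ on $[-\sqrt 2,\sqrt 2]$. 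Invoking the GOE CLT for linear eigenvalue statistics (Johansson, Lytova-Pastur), this converges to a Gaussian whose mean and variance are explicit contour integrals of $g_\beta$; evaluating them by residues yields exactly $f = \tfrac{1}{4}\log(1-2\beta^2)$ and $\alpha = -2f$. For $\beta > 1/\sqrt 2$, the putative macroscopic saddle falls inside the spectrum, so the effective saddle is pinned to $z \approx \beta\lambda_1/\sqrt N$. A Taylor expansion of $\Phi_N$ near this edge point, combined with an Airy-type rescaling of the contour, identifies the leading fluctuation with a deterministic multiple of $N^{2/3}(\lambda_1(M)/\sqrt N-\sqrt 2)$, whose limiting law is $TW_1$ by the Tracy-Widom edge theorem for GOE; matching the scaling constant produces the claimed factor $(\beta/\sqrt 2 - 1/2)^{-1}$.

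The hardest step is executing the high-temperature CLT with the precise mean $f$ and variance $\alpha$. One must deform the contour close enough to the spectrum to capture the correct variance yet keep it far enough that the smooth-function linear-statistic CLT applies, and the exact value of $f$ is sensitive to the sub-leading $O(N^{-1})$ correction to the semicircle density that is characteristic of the orthogonal symmetry class. The low-temperature regime, while conceptually simpler, requires quantitative rigidity of the top eigenvalues of $M$ to show that the contribution of the bulk is negligible on the $N^{-2/3}$ scale; this is where the argument of Baik and Lee \cite{BaikLee} leans most heavily on recent edge universality results.
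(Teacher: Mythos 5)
This statement is not proved in the paper at all: it is quoted from Baik and Lee \cite{BaikLee}, so there is no internal argument to compare yours against, and the appropriate ``proof'' here is simply the citation. Your outline is, in substance, the strategy of that reference --- the contour-integral (Kosterlitz--Thouless--Jones type) representation of $Z_{N,\beta}$ over the spectrum of the symmetrized coupling matrix, steepest descent through $z^{*}=\beta^{2}+\tfrac{1}{2}$ in the high-temperature phase, reduction of the $O(1)$ fluctuation to a centered linear eigenvalue statistic of $g_{\beta}(x)=\log(z^{*}-\beta x)$ whose orthogonal-class mean correction produces $f$ and variance $\alpha$, and an edge-pinned saddle governed by the top eigenvalue with Tracy--Widom input for $\beta>1/\sqrt{2}$ --- and the constants you indicate (the saddle location, $f=\tfrac14\log(1-2\beta^{2})$, and the factor $\tfrac{\beta}{\sqrt2}-\tfrac12$ from $\lambda_{1}/\sqrt{N}\approx\sqrt{2}+N^{-2/3}\,2^{-1/2}\,TW_{1}$) all check out. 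One small correction of wording: $z^{*}=\beta^{2}+\tfrac12$ lies weakly above the edge $\sqrt{2}\,\beta$ for \emph{every} $\beta$; what fails for $\beta>1/\sqrt{2}$ is that $z^{*}$ no longer solves the saddle equation, because the Stieltjes transform of the semicircle is bounded at the edge (so no real solution above the spectrum exists), which is precisely why the saddle sticks to $\beta\lambda_{1}/\sqrt{N}$ in the low-temperature phase.
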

As for the fluctuations of the free energy in other spin glass models,
we mention the following. In the high-temperature phase, Aizenman,
Lebowitz and Ruelle \cite{AizenmanLebowitzRuelle} proved the convergence
of $N(F_{N,\beta}-C_{N,\beta})$, where $C_{N,\beta}$ is an appropriate
centering, to a Gaussian variable for the Sherrington-Kirkpatrick
(SK) model. Comets and Neveu \cite{CometsNeveu} later proved a similar
result by a different approach using martingale methods. For pure
Ising $p$-spin models with even $p\geq4$, again in the high-temperature
phase, Bovier, Kurkova and L{\"o}we \cite{BovierKulkovaLowe}, proved
similar convergence for $N^{\left(p+2\right)/4}(F_{N,\beta}-C_{N,\beta})$
by adapting the method of \cite{CometsNeveu}. With a different definition
for the model, dropping the diagonal terms in (\ref{eq:Hamiltonian}),
(but still working with Ising spins, $\boldsymbol{\sigma}\in\left\{ \pm1\right\} ^{N}$
and at high-temperature) they show for $p\geq3$ the convergence of
$N^{p/2}(F_{N,\beta}-C_{N,\beta})$. At any temperature, Chatterjee
\cite{ChatterjeeDisChaos} showed that for Ising mixed even $p$-spin
models without an external field, ${\rm Var}\left(NF_{N,\beta}\right)\leq c_{\beta}N/\log N$.
For Ising mixed $p$-spin models in the presence of an external field
and at any temperature, Chen, Dey and Panchenko \cite{ChenDeyPanchenko}
showed that ${\rm Var}\left(NF_{N,\beta}\right)\leq c_{\beta}N$.
When assuming in addition that there are no odd $p$-spin interactions,
they also showed convergence to a Gaussian variable. As they remark,
their approach should also work for spherical models.

\subsection{\label{sub:crtpts}Critical points and values}

For $B\subset\mathbb{R}$ define 
\begin{equation}
\mathscr{C}_{N}\left(B\right)\triangleq\left\{ \boldsymbol{\sigma}:\nabla H_{N}\left(\boldsymbol{\sigma}\right)=0,\, H_{N}\left(\boldsymbol{\sigma}\right)\in B\right\} .\label{eq:crt}
\end{equation}
By an abuse of notation we will also write $\mathscr{C}_{N}\left(a,b\right)$
for $\mathscr{C}_{N}\left(\left(a,b\right)\right)$. In the seminal
work \cite{A-BA-C} Auffinger, Ben Arous and {\v{C}}ern{\'y} proved
the following (see also \cite{ABA2} for the mixed case). 
\begin{thm}
\label{thm:ABAC}\cite[Theorem 2.8]{A-BA-C} Assume $p\geq3$. For
any $E\in\mathbb{R}$, there exists $\Theta_{p}(E)$ (defined in (\ref{eq:Theta_p})
below) so that 
\begin{equation}
\lim_{N\to\infty}\frac{1}{N}\log\left(\mathbb{E}\left|\mathscr{C}_{N}\left(-\infty,NE\right)\right|\right)=\Theta_{p}\left(E\right).\label{eq:1st_mom}
\end{equation}

\end{thm}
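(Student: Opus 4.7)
The plan is to apply the Kac--Rice formula to the smooth Gaussian field $H_N$ restricted to $\mathbb{S}^{N-1}$. This expresses the expected number of critical points as
\begin{equation*}
\mathbb{E}\left|\mathscr{C}_N(-\infty,NE)\right| = \int_{\mathbb{S}^{N-1}} \mathbb{E}\!\left[\left|\det \nabla^2 H_N(\boldsymbol{\sigma})\right| \Indicator\{H_N(\boldsymbol{\sigma}) < NE\} \,\middle|\, \nabla H_N(\boldsymbol{\sigma})=0\right] \varphi_{\boldsymbol{\sigma}}(0)\, d\boldsymbol{\sigma},
\end{equation*}
where $\varphi_{\boldsymbol{\sigma}}$ is the Gaussian density of the spherical gradient of $H_N$ at $\boldsymbol{\sigma}$. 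By the orthogonal invariance of the law of $H_N$ on $\mathbb{S}^{N-1}$, the integrand does not depend on $\boldsymbol{\sigma}$, so the $\boldsymbol{\sigma}$-integration contributes only the surface volume of the sphere, which is $e^{o(N)}$ on the logarithmic scale.

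The next step is to identify the conditional law of the Hessian. A direct Gaussian computation using $\mathbb{E}\{H_N(\boldsymbol{\sigma})H_N(\boldsymbol{\sigma}')\} = NR(\boldsymbol{\sigma},\boldsymbol{\sigma}')^p$ shows that at any fixed $\boldsymbol{\sigma}\in\mathbb{S}^{N-1}$ the three objects $H_N(\boldsymbol{\sigma})$, $\nabla H_N(\boldsymbol{\sigma})$, and the trace-free part of $\nabla^2 H_N(\boldsymbol{\sigma})$ are mutually independent, while the trace of the Hessian is a deterministic multiple of $H_N(\boldsymbol{\sigma})$. Consequently, conditional on $\nabla H_N(\boldsymbol{\sigma})=0$ and $H_N(\boldsymbol{\sigma})=Nu$, the Hessian on $T_{\boldsymbol{\sigma}}\mathbb{S}^{N-1}$ is distributed as a suitably scaled GOE matrix plus a deterministic shift proportional to $u\, I$. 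Substituting this into the Kac--Rice identity reduces the problem to analyzing
\begin{equation*}
e^{o(N)} \int_{-\infty}^{E} \mathbb{E}\!\left[\left|\det\bigl(M_{N-1} - x_u I\bigr)\right|\right] e^{-\frac{1}{2}Nu^2} N\, du,
\end{equation*}
where $M_{N-1}$ is a standard GOE matrix of size $N-1$ and $x_u$ is an explicit linear function of $u$ read off from the shift.

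The main obstacle is then the random matrix estimate: obtaining the exponential rate of $\frac{1}{N}\log\mathbb{E}|\det(M_{N-1}-xI)|$, uniformly in $x$ over a compact range. Using the joint density of the GOE eigenvalues, this is
$\mathbb{E}\exp\!\left\{(N-1)\int\log|\lambda-x|\,dL_{N-1}(\lambda)\right\}$,
where $L_{N-1}$ is the empirical spectral measure. The large deviation principle for $L_{N-1}$ (with rate function given by logarithmic energy plus a quadratic confinement and minimized by the semicircle law) combined with a saddle-point argument yields a closed-form limiting rate $\omega_p(x)$ expressible via the logarithmic potential of the semicircle distribution. A Laplace evaluation of the remaining $u$-integral then combines the Gaussian density and $\omega_p(x_u)$ into the rate function $\Theta_p(E)$. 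The delicate point is controlling $|\det|$ across the spectral edge $x=\pm 2$: for $|x|<2$ a logarithmic singularity at $\lambda=x$ must be integrated against the semicircle, while for $|x|>2$ the sign of $\det$ is deterministic and the evaluation is smoother. This transition is what produces the distinguished energy thresholds baked into the definition of $\Theta_p$.
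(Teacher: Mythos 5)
This theorem is not proved in the paper at all: it is imported verbatim from Auffinger--Ben Arous--{\v{C}}ern{\'y} \cite{A-BA-C}, so there is no internal proof to compare against; your outline essentially reconstructs the original argument of that reference. The route you describe --- Kac--Rice on the sphere, rotational invariance collapsing the spatial integral to a volume factor $e^{o(N)}$, identification of the conditional Hessian given $H_{N}(\boldsymbol{\sigma})=Nu$, $\nabla H_{N}(\boldsymbol{\sigma})=0$ as a scaled GOE matrix shifted by a deterministic multiple of $u\mathbf{I}$, and then the exponential asymptotics of $\mathbb{E}\left|\det\left(M_{N-1}-x_{u}\mathbf{I}\right)\right|$ expressed through the logarithmic potential $\Omega$ of the semicircle law, followed by Laplace in $u$ --- is exactly the strategy of \cite{A-BA-C}, and is also the computation this paper partially redoes in Appendix I (compare Lemma \ref{lem:KR} and the upper bound in Lemma \ref{lem:15}, which uses the truncated logarithm $\log_{\epsilon}^{\kappa}$, the speed-$N^{2}$ LDP of \cite{BAG97}, and the top-eigenvalue bound of \cite{BDG}).

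Two caveats. First, your intermediate claim that ``the trace of the Hessian is a deterministic multiple of $H_{N}(\boldsymbol{\sigma})$'' is not literally true: the correct statement (Lemma \ref{lem:Hhat_expressions}) is that $\nabla^{2}H_{N}(\boldsymbol{\sigma})+\frac{p}{N}H_{N}(\boldsymbol{\sigma})\mathbf{I}$ is a scaled GOE matrix independent of $\left(H_{N}(\boldsymbol{\sigma}),\nabla H_{N}(\boldsymbol{\sigma})\right)$, whose trace still fluctuates; this nevertheless yields precisely the conditional law you go on to use, so the slip is harmless. Second, the step you only gesture at --- the limit of $\frac{1}{N}\log\mathbb{E}\left|\det\left(M_{N-1}-x\mathbf{I}\right)\right|$, uniformly in $x$, including the matching \emph{lower} bound near the logarithmic singularity $\lambda=x$ and across the spectral edge $|x|=2$ --- is the genuinely delicate part: Varadhan-type arguments give the upper bound readily (as in Lemma \ref{lem:15}), but a lower bound requires controlling eigenvalues in a shrinking window around $x$, and in \cite{A-BA-C} this is handled by exact GOE determinant identities and known eigenvalue-density asymptotics rather than by the empirical-measure LDP alone. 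With that point supplied, your proposal is a faithful account of the known proof.
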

Set $E_{\infty}=2\sqrt{\left(p-1\right)/p}$ and 
\begin{equation}
\mbox{let \ensuremath{E_{0}>E_{\infty}}\,\ be the unique number satisfying \ensuremath{\Theta_{p}\left(-E_{0}\right)=0}.}\label{eq:E0}
\end{equation}
Critical points of a given index were also considered in \cite{A-BA-C}
(the index of a critical point is the number of negative eigenvalues
of the Hessian at that point). By Markov's inequality one has the
following.
\begin{cor}
\label{cor:CrtMin}\cite[Theorem 2.5]{A-BA-C} For $p\geq3$, there
exists a number $E_{1}\in\left(E_{\infty},E_{0}\right)$ such that
for any $\delta>0$ there exists $c(\delta)>0$ for which 
\[
\mathbb{P}\left\{ \exists\boldsymbol{\sigma}\in\mathscr{C}_{N}\left(-\infty,-N\left(E_{1}+\delta\right)\right):\,\boldsymbol{\sigma}\mbox{ is not a local min}\right\} <e^{-c(\delta)N}.
\]

\end{cor}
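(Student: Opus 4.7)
The plan is to apply the first moment method to the expected number of non-minimum critical points in the deep range. Write $\mathscr{C}_{N,k}(B)\subset \mathscr{C}_N(B)$ for the critical points in $B$ whose Hessian has exactly $k$ negative eigenvalues, so that ``not a local minimum'' means having Morse index $k\ge 1$. By Markov's inequality,
\begin{equation*}
\mathbb{P}\bigl(\exists\,\boldsymbol{\sigma}\in\mathscr{C}_N(-\infty,-N(E_1+\delta))\text{ not a local min}\bigr)\le \sum_{k\ge 1}\mathbb{E}|\mathscr{C}_{N,k}(-\infty, -N(E_1+\delta))|,
\end{equation*}
so it suffices to control the right-hand side by $e^{-c(\delta)N}$ for a suitable $E_1\in(E_\infty,E_0)$.

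The key input is the index-refined Kac--Rice analysis of \cite[Theorem 2.5]{A-BA-C}: for each fixed $k\ge 0$ there exists
\begin{equation*}
\Theta_{p,k}(E) := \lim_{N\to\infty}\tfrac{1}{N}\log\mathbb{E}|\mathscr{C}_{N,k}(-\infty, NE)|,
\end{equation*}
with the structural properties, valid on $E\le -E_\infty$, that (i) $\Theta_{p,0}=\Theta_p$, so minima account for the entire deep complexity; and (ii) $\Theta_{p,k}(E)\le \Theta_{p,1}(E)$ for every $k\ge 1$, reflecting the fact that once one eigenvalue of the shifted GOE Hessian has been pushed below zero, pushing additional eigenvalues below zero only increases the large-deviation cost. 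Combining (i), (ii), the continuity of the $\Theta_{p,k}$, and $\Theta_p(-E_0)=0$, I define $E_1\in(E_\infty,E_0)$ to be the unique value with $\Theta_{p,1}(-E_1)=0$; strict negativity of $\Theta_{p,1}(-E)$ for $E>E_1$ then gives $\Theta_{p,k}(-E)\le \Theta_{p,1}(-E)<0$ uniformly in $k\ge 1$.

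Fix $\delta>0$ and set $\eta(\delta) := -\Theta_{p,1}(-(E_1+\delta))>0$. By (ii), $\Theta_{p,k}(-(E_1+\delta))\le -\eta(\delta)$ for all $k\ge 1$, and since the index is bounded by the sphere dimension, the sum in the first display has at most $N-1$ nonzero terms. Each term is at most $\exp(-N\eta(\delta)+o(N))$ by the definition of $\Theta_{p,k}$, and choosing any $c(\delta)\in(0,\eta(\delta))$ absorbs the polynomial prefactor and the $o(N)$ error for $N$ large enough; finitely many small $N$ are handled by further shrinking $c(\delta)$.

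The main obstacle I anticipate is passing from the pointwise-in-$k$ rate statement of \cite{A-BA-C} to the uniform-in-$k$ first-moment bound $\mathbb{E}|\mathscr{C}_{N,k}(-\infty, NE)|\le \mathrm{poly}(N)\exp(N\Theta_{p,k}(E))$ needed to sum over $k$ up to $N-1$. This should follow from inspecting the Kac--Rice integral formula underlying Theorem \ref{thm:ABAC}: the $k$-dependence enters only through the GOE probability that the shifted Hessian has exactly $k$ negative eigenvalues, and for $E\le -E_\infty$ this probability is dominated by the corresponding $k=1$ probability, yielding the required uniform control and closing the argument.
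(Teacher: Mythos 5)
Your proposal is correct and is essentially the paper's own argument: the paper offers no separate proof beyond noting that the corollary follows from the index-resolved complexity asymptotics of \cite[Theorem 2.5]{A-BA-C} together with Markov's inequality, which is exactly your route (defining $E_{1}$ by $\Theta_{p,1}(-E_{1})=0$, using that the index-$k$ complexities decrease in $k$ for energies below $-E_{\infty}$, and that $\Theta_{p,1}<\Theta_{p,0}=\Theta_{p}$ there forces $E_{1}\in(E_{\infty},E_{0})$). The uniformity-in-$k$ issue you flag is most cleanly closed not by comparing exact-index counts term by term but by bounding, via the Kac--Rice integral, the expected number of critical points whose Hessian has at least one negative eigenvalue, which reduces to the exponential lower-tail bound for the smallest eigenvalue of the shifted GOE matrix and yields the stated $e^{-c(\delta)N}$ bound at finite $N$.
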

By a second moment computation for $\left|\mathscr{C}_{N}\left(NB\right)\right|$
(where $NB=\{Nx:\, x\in B\}$), concentration of the number of critical
points around its mean was proved in \cite{2nd}.
\begin{thm}
\label{thm:2ndConcentration}\cite[Corollary 2]{2nd} For $p\geq3$
and $E\in\left(-E_{0},-E_{\infty}\right)$,
\[
\lim_{N\to\infty}\frac{\left|\mathscr{C}_{N}\left(-\infty,NE\right)\right|}{\mathbb{E}\left|\mathscr{C}_{N}\left(-\infty,NE\right)\right|}=1,\,\,\,\mbox{in }L^{2}.
\]

\end{thm}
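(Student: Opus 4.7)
The plan is to establish $L^2$ convergence by the second moment method: since the ratio in the statement has mean one, it suffices to show $\mathbb{E}|\mathscr{C}_N(-\infty,NE)|^2 / (\mathbb{E}|\mathscr{C}_N(-\infty,NE)|)^2 \to 1$. Splitting the second moment into the diagonal (first-moment) contribution and the off-diagonal one, I would apply the Kac-Rice formula for the second factorial moment:
\[
\mathbb{E}\bigl[|\mathscr{C}_N|(|\mathscr{C}_N|-1)\bigr] = \int_{\mathbb{S}^{N-1}}\int_{\mathbb{S}^{N-1}} K_N(\sigma,\sigma')\,d\sigma\,d\sigma',
\]
where $K_N(\sigma,\sigma')$ is a conditional expectation of the absolute product of the two Hessian determinants (restricted to energies below $NE$) against the Gaussian density of the two gradients at $0$. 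Theorem~\ref{thm:ABAC} fixes the first-moment exponential rate at $\Theta_p(E)$; the task is to show that the double integral above has rate exactly $2\Theta_p(E)$ with matching sub-exponential corrections.

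By the rotational invariance of $H_N$, the integrand depends on $(\sigma,\sigma')$ only through the overlap $r = R(\sigma,\sigma')$, so
\[
\mathbb{E}\bigl[|\mathscr{C}_N|(|\mathscr{C}_N|-1)\bigr] = c_N \int_{-1}^{1} k_N(r)\,(1-r^2)^{(N-3)/2}\,dr.
\]
To compute $k_N(r)$ I would work out the joint Gaussian law of $(H_N(\sigma),\nabla H_N(\sigma),\nabla^2 H_N(\sigma),H_N(\sigma'),\nabla H_N(\sigma'),\nabla^2 H_N(\sigma'))$ as an explicit function of $r$, condition on both gradients vanishing, and then evaluate the expected absolute product of determinants using the fact that, after conditioning, the two Hessians are (up to a shift by the conditioned values of $H_N$) GOE-like matrices with a joint covariance structure determined by $r$. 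A Laplace-type analysis then identifies the rate
\[
\psi_E(r) := \lim_{N\to\infty}\frac{1}{N}\log\bigl(k_N(r)(1-r^2)^{(N-3)/2}\bigr),
\]
and reduces concentration to the variational claim $\sup_{r\in(-1,1)}\psi_E(r) = 2\Theta_p(E)$, attained uniquely at $r=0$ (and, for even $p$, trivially at $r=\pm 1$, which contributes only $O(N)$ pairs). At $r=0$ all relevant quantities at $\sigma$ and $\sigma'$ decouple, so $k_N(0)$ factorizes and matches $(\mathbb{E}|\mathscr{C}_N|)^2$ up to sub-exponential terms.

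The main obstacle is precisely this variational step: one must show that for every $E\in(-E_0,-E_\infty)$ the coupling between the two Hessians induced by $r$ is never favorable enough to beat the sphere-volume factor $(1-r^2)^{(N-3)/2}$, which vanishes exponentially near $r=\pm 1$. This requires uniform control of the conditional expected product of determinants as $r$ varies through $(-1,1)$, combining GOE large-deviation estimates for $\log|\det(\cdot)|$ as employed in \cite{A-BA-C} with an explicit diagonalization of the $2\times 2$ block covariance of the rescaled Hessians. The restriction $E>-E_0$ keeps the first moment growing exponentially so that the diagonal contribution is negligible, while $E<-E_\infty$ guarantees that the relevant Hessians are, with overwhelming probability, bona fide local minima whose spectra stay bounded away from the origin, stabilizing the Laplace asymptotics. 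Once the variational problem is resolved, a standard Laplace-method remainder estimate closes the argument and yields the claimed $L^2$ convergence.
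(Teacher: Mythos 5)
The paper does not actually prove this statement: it is quoted from \cite{2nd}, and the argument there is exactly the Kac--Rice second-moment computation you outline, so your overall strategy is the right one. The difficulty is the precision at which you propose to carry it out, and this is where your plan has a genuine gap.

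Reducing the problem to the variational claim $\sup_{r\in(-1,1)}\psi_E(r)=2\Theta_p(E)$, with the maximum attained only at $r=0$, matches $\mathbb{E}|\mathscr{C}_N|^2$ and $(\mathbb{E}|\mathscr{C}_N|)^2$ only on the exponential scale. For $L^2$ convergence of the ratio to $1$ you need the exact asymptotics $\mathbb{E}|\mathscr{C}_N|^2=(1+o(1))(\mathbb{E}|\mathscr{C}_N|)^2$, so even a constant multiplicative discrepancy is fatal, and ``a standard Laplace-method remainder estimate'' does not rule one out. Concretely, the Laplace analysis concentrates the overlap integral in a window $|r|\lesssim N^{-1/2}$, and inside that window you must prove that the two-point Kac--Rice intensity divided by the product of one-point intensities converges to $1$ \emph{uniformly}, i.e.\ that the conditional expectation of the product of $|\det|$'s, together with the joint Gaussian density of the two gradients, factorizes at the level of the full prefactor, not just the rate function. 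That this happens is a quantitative fact special to $p\geq3$: the covariances coupling values, gradients and Hessians at overlap $r$ are $O(r^{p-2})$, hence $o(1)$ uniformly over $|r|\lesssim N^{-1/2}$ when $p\geq3$ (and not when $p=2$). Your sketch never isolates this mechanism, and the danger is real: the structurally identical computation for band masses in the present paper (Lemma \ref{lem:Z_2nd_mom}, part (4)) produces a limiting ratio $1/\sqrt{C_*}\neq 1$ from exactly this kind of Gaussian window integral.

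Two further points. Near $|r|\to1$ the joint law degenerates -- the density of the pair of gradients blows up and the two conditioned Hessians become strongly coupled -- so the bound there is not a routine combination of GOE large deviations with the volume factor; one needs uniform control of the degenerating block covariance, which is one of the delicate steps in \cite{2nd}. Finally, a small slip: for even $p$ the contribution at $r=-1$ consists of the antipodal pairs $(\boldsymbol{\sigma},-\boldsymbol{\sigma})$, of which there are $|\mathscr{C}_N|$ -- exponentially many, not $O(N)$; it is still negligible against $|\mathscr{C}_N|^2$ precisely because $E>-E_0$ makes the first moment grow exponentially, but the claim as you state it is incorrect.
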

Set 
\begin{align}
m_{N} & =-E_{0}N+\frac{1}{2c_{p}}\log N-K_{0},\label{eq:m_N}\\
c_{p} & =\left.\frac{d}{dx}\right|_{x=-E_{0}}\Theta_{p}\left(x\right)=E_{0}-\frac{2}{E_{\infty}^{2}}\left(E_{0}-\sqrt{E_{0}^{2}-E_{\infty}^{2}}\right),\label{eq:c_p}
\end{align}
where $K_{0}$ is given in \cite[Eq. (2.6)]{pspinext}. Zeitouni and
the author \cite{pspinext} proved the convergence of the extremal
process of critical points defined by 
\begin{equation}
\xi_{N}\triangleq\left(1+\iota_{p}\right)^{-1}\sum_{\boldsymbol{\sigma}\in\mathscr{C}_{N}\left(-\infty,\infty\right)}\delta_{H_{N}\left(\boldsymbol{\sigma}\right)-m_{N}},\label{eq:xi_N}
\end{equation}
where $\iota_{p}=\left(1+\left(-1\right)^{p}\right)/2$ (normalizing
the weights so that $\xi_{N}$ is a simple point process a.s. for
even $p$). Let $PPP\left(\mu\right)$ denote the distribution of
a Poisson point process with intensity measure $\mu$ and endow the
space of point processes with the vague topology.
\begin{thm}
\label{thm:ext proc}\cite[Theorem 1]{pspinext} For $p\geq3$,
\begin{equation}
\xi_{N}\stackrel[{\scriptstyle N\to\infty}]{d}{\to}\xi_{\infty}\sim PPP\left(e^{c_{p}x}dx\right).\label{eq:xi_infty}
\end{equation}
\end{thm}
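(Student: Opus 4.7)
The plan is to prove $\xi_N \to PPP(e^{c_p x}dx)$ by establishing convergence of all factorial moments on bounded intervals (and more generally on disjoint unions of such intervals) and then invoking Kallenberg's theorem for simple point processes. The core inputs would be sharpened Kac-Rice formulas at a single point, which refine the exponential-rate result Theorem \ref{thm:ABAC}, and at $k$-tuples of points, which extend the two-point analysis behind Theorem \ref{thm:2ndConcentration}.

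\textbf{First intensity.} The centering $m_N$ in (\ref{eq:m_N}) is engineered precisely so that, for any bounded interval $[a,b]\subset\mathbb{R}$,
$$\mathbb{E}[\xi_N([a,b])] = (1+\iota_p)^{-1}\,\mathbb{E}|\mathscr{C}_N(m_N+a,\,m_N+b)| \longrightarrow \int_a^b e^{c_p x}\,dx.$$
The Kac-Rice formula expresses this expectation as a sphere integral of the joint Gaussian density of $(H_N,\nabla H_N)$ at $(m_N+t,0)$ times the conditional expectation of $|\det\nabla^2 H_N|$. Up to a deterministic shift, the conditional Hessian is a GOE matrix, for which the exact absolute-determinant asymptotics yield the exponential rate $e^{N\Theta_p(E)}$ at $E=-E_0+t/N+(\log N)/(2Nc_p)$. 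The two subleading contributions $(\log N)/(2c_p)$ and $K_0$ in $m_N$ are the prefactor corrections that exactly balance the polynomial prefactors of the GOE determinant asymptotics and of the Gaussian density; coupled with the first-order Taylor expansion $\Theta_p(-E_0+y) = c_p y + O(y^2)$ and $\Theta_p'(-E_0)=c_p$, this produces the intensity $e^{c_p x}dx$ in the limit.

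\textbf{Higher factorial moments.} For $k\geq 2$, I would compute $\mathbb{E}\bigl[\binom{\xi_N([a,b])}{k}\bigr]$ via a $k$-point Kac-Rice formula, partitioning the integration by the overlap profile $(q_{ij})=(R(\boldsymbol{\sigma}_i,\boldsymbol{\sigma}_j))_{i<j}$. For profiles where some $|q_{ij}|$ is bounded away from the trivial values (namely $0$ always, and $\pm 1$ for even $p$), the two-point rate function from \cite{2nd} is strictly smaller than $2\Theta_p(-E_0)$, so these configurations contribute at most $e^{Nk\Theta_p(-E_0)-\Omega(N)}$ and are asymptotically negligible. For profiles where all nontrivial $q_{ij}\to 0$, the joint Gaussian density, conditional Hessian moments, and volume factor decouple to leading order into $k$-fold products of their one-point counterparts, yielding
$$\mathbb{E}\Big[\binom{\xi_N([a,b])}{k}\Big] \longrightarrow \frac{1}{k!}\Big(\int_a^b e^{c_p x}\,dx\Big)^k,$$
which is the $k$-th factorial moment of $PPP(e^{c_p x}dx)$ on $[a,b]$. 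Overlap profiles with $q_{ij}\to -1$ in the even-$p$ case correspond to antipodal pairs $\boldsymbol{\sigma}_i=-\boldsymbol{\sigma}_j$ identified in the enumeration, and this double counting is exactly what the prefactor $(1+\iota_p)^{-1}$ absorbs.

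\textbf{Conclusion and main obstacle.} Factorial-moment convergence on all bounded intervals, extended to joint convergence on disjoint intervals by a $k$-point Kac-Rice with several prescribed value windows, yields convergence of the finite-dimensional distributions of $\xi_N$ to those of $PPP(e^{c_p x}dx)$; tightness on $(-\infty,T]$ for every $T$ follows from the first-moment bound (\ref{eq:1st_mom}), and Kallenberg's criterion then promotes this to vague convergence of the point processes. The main obstacle is Step 2: one must upgrade the two-point analysis of \cite{2nd} from a matching-upper-bound estimate into a genuine asymptotic equality near $q=0$, with error control uniform over $\boldsymbol{\sigma}_1,\boldsymbol{\sigma}_2$ and over $(t_1,t_2)\in[a,b]^2$, and simultaneously establish a strict gap for the two-point large-deviation rate function at every $q\in(-1,1)\setminus\{0\}$ (and away from $\{0,\pm 1\}$ for even $p$). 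Extending these estimates to arbitrary $k$, with joint factorization of the conditional Hessian moments when all pairwise overlaps vanish, is the technical heart of the proof; the one- and two-point inputs from \cite{A-BA-C,2nd} provide the framework but the sharp decoupling asymptotics need to be established from scratch.
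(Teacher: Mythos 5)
You should first note that this paper does not prove Theorem \ref{thm:ext proc} at all: it is quoted verbatim from \cite{pspinext}, and the route taken there is fundamentally different from yours. As the discussion in Section \ref{sec:discussion} of the present paper indicates, the proof in \cite{pspinext} does not compute higher factorial moments; instead it shows that the law of the extremal process of critical values is asymptotically invariant under small independent perturbations of the disorder (via a coupling $\mathscr{G}_{N,\kappa}$ of critical points of $H_N$ and of the perturbed Hamiltonian, of the kind recalled in Appendix II here), and then invokes Liggett's characterization \cite{Liggett} of point processes invariant under such independent shifting, which forces a Poisson process with exponential intensity; only the \emph{first} intensity (your Step 1, which is essentially \cite[Proposition 3]{pspinext}, used here in the proof of Lemma \ref{lem:16}) is computed by Kac--Rice. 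That strategy is designed precisely to avoid the $k$-point asymptotics your plan rests on.

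This matters because the step you flag as "the main obstacle" is not a technical refinement but the entire content of the theorem under your approach, and none of the inputs you cite supply it. The second-moment result behind Theorem \ref{thm:2ndConcentration} and the orthogonality estimate of Corollary \ref{cor:orth} control two-point contributions only at exponential scale and only for overlaps bounded away from $0$ (and $\pm1$); a method-of-moments proof needs the two-point (and $k$-point) Kac--Rice density with multiplicative $1+o(1)$ precision, uniformly over overlaps in a whole neighborhood of $0$ shrinking with $N$, which requires sharp asymptotics for conditional expectations of products of correlated GOE-type determinants together with the order-one corrections to the conditional means and variances of values and gradients --- nothing of this sort follows from \cite{A-BA-C,2nd}. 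In addition, the regime of overlaps near $\pm1$ (possible microscopic clustering of critical points, which would also threaten simplicity of the limit) is not exponentially suppressed by any of the quoted results and needs a separate local non-degeneracy argument that your sketch does not address. So as written the proposal is a plausible program along a genuinely different route, but the decisive estimates are missing, and the published proof deliberately circumvents them rather than establishing them.
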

\begin{cor}
\label{cor:min}\cite[Theorem 1, Corollary 2]{pspinext} For $p\geq3$,
$H_{N}(\boldsymbol{\sigma}_{0}^{1})=\min_{\boldsymbol{\sigma}}H_{N}(\boldsymbol{\sigma})$
converges to the negative of a Gumbel variable, namely, $\mathbb{P}\{H_{N}(\boldsymbol{\sigma}_{0}^{1})-m_{N}\geq x\}\to\exp\left\{ -c_{p}^{-1}e^{c_{p}x}\right\} $
as $N\to\infty$. Moreover, 
\begin{align*}
\forall k\geq1,\,\, & \lim_{L\to\infty}\lim_{N\to\infty}\mathbb{P}\left\{ \mathscr{C}_{N}\left(m_{N}-L,m_{N}+L\right)\supset\left\{ \boldsymbol{\sigma}_{0}^{i}:\, i\in[k]\right\} \right\} =1,\\
\forall L>0,\,\, & \lim_{k\to\infty}\lim_{N\to\infty}\mathbb{P}\left\{ \mathscr{C}_{N}\left(m_{N}-L,m_{N}+L\right)\subset\left\{ \boldsymbol{\sigma}_{0}^{i}:\, i\in[k]\right\} \right\} =1.
\end{align*}

\end{cor}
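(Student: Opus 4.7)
The plan is to deduce Corollary \ref{cor:min} from the Poisson convergence $\xi_N \overset{d}{\to} \xi_\infty \sim PPP(e^{c_p x}\,dx)$ provided by Theorem \ref{thm:ext proc}. For the limiting process, the intensity assigns finite mass $c_p^{-1}e^{c_p x}$ to each half-line $(-\infty, x]$, so the smallest atom of $\xi_\infty$ has law $\mathbb{P}\{\min > x\} = \exp(-c_p^{-1}e^{c_p x})$. The observation identifying this with $H_N(\boldsymbol{\sigma}_0^1) - m_N$ is that the latter is the smallest atom of $\xi_N$: for odd $p$ this is immediate, while for even $p$ the weight $(1+\iota_p)^{-1} = 1/2$ combines each antipodal pair into a single unit atom at the common critical value.

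The main technical step will be to upgrade the vague convergence of Theorem \ref{thm:ext proc} to convergence of the relevant functionals (the minimum, and the count in a bounded interval). This requires controlling atoms far below $m_N$, which I would obtain from the first moment estimate of Theorem \ref{thm:ABAC}: a Taylor expansion of $\Theta_p$ around $-E_0$, combined with the sharp log-correction in (\ref{eq:m_N}) and the identity $c_p = \Theta_p'(-E_0)$, yields $\lim_{N\to\infty}\mathbb{E}\,\xi_N((-\infty, -L]) = c_p^{-1}e^{-c_p L}$, matching the intensity of $\xi_\infty$ on the same half-line. Markov's inequality then gives $\mathbb{P}\{\xi_N((-\infty, -L]) \geq 1\} \leq c_p^{-1}e^{-c_p L} + o_N(1)$, which is arbitrarily small uniformly in $N$ for $L$ large.

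With the tail bound in hand, the Gumbel limit follows by decomposing $\{H_N(\boldsymbol{\sigma}_0^1) - m_N > x\} = \{\xi_N((-L, x]) = 0\} \cap \{\xi_N((-\infty, -L]) = 0\}$, handling the tail via the Markov bound and the bounded interval via vague convergence on $(-L, x]$ (a continuity set of $\xi_\infty$ for a.e. $L$), and then sending $N \to \infty$ followed by $L \to \infty$. The first assertion of the ``moreover'' clause is similar: the $k$-th smallest atom $Y_k$ of $\xi_\infty$ is a.s. finite, so choosing $L$ with $\mathbb{P}\{|Y_k| \leq L\} \geq 1 - \epsilon$ and combining the tail bound with weak convergence of $\xi_N((-L, L))$ to $\xi_\infty((-L, L))$ forces the $k$-th smallest atom of $\xi_N$ to lie in $(-L, L)$ with high probability; for even $p$, the $2k$ indices in $[k]_p = \{\pm 1, \ldots, \pm k\}$ correspond to $k$ antipodal pairs, i.e., $k$ atoms of $\xi_N$. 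The second assertion is dual: $\xi_\infty((-L, L))$ is a.s.\ finite Poisson, hence bounded by $k$ with probability $\geq 1 - \epsilon$ for $k$ large, and the weak convergence transfers this bound to $\xi_N$. The only (mild) obstacle is the tail control just described; no computation beyond Theorems \ref{thm:ABAC} and \ref{thm:ext proc} is required.
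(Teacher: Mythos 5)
The paper does not prove this corollary at all: it is quoted verbatim from \cite{pspinext} (their Theorem 1 and Corollary 2), and the general shape of your plan --- deduce the Gumbel law and the two containment statements from the Poisson convergence of $\xi_{N}$ plus control of the intensity --- is indeed how it is obtained there. The genuine gap is in your tail-control step. Theorem \ref{thm:ABAC} is an exponential-scale statement, $\mathbb{E}\left|\mathscr{C}_{N}\left(-\infty,NE\right)\right|=e^{N\Theta_{p}\left(E\right)+o\left(N\right)}$, and the level $m_{N}-L$ sits exactly at the scale where the $o(N)$ error (and even the sub-exponential prefactor) decides everything: the shift $\frac{1}{2c_{p}}\log N$ and the constant $K_{0}$ in (\ref{eq:m_N}) are calibrated by sub-exponential corrections to the first moment which are invisible to $\Theta_{p}$. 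Concretely, even if you pretend the error term vanishes and Taylor-expand, with $u=m_{N}-L$ you get $N\Theta_{p}\left(u/N\right)\approx\frac{1}{2}\log N-c_{p}\left(K_{0}+L\right)$, i.e.\ a bound of order $\sqrt{N}e^{-c_{p}\left(K_{0}+L\right)}\to\infty$, not $c_{p}^{-1}e^{-c_{p}L}$; and $K_{0}$ cannot be recovered from $\Theta_{p}$ at all. So neither the claimed limit $\lim_{N}\mathbb{E}\,\xi_{N}\left(\left(-\infty,-L\right]\right)=c_{p}^{-1}e^{-c_{p}L}$ nor even a uniform-in-$N$ Markov bound on $\mathbb{P}\left\{ \xi_{N}\left(\left(-\infty,-L\right]\right)\geq1\right\} $ follows from Theorem \ref{thm:ABAC}. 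Since vague convergence in Theorem \ref{thm:ext proc} does not prevent mass escaping to $-\infty$, this tightness of the leftmost atom is exactly the point that needs an independent input, and your proposal has no valid source for it.

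The missing ingredient is the sharp asymptotics of the intensity (mean measure) of $\xi_{N}$ near $m_{N}$: in the notation of this paper, the Kac--Rice density $\omega_{N}\left(\left(N-1\right)\frac{p-1}{2\pi}\right)^{\frac{N-1}{2}}\frac{1}{\sqrt{2\pi N}}e^{-\frac{u^{2}}{2N}}\mathbb{E}_{u,0}\left\{ \left|\det(\mathbf{V})\right|\right\} $ converges, uniformly on windows of width $o(N)$ around $m_{N}$, to $e^{c_{p}\left(u-m_{N}\right)}$; this is \cite[Proposition 3]{pspinext} and is precisely what the present paper invokes in the proof of Lemma \ref{lem:16}. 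With that estimate for the range $\left(-\left(E_{0}+\delta\right)N,\,m_{N}-L\right)$, and Theorem \ref{thm:ABAC} only for the far range $\left(-\infty,-\left(E_{0}+\delta\right)N\right)$ where $\Theta_{p}<0$, your Markov-plus-vague-convergence scheme (including the continuity-set argument, the identification of $H_{N}(\boldsymbol{\sigma}_{0}^{1})$ with the smallest atom, and the antipodal pairing for even $p$) does go through and reproduces the derivation in \cite{pspinext}. As written, however, the claim that ``no computation beyond Theorems \ref{thm:ABAC} and \ref{thm:ext proc} is required'' is false, and the step it rests on fails.
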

Another consequence of the second moment calculation \cite{2nd} and
the bound on the minimum is the following bound on overlaps of critical
points.%
\footnote{Corollary \ref{cor:orth} follows from \cite[Eq. (5.2), (5.3)]{pspinext}
and since it is shown in the proof of \cite[Proposition 4]{pspinext}
that \cite[Eq. (5.2)]{pspinext} is negative.%
}
\begin{cor}
\label{cor:orth}For $p\geq3$, for any $\epsilon>0$ there exist
$\delta(\epsilon),\, c(\epsilon)>0$ such that 
\begin{equation}
\mathbb{P}\left\{ \exists\boldsymbol{\sigma},\boldsymbol{\sigma}'\in\mathscr{C}_{N}\left(N\left(-E_{0}-\delta(\epsilon),-E_{0}+\delta(\epsilon)\right)\right),\,\boldsymbol{\sigma}\neq\pm\boldsymbol{\sigma}':\,\left|R\left(\boldsymbol{\sigma},\boldsymbol{\sigma}'\right)\right|\geq\epsilon\right\} <e^{-c(\epsilon)N}.\label{eq:s1}
\end{equation}
Moreover, there exists a sequence $\epsilon_{N}>0$ with $\epsilon_{N}\to0$
as $N\to\infty$ such that 
\[
\lim_{N\to\infty}\mathbb{P}\left\{ \exists\boldsymbol{\sigma},\boldsymbol{\sigma}'\in\mathscr{C}_{N}\left(-\infty,m_{N}+\sqrt{N}\right),\,\boldsymbol{\sigma}\neq\pm\boldsymbol{\sigma}':\,\left|R\left(\boldsymbol{\sigma},\boldsymbol{\sigma}'\right)\right|\geq\epsilon_{N}\right\} =0.
\]

\end{cor}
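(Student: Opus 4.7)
The plan is to reduce to the second-moment pair computation already carried out in \cite{pspinext}; per the footnote, Corollary \ref{cor:orth} is essentially a direct consequence of the fact that the rate function controlling the expected number of pairs of critical points at a given overlap is strictly negative at energy $-E_0$ away from the trivial overlaps $0$ and $\pm 1$. I would therefore first invoke the Kac-Rice formula for the joint density of $(\nabla H_N(\boldsymbol{\sigma}), \nabla H_N(\boldsymbol{\sigma}'))$ as $\boldsymbol{\sigma}, \boldsymbol{\sigma}'$ vary over pairs in $\mathbb{S}^{N-1}$ with fixed overlap $r = R(\boldsymbol{\sigma}, \boldsymbol{\sigma}')$. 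Integrating over $r \in I$ and over joint critical values in $NB \times NB$ yields the expression appearing as \cite[Eq.\ (5.3)]{pspinext} for $\mathbb{E}[\#\{(\boldsymbol{\sigma}, \boldsymbol{\sigma}') \in \mathscr{C}_N(NB)^2 : R(\boldsymbol{\sigma}, \boldsymbol{\sigma}') \in I,\ \boldsymbol{\sigma} \neq \pm \boldsymbol{\sigma}'\}]$. Standard Laplace asymptotics bound this by $\exp\{N \sup_{(u,v,r) \in B^2 \times I} \Psi_p(u,v,r) + o(N)\}$, where $\Psi_p$ is the explicit rate function built from the conditional Hessian determinants and the Gaussian level density.

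For the first claim I would specialize to $B = (-E_0 - \delta, -E_0 + \delta)$ and $I = \{r : \epsilon \leq |r| \leq 1 - \epsilon\}$. The key analytic input — namely the negativity of \cite[Eq.\ (5.2)]{pspinext} established in the proof of \cite[Proposition 4]{pspinext} — is that $\Psi_p(-E_0, -E_0, r) < 0$ uniformly in $r \in I$. By continuity, for $\delta = \delta(\epsilon)$ small enough the supremum of $\Psi_p$ over $B^2 \times I$ is bounded above by some $-2c(\epsilon) < 0$, and Markov's inequality applied to the integer-valued count of bad pairs yields the asserted $e^{-c(\epsilon) N}$ bound. The constraint $\boldsymbol{\sigma} \neq \pm \boldsymbol{\sigma}'$ removes precisely the $|r| = 1$ contributions that would otherwise dominate the integral.

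For the second claim I would combine the first part with Corollary \ref{cor:min}. That corollary, together with tightness of $H_N(\boldsymbol{\sigma}_0^1) - m_N$, implies that for any $\eta > 0$ one can pick $L = L(\eta)$ so that with probability at least $1 - \eta$ every critical value in $(-\infty, m_N + \sqrt{N})$ lies in $(m_N - L, m_N + \sqrt{N})$; since $m_N = -E_0 N + O(\log N)$, after dividing by $N$ this window is contained in $(-E_0 - \delta_N', -E_0 + \delta_N')$ for some deterministic $\delta_N' \to 0$. Taking a sequence $\epsilon_k \downarrow 0$ with the associated $\delta(\epsilon_k), c(\epsilon_k)$ from the first part, and setting $\epsilon_N := \epsilon_{k_N}$ with $k_N \to \infty$ chosen slowly enough that both $\delta_N' < \delta(\epsilon_{k_N})$ and $c(\epsilon_{k_N}) N \to \infty$ hold, then yields the desired $\epsilon_N \to 0$ by a standard diagonal argument.

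The main obstacle — and the only nontrivial ingredient — is the strict negativity of $\Psi_p(-E_0, -E_0, r)$ for $r$ bounded away from $0$ and $\pm 1$. This is delicate because a naive guess would be that the pair rate equals twice the single-point rate (which vanishes at $-E_0$); one must quantify the penalty introduced by the overlap constraint through the conditional Hessian covariance, and show it is strictly positive on the whole relevant range of $r$. In the present paper this is used as a black box from \cite{pspinext}.
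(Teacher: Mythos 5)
Your overall route is the same as the paper's: the paper disposes of this corollary in a footnote, citing exactly the pair Kac--Rice bound \cite[Eq.\ (5.3)]{pspinext} together with the strict negativity of the pair rate function \cite[Eq.\ (5.2)]{pspinext} established in the proof of \cite[Proposition 4]{pspinext}, and your reduction (Markov's inequality on the expected number of offending pairs for the first claim; combination with Corollary \ref{cor:min} and a diagonal choice of $\epsilon_N$ for the second) is a faithful reconstruction of how that citation is meant to be used. The second part of your argument, including the observation that $m_N/N\to -E_0$ and $\sqrt{N}/N\to 0$ so that the relevant window of values shrinks onto $-E_0$, is fine.

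There is, however, one genuine gap as written: you take $I=\{r:\epsilon\le |r|\le 1-\epsilon\}$ and dismiss the remaining region with the remark that the constraint $\boldsymbol{\sigma}\neq\pm\boldsymbol{\sigma}'$ ``removes precisely the $|r|=1$ contributions.'' It does not. The event in (\ref{eq:s1}) includes pairs of \emph{distinct, non-antipodal} critical points with overlap anywhere in $(1-\epsilon,1)$, i.e.\ possible clustering of critical points, and excluding exact equality says nothing about the expected number of such pairs. This near-degenerate regime $|r|\to 1$ is precisely the delicate part of the second-moment analysis: the two-point Kac--Rice density and the conditional Hessian covariance degenerate there, so the Laplace bound via a continuous rate function $\Psi_p(u,v,r)$ on a compact set bounded away from $|r|=1$ does not apply, and a separate local analysis (carried out in \cite{2nd} and used in \cite{pspinext}) is needed to show that contribution is also exponentially negligible. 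Since the cited estimates do cover the full range $|r|\in[\epsilon,1)$, the fix is simply to take them as the black box over that full range rather than truncating at $1-\epsilon$; but your proposal, taken literally, proves only the weaker statement with overlaps restricted away from $\pm 1$.
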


\section{\label{sec:outline}Outline of the proof of Theorem \ref{thm:Geometry}}

Schematically, we think of an overlap-depth plane by defining the
contribution of $A\times B\subset\left[-1,1\right]\times\mathbb{R}$
to $Z_{N,\beta}$ as 
\begin{equation}
{\rm Cont}_{N,\beta}\left(A\times B\right)=Z_{N,\beta}\left(\cup_{\boldsymbol{\sigma}_{0}\in\mathscr{C}_{N}\left(B\right)}\left\{ \boldsymbol{\sigma}:R\left(\boldsymbol{\sigma},\boldsymbol{\sigma}_{0}\right)\in A\right\} \right),\label{eq:cont}
\end{equation}
The basic picture we prove about (\ref{eq:cont}) is that as one `scans'
possible depths $u=EN$ for critical points $\boldsymbol{\sigma}_{0}$
and overlaps $q$ (in some range, as we explain below), the maximal
value for the contribution (\ref{eq:cont}) with $A=\left(q,q+o(1)\right)$
and $B=\left(u,u+o(1)\right)$ is obtained with high probability (w.h.p)
when $q=q_{*}$ and $u=-E_{0}N$. Of course, ${\rm Cont}_{N,\beta}\left(\cdot\right)$
is not additive, since bands can intersect each other, and exploring
the whole range of $\left[-1,1\right]\times\mathbb{R}$ therefore
makes no sense; it is additive, however, when restricted to small
enough range of overlaps around $1$ and low enough depths (clearly,
at least when we allow those ranges to depend on the disorder). The
next section explains how we restrict to a small range of overlaps
containing $q_{*}$ and range of depths near $u=-E_{0}N$.

\subsection*{Restriction to caps}

Observe that, with $B=\{\boldsymbol{\sigma}:\, H_{N}\left(\boldsymbol{\sigma}\right)\geq u\}$
being the super-level set of $H_{N}\left(\boldsymbol{\sigma}\right)$
with some level $u\in\mathbb{R}$, we have that $Z_{N,\beta}\left(B\right)\leq e^{-\beta u}$.
In Lemma \ref{lem:LSnegligible}, we shall prove that for an appropriate
choice of $u_{{\rm LS}}$ and $q_{{\rm LS}}$ (see (\ref{eq:qLS})
and (\ref{eq:uLS})) the sub-level set of $u_{{\rm LS}}+\delta N$,
with small $\delta>0$, is covered by $\cup{\rm Cap}\left(\boldsymbol{\sigma}_{0},q_{{\rm LS}}\right)$,
where the union goes over $\boldsymbol{\sigma}_{0}\in\mathscr{C}_{N}\left(-\infty,u_{{\rm LS}}+\delta N\right)$
and we define the spherical caps 
\begin{equation}
{\rm Cap}\left(\boldsymbol{\sigma}_{0},q\right)\triangleq\left\{ \boldsymbol{\sigma}\in\mathbb{S}^{N-1}:q\leq R\left(\boldsymbol{\sigma},\boldsymbol{\sigma}_{0}\right)\right\} .\label{eq:cap}
\end{equation}
We shall also prove a lower bound on the contribution coming from
overlaps roughly $q_{*}$ (see (\ref{eq:Reg*bd})) by which for the
same choice of $u_{{\rm LS}}$ the free energy is w.h.p greater than
$e^{-\beta u_{{\rm LS}}-o\left(N\right)}$. Hence, w.h.p the mass
of the complement of the caps is negligible. That is, when analyzing
contributions we may restrict to 
\[
{\rm Reg}_{{\rm LS}}\left(\delta\right)=\left(q_{{\rm LS}},1\right)\times\left(-\infty,u_{{\rm LS}}+\delta N\right).
\]
(See Figure \ref{fig:3.1} for a graphical description.) In addition,
from bounds on the global minimum of $H_{N}\left(\boldsymbol{\sigma}\right)$
(Corollary \ref{cor:min}), with probability going to $1$ as $\kappa'\to\infty$,
for large $N$ the contribution of
\[
{\rm Reg}_{{\rm GS}}\left(\kappa'\right)=\left(-1,1\right)\times\left(-\infty,m_{N}-\kappa'\right]
\]
is equal to $0$ w.h.p, where $m_{N}$ is defined in (\ref{eq:m_N}).
Combining the above with bounds on the lowest critical values of $H_{N}\left(\boldsymbol{\sigma}\right)$
(see Corollary \ref{cor:min}), we will see that (\ref{eq:thm1_1})
follows if we prove:
\begin{enumerate}
\item an appropriate lower bound on ${\rm Cont}_{N,\beta}\left({\rm Reg}_{*}\left(c,\kappa,\kappa'\right)\right)$
where 
\begin{equation}
{\rm Reg}_{*}\left(c,\kappa,\kappa'\right)=\left(q_{*}-cN^{-1/2},q_{*}+cN^{-1/2}\right)\times\left(m_{N}-\kappa',m_{N}+\kappa\right);\label{eq:Reg*}
\end{equation}

\item upper bounds on ${\rm Cont}_{N,\beta}\left(A_{i}\times B_{i}\right)$
for some collection of sets $A_{i}\times B_{i}$ covering 
\begin{align}
 & {\rm Reg}_{{\rm UB}}\left(\delta,c,\kappa,\kappa'\right)={\rm Reg}_{{\rm LS}}\left(\delta\right)\setminus\left({\rm Reg}_{*}\left(c,\kappa,\kappa'\right)\cup{\rm Reg}_{{\rm GS}}\left(\kappa'\right)\right)\label{eq:RegUB}\\
 & =\left(q_{{\rm LS}},1\right)\times\left(m_{N}-\kappa',u_{{\rm LS}}+\delta N\right)\setminus\left(q_{*}-cN^{-1/2},q_{*}+cN^{-1/2}\right)\times\left(m_{N}-\kappa',m_{N}+\kappa\right).\nonumber 
\end{align}

\end{enumerate}
In Section \ref{sub:Overlap-depth-regions} we split the latter to
several regions, dealt with separately, corresponding to critical
values near and far from $m_{N}$ and overlaps near and far from $1$,
see Figure \ref{fig:8.1}.

\begin{figure}[H]
~\\
~\\
~\\
\hspace*{-1.7cm}\begin{overpic}[width=0.65\textwidth]{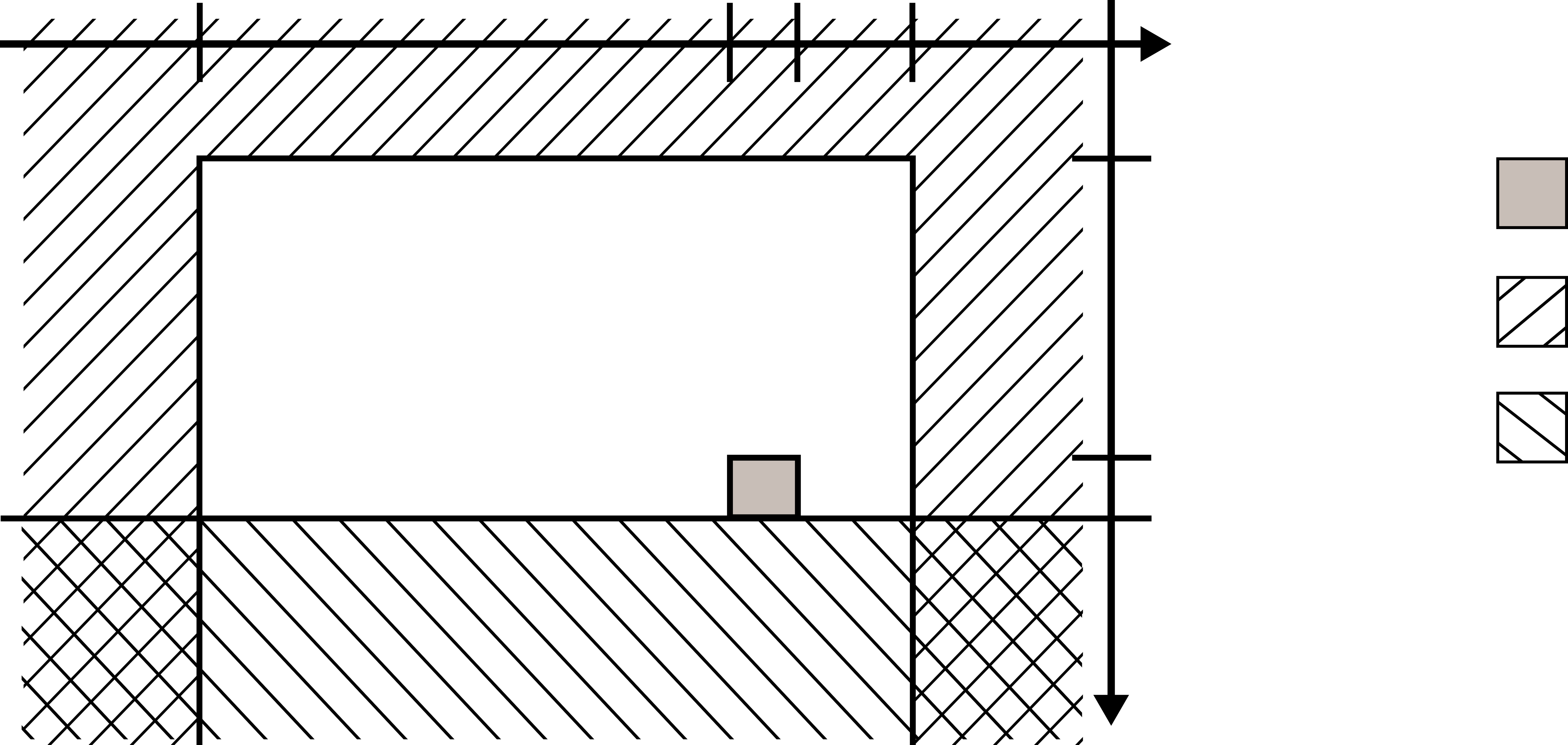}
\put (57,50) {$1$} 
\put (39,50) {$q_*\pm cN^{-\frac{1}{2}}$}
\put (11,50) {$q_{\scriptscriptstyle\rm{LS}}$}
\put (76,44) {Overlap} 
\put (74.5,36.5) {$u_{\scriptscriptstyle\rm{LS}}+\delta N$} 
\put (74.5,18) {$m_N+\kappa$} 
\put (74.5,13.5) {$m_N-\kappa'$} 
\put (103,34) {$\mbox{Reg}_*(c,\kappa,\kappa')$} 
\put (103,26.65) {$(\rm{Reg}_{\scriptscriptstyle\rm{LS}}(\delta))^c$} 
\put (103,19.30) {$\rm{Reg}_{\scriptscriptstyle\rm{GS}}(\kappa')$} 
\put (76,1.5) {Depth} 
\put (23,25) {$\mbox{Reg}_{\scriptscriptstyle\rm{UB}}(\delta,c,\kappa,\kappa')$}  
\end{overpic}\protect\caption{\label{fig:3.1}Regions of overlap and depth.}
\end{figure}

\subsection*{The Kac-Rice formula}

The basic tool we use for deriving bounds is the Kac-Rice formula;
see Appendix I. The formula will allow us to bound the expected number
of critical points $\boldsymbol{\sigma}_{0}$ that satisfy various
conditions by integrals involving the intensity of the empirical measure
of critical values and conditional probabilities that the (arbitrarily
chosen) point
\[
\hat{\mathbf{n}}\triangleq\left(0,...,0,\sqrt{N}\right),
\]
satisfies the aforementioned conditions, given that
\begin{equation}
H_{N}\left(\hat{\mathbf{n}}\right)=u\,\,\,{\rm and}\,\,\,\nabla H_{N}\left(\hat{\mathbf{n}}\right)=0.\label{eq:conditional}
\end{equation}
When combined with Markov's inequality to upper bound corresponding
probabilities, the formula can roughly be thought of as a variant
of the union bound, where the intensity accounts for the number of
events. In the sections below we explain in more detail how the formula
is used to derive bounds on the contributions ${\rm Cont}_{N,\beta}\left(A\times B\right)$.
Prior to deriving those bounds (in Section \ref{sec:Bounds-on-contributions}),
we will need to investigate the conditional probabilities mentioned
above.

\subsection*{Conditional structure around critical points}

The Kac-Rice formula allows us to transfer questions dealing with
${\rm Cont}_{N,\beta}\left(A\times B\right)$ to ones about the conditional
law of $\left\{ H_{N}\left(\boldsymbol{\sigma}\right)\right\} _{\boldsymbol{\sigma}}$
given (\ref{eq:conditional}). A useful description of the latter
will be obtained in Section \ref{sec:Decomposition} by decomposing
$H_{N}\left(\boldsymbol{\sigma}\right)$ as a sum of independent fields,
see (\ref{eq:Hbar decomposition}). For any overlap $q$, on $\left\{ \boldsymbol{\sigma}:R\left(\boldsymbol{\sigma},\hat{\mathbf{n}}\right)=q\right\} $
the $k$-th of those fields is distributed like a pure spherical $k$-spin
model multiplied by a factor that is a function of $q$, which can
be thought of as an effective temperature (see (\ref{eq:1220-2})).
The effect of conditioning on $H_{N}\left(\hat{\mathbf{n}}\right)=u$
and $\nabla H_{N}\left(\hat{\mathbf{n}}\right)=0$ is equivalent to
dropping the field corresponding to $k=1$ and setting the field corresponding
to $k=0$ to be equal to some deterministic function of $\boldsymbol{\sigma}$.
On thin bands around overlap $q$, the conditional field is roughly
a mixture of pure $k$-spin with $2\leq k\leq p$, shifted by a constant.

\subsection*{Upper bounds on masses of bands}

Denote the conditional law given (\ref{eq:conditional}) and expectation
by $\mathbb{P}_{u,0}$ and $\mathbb{E}_{u,0}$ (see Remark \ref{rem:conditional laws}).
Abbreviate, only in this subsection, $Z(q)=Z_{N,\beta}({\rm Band}(\hat{\mathbf{n}},q,q+o(1)))$.
Combining a variant of the Kac-Rice formula (see Lemmas \ref{lem:17}
and \ref{lem:18}) with a computation of $\mathbb{E}_{u,0}Z(q)$,
we will derive an upper bound on the expectation of contributions
to the partition function (\ref{eq:cont}). By Markov's inequality,
they yield upper bounds on the probabilities that the contributions
are not small compared to the mass of the bands in Theorem \ref{thm:Geometry}.
This will be good enough for overlaps sufficiently close to $1$.
Specifically, in Section \ref{sec:range(q**,1)}, we will use such
bounds for overlaps in the range $q\in(q_{**},1)$ with $q_{**}$
defined in (\ref{eq:39}) (see Figure \ref{fig:qualitative-graph}).

In general, however, what describes the typical behavior (under $\mathbb{P}_{u,0}$)
is the corresponding free energy, i.e., $\frac{1}{N}\mathbb{E}_{u,0}\log\left(Z(q)\right)$.
By Jensen's inequality, this free energy is bounded from above by
$\frac{1}{N}\log(\mathbb{E}_{u,0}Z(q))$, which for $u=-EN$ is asymptotically
equal to the expression $\Frf\left(E,q\right)$ of (\ref{eq:6}) (see
Remark \ref{rem:18}). In fact, by the methods we use in Section \ref{sec:range(q**,1)},
one can check that this bound is tight asymptotically for $q\in\left(q_{c},1\right)$,
where $q_{c}\in\left(q_{**},q_{*}\right)$ is given in (\ref{eq:qc}). 

We remark that the bound obtained from Jensen's inequality is not
good enough to bound contributions related to overlaps sufficiently
close to $q_{{\rm LS}}$. We deal with the range $\left(q_{{\rm LS}},q_{**}\right)$
in Section \ref{sec:range(q***,q**)} where we upper bound the $N\to\infty$
limit of the free energy $\frac{1}{N}\mathbb{E}_{u,0}\log\left(Z(q)\right)$
which, with $u=-NE$, we denote by $\FrfF\left(E,q\right)$ (see (\ref{eq:lambda_F})).
First, we will relate $\FrfF\left(E,q\right)$ to a free energy $\FrfFt\left(E,q\right)$
(see (\ref{eq:LambdaF2})) which in a certain sense takes into account
in a non-trivial way only the $k=2$ component in the decomposition
of $H_{N}\left(\boldsymbol{\sigma}\right)$, i.e., a pure $2$-spin.
Second, we shall bound the fluctuations (under $\mathbb{P}_{u,0}$)
of $\frac{1}{N}\log\left(Z(q)\right)$ from its mean. Those will allow
us to use the Kac-Rice formula to control the number of critical points
whose corresponding bands have exceedingly high mass. Together with
bounds on the number of critical points obtained from Theorem \ref{thm:ABAC},
this yields an upper bound on contribution related to overlaps in
the range $\left(q_{{\rm LS}},q_{**}\right)$ w.h.p. 

\begin{figure}[h]
\begin{overpic}[width=0.5\textwidth,tics=10]{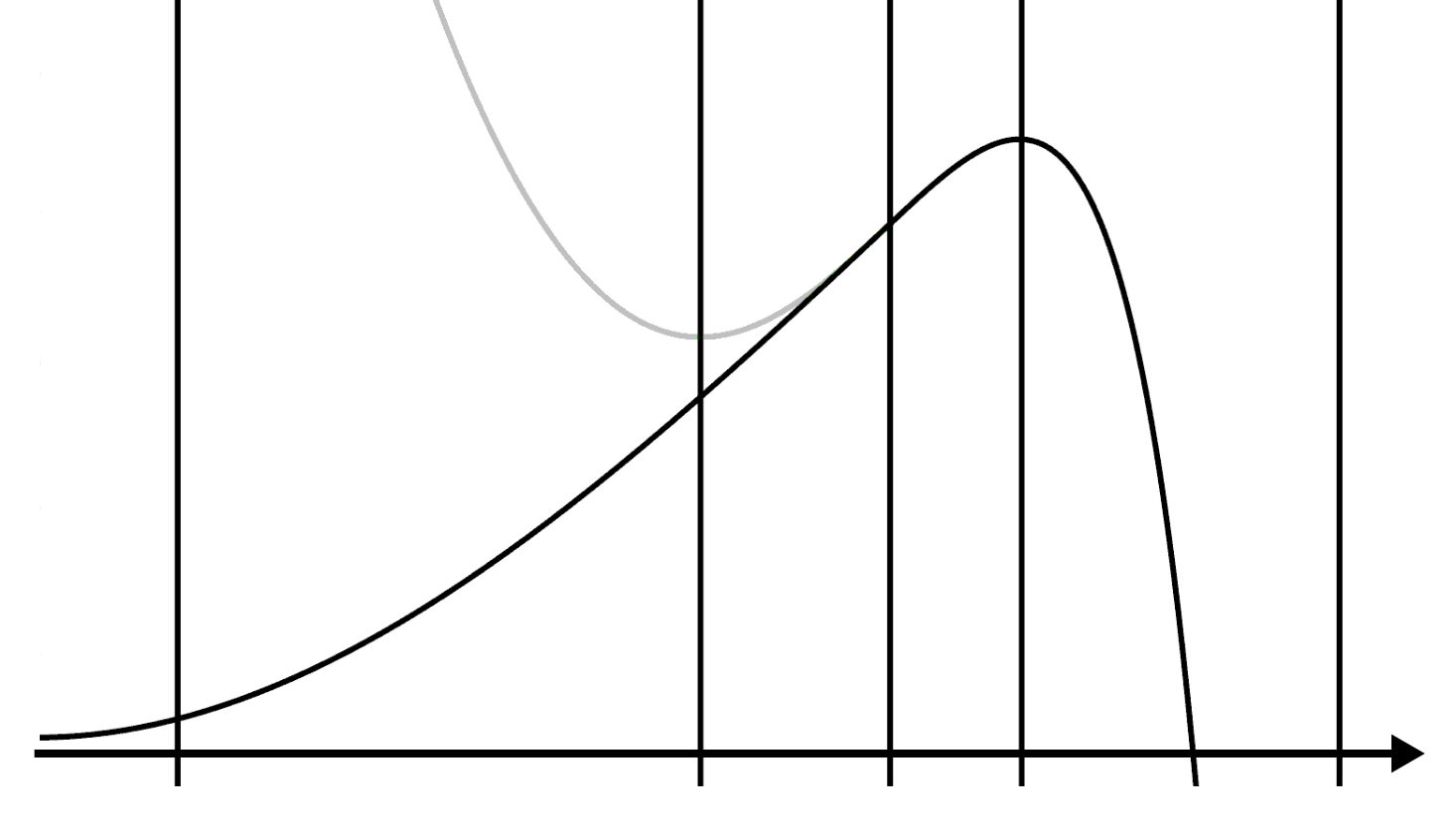}
 \put (93.2,7) {$1$} 
\put (71.5,7) {$q_*$}
\put (62,7) {$q_c$}
\put (49,7) {$q_{**}$}
\put (13.25,7) {$q_{\scriptscriptstyle\rm{LS}}$}
\end{overpic}\protect\caption{\label{fig:qualitative-graph}A qualitative graph of $\protect\FrfFt\left(E_{0},q\right)$
(black) and $\protect\Frf\left(E_{0},q\right)$ (gray), at low temperature
(on $(q_{c},1)$ both coincide). Vertical lines correspond to overlap
values.}
\end{figure}

\subsection*{Lower bound on the mass of a band of overlap $q_{*}$}

Conditional on (\ref{eq:conditional}), the behavior of the Hamiltonian
on a thin band ${\rm Band}\left(\hat{\mathbf{n}},q_{*}-o(1),q_{*}+o(1)\right)$,
is essentially described by its behavior on $\left\{ \boldsymbol{\sigma}:R\left(\boldsymbol{\sigma},\hat{\mathbf{n}}\right)=q_{*}\right\} $.
As we already mentioned, the latter is distributed like a spherical
mixed model, where the coefficients are determined by $\beta$. As
$\beta$ increases - and the band narrows, i.e., $q_{*}$ increases
to $1$ - the $2$-spin interaction becomes more dominant relative
to the other spins. This will allow us to prove (see Proposition \ref{prop:Gaussflucts}
and Lemmas \ref{lem:BaikLee} and \ref{lem:ConcentrationZ}) that
the fluctuations of the free energy attributed to the band are roughly
determined only by the $2$-spin interaction. We remark that the $2$-spin
interaction is closely related to the Hessian of the Hamiltonian at
the center of the band, which is key to Corollary \ref{cor:FE_u_Hess_rep}.
Also, this interaction will have an `effective' high temperature which,
based on the convergence result of Baik and Lee \cite{BaikLee} (Theorem
\ref{thm:BaikLee}), implies Gaussian fluctuations for the free energy.
Combined with the Kac-Rice formula this will be used to show that
w.h.p there are no bands corresponding to (\ref{eq:Reg*}) with relative
mass which is too low. On the other hand, from Theorem \ref{thm:ext proc}
we know that the probability that there are no critical points with
values in the range corresponding to (\ref{eq:Reg*}) goes to $0$
as $N\to\infty$. In Proposition \ref{prop:bounds}, we will this
to conclude that there exist bands corresponding to (\ref{eq:Reg*})
with large enough relative mass and prove a lower bound on the contribution
of (\ref{eq:Reg*}).

\subsection*{Overlap distribution}

We shall compute the first and second moments of the mass of bands
as in Theorem \ref{thm:Geometry} (see Proposition \ref{prop:means}
and Lemma \ref{lem:Lambda_F_bound}) and see that: the contribution
to the second moment coming from pairs of points whose overlap `inside
the band'%
\footnote{I.e., the overlap between the projections of the points to the orthogonal
space to the center point $\boldsymbol{\sigma}_{0}\in\mathbb{R}^{N}$.%
} is not approximately $0$ is negligible; and that the ratio of second
to first moment squared converges to a constant as $N\to\infty$.
Combined with the lower bound on the mass of corresponding bands which
we discussed in the previous subsection, this will yield (\ref{eq:2601}).
To prove (\ref{eq:1402}) from the latter, we will use Corollary \ref{cor:orth},
which states that deep critical points are either antipodal or essentially
orthogonal, and a simple deterministic geometric argument (see Lemma
\ref{lem:center}).

\section{\label{sec:Decomposition}Decomposition around (critical) points}

A crucial ingredient in our analysis is a certain decomposition of
$H_{N}\left(\boldsymbol{\sigma}\right)$ around a given point on the
sphere, which we develop in this section. We shall use $\hat{\mathbf{n}}$
as the center point, but since the Hamiltonian is invariant under
rotations of the sphere, similar results hold for an arbitrary point
on the sphere. Let $\left(E_{i}\right)_{i=1}^{N-1}=\left(E_{i}\left(\boldsymbol{\sigma}\right)\right)_{i=1}^{N-1}$
be a smooth orthonormal frame field defined over a neighborhood of
$\hat{\mathbf{n}}$ (relative to the standard Riemannian metric).
With $P_{\hat{\mathbf{n}}}:\left(x_{1},...,x_{N}\right)\mapsto\left(x_{1},...,x_{N-1}\right)$
denoting the projection from the hemisphere $\left\{ x\in\mathbb{S}^{N-1}\left(\sqrt{N}\right):\, x_{N}>0\right\} $
to $\mathbb{R}^{N-1}$, we shall assume that for any smooth function
$g:\mathbb{S}^{N-1}\left(\sqrt{N}\right)\to\mathbb{R}$,%
\footnote{The fact that such frame field exists can be seen from the following.
If we let $\left\{ \frac{\partial}{\partial x_{i}}\right\} _{i=1}^{N-1}$
be the pullback of $\left\{ \frac{d}{dx_{i}}\right\} _{i=1}^{N-1}$
by $P_{\hat{\mathbf{n}}}$, then $\left\{ \frac{\partial}{\partial x_{i}}(\hat{\mathbf{n}})\right\} _{i=1}^{N-1}$
is an orthonormal frame at the north pole. For any point in a small
neighborhood of $\hat{\mathbf{n}}$ we can define an orthonormal frame
as the parallel transport of $\left\{ \frac{\partial}{\partial x_{i}}(\hat{\mathbf{n}})\right\} _{i=1}^{N-1}$
along a geodesic from $\hat{\mathbf{n}}$ to that point. This yields
an orthonormal frame field on that neighborhood, say $E_{i}(\boldsymbol{\sigma})=\sum_{j=1}^{N-1}a_{ij}(\boldsymbol{\sigma})\frac{\partial}{\partial x_{j}}(\boldsymbol{\sigma})$,
$i=1,...,N-1$. Working with the coordinate system $P_{\hat{\mathbf{n}}}$
one can verify that at $x=0$ the Christoffel symbols $\Gamma_{ij}^{k}$
are equal to $0$, and therefore (see e.g. \cite[Eq. (2), P. 53]{DoCarmo})
the derivatives $\frac{d}{dx_{k}}a_{ij}(P_{\hat{\mathbf{n}}}^{-1}(x))$
at $x=0$ are also equal to $0$.%
}
\begin{equation}
E_{i}g\left(\hat{\mathbf{n}}\right)=\frac{d}{dx_{i}}g\circ P_{\hat{\mathbf{n}}}^{-1}\left(0\right),\,\,\, E_{i}E_{j}g\left(\hat{\mathbf{n}}\right)=\frac{d}{dx_{i}}\frac{d}{dx_{j}}g\circ P_{\hat{\mathbf{n}}}^{-1}\left(0\right).\label{eq:derivativesProjection}
\end{equation}
Define $\mathcal{F}_{k}$ as the $\sigma$-algebra generated by 
\begin{equation}
\left\{ E_{i_{1}}\cdots E_{i_{j}}H_{N}\left(\hat{\mathbf{n}}\right):1\leq i_{1}\leq\cdots\leq i_{j}\leq N-1,\,0\leq j\leq k\right\} ,\label{eq:vars}
\end{equation}
that is, by $H_{N}\left(\hat{\mathbf{n}}\right)$ and all the derivatives
(by $E_{i}$) of $H_{N}\left(\boldsymbol{\sigma}\right)$ at $\hat{\mathbf{n}}$
up to order $k$. Setting $\mathring{H}_{N}^{\hat{\mathbf{n}},k}\left(\boldsymbol{\sigma}\right)=\mathbb{E}\left[\left.H_{N}\left(\boldsymbol{\sigma}\right)\,\right|\,\mathcal{F}_{k}\right]$,
define $\bar{H}_{N}^{\hat{\mathbf{n}},k}\left(\boldsymbol{\sigma}\right)=\mathring{H}_{N}^{\hat{\mathbf{n}},k}\left(\boldsymbol{\sigma}\right)-\mathring{H}_{N}^{\hat{\mathbf{n}},k-1}\left(\boldsymbol{\sigma}\right)$
for $k>0$, and $\bar{H}_{N}^{\hat{\mathbf{n}},0}\left(\boldsymbol{\sigma}\right)=\mathring{H}_{N}^{\hat{\mathbf{n}},0}\left(\boldsymbol{\sigma}\right)$. 

Define $H_{N}^{{\rm Euc}}\left(x\right)$ as the extension of $H_{N}\left(\boldsymbol{\sigma}\right)$
to $\mathbb{R}^{N}$ defined by the formula (\ref{eq:Hamiltonian})
only with general $x\in\mathbb{R}^{N}$ instead of $\boldsymbol{\sigma}$
from the sphere. Clearly, $H_{N}\left(\hat{\mathbf{n}}\right)$ and
the derivatives of $H_{N}\left(\boldsymbol{\sigma}\right)$ at $\hat{\mathbf{n}}$
up to order $k$ are determined by $H_{N}^{{\rm Euc}}\left(\hat{\mathbf{n}}\right)$
and the Euclidean derivatives of $H_{N}^{{\rm Euc}}\left(x\right)$
at $\hat{\mathbf{n}}$ up to order $k$, and vice versa. In other
words, $\mathcal{F}_{k}$ is equal to the $\sigma$-algebra generated
by
\begin{equation}
\left\{ \frac{d}{dx_{i_{1}}}\cdots\frac{d}{dx_{i_{j}}}H_{N}^{{\rm Euc}}\left(\hat{\mathbf{n}}\right):1\leq i_{1}\leq\cdots\leq i_{j}\leq N-1,\,0\leq j\leq k\right\} .\label{eq:varEuc}
\end{equation}
The advantage of this is that the Euclidean derivatives (\ref{eq:varEuc})
are directly related to the disorder $J_{i_{1},...,i_{p}}$. 

Write 
\begin{equation}
H_{N}\left(\boldsymbol{\sigma}\right)=\frac{1}{N^{\left(p-1\right)/2}}\sum_{1\leq i_{1}\leq...\leq i_{p}\leq N}J_{i_{1},...,i_{p}}^{\prime}\sigma_{i_{1}}\cdots\sigma_{i_{p}},\quad\boldsymbol{\sigma}\in\mathbb{S}^{N-1}\left(\sqrt{N}\right),\label{eq:HN_Jprime}
\end{equation}
where $J_{\bar{i}_{1},...,\bar{i}_{p}}^{\prime}$ is defined as the
sum of all $J_{i_{1},...,i_{p}}$ in (\ref{eq:Hamiltonian}) such
that $\left\{ i_{1},...,i_{p}\right\} =\left\{ \bar{i}_{1},...,\bar{i}_{p}\right\} $
as multisets. Then for any $1\leq k\leq p$, $1\leq i_{1}\leq\cdots\leq i_{k}\leq N-1$,
\begin{equation}
\frac{d}{dx_{i_{1}}}\cdots\frac{d}{dx_{i_{k}}}H_{N}^{{\rm Euc}}\left(\hat{\mathbf{n}}\right)=\frac{\prod_{j=1}^{N-1}\left|\left\{ l\leq k:i_{l}=j\right\} \right|!}{N^{\left(k-1\right)/2}}J_{i_{1},...,i_{k},N,...,N}^{\prime},\label{eq:EucderivativeJprime}
\end{equation}
and 
\begin{equation}
H_{N}\left(\hat{\mathbf{n}}\right)=H_{N}^{{\rm Euc}}\left(\hat{\mathbf{n}}\right)=N^{1/2}J_{N,...,N}^{\prime}.\label{eq:euc1}
\end{equation}
Since $H_{N}\left(\boldsymbol{\sigma}\right)$ and $\left\{ J_{i_{1},...,i_{p}}^{\prime}\right\} $
are centered and jointly Gaussian, and $\left\{ J_{i_{1},...,i_{p}}^{\prime}\right\} $
is a set of independent variables, it follows that 
\begin{align*}
 & \mathbb{E}\left[\left.H_{N}\left(\boldsymbol{\sigma}\right)\,\right|\, J_{i_{1},...,i_{j},N,...,N}^{\prime}:1\leq i_{1}\leq\cdots\leq i_{j}\leq N-1,\,0\leq j\leq k\right]\\
 & =\sum_{j=0}^{k}\mathbb{E}\left[\left.H_{N}\left(\boldsymbol{\sigma}\right)\,\right|\, J_{i_{1},...,i_{j},N,...,N}^{\prime}:1\leq i_{1}\leq\cdots\leq i_{j}\leq N-1\right].
\end{align*}

Therefore, for $0\leq k\leq p$,
\begin{align}
\bar{H}_{N}^{\hat{\mathbf{n}},k}\left(\boldsymbol{\sigma}\right) & =\mathbb{E}\left[\left.H_{N}\left(\boldsymbol{\sigma}\right)\,\right|\, J_{i_{1},...,i_{k},N,...,N}^{\prime}:1\leq i_{1}\leq\cdots\leq i_{k}\leq N-1\right]\nonumber \\
 & =\frac{1}{N^{\left(p-1\right)/2}}\sum_{1\leq i_{1}\leq\cdots\leq i_{k}\leq N-1}J_{i_{1},...,i_{k},N,...,N}^{\prime}\sigma_{i_{1}}\cdots\sigma_{i_{k}}\sigma_{N}^{p-k}.\label{eq:1220-1}
\end{align}
The following lemma directly follows. For $0\leq k\leq p$, let $H_{N-1}^{{\rm pure\,}k}\left(\boldsymbol{\sigma}\right)$
be independent pure $k$-spin models on the sphere $\mathbb{S}^{N-2}$,
where by $0$-spin model we simply mean a constant centered Gaussian
field with variance $\sqrt{N-1}$. Also, with $\boldsymbol{\sigma}=\left(\sigma_{1},...,\sigma_{N}\right)$
define 
\begin{equation}
\tilde{\boldsymbol{\sigma}}=\sqrt{\frac{N-1}{N}}\frac{\left(\sigma_{1},...,\sigma_{N-1}\right)}{\sqrt{1-q^{2}\left(\boldsymbol{\sigma}\right)}}\in\mathbb{S}^{N-2}\,\,\,{\rm and}\,\,\, q\left(\boldsymbol{\sigma}\right)=\frac{\sigma_{N}}{\sqrt{N}}=R\left(\boldsymbol{\sigma},\hat{\mathbf{n}}\right).\label{eq:69}
\end{equation}
Lastly, define
\begin{equation}
\alpha_{k}\left(q\right)\triangleq\sqrt{\binom{p}{k}\left(1-q^{2}\right)^{k}}q^{p-k},\label{eq:alpha_k}
\end{equation}
where obviously $\sum_{k=0}^{p}\alpha_{k}^{2}\left(q\right)=\left(1-q^{2}+q^{2}\right)^{p}=1$.
Note that $\alpha_{k}\left(q\right)$ can be negative. Denote by $\overset{d}{=}$
equality in distribution.
\begin{lem}
\label{lem:HNdecomposition}The Hamiltonian decomposes as
\begin{equation}
H_{N}\left(\boldsymbol{\sigma}\right)=\sum_{k=0}^{p}\bar{H}_{N}^{\hat{\mathbf{n}},k}\left(\boldsymbol{\sigma}\right),\label{eq:Hbar decomposition}
\end{equation}
where the Gaussian fields $\left\{ \bar{H}_{N}^{\hat{\mathbf{n}},k}\left(\boldsymbol{\sigma}\right)\right\} _{\boldsymbol{\sigma}}$,
$0\leq k\leq p$, are independent of each other and
\begin{equation}
\left\{ \bar{H}_{N}^{\hat{\mathbf{n}},k}\left(\boldsymbol{\sigma}\right)\right\} _{\boldsymbol{\sigma}}\overset{d}{=}\left\{ \alpha_{k}\left(q\left(\boldsymbol{\sigma}\right)\right)\sqrt{\frac{N}{N-1}}H_{N-1}^{{\rm pure\,}k}\left(\tilde{\boldsymbol{\sigma}}\right)\right\} _{\boldsymbol{\sigma}}.\label{eq:1220-2}
\end{equation}
\end{lem}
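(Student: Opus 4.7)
The proof decomposes into three independent tasks: verifying the decomposition (\ref{eq:Hbar decomposition}), verifying independence of the components, and verifying the distributional identity (\ref{eq:1220-2}). The explicit formula (\ref{eq:1220-1}) already derived in the excerpt does most of the heavy lifting.

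First, for the decomposition, I would argue that $\sum_{k=0}^{p} \bar{H}_N^{\hat{\mathbf{n}},k}(\boldsymbol{\sigma})$ telescopes to $\mathring{H}_N^{\hat{\mathbf{n}},p}(\boldsymbol{\sigma}) = \mathbb{E}[H_N(\boldsymbol{\sigma}) \mid \mathcal{F}_p]$. By (\ref{eq:varEuc}) and (\ref{eq:EucderivativeJprime}), the $\sigma$-algebra $\mathcal{F}_p$ coincides with the one generated by the collection $\{J'_{i_1,\ldots,i_j,N,\ldots,N} : 1 \leq i_1 \leq \cdots \leq i_j \leq N-1,\, 0 \leq j \leq p\}$, which ranges over every multiset of $p$ indices in $\{1,\ldots,N\}$ and therefore determines all $J'$-variables appearing in (\ref{eq:HN_Jprime}). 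Hence $\mathbb{E}[H_N(\boldsymbol{\sigma}) \mid \mathcal{F}_p] = H_N(\boldsymbol{\sigma})$ a.s., proving (\ref{eq:Hbar decomposition}).

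Second, for independence, the formula (\ref{eq:1220-1}) shows that $\bar{H}_N^{\hat{\mathbf{n}},k}$ is a linear combination of the family $\{J'_{i_1,\ldots,i_k,N,\ldots,N} : 1 \leq i_1 \leq \cdots \leq i_k \leq N-1\}$, and these families are pairwise disjoint across different values of $k$. Since each $J'_{\bar{i}_1,\ldots,\bar{i}_p}$ is a sum of those $J_{i_1,\ldots,i_p}$ whose index multiset equals $\{\bar{i}_1,\ldots,\bar{i}_p\}$, distinct $J'$-variables involve disjoint subsets of the i.i.d.\ Gaussians $\{J_{i_1,\ldots,i_p}\}$ and are therefore independent. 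The mutual independence of the Gaussian processes $\{\bar{H}_N^{\hat{\mathbf{n}},k}(\boldsymbol{\sigma})\}_{\boldsymbol{\sigma}}$ over $0 \leq k \leq p$ follows.

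Third, for the distributional identity, both sides of (\ref{eq:1220-2}) are centered Gaussian processes in $\boldsymbol{\sigma}$, so it suffices to match covariances. Using the covariance $\mathbb{E}\{H_N(\boldsymbol{\sigma})H_N(\boldsymbol{\sigma}')\} = N R(\boldsymbol{\sigma},\boldsymbol{\sigma}')^p$, writing $\boldsymbol{\sigma}\cdot\boldsymbol{\sigma}' = \boldsymbol{\sigma}^\perp\cdot\boldsymbol{\sigma}'^\perp + \sigma_N \sigma_N'$ (where $\boldsymbol{\sigma}^\perp = (\sigma_1,\ldots,\sigma_{N-1})$), and binomially expanding yields
\[
\mathbb{E}\{H_N(\boldsymbol{\sigma})H_N(\boldsymbol{\sigma}')\} = \sum_{k=0}^{p}\binom{p}{k} N^{1-p}(\boldsymbol{\sigma}^\perp\cdot\boldsymbol{\sigma}'^\perp)^k(\sigma_N \sigma_N')^{p-k}.
\]
By independence of the $\bar{H}_N^{\hat{\mathbf{n}},k}$, the $k$-th summand must equal $\mathbb{E}\{\bar{H}_N^{\hat{\mathbf{n}},k}(\boldsymbol{\sigma})\bar{H}_N^{\hat{\mathbf{n}},k}(\boldsymbol{\sigma}')\}$, which can also be read off directly from (\ref{eq:1220-1}) via a second-moment count (the main bookkeeping obstacle, since one must carefully track the multinomial factors hidden in the definition of $J'$ and the ordered-tuple sums). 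On the right side of (\ref{eq:1220-2}), the identity $R(\tilde{\boldsymbol{\sigma}},\tilde{\boldsymbol{\sigma}}') = (\boldsymbol{\sigma}^\perp\cdot\boldsymbol{\sigma}'^\perp)/(N\sqrt{(1-q^2)(1-q'^2)})$, together with the definition (\ref{eq:alpha_k}) of $\alpha_k$, gives
\[
\alpha_k(q)\alpha_k(q')\,\frac{N}{N-1}\,(N-1)\,R(\tilde{\boldsymbol{\sigma}},\tilde{\boldsymbol{\sigma}}')^k = \binom{p}{k} N^{1-p}(\boldsymbol{\sigma}^\perp\cdot\boldsymbol{\sigma}'^\perp)^k(\sigma_N\sigma_N')^{p-k},
\]
which matches the $k$-th summand and completes the proof.
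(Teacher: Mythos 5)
Your proposal is correct and follows essentially the same route as the paper: the decomposition (\ref{eq:Hbar decomposition}) and the mutual independence are read off from the explicit representation (\ref{eq:1220-1}) in terms of the variables $J_{i_{1},...,i_{k},N,...,N}^{\prime}$, and the identity (\ref{eq:1220-2}) reduces to the same multiset--ordering count, namely ${\rm Var}\left(J_{i_{1},...,i_{k},N,...,N}^{\prime}\right)=\binom{p}{k}{\rm Var}\left(J_{i_{1},...,i_{k}}^{\prime}\right)$, which you package as a covariance match of centered Gaussian fields while the paper matches the laws of the coefficients directly. One phrasing to tighten: termwise identification of the binomial expansion of $NR\left(\boldsymbol{\sigma},\boldsymbol{\sigma}'\right)^{p}$ with the covariances of the $\bar{H}_{N}^{\hat{\mathbf{n}},k}$ does not follow from independence alone (equal sums need not have equal summands), but this is harmless since, as you indicate, the $k$-th covariance is computed directly from (\ref{eq:1220-1}) via the variance count above.
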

\begin{proof}
Equation (\ref{eq:Hbar decomposition}) follows from (\ref{eq:1220-1})
and (\ref{eq:HN_Jprime}). Independence follows from that of $J_{i_{1},...,i_{p}}^{\prime}$
of each other. By simple algebra
\[
\bar{H}_{N}^{\hat{\mathbf{n}},k}\left(\boldsymbol{\sigma}\right)=\sqrt{\frac{N}{N-1}\binom{p}{k}^{-1}}\frac{\alpha_{k}\left(q\left(\boldsymbol{\sigma}\right)\right)}{\left(N-1\right)^{\left(k-1\right)/2}}\sum_{1\leq i_{1}\leq\cdots\leq i_{k}\leq N-1}J_{i_{1},...,i_{k},N,...,N}^{\prime}\tilde{\boldsymbol{\sigma}}_{i_{1}}\cdots\tilde{\boldsymbol{\sigma}}_{i_{k}},
\]
where $\tilde{\boldsymbol{\sigma}}_{i}$ are the elements of $\tilde{\boldsymbol{\sigma}}$.
Using the fact that 
\[
{\rm Var}\left(J_{i_{1},...,i_{k},N,...,N}^{\prime}\right)=\binom{p}{k}{\rm Var}\left(J_{i_{1},...,i_{k}}^{\prime}\right),
\]
(\ref{eq:1220-2}) follows from (\ref{eq:HN_Jprime}) (with $N-1$
instead of $N$ in the latter).
\end{proof}
The following lemma expresses $\bar{H}_{N}^{\hat{\mathbf{n}},k}\left(\boldsymbol{\sigma}\right)$,
$k\leq2$, in terms of the Hamiltonian $H_{N}\left(\boldsymbol{\sigma}\right)$
directly. Denote 
\[
\nabla H_{N}\left(\hat{\mathbf{n}}\right)=\left(E_{i}H_{N}\left(\hat{\mathbf{n}}\right)\right)_{i=1}^{N-1}\mbox{\,\,\, and\,\,\,\,\,\,}\nabla^{2}H_{N}\left(\hat{\mathbf{n}}\right)=\left(E_{i}E_{j}H_{N}\left(\hat{\mathbf{n}}\right)\right)_{i,j=1}^{N-1}.
\]
We define the random matrix
\begin{equation}
\mathbf{G}\left(\hat{\mathbf{n}}\right):=\mathbf{G}_{N-1}\left(\hat{\mathbf{n}}\right)\triangleq\nabla^{2}H_{N}\left(\hat{\mathbf{n}}\right)+\frac{p}{N}H_{N}\left(\hat{\mathbf{n}}\right)\mathbf{I},\label{eq:G}
\end{equation}
where $\mathbf{I}:=\mathbf{I}_{N-1}$ denotes the identity matrix
of dimension $N-1$. A random matrix $\mathbf{M}$ from the $N\times N$
Gaussian orthogonal ensemble, or for brevity an $N\times N$ GOE matrix,
is a real, symmetric matrix such that all elements are centered Gaussian
variables which, up to symmetry, are independent with variance $2/N$
on the diagonal and $1/N$ off the diagonal. 
\begin{lem}
\label{lem:Hhat_expressions}We have that
\begin{align}
\bar{H}_{N}^{\hat{\mathbf{n}},0}\left(\boldsymbol{\sigma}\right) & =q^{p}\left(\boldsymbol{\sigma}\right)\cdot H_{N}\left(\hat{\mathbf{n}}\right),\label{eq:Hnhat0}\\
\bar{H}_{N}^{\hat{\mathbf{n}},1}\left(\boldsymbol{\sigma}\right) & =q^{p-1}\left(\boldsymbol{\sigma}\right)\left(1-q^{2}\left(\boldsymbol{\sigma}\right)\right)^{1/2}\sqrt{\frac{N}{N-1}}\cdot\left\langle \nabla H_{N}\left(\hat{\mathbf{n}}\right),\tilde{\boldsymbol{\sigma}}\right\rangle ,\label{eq:Hnhat1}\\
\bar{H}_{N}^{\hat{\mathbf{n}},2}\left(\boldsymbol{\sigma}\right) & =\frac{1}{2}q^{p-2}\left(\boldsymbol{\sigma}\right)\left(1-q^{2}\left(\boldsymbol{\sigma}\right)\right)\frac{N}{N-1}\cdot\tilde{\boldsymbol{\sigma}}^{T}\mathbf{G}_{N-1}\left(\hat{\mathbf{n}}\right)\tilde{\boldsymbol{\sigma}},\label{eq:Hnhat2}
\end{align}
and $H_{N}\left(\hat{\mathbf{n}}\right)$, $\nabla H_{N}\left(\hat{\mathbf{n}}\right)$,
and $\mathbf{G}_{N-1}\left(\hat{\mathbf{n}}\right)$ are independent.
Moreover, $\nabla H_{N}\left(\hat{\mathbf{n}}\right)\sim\mathcal{N}\left(0,p\mathbf{I}_{N-1}\right)$
and with $\mathbf{M}$ being a GOE matrix of dimension $N-1$, 
\begin{equation}
\mathbf{G}_{N-1}\left(\hat{\mathbf{n}}\right)\stackrel{d}{=}\sqrt{\frac{N-1}{N}p\left(p-1\right)}\mathbf{M}.\label{eq:GMrel}
\end{equation}
\end{lem}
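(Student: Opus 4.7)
The plan is to derive all three explicit formulas and the independence/distribution claims by making the decomposition \eqref{eq:1220-1} from Lemma \ref{lem:HNdecomposition} completely explicit in terms of the independent Gaussian coefficients $J'_{i_1,\ldots,i_p}$ appearing in \eqref{eq:HN_Jprime}. For each $k=0,1,2$ the relevant variables are $J'_{i_1,\ldots,i_k,N,\ldots,N}$, $1\le i_1\le\cdots\le i_k\le N-1$, and independence of disjoint index-multisets across different $k$ immediately gives the joint independence of $H_N(\hat{\mathbf{n}})$, $\nabla H_N(\hat{\mathbf{n}})$ and $\mathbf{G}_{N-1}(\hat{\mathbf{n}})$ once these are identified with the corresponding groups.

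The cases $k=0,1$ are essentially direct substitution. Using $\sigma_N^p = N^{p/2}q^p$ in \eqref{eq:1220-1} with $k=0$ and \eqref{eq:euc1} gives $\bar{H}_N^{\hat{\mathbf{n}},0}(\boldsymbol{\sigma}) = q^p\cdot\sqrt{N}J'_{N,\ldots,N} = q^p H_N(\hat{\mathbf{n}})$, yielding \eqref{eq:Hnhat0}. For $k=1$, the chain rule for $P_{\hat{\mathbf{n}}}^{-1}$ at the origin (where $\partial_i P_{\hat{\mathbf{n}}}^{-1}(0)=e_i$ for $i\le N-1$) combined with \eqref{eq:derivativesProjection} gives $E_iH_N(\hat{\mathbf{n}}) = \partial_i H_N^{\rm Euc}(\hat{\mathbf{n}}) = J'_{i,N,\ldots,N}$ via \eqref{eq:EucderivativeJprime}; counting the $p$ tuples with multiset $\{i,N,\ldots,N\}$ yields ${\rm Var}(J'_{i,N,\ldots,N})=p$, hence $\nabla H_N(\hat{\mathbf{n}})\sim\mathcal{N}(0,p\mathbf{I}_{N-1})$. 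Substituting back into \eqref{eq:1220-1} with $k=1$, using $\sigma_N^{p-1}=N^{(p-1)/2}q^{p-1}$, and rewriting $\boldsymbol{\sigma}_{1:N-1} = \sqrt{N/(N-1)}\sqrt{1-q^2}\,\tilde{\boldsymbol{\sigma}}$ produces \eqref{eq:Hnhat1}.

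The main computation is the case $k=2$ and the identification of $\mathbf{G}_{N-1}(\hat{\mathbf{n}})$. The key geometric input is the second-order chain rule at the north pole: writing $g=P_{\hat{\mathbf{n}}}^{-1}$ and using $g_N(x)=\sqrt{N-|x|^2}$, one has $\partial_i g_a(0)=\delta_{ia}$ for $a\le N-1$, $\partial_i g_N(0)=0$, $\partial_i\partial_j g_a(0)=0$ for $a\le N-1$, and crucially $\partial_i\partial_j g_N(0)=-\delta_{ij}/\sqrt{N}$. Applying the chain rule to $H_N^{\rm Euc}\circ g$ at $0$ and computing $\partial_N H_N^{\rm Euc}(\hat{\mathbf{n}}) = p H_N(\hat{\mathbf{n}})/\sqrt{N}$ directly from \eqref{eq:HN_Jprime} (only the all-$N$ term survives at $\hat{\mathbf{n}}$), I obtain $E_iE_jH_N(\hat{\mathbf{n}}) = \partial_i\partial_j H_N^{\rm Euc}(\hat{\mathbf{n}}) - \delta_{ij}\,pH_N(\hat{\mathbf{n}})/N$. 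Rearranging, this is precisely the statement that $\mathbf{G}_{N-1}(\hat{\mathbf{n}})$ as defined in \eqref{eq:G} is the Euclidean Hessian block $(\partial_i\partial_j H_N^{\rm Euc}(\hat{\mathbf{n}}))_{i,j\le N-1}$, whose entries by \eqref{eq:EucderivativeJprime} are $J'_{i,j,N,\ldots,N}/\sqrt{N}$ for $i<j$ and $2J'_{i,i,N,\ldots,N}/\sqrt{N}$ on the diagonal. The multinomial counts ${\rm Var}(J'_{i,j,N,\ldots,N})=p(p-1)$ and ${\rm Var}(J'_{i,i,N,\ldots,N})=p(p-1)/2$ then give variances $p(p-1)/N$ off-diagonal and $2p(p-1)/N$ on-diagonal, matching a GOE matrix of dimension $N-1$ rescaled by $\sqrt{p(p-1)(N-1)/N}$, which is \eqref{eq:GMrel}. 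Substituting into \eqref{eq:1220-1} with $k=2$, noting that $\sum_{i\le j\le N-1}J'_{i,j,N,\ldots,N}\sigma_i\sigma_j = \tfrac{\sqrt{N}}{2}\boldsymbol{\sigma}_{1:N-1}^T\mathbf{G}_{N-1}(\hat{\mathbf{n}})\boldsymbol{\sigma}_{1:N-1}$ by symmetry, and converting to $\tilde{\boldsymbol{\sigma}}$ yields \eqref{eq:Hnhat2}.

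The main obstacle is keeping track of the second-order chain-rule correction: the curvature term $\partial_i\partial_j g_N(0)=-\delta_{ij}/\sqrt{N}$ multiplies $\partial_NH_N^{\rm Euc}(\hat{\mathbf{n}})$ and is exactly what forces the $+\tfrac{p}{N}H_N(\hat{\mathbf{n}})\mathbf{I}$ shift in the definition \eqref{eq:G} of $\mathbf{G}$, so that the remainder is a clean rescaled GOE and is independent of $H_N(\hat{\mathbf{n}})$ (rather than the naive Hessian, which is not). Once this bookkeeping is correct, independence reduces to the observation that $J'_{N,\ldots,N}$, $\{J'_{i,N,\ldots,N}\}_{i\le N-1}$, and $\{J'_{i,j,N,\ldots,N}\}_{i\le j\le N-1}$ are indexed by disjoint multisets and hence independent.
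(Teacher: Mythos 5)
Your proposal is correct and follows essentially the same route as the paper: both express everything through the independent coefficients $J'_{i_1,\ldots,i_p}$ of (\ref{eq:HN_Jprime}), compute $E_iH_N(\hat{\mathbf{n}})=J'_{i,N,\ldots,N}$ and $E_iE_jH_N(\hat{\mathbf{n}})=N^{-1/2}\bigl((1+\delta_{ij})J'_{i,j,N,\ldots,N}-\delta_{ij}pJ'_{N,\ldots,N}\bigr)$ via the coordinates $P_{\hat{\mathbf{n}}}$ and (\ref{eq:derivativesProjection}), and then read off (\ref{eq:Hnhat0})--(\ref{eq:Hnhat2}), the variance counts giving (\ref{eq:GMrel}), and independence from the disjointness of the $J'$ index multisets. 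The only cosmetic difference is that you organize the second-derivative computation as a chain rule with the explicit curvature term $\partial_i\partial_j g_N(0)=-\delta_{ij}/\sqrt{N}$ (correctly identifying $\mathbf{G}$ with the Euclidean Hessian block), whereas the paper differentiates the composed coordinate monomials directly; the resulting identities are the same.
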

\begin{proof}
Since $H_{N}\left(\hat{\mathbf{n}}\right)=N^{1/2}J_{N,...,N}^{\prime}$
and $q\left(\boldsymbol{\sigma}\right)=\sigma_{N}/\sqrt{N}$, (\ref{eq:Hnhat0})
follows from (\ref{eq:1220-1}) (with $k=0$). From (\ref{eq:derivativesProjection}),
(\ref{eq:HN_Jprime}) and the fact that
\[
\left.\frac{d}{dx_{i}}\right|_{x=0}\left(x_{i_{1}}\cdots x_{i_{k}}\left(N-\sum_{i=1}^{N-1}x_{i}^{2}\right)^{\frac{p-k}{2}}\right)=\begin{cases}
N^{\frac{p-1}{2}},\,\, & k=1,\, i_{1}=i\\
0, & {\rm otherwise}
\end{cases},
\]
we obtain that
\begin{equation}
E_{i}H_{N}\left(\hat{\mathbf{n}}\right)=J_{i,N,...,N}^{\prime}\sim\mathcal{N}\left(0,p\right).\label{eq:z1}
\end{equation}
Thus, (\ref{eq:Hnhat1}) follows from (\ref{eq:1220-1}) (with $k=1$)
and some rearranging. The independence of $(J_{i,N,...,N}^{\prime})_{i}$
implies that $\nabla H_{N}\left(\hat{\mathbf{n}}\right)\sim\mathcal{N}\left(0,p\mathbf{I}_{N-1}\right)$.

Similarly, from (\ref{eq:derivativesProjection}), (\ref{eq:HN_Jprime})
and the fact that
\[
\left.\frac{d}{dx_{i}}\frac{d}{dx_{j}}\right|_{x=0}\left(x_{i_{1}}\cdots x_{i_{k}}\left(N-\sum_{i=1}^{N-1}x_{i}^{2}\right)^{\frac{p-k}{2}}\right)=\begin{cases}
\left(1+\delta_{ij}\right)N^{\frac{p-2}{2}},\,\, & k=2,\,\left\{ i_{1},i_{2}\right\} =\left\{ i,j\right\} \\
-\delta_{ij}pN^{\frac{p-2}{2}}, & k=0\\
0, & {\rm otherwise}
\end{cases},
\]
we obtain that 
\begin{align*}
E_{i}E_{j}H_{N}\left(\hat{\mathbf{n}}\right)= & N^{-1/2}\left(\left(1+\delta_{ij}\right)J_{i,j,N,...,N}^{\prime}-\delta_{ij}pJ_{N,...,N}^{\prime}\right),
\end{align*}
and thus 
\begin{equation}
\left(\mathbf{G}_{N-1}\left(\hat{\mathbf{n}}\right)\right)_{ij}=N^{-1/2}\left(1+\delta_{ij}\right)J_{i,j,N,...,N}^{\prime}.\label{eq:z2}
\end{equation}
Since $J_{i,j,N,...,N}^{\prime}\sim\mathcal{N}(0,p\left(p-1\right)/(1+\delta_{ij}))$
are independent, (\ref{eq:GMrel}) follows. Substituting this in (\ref{eq:1220-1})
(with $k=2$), after some algebra we obtain (\ref{eq:Hnhat2}). 

Lastly, the independence $H_{N}\left(\hat{\mathbf{n}}\right)$, $\nabla H_{N}\left(\hat{\mathbf{n}}\right)$,
and $\mathbf{G}_{N-1}\left(\hat{\mathbf{n}}\right)$ follows since
they are measurable w.r.t to disjoint subsets of the random variables
$\left\{ J_{i_{1},...,i_{p}}^{\prime}\right\} $.
\end{proof}
Denote
\begin{equation}
\bar{H}_{N}^{\hat{\mathbf{n}},k+}\left(\boldsymbol{\sigma}\right)=\sum_{m=k}^{p}\bar{H}_{N}^{\hat{\mathbf{n}},m}\left(\boldsymbol{\sigma}\right).\label{eq:3+}
\end{equation}

\begin{rem}
\label{rem:conditional laws}In many situations in the sequel we will
have some random variable $X$ for which $H_{N}\left(\hat{\mathbf{n}}\right)$,
$\nabla H_{N}\left(\hat{\mathbf{n}}\right)$ and $X$ have a continuous
joint density and we will need to compute or estimate the probability
that $X\in B$, for some measurable set $B$, conditional on $H_{N}\left(\hat{\mathbf{n}}\right)=u$,
$\nabla H_{N}\left(\hat{\mathbf{n}}\right)=0$. In this case, the
notation $\mathbb{P}_{u,0}\left\{ X\in B\right\} $ should be understood
as the probability under the law determined the usual conditional
density given by the ratio of the joint densities. That is, if $\varphi_{1}\left(v,w,x\right)$
and $\varphi_{2}\left(v,w\right)$ are the continuous densities of
$(H_{N}(\hat{\mathbf{n}}),\nabla H_{N}(\hat{\mathbf{n}}),X)$ and
$(H_{N}(\hat{\mathbf{n}}),\nabla H_{N}(\hat{\mathbf{n}}))$ , respectively,
then $\mathbb{P}_{u,0}\left\{ X\in B\right\} =\int_{B}\varphi_{1}\left(u,0,x\right)dx/\varphi_{2}\left(u,0\right)$.
We will also refer to $\mathbb{P}_{u,0}\left\{ X\in\cdot\right\} $
as the conditional law of $X$ under $\mathbb{P}_{u,0}\left\{ \,\cdot\,\right\} $.
We define $\mathbb{P}_{u,0,\mathbf{A}}\left\{ X\in B\right\} $ similarly,
assuming the corresponding joint density exists, as the conditional
probability given 
\begin{equation}
H_{N}\left(\hat{\mathbf{n}}\right)=u,\nabla H_{N}\left(\hat{\mathbf{n}}\right)=0{\rm \,\, and\,\,}\mathbf{G}_{N-1}\left(\hat{\mathbf{n}}\right)=\mathbf{A}.\label{eq:24}
\end{equation}
The conditional expectations corresponding to the above will denoted
by $\mathbb{E}_{u,0}\left\{ \,\cdot\,\right\} $ and $\mathbb{E}_{u,0,\mathbf{A}}\left\{ \,\cdot\,\right\} $.

The corollary below follows directly from Lemmas \ref{lem:HNdecomposition}
and \ref{lem:Hhat_expressions}.\end{rem}
\begin{cor}
\label{cor:conditional laws}The conditional law of the field $H_{N}\left(\boldsymbol{\sigma}\right)$
under $\mathbb{P}_{u,0}\left\{ \,\cdot\,\right\} $ is identical to
the (unconditional) law of
\[
uq^{p}\left(\boldsymbol{\sigma}\right)+\bar{H}_{N}^{\hat{\mathbf{n}},2+}\left(\boldsymbol{\sigma}\right).
\]
Similarly, the conditional law of the field $H_{N}\left(\boldsymbol{\sigma}\right)$
under $\mathbb{P}_{u,0,\mathbf{A}}\left\{ \,\cdot\,\right\} $ is
identical to the (unconditional) law of
\[
uq^{p}\left(\boldsymbol{\sigma}\right)+\frac{1}{2}q^{p-2}\left(\boldsymbol{\sigma}\right)\left(1-q^{2}\left(\boldsymbol{\sigma}\right)\right)\frac{N}{N-1}\cdot\tilde{\boldsymbol{\sigma}}^{T}\mathbf{A}\tilde{\boldsymbol{\sigma}}+\bar{H}_{N}^{\hat{\mathbf{n}},3+}\left(\boldsymbol{\sigma}\right).
\]

\end{cor}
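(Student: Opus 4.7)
The plan is to read off the corollary directly from the decomposition $H_{N}(\boldsymbol{\sigma}) = \sum_{k=0}^{p} \bar{H}_{N}^{\hat{\mathbf{n}},k}(\boldsymbol{\sigma})$ of Lemma \ref{lem:HNdecomposition} together with the explicit formulas for the three lowest-order blocks from Lemma \ref{lem:Hhat_expressions}. The essential structural input is that the Gaussian fields $\{\bar{H}_{N}^{\hat{\mathbf{n}},k}(\boldsymbol{\sigma})\}_{\boldsymbol{\sigma}}$ are mutually independent across $k$, and that for $k=0,1,2$ each block is, as a function of $\boldsymbol{\sigma}$, a deterministic linear/quadratic image of exactly one of the conditioning variables --- namely $H_{N}(\hat{\mathbf{n}})$, $\nabla H_{N}(\hat{\mathbf{n}})$, and $\mathbf{G}_{N-1}(\hat{\mathbf{n}})$, respectively --- via (\ref{eq:Hnhat0})--(\ref{eq:Hnhat2}). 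Since conversely the triple $(H_{N}(\hat{\mathbf{n}}), \nabla H_{N}(\hat{\mathbf{n}}), \mathbf{G}_{N-1}(\hat{\mathbf{n}}))$ can be recovered from these three blocks (evaluated at suitable $\boldsymbol{\sigma}$), conditioning on them simply freezes those blocks while leaving the higher-order tail unchanged in law.

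For the first assertion, I would argue as follows. Under $\mathbb{P}_{u,0}$ the block $\bar{H}_{N}^{\hat{\mathbf{n}},0}(\boldsymbol{\sigma})$ becomes the deterministic function $u q^{p}(\boldsymbol{\sigma})$ by (\ref{eq:Hnhat0}), and $\bar{H}_{N}^{\hat{\mathbf{n}},1}(\boldsymbol{\sigma})$ vanishes identically by (\ref{eq:Hnhat1}). By the independence asserted in Lemma \ref{lem:HNdecomposition}, the tail $\bar{H}_{N}^{\hat{\mathbf{n}},2+}(\boldsymbol{\sigma})$ is independent of $(H_{N}(\hat{\mathbf{n}}), \nabla H_{N}(\hat{\mathbf{n}}))$ --- because the latter pair is a measurable function of the two frozen blocks --- so its conditional law under $\mathbb{P}_{u,0}$ equals its unconditional law. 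Substituting into (\ref{eq:Hbar decomposition}) yields the stated equality of laws.

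The second assertion is handled analogously. Further conditioning on $\mathbf{G}_{N-1}(\hat{\mathbf{n}}) = \mathbf{A}$ freezes $\bar{H}_{N}^{\hat{\mathbf{n}},2}(\boldsymbol{\sigma})$ to the deterministic expression obtained by substituting $\mathbf{A}$ for $\mathbf{G}_{N-1}(\hat{\mathbf{n}})$ in (\ref{eq:Hnhat2}), while the tail $\bar{H}_{N}^{\hat{\mathbf{n}},3+}(\boldsymbol{\sigma})$ is independent of all three conditioning variables by Lemma \ref{lem:HNdecomposition} and retains its unconditional law. The only housekeeping required to justify the conditional densities of Remark \ref{rem:conditional laws} is that $(H_{N}(\hat{\mathbf{n}}), \nabla H_{N}(\hat{\mathbf{n}}), \mathbf{G}_{N-1}(\hat{\mathbf{n}}))$ is a non-degenerate Gaussian triple (taking values in $\mathbb{R}$, $\mathbb{R}^{N-1}$, and the space of symmetric $(N-1) \times (N-1)$ matrices, respectively); this is immediate from Lemma \ref{lem:Hhat_expressions}, in particular from $\nabla H_{N}(\hat{\mathbf{n}}) \sim \mathcal{N}(0, p \mathbf{I}_{N-1})$ and the scaled-GOE identity (\ref{eq:GMrel}). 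Given how much work is already packaged in the two preceding lemmas, there is no genuine obstacle here --- the corollary is essentially a bookkeeping consequence of the decomposition, and the only real content is the Gaussian independence already established.
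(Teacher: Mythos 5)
Your argument is correct and is exactly the route the paper intends: the text simply states (in Remark \ref{rem:conditional laws}) that the corollary follows directly from Lemmas \ref{lem:HNdecomposition} and \ref{lem:Hhat_expressions}, and your write-up is a faithful expansion of that — freezing the $k=0,1,2$ blocks via (\ref{eq:Hnhat0})--(\ref{eq:Hnhat2}) and using the independence of the blocks to leave the tail's law unchanged. No gaps.
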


\section{\label{sec:overlapsVals}Important overlap values}

Our analysis requires understanding the contribution to the partition
function $Z_{N,\beta}$ coming from different distances, or equivalently
overlaps, from critical points. In this section we define several
important overlap values that will be used to define different regions
in the overlap-depth plane of Section \ref{sec:outline} (see Figure
\ref{fig:3.1}). It will be very useful for us to investigate the
restriction of the random fields $\bar{H}_{N}^{\hat{\mathbf{n}},k}\left(\boldsymbol{\sigma}\right)$
(with $k=2$ in particular) to 
\[
\left\{ \boldsymbol{\sigma}\in\mathbb{S}^{N-1}:R\left(\boldsymbol{\sigma},\hat{\mathbf{n}}\right)=q\right\} ,
\]
which for convenience we parametrize as random fields on $\mathbb{S}^{N-2}$.
With $\theta_{q}:\mathbb{S}^{N-2}\to\mathbb{S}^{N-1}$ being the left
inverse of $\boldsymbol{\sigma}\mapsto\tilde{\boldsymbol{\sigma}}$
(see (\ref{eq:69})) given by 
\[
\theta_{q}\left(\left(\sigma_{1},...,\sigma_{N-1}\right)\right)=\sqrt{\frac{N}{N-1}\left(1-q^{2}\right)}\left(\sigma_{1},...,\sigma_{N-1},0\right)+q\hat{\mathbf{n}},
\]
for any function $h:\mathbb{S}^{N-1}\to\mathbb{R}$ define $h|_{q}:\mathbb{S}^{N-2}\to\mathbb{R}$
by 
\begin{equation}
h|_{q}\left(\boldsymbol{\sigma}\right)=h\circ\theta_{q}\left(\boldsymbol{\sigma}\right).\label{eq:22}
\end{equation}

Note that by (\ref{eq:1220-2}), $\bar{H}_{N}^{\hat{\mathbf{n}},k}|_{q}$
is a pure $k$-spin for any $q$, up to a multiplicative factor. Specifically,
we have that 
\begin{equation}
\mathbb{E}\left\{ \bar{H}_{N}^{\hat{\mathbf{n}},2}|_{q}\left(\boldsymbol{\sigma}\right)\bar{H}_{N}^{\hat{\mathbf{n}},2}|_{q}\left(\boldsymbol{\sigma}'\right)\right\} =N\left(\alpha_{2}\left(q\right)R\left(\boldsymbol{\sigma},\boldsymbol{\sigma}'\right)\right)^{2},\label{eq:Hnr2}
\end{equation}
where $\alpha_{2}\left(q\right)$ was defined in (\ref{eq:alpha_k}).
With $\beta$ fixed, we can think of the partition function corresponding
to $\bar{H}_{N}^{\hat{\mathbf{n}},2}|_{q}$ as that of the pure $2$-spin
model on $\mathbb{S}^{N-2}$ with an `effective' temperature
\begin{equation}
\beta|\alpha_{2}\left(q\right)|\sqrt{\frac{N}{N-1}}.\label{eq:beff}
\end{equation}
By Corollary \ref{cor:2spinFE}, with $\beta$ fixed, the limiting
free energy of $\bar{H}_{N}^{\hat{\mathbf{n}},2}|_{q}$ undergoes
a transition at values of $q$ that satisfy 
\begin{equation}
\alpha_{2}\left(q\right)=\binom{p}{2}^{1/2}q^{p-2}\left(1-q^{2}\right)=\frac{1}{\beta\sqrt{2}}\triangleq\chi_{2}.\label{eq:s2}
\end{equation}
Define, assuming $\beta$ is large enough so that the set below is
non-empty, 
\begin{equation}
q_{c}:=q_{c}(\beta)=\max\left\{ q\in\left(0,1\right):\alpha_{2}\left(q\right)=\chi_{2}\right\} .\label{eq:qc}
\end{equation}

In our computations we will encounter the function
\begin{equation}
\Frf\left(E,q\right)\triangleq\frac{1}{2}\log\left(1-q^{2}\right)+\beta Eq^{p}+\frac{1}{2}\beta^{2}\left(1-q^{2p}-pq^{2p-2}\left(1-q^{2}\right)\right),\label{eq:6}
\end{equation}
which is related to the free energy of bands of overlap approximately
$q$ around critical points of depth $-EN$ (see Lemma \ref{lem:Z_1st_mom}).
The critical points of $\Frf\left(E_{0},q\right)$ as a function of
$q$ are the solutions of 
\begin{equation}
-\frac{q}{1-q^{2}}+p\beta E_{0}q^{p-1}-p\left(p-1\right)\beta^{2}q^{2p-3}\left(1-q^{2}\right)=0.\label{eq:14}
\end{equation}
Viewing the latter as a quadratic equation in $\beta$ we have that
the solutions with $q\neq0$ are characterized by the relation 
\begin{equation}
\alpha_{2}\left(q\right)=\frac{1}{\sqrt{2}\beta}\left(\frac{E_{0}}{E_{\infty}}\pm\sqrt{\frac{E_{0}^{2}}{E_{\infty}^{2}}-1}\right).\label{eq:a39}
\end{equation}
From the fact that $z-\sqrt{z^{2}-1}$ decreases in $z>1$ and since
$E_{0}>E_{\infty}$,
\begin{equation}
\chi_{1}\triangleq\frac{1}{\sqrt{2}\beta}\left(\frac{E_{0}}{E_{\infty}}-\sqrt{\frac{E_{0}^{2}}{E_{\infty}^{2}}-1}\right)<\chi_{2}<\frac{1}{\sqrt{2}\beta}\left(\frac{E_{0}}{E_{\infty}}+\sqrt{\frac{E_{0}^{2}}{E_{\infty}^{2}}-1}\right)\triangleq\chi_{3}.\label{eq:7}
\end{equation}
Thus, assuming $\beta$ is sufficiently large so that the sets below
are non-empty, defining 
\begin{align}
q_{*} & :=q_{*}(\beta)=\max\left\{ q\in\left(0,1\right):\alpha_{2}\left(q\right)=\chi_{1}\right\} ,\label{eq:39}\\
q_{**} & :=q_{**}(\beta)=\max\left\{ q\in\left(0,1\right):\alpha_{2}\left(q\right)=\chi_{3}\right\} ,\nonumber 
\end{align}
which are in particular critical points of $\Frf\left(E,q\right)$
in $q$, we have that 
\[
0<q_{**}<q_{c}<q_{*}<1.
\]
(See Figure \ref{fig:qualitative-graph}.) We also have
\begin{align}
\lim_{\beta\to\infty}\frac{1}{2}\left(1-q_{*}^{2}\right)^{2}\frac{\partial^{2}}{\partial q^{2}}\Frf\left(E_{0},q_{*}\right) & =2\left(\beta\chi_{1}\right)^{2}-1\label{eq:14031}\\
 & <2\left(\beta\chi_{2}\right)^{2}-1=0.\nonumber 
\end{align}
Thus, for large enough $\beta$, $\frac{\partial^{2}}{\partial q^{2}}\Frf\left(E_{0},q_{*}\right)<0$.

With an arbitrary constant which will be fixed $C_{{\rm LS}}>\left(2p\left(E_{0}-E_{\infty}\right)\right)^{-1}$,
the last overlap value we define is 
\begin{equation}
q_{{\rm LS}}=1-C_{{\rm LS}}\frac{\log\beta}{\beta}.\label{eq:qLS}
\end{equation}
This overlap value is the one we will use to define the caps which
we restrict to, as described in the outline in Section \ref{sec:outline}.

\section{\label{sec:range(q**,1)}mass of bands under $\mathbb{P}_{u,0}$:
the range $(q_{**},1)$}

In this section we evaluate the relative partition function of bands
and caps. To shorten the notation, we will henceforth use the abbreviations
\begin{align}
{\rm Cap} & \triangleq{\rm Cap}\left(\hat{\mathbf{n}},q_{**}\right),\label{eq:BN}\\
{\rm Band}\left(\epsilon\right) & \triangleq{\rm Band}\left(\hat{\mathbf{n}},q_{*}-\epsilon,q_{*}+\epsilon\right).\nonumber 
\end{align}
The main results of this section are the two propositions below. In
Proposition \ref{prop:means} we compute, under $\mathbb{P}_{u,0}$,
the expected relative partition function of ${\rm Band}\left(cN^{-1/2}\right)$,
and show that for large $c$ it is much larger than that of ${\rm Cap}\setminus{\rm Band}\left(cN^{-1/2}\right)$.
The levels $u$ considered in the proposition are approximately equal
to $m_{N}$ (in fact for higher levels the overlap that captures most
of the mass is not $q_{*}$). However, because of the simple dependence
of the conditional law in $u$ (see Corollary \ref{cor:conditional laws})
we will be able to easily derive from Proposition \ref{prop:means}
bounds for higher levels when needed (e.g., in the proof of Lemma
\ref{eq:RegIIIbd}). As mentioned in Section \ref{sec:outline}, bounds
on the expected relative partition function will be sufficient for
our analysis of overlaps close enough to $1$. Specifically, the caps
(\ref{eq:BN}) cover the range $(q_{**},1)$. Define 
\begin{equation}
\mathfrak{V}_{N,\beta}\left(u\right)\triangleq\left(1-q_{*}^{2}\right)^{-3/2}\left|\frac{\partial^{2}}{\partial q^{2}}\Frf\left(E_{0},q_{*}\right)\right|^{-1/2}\exp\left\{ N\Frf\left(E_{0},q_{*}\right)-\left(\beta E_{0}N+\beta u\right)q_{*}^{p}\right\} .\label{eq:V(u)}
\end{equation}

\begin{prop}
\label{prop:means}For large enough $\beta$ we have the following.
Let $a_{N}=o\left(\sqrt{N}\right)$ be a sequence of positive numbers
and set $J_{N}=\left(m_{N}-a_{N},m_{N}+a_{N}\right)$. Then, 
\begin{equation}
\lim_{c\to\infty}\limsup_{N\to\infty}\sup_{u\in J_{N}}\left|\frac{\mathbb{E}_{u,0}\left\{ Z_{N,\beta}\left({\rm Band}\left(cN^{-1/2}\right)\right)\right\} }{\mathfrak{V}_{N,\beta}\left(u\right)}-1\right|=0,\label{eq:13-3}
\end{equation}
and
\begin{equation}
\lim_{c\to\infty}\limsup_{N\to\infty}\sup_{u\in J_{N}}\frac{\mathbb{E}_{u,0}\left\{ Z_{N,\beta}\left({\rm Cap}\setminus{\rm Band}\left(cN^{-1/2}\right)\right)\right\} }{\mathbb{E}_{u,0}\left\{ Z_{N,\beta}\left({\rm Band}\left(cN^{-1/2}\right)\right)\right\} }=0.\label{eq:13-2}
\end{equation}

\end{prop}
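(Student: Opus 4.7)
The plan is to pass from the conditional law $\mathbb{P}_{u,0}$ to an explicit one-dimensional integral via Corollary \ref{cor:conditional laws}, then apply Laplace's method at $q_*$ for (\ref{eq:13-3}), and finally exploit the fact that $q_*$ is the unique maximizer of $\Frf(E_0,\cdot)$ on $[q_{**},1]$ for (\ref{eq:13-2}). Concretely, Corollary \ref{cor:conditional laws} identifies the conditional law of $H_N(\boldsymbol{\sigma})$ under $\mathbb{P}_{u,0}$ with that of $uq^p(\boldsymbol{\sigma})+\bar{H}_N^{\hat{\mathbf{n}},2+}(\boldsymbol{\sigma})$; by Lemma \ref{lem:HNdecomposition} and $\sum_{k=0}^p\alpha_k^2(q)=1$, this last field is centered Gaussian with variance $N(1-q^{2p}-pq^{2p-2}(1-q^2))$. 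A direct Gaussian MGF calculation together with (\ref{eq:6}) gives
\begin{equation*}
\mathbb{E}_{u,0}\bigl\{e^{-\beta H_N(\boldsymbol{\sigma})}\bigr\}=\exp\Bigl(N\Frf(E_0,q(\boldsymbol{\sigma}))-\tfrac{N}{2}\log(1-q^2(\boldsymbol{\sigma}))-\beta(E_0N+u)q^p(\boldsymbol{\sigma})\Bigr).
\end{equation*}
Since this depends on $\boldsymbol{\sigma}$ only through $q(\boldsymbol{\sigma})$, and $q(\boldsymbol{\sigma})=\sigma_N/\sqrt{N}$ has density $c_N(1-q^2)^{(N-3)/2}$ under $\mu_N$ with $c_N\sim\sqrt{N/(2\pi)}$, Fubini reduces expectations over $\{q(\boldsymbol{\sigma})\in I\}$ to
\begin{equation*}
\mathbb{E}_{u,0}\{Z_{N,\beta}(\{q\in I\})\}=c_N\int_I(1-q^2)^{-3/2}\exp\bigl(N\Frf(E_0,q)-\beta(E_0N+u)q^p\bigr)dq.
\end{equation*}

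For (\ref{eq:13-3}), I apply Laplace's method at $q_*$: by the discussion around (\ref{eq:14}), $q_*$ is a critical point of $\Frf(E_0,\cdot)$, and (\ref{eq:14031}) gives $\Frf''(E_0,q_*)<0$ for large $\beta$. On $|q-q_*|<cN^{-1/2}$, Taylor expansion produces a cubic error $O(N|q-q_*|^3)=O(N^{-1/2})$, while $(1-q^2)^{-3/2}=(1-q_*^2)^{-3/2}(1+o(1))$ and $\beta(E_0N+u)(q^p-q_*^p)=O(N^{-1/2}(\log N+a_N))=o(1)$ uniformly in $u\in J_N$ (using $m_N=-E_0N+O(\log N)$ and $a_N=o(\sqrt{N})$). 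The remaining Gaussian integral evaluates to $\sqrt{2\pi/(N|\Frf''(E_0,q_*)|)}\cdot P_c$ with $P_c\to 1$ as $c\to\infty$, and multiplication by $c_N\sim\sqrt{N/(2\pi)}$ recovers exactly $\mathfrak{V}_{N,\beta}(u)$ from (\ref{eq:V(u)}).

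For (\ref{eq:13-2}), I partition $[q_{**},1]\setminus(q_*-cN^{-1/2},q_*+cN^{-1/2})$ into a \emph{near} region $\{cN^{-1/2}<|q-q_*|<\delta\}$ and a \emph{far} region $\{q\in[q_{**},q_*-\delta]\cup[q_*+\delta,1]\}$. The near region is handled by the same Laplace expansion, yielding a Gaussian-tail bound of order $\exp(-c^2|\Frf''(E_0,q_*)|/3)\cdot\mathfrak{V}_{N,\beta}(u)$ that vanishes as $c\to\infty$. The far region requires the deterministic fact that $q_*$ is the \emph{unique} maximizer of $\Frf(E_0,\cdot)$ on $[q_{**},1]$: by (\ref{eq:a39}) the critical points of $\Frf(E_0,\cdot)$ in $(0,1)$ are exactly the solutions of $\alpha_2(q)\in\{\chi_1,\chi_3\}$, and since $\alpha_2(q)=\sqrt{\binom{p}{2}}q^{p-2}(1-q^2)$ is unimodal on $(0,1)$ with $\chi_1<\chi_3$, the two such critical points lying in $[q_{**},1]$ are precisely $q_{**}$ and $q_*$. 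An argument parallel to (\ref{eq:14031}) shows $\Frf''(E_0,q_{**})>0$, so $q_{**}$ is a local minimum and $q_*$ a local maximum, forcing $\Frf(E_0,\cdot)$ to be strictly increasing on $(q_{**},q_*)$ and strictly decreasing on $(q_*,1)$, with $\Frf(E_0,q)\to-\infty$ as $q\to 1$. Thus $\Frf(E_0,q)\leq\Frf(E_0,q_*)-\eta(\delta)$ on the far region, which then contributes at most $e^{-N\eta/2}\mathfrak{V}_{N,\beta}(u)$; the polynomial blow-up of $(1-q^2)^{-3/2}$ near $q=1$ is absorbed by $N\Frf(E_0,q)\to-\infty$ there. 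The main technical obstacle is the uniformity in $u\in J_N$, which is handled cleanly because the $u$-dependence factors through $\exp(-\beta u q^p)$ and on $J_N$ the displacement $|u+E_0N|=o(\sqrt{N})$ yields only lower-order corrections.
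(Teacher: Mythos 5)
Your proposal is correct and follows essentially the same route as the paper: reduce $\mathbb{E}_{u,0}$ to a one-dimensional integral in the overlap via the conditional law of Corollary \ref{cor:conditional laws} (the paper does this through the co-area formula, producing exactly your integrand $\Phi^{(1)}_{N,\beta,u}$), then apply a Laplace/Gaussian expansion at $q_*$ using $\frac{\partial}{\partial q}\Frf(E_0,q_*)=0$, $\frac{\partial^2}{\partial q^2}\Frf(E_0,q_*)<0$, the $o(\sqrt N)$ control of the $u$-dependent term, and the uniqueness of $q_*$ as maximizer of $\Frf(E_0,\cdot)$ on $[q_{**},1)$ for the far region. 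The only slight looseness is treating the fixed-$\delta$ near region by ``the same Laplace expansion'' (the $o((q-q_*)^2)$ Taylor error is only valid locally), but your degraded Gaussian constant already reflects the standard fix of dominating $\Frf(E_0,\cdot)$ by a quadratic uniformly on a small fixed neighborhood, so this is not a genuine gap.
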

With $\chi_{1}$ given by (\ref{eq:7}), define 
\begin{equation}
C_{*}=1-2\left(\beta\chi_{1}\right)^{2}=1-\left(\frac{E_{0}}{E_{\infty}}-\sqrt{\frac{E_{0}^{2}}{E_{\infty}^{2}}-1}\right)^{2}\label{eq:Cstar}
\end{equation}
and note that by (\ref{eq:7}) and (\ref{eq:s2}), $C_{*}>0$. Define
\begin{equation}
Y_{*}\sim\mathcal{N}\left(\frac{1}{4}\log\left(C_{*}\right),-\frac{1}{2}\log\left(C_{*}\right)\right).\label{eq:48}
\end{equation}
The following proposition is key to controlling the mass of the bands
in (\ref{eq:thm1_1}).
\begin{prop}
\label{prop:Gaussflucts}For $\beta>0$ large enough we have the following.
Let $a_{N}=o\left(N\right)$ and $\epsilon_{N}=o(1)$ be sequences
of positive numbers and set $J_{N}=\left(m_{N}-a_{N},m_{N}+a_{N}\right)$.
Let $Y_{*}$ be defined as in (\ref{eq:48}). Then,
\begin{equation}
\lim_{N\to\infty}\sup_{u\in J_{N}}\left|\mathbb{P}_{u,0}\left\{ \frac{Z_{N,\beta}\left({\rm Band}\left(\epsilon_{N}\right)\right)}{\mathbb{E}_{u,0}\left\{ Z_{N,\beta}\left({\rm Band}\left(\epsilon_{N}\right)\right)\right\} }\leq t\right\} -\mathbb{P}\left\{ e^{Y_{*}}\leq t\right\} \right|=0.\label{eq:51}
\end{equation}

\end{prop}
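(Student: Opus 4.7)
The plan is to reduce the problem to a $2$-spin problem on a sub-sphere where Baik--Lee applies. Under $\mathbb{P}_{u,0}$, Corollary \ref{cor:conditional laws} tells us that $H_N(\boldsymbol{\sigma})$ is distributed as $uq^p(\boldsymbol{\sigma})+\bar{H}_N^{\hat{\mathbf{n}},2}(\boldsymbol{\sigma})+\bar{H}_N^{\hat{\mathbf{n}},3+}(\boldsymbol{\sigma})$ with the three summands independent. The first step is to disintegrate $\mu_N$ on ${\rm Band}(\epsilon_N)$ as the product of the (explicit) density of the overlap $q=R(\boldsymbol{\sigma},\hat{\mathbf{n}})$ against the uniform measure on each slice $\{R(\boldsymbol{\sigma},\hat{\mathbf{n}})=q\}$, parametrized via $\theta_q$ from Section \ref{sec:overlapsVals}. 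Since $\epsilon_N=o(1)$ and each deterministic factor in the integrand is smooth in $q$, a Laplace expansion about $q_*$ (whose strict maximality is ensured by (\ref{eq:14031})) reduces the ratio $Z_{N,\beta}({\rm Band}(\epsilon_N))/\mathbb{E}_{u,0}\{Z_{N,\beta}({\rm Band}(\epsilon_N))\}$, up to an $o_\mathbb{P}(1)$ multiplicative factor, to the analogous ratio for $\bar{H}_N^{\hat{\mathbf{n}},2}|_{q_*}+\bar{H}_N^{\hat{\mathbf{n}},3+}|_{q_*}$ on $\mathbb{S}^{N-2}$. Crucially, the $u$-dependent factor $e^{-\beta u q_*^p}$ appears identically in numerator and denominator and cancels, and the slow dependence of the Laplace correction on $u\in J_N$ (controlled by $a_N=o(N)$) delivers the uniformity in $u$.

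On the slice, by Lemma \ref{lem:HNdecomposition}, $\bar{H}_N^{\hat{\mathbf{n}},2}|_{q_*}$ is distributed as a pure $2$-spin Hamiltonian on $\mathbb{S}^{N-2}$ multiplied by $\alpha_2(q_*)=\chi_1$. The corresponding effective inverse temperature $\beta_{\rm eff}=\beta\chi_1\sqrt{N/(N-1)}$ satisfies $\beta_{\rm eff}<1/\sqrt{2}$ by (\ref{eq:7}), so Theorem \ref{thm:BaikLee}, in the high-temperature form (\ref{eq:BaikLee-high}), asserts that the centered log $2$-spin partition function converges in distribution to $\mathcal{N}(f,-2f)$ with $f=\tfrac{1}{4}\log(1-2\beta_{\rm eff}^2)\to\tfrac{1}{4}\log C_*$. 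This matches exactly the law of $Y_*$ from (\ref{eq:48}), and is the content of the Lemma \ref{lem:BaikLee} previewed in the outline.

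The main obstacle is to show that the higher-spin contribution $\bar{H}_N^{\hat{\mathbf{n}},3+}|_{q_*}$ induces only $o_\mathbb{P}(1)$ fluctuations in the log partition function around its mean, so that it is absorbed into the denominator and does not affect the limiting law. For large fixed $\beta$, the coefficients $\alpha_k(q_*)=\binom{p}{k}^{1/2}(1-q_*^2)^{k/2}q_*^{p-k}$ are significantly smaller for $k\ge 3$ than $\alpha_2(q_*)$, placing each $k$-spin piece at a very high effective temperature. I would therefore freeze $\bar{H}^{3+}$ and apply the Baik--Lee step conditional on it, and then establish concentration of $\log Z$ as a Lipschitz functional of the Gaussian vector $\bar{H}^{3+}$ using a Borell--TIS or Gaussian Poincar\'e estimate, combined with a high-temperature expansion of the higher-spin log partition function in the small parameters $\beta\alpha_k(q_*)$, $k\ge 3$. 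This is what is packaged as Lemma \ref{lem:ConcentrationZ} in the outline; the technical heart of the proof is to verify that the relevant Lipschitz constants decay fast enough in $1-q_*^2$ for concentration to beat the $\sqrt{N}$-scale growth of Gaussian deviations.
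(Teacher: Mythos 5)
Your skeleton matches the paper's (Baik--Lee for the $2$-spin fluctuations, then showing the $3$-and-above spins do not contribute), but the two places where you defer the work are exactly where your proposed tools are inadequate, so as written the argument has genuine gaps.

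First, the reduction to the single slice $q_*$. The band has width $\epsilon_N=o(1)$, which may be much larger than $N^{-1/2}$, and the random factor in the integrand, $Z_{N-1,\beta}\bigl(\bar{H}_{N}^{\hat{\mathbf{n}},2+}|_{q}\bigr)$, varies with $q$ through the effective temperatures $\beta|\alpha_k(q)|$; a Laplace expansion controls only the deterministic weight $(1-q^2)^{(N-3)/2}e^{-\beta u q^p}\mathbb{E}Z(q)$. To replace the random ratio $Z(q)/\mathbb{E}Z(q)$ by its value at $q_*$ uniformly over the band you need an equicontinuity/tightness statement for the centered log-partition-function process as a function of the effective inverse temperature on a small interval. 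This is precisely the content of Lemma \ref{lem:BaikLee}, which the paper obtains by rerunning parts of the Baik--Lee argument uniformly in $\beta_0$ and invoking the Bai--Silverstein process CLT for linear eigenvalue statistics; your proposal assumes this continuity implicitly and supplies no substitute.

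Second, and more seriously, the treatment of $\bar{H}_{N}^{\hat{\mathbf{n}},3+}$. A Borell--TIS or Gaussian Poincar\'e bound applied to $\log Z_{N,\beta}({\rm Band}(\epsilon_N))$ as a Lipschitz function of the $3+$ disorder has Lipschitz constant of order $\beta\sqrt{N\sum_{k\geq3}\alpha_k^2(q_*)}$ (this is exactly the computation in Lemma \ref{lem:devFE-RegIV}); for fixed large $\beta$ the factor $\sum_{k\ge3}\alpha_k^2(q_*)\asymp(1-q_*^2)^3$ is a constant in $N$, so no amount of smallness in $1-q_*^2$ prevents the resulting deviation scale from being $\Theta(\sqrt{N})$ --- vastly larger than the $O(1)$ fluctuations of $Y_*$ you must preserve. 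What actually makes the $3+$ contribution negligible is not a crude Lipschitz bound but the fact that two replicas drawn from the band have projected overlap $R_{\hat{\mathbf{n}}}(\boldsymbol{\sigma},\boldsymbol{\sigma}')=O(N^{-1/2})$ with overwhelming Gibbs weight, so the relevant covariance $N\sum_{k\geq3}\binom{p}{k}\rho^k$ is $o(1)$. The paper encodes this as a conditional second-moment/Chebyshev argument: given $(u,0,\mathbf{A})$ the ratio has conditional mean $1$ and conditional second moment $1+o(1)$ (Lemma \ref{lem:ConcentrationZ}, resting on part (\ref{enu:3_lem2nd}) of Lemma \ref{lem:Z_2nd_mom} and Corollary \ref{cor:2nd_moment}). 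Your plan contains no analogue of this overlap-concentration input, and the ``small parameters $\beta\alpha_k(q_*)$'' cannot replace it; without it the claimed $o_{\mathbb{P}}(1)$ effect of the higher spins is unproved.
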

In Section \ref{sub:moments} we prove Proposition \ref{prop:means}.
We proceed with a corresponding second moment computation in Section
\ref{sub:2nd}. In Section \ref{sub:conv_2to3+} we prove a version
of Proposition \ref{prop:Gaussflucts} where the `3-and-above' spins
in the decomposition (\ref{eq:Hbar decomposition}) are averaged out.
Lastly, we prove Proposition \ref{prop:Gaussflucts} in Section \ref{sub:pfGaussFlucts}
by essentially showing that this averaging has no effect in the scale
we work in.

\subsection{\label{sub:moments}Proof of Proposition \ref{prop:means}}

The first step in the proof of the proposition, is the computation
of expectations on exponential scale in the lemma below.
\begin{lem}
\label{lem:Z_1st_mom}For large enough $\beta$ we have the following.
Let $a_{N}>0$ be a sequence such that $a_{N}/N\to0$ and set $J_{N}=\left(m_{N}-a_{N},m_{N}+a_{N}\right)$.
Then for any $c$, $\epsilon>0$,

\begin{equation}
\limsup_{N\to\infty}\sup_{u\in J_{N}}\left|\frac{1}{N}\log\left(\mathbb{E}_{u,0}\left\{ Z_{N,\beta}\left({\rm Band}\left(cN^{-1/2}\right)\right)\right\} \right)-\Frf\left(E_{0},q_{*}\right)\right|=0,\label{eq:1812_2}
\end{equation}
and

\begin{equation}
\limsup_{N\to\infty}\sup_{u\in J_{N}}\frac{1}{N}\log\left(\mathbb{E}_{u,0}\left\{ Z_{N,\beta}\left({\rm Cap}\setminus{\rm Band}\left(\epsilon\right)\right)\right\} \right)<\Frf\left(E_{0},q_{*}\right).\label{eq:13}
\end{equation}
\end{lem}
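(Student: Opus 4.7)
The plan is to apply Corollary \ref{cor:conditional laws} to rewrite the conditional expectation as an integral depending on $\boldsymbol{\sigma}$ only through the overlap $q(\boldsymbol{\sigma}) = R(\boldsymbol{\sigma}, \hat{\mathbf{n}})$, reduce it to a one-dimensional Laplace integral via the density of $q$ under $\mu_N$, and read off the exponential rate from $\Frf(E_0,\cdot)$. By Corollary \ref{cor:conditional laws} and Fubini,
\begin{equation*}
\mathbb{E}_{u,0}\{Z_{N,\beta}(B)\} = \int_B \exp\Bigl\{-\beta u q^p(\boldsymbol{\sigma}) + \tfrac{1}{2}\beta^2 {\rm Var}\bigl(\bar{H}_N^{\hat{\mathbf{n}},2+}(\boldsymbol{\sigma})\bigr)\Bigr\}\, d\mu_N(\boldsymbol{\sigma}),
\end{equation*}
and Lemma \ref{lem:HNdecomposition} together with $\sum_{k=0}^p \alpha_k^2(q) = 1$, $\alpha_0(q) = q^p$ and $\alpha_1^2(q) = p q^{2p-2}(1-q^2)$ give ${\rm Var}(\bar{H}_N^{\hat{\mathbf{n}},2+}(\boldsymbol{\sigma})) = N(1 - q^{2p} - p q^{2p-2}(1-q^2))$ with $q = q(\boldsymbol{\sigma})$. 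Since the integrand depends on $\boldsymbol{\sigma}$ only through $q$, and the density of $q$ under $\mu_N$ is $C_N(1-q^2)^{(N-3)/2}$ on $[-1,1]$ with $\log C_N = O(\log N)$,
\begin{equation*}
\mathbb{E}_{u,0}\{Z_{N,\beta}(B)\} = C_N \int_{I(B)} \exp\{N \Phi_N(u, q)\}\, dq,
\end{equation*}
where $I(B) \subset [-1,1]$ is the image of $B$ under $R(\cdot, \hat{\mathbf{n}})$ and
\begin{equation*}
\Phi_N(u, q) = \tfrac{N-3}{2N}\log(1-q^2) - \beta (u/N) q^p + \tfrac{1}{2}\beta^2 \bigl(1 - q^{2p} - p q^{2p-2}(1-q^2)\bigr).
\end{equation*}
For $u \in J_N$ one has $|u/N + E_0| \leq a_N/N + O(\log N / N) = o(1)$, so $\Phi_N(u,q) \to \Frf(E_0, q)$ uniformly on $[-1+\delta, 1-\delta]$ for every $\delta>0$ and uniformly in $u \in J_N$.

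Standard Laplace asymptotics on the exponential scale then give
\begin{equation*}
\tfrac{1}{N}\log \mathbb{E}_{u,0}\{Z_{N,\beta}(B)\} = \sup_{q \in I(B)} \Frf(E_0, q) + o(1),
\end{equation*}
uniformly in $u \in J_N$, with a matching lower bound whenever the supremum is attained at an interior point of $I(B)$. For $B = {\rm Band}(c N^{-1/2})$, $I(B) = [q_* - c N^{-1/2}, q_* + c N^{-1/2}]$ contains $q_*$ in its interior, and $q_*$ is a strict local maximum of $\Frf(E_0, \cdot)$ by (\ref{eq:14031}), which gives (\ref{eq:1812_2}). For $B = {\rm Cap} \setminus {\rm Band}(\epsilon)$, $I(B) = [q_{**}, q_* - \epsilon] \cup [q_* + \epsilon, 1]$, and we need $\sup_{q \in I(B)} \Frf(E_0, q) < \Frf(E_0, q_*)$. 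By (\ref{eq:a39}), the critical points of $\Frf(E_0,\cdot)$ in $(0,1)$ satisfy $\alpha_2(q) \in \{\chi_1, \chi_3\}$. Since $\alpha_2(\cdot)$ peaks at $q = \sqrt{(p-2)/p}$ and $\chi_3 = O(1/\beta)$, both $q_{**}$ and $q_*$ lie past this peak for large $\beta$, so $\alpha_2(\cdot)$ is strictly decreasing on $[q_{**}, 1)$, and $q_{**}$ and $q_*$ are the only critical points of $\Frf(E_0,\cdot)$ in $[q_{**}, 1)$. The second-derivative signs $2(\beta\chi_3)^2 - 1 > 0 > 2(\beta\chi_1)^2 - 1$ (cf.\ (\ref{eq:14031})) identify $q_{**}$ as a strict local minimum and $q_*$ as a strict local maximum; together with $\Frf(E_0, q) \to -\infty$ as $q \to 1$, this forces $\Frf(E_0,\cdot)$ to strictly increase on $[q_{**}, q_*]$ and strictly decrease on $[q_*, 1)$. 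Thus $\sup_{q \in I(B)} \Frf(E_0, q) = \max\{\Frf(E_0, q_* - \epsilon), \Frf(E_0, q_* + \epsilon)\} < \Frf(E_0, q_*)$, with a gap independent of $u \in J_N$, proving (\ref{eq:13}).

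The main technical point is the global shape analysis of $\Frf(E_0, \cdot)$ on $[q_{**}, 1)$ identifying $q_*$ as the unique global maximum; once this is in hand, the uniform convergence $\Phi_N \to \Frf(E_0, \cdot)$ and elementary Laplace estimates on $\frac{1}{N}\log$-scale finish the job. Uniformity in $u \in J_N$ is automatic, since the only $u$-dependence of $\Phi_N$ is through $-\beta(u/N) q^p$ and $|u/N + E_0| = o(1)$ uniformly.
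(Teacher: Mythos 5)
Your proposal is correct and follows essentially the same route as the paper: Corollary \ref{cor:conditional laws} reduces the conditional expectation to a one-dimensional integral over the overlap $q$ with exponential rate $\Frf(E_0,q)$ (the paper does this via the co-area formula, which yields exactly your density $\frac{\omega_{N-1}}{\omega_N}(1-q^2)^{(N-3)/2}$), and the conclusion rests on $q_*$ being the unique global maximizer of $\Frf(E_0,\cdot)$ on $[q_{**},1)$. Your explicit monotonicity analysis of $\Frf(E_0,\cdot)$ (via monotonicity of $\alpha_2$ past its peak and the second-derivative signs at $q_{**}$ and $q_*$) is just a spelled-out version of the paper's appeal to $q_*$ being the only interior critical point together with $\frac{\partial^2}{\partial q^2}\Frf(E_0,q_*)<0$.
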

\begin{proof}
If $\left\{ q_{i}\right\} _{i=0}^{k}$ is a finite sequence such that,
with $j<k$, 
\begin{equation}
q_{**}=q_{0}<q_{1}<\cdots<q_{j}=q_{*}-\epsilon<q_{*}+\epsilon=q_{j+1}<\cdots<q_{k}=1,\label{eq:15}
\end{equation}
then 
\begin{equation}
\mathbb{E}_{u,0}\left\{ Z_{N,\beta}\left({\rm Cap}\setminus{\rm Band}\left(\epsilon\right)\right)\right\} =\sum_{i\neq j}\mathbb{E}_{u,0}\left\{ Z_{N,\beta}\left({\rm Band}\left(\hat{\mathbf{n}},q_{i},q_{i+1}\right)\right)\right\} .\label{eq:1812_1}
\end{equation}

Using the co-area formula with the mapping $\boldsymbol{\sigma}\mapsto R\left(\boldsymbol{\sigma},\hat{\mathbf{n}}\right)$
we obtain
\begin{align}
\mathbb{E}_{u,0}\left\{ Z_{N,\beta}\left({\rm Band}\left(\hat{\mathbf{n}},q_{i},q_{i+1}\right)\right)\right\}  & =\int_{q_{i}}^{q_{i+1}}\Phi_{N,\beta,u}^{\left(1\right)}\left(q\right)dq,\label{eq:16}
\end{align}
where, using Corollary \ref{cor:conditional laws} and (\ref{eq:1220-2}),
\begin{align}
\Phi_{N,\beta,u}^{\left(1\right)}\left(q\right) & \triangleq\frac{\omega_{N-1}}{\omega_{N}}\left(1-q^{2}\right)^{\frac{N-3}{2}}\Xi_{N,\beta,u}^{\left(1\right)}\left(q\right),\label{eq:Phi1}\\
\Xi_{N,\beta,u}^{\left(1\right)}\left(q\right) & \triangleq\mathbb{E}_{u,0}\left\{ \exp\left\{ -\beta H_{N}\left(\boldsymbol{\sigma}_{q}\right)\right\} \right\} \nonumber \\
 & =\exp\left\{ -\beta uq^{p}+\frac{1}{2}\beta^{2}N\left(1-q^{2p}-pq^{2p-2}\left(1-q^{2}\right)\right)\right\} ,\nonumber 
\end{align}
with $\boldsymbol{\sigma}_{q}$ being an arbitrary point on the sphere
such that $R\left(\boldsymbol{\sigma}_{q},\hat{\mathbf{n}}\right)=q$.
Let $\delta>0$ and note that, with $\Frf\left(E,q\right)$ as defined
in (\ref{eq:6}), uniformly in $q\in\left(-1+\delta,1-\delta\right)$,
\[
\lim_{N\to\infty}\left|\frac{1}{N}\log\left(\Phi_{N,\beta,-NE_{0}}^{\left(1\right)}\left(q\right)\right)-\Frf\left(E_{0},q\right)\right|=0,
\]
and, 
\[
\limsup_{N\to\infty}\sup_{u\in J_{N}}\frac{1}{N}\left|\log\left(\vphantom{\Phi_{N,\beta,-NE_{0}}^{\left(1\right)}}\mathbb{E}_{u,0}\left\{ Z_{N,\beta}\left({\rm Band}\left(\hat{\mathbf{n}},q-s,q+s\right)\right)\right\} \right)-\log\left(\Phi_{N,\beta,-NE_{0}}^{\left(1\right)}\left(q\right)\right)\right|=o\left(s\right).
\]
For small enough $\delta>0$, 
\[
\sup_{u\in J_{N}}\frac{1}{N}\log\left(\mathbb{E}_{u,0}\left\{ Z_{N,\beta}\left({\rm Cap}\left(\hat{\mathbf{n}},1-\delta\right)\right)\right\} \right)
\]
is as negative as we wish. Hence, for small enough $\delta$, denoting
$I=(q_{**},1-\delta)\setminus[q_{*}-\epsilon,q_{*}+\epsilon]$, using
(\ref{eq:1812_1}) and refining the partition (\ref{eq:15}) if needed,
we have that the left-hand side of (\ref{eq:13}) is bounded from
above by 
\[
\limsup_{N\to\infty}\sup_{u\in J_{N}}\sup_{q\in I}\frac{1}{N}\log\left(\Phi_{N,\beta,-NE_{0}}^{\left(1\right)}\left(q\right)\right)\leq\sup_{q\in I}\Frf\left(E_{0},q\right).
\]
In a similar manner, from the representation (\ref{eq:16}) with $q_{i}=q_{*}-cN^{-1/2}$
and $q_{i+1}=q_{*}+cN^{-1/2}$ and since $\Frf\left(E_{0},q\right)$
is continuous in $q$ in a neighborhood of $q_{*}$, we have that
(\ref{eq:1812_2}) holds.

Thus, in order to finish the proof it is enough to show that $q_{*}$
is the unique maximum point of $\Frf\left(E_{0},q\right)$ in the
interval $q\in\left[q_{**},1\right)$. This follows since $q_{*}$
is the only critical point (see Section \ref{sec:overlapsVals}) in
the interior of the interval and since by (\ref{eq:14031}), $\frac{\partial^{2}}{\partial q^{2}}\Frf\left(E_{0},q_{*}\right)<0$,
for large$\beta$. \end{proof}
\begin{rem}
\label{rem:18}From (\ref{eq:16}) it also follows that for any $E>0$
and $q\in\left(-1,1\right)$, if $a_{N},\,\epsilon_{N}>0$ are sequences
such that $a_{N}=o(N)$, $\epsilon_{N}=o(N)$ and $\log\epsilon_{N}=o(N)$,
then, setting $J_{N}=\left(-NE-a_{N},-NE+a_{N}\right)$,

\[
\limsup_{N\to\infty}\sup_{u\in J_{N}}\left|\frac{1}{N}\log\left(\mathbb{E}_{u,0}\left\{ Z_{N,\beta}\left({\rm Band}\left(\hat{\mathbf{n}},q,q+\epsilon_{N}\right)\right)\right\} \right)-\Frf\left(E,q\right)\right|=0.
\]
We note that $\Frf\left(E,q\right)$ coincides (when taking into account
small differences in the definition of the model) with the TAP free
energy computed by Kurchan, Parisi and Virasoro \cite[Eq. (20)]{KurchanParisiVirasoro}
and Crisanti and Sommers \cite[Eq. (7)]{CrisantiSommersTAPpspin}
for a pure state at energy $E$ and such that the overlap inside the
state is $q^{2}$. Recall that Theorem \ref{thm:Geometry} states
that if two samples are taken from a band of overlaps approximately
$q_{*}$ as in (\ref{eq:thm1_1}), then the overlap of the samples
is generically equal to $q_{*}^{2}$. The proof of this fact generalizes
to any $q>q_{c}$ instead of $q_{*}$. The condition $q>q_{c}$ corresponds
to \cite[Eq. (25)]{KurchanParisiVirasoro} which is referred to in
\cite{KurchanParisiVirasoro} as a condition for stability with respect
to fluctuations inside a cluster.
\end{rem}
We continue with the proof of Proposition \ref{prop:means}. It follows
from Lemma \ref{lem:Z_1st_mom} that there exists some sequence $\epsilon_{N}$
such that $\epsilon_{N}\to0$ as $N\to\infty$ 
\[
\lim_{N\to\infty}\sup_{u\in J_{N}}\frac{\mathbb{E}_{u,0}\left\{ Z_{N,\beta}\left({\rm Cap}\setminus{\rm Band}\left(\epsilon_{N}\right)\right)\right\} }{\mathbb{E}_{u,0}\left\{ Z_{N,\beta}\left({\rm Band}\left(cN^{-1/2}\right)\right)\right\} }=0,
\]
for any $c>0$ (with no information provided, however, on the rate
of convergence of $\epsilon_{N}$ to $0$). In order to prove (\ref{eq:13-2}),
what remains to show is that 
\begin{equation}
\lim_{c\to\infty}\lim_{N\to\infty}\sup_{u\in J_{N}}\frac{\mathbb{E}_{u,0}\left\{ Z_{N,\beta}\left({\rm Band}\left(\epsilon_{N}\right)\setminus{\rm Band}\left(cN^{-1/2}\right)\right)\right\} }{\mathbb{E}_{u,0}\left\{ Z_{N,\beta}\left({\rm Band}\left(cN^{-1/2}\right)\right)\right\} }=0.\label{eq:32}
\end{equation}
By (\ref{eq:16}), with 
\[
A_{1}:=A_{1,N}=\left(q_{*}-\epsilon_{N},q_{*}+\epsilon_{N}\right),\,\,\, A_{2}:=A_{2,N}=\left(q_{*}-cN^{-1/2},q_{*}+cN^{-1/2}\right),
\]
the ratio of expectations in (\ref{eq:32}) is equal to
\begin{equation}
\frac{\int_{A_{1}\setminus A_{2}}\Phi_{N,\beta,u}^{\left(1\right)}\left(q\right)dq}{\int_{A_{2}}\Phi_{N,\beta,u}^{\left(1\right)}\left(q\right)dq}.\label{eq:38}
\end{equation}

Recall that in (\ref{eq:14031}) we showed that for $\beta$ large,
$\frac{\partial^{2}}{\partial q^{2}}\Frf\left(E_{0},q_{*}\right)<0$
and that $q_{*}$ was chosen such that $\frac{\partial}{\partial q}\Frf\left(E_{0},q_{*}\right)=0$
(see (\ref{eq:14})-(\ref{eq:7})). Therefore, as $q\to q_{*}$,
\[
\Frf\left(E_{0},q\right)=\Frf\left(E_{0},q_{*}\right)+\frac{1}{2}\frac{\partial^{2}}{\partial q^{2}}\Frf\left(E_{0},q_{*}\right)\left(q-q_{*}\right)^{2}+o\left(\left(q-q_{*}\right)^{2}\right).
\]
From our assumption made in Proposition \ref{prop:means} that $a_{N}=o\left(\sqrt{N}\right)$,
uniformly in $u\in J_{N}$ and $q\in q_{*}+\left(-\epsilon_{N},\epsilon_{N}\right)\cup\left(-cN^{-1/2},cN^{-1/2}\right)$,
with some $v\left(q,u,N\right)=o\left(\sqrt{N}\right)$, 
\[
\left(\beta E_{0}N+\beta u\right)q^{p}=\left(\beta E_{0}N+\beta u\right)q_{*}^{p}+v\left(q,u,N\right)\cdot\left(q-q_{*}\right),
\]
and, from (\ref{eq:Phi1}), 
\begin{align*}
\Phi_{N,\beta,u}^{\left(1\right)}\left(q\right) & =\frac{\omega_{N-1}}{\omega_{N}}\left(1-q^{2}\right)^{-3/2}\exp\left\{ N\Frf\left(E_{0},q\right)-\left(\beta E_{0}N+\beta u\right)q^{p}\right\} \\
 & =\left(1+o\left(1\right)\right)\sqrt{\frac{N}{2\pi}}\left(1-q_{*}^{2}\right)^{-3/2}\exp\left\{ N\Frf\left(E_{0},q_{*}\right)-\left(\beta E_{0}N+\beta u\right)q_{*}^{p}\right\} \\
 & \times\exp\left\{ \frac{1}{2}N\frac{\partial^{2}}{\partial q^{2}}\Frf\left(E_{0},q_{*}\right)\cdot\left(q-q_{*}\right)^{2}+N\cdot o\left(\left(q-q_{*}\right)^{2}\right)+v\left(q,u,N\right)\cdot\left(q-q_{*}\right)\right\} ,
\end{align*}
where we used the fact that $\sqrt{N}\omega_{N}/\omega_{N-1}\to\sqrt{2\pi}$
as $N\to\infty$ and the $o\left(1\right)$ term is with respect to
taking $N\to\infty$. 

By the change of variables $\sqrt{N}\left(q-q_{*}\right)\mapsto s$
we obtain that, uniformly in $u\in J_{N}$, 
\begin{align*}
 & \lim_{c\to\infty}\limsup_{N\to\infty}\frac{\int_{A_{1}\setminus A_{2}}\Phi_{N,\beta,u}^{\left(1\right)}\left(q\right)dq}{\exp\left\{ N\Frf\left(E_{0},q_{*}\right)-\left(\beta E_{0}N+\beta u\right)q_{*}^{p}\right\} }\\
 & \leq\lim_{c\to\infty}\left(1-q_{*}^{2}\right)^{-3/2}\frac{1}{\sqrt{2\pi}}\int_{\mathbb{R}\setminus\left(-c,c\right)}\exp\left\{ \frac{1}{2}\frac{\partial^{2}}{\partial q^{2}}\Frf\left(q_{*}\right)s^{2}\right\} ds=0.
\end{align*}
Similarly, uniformly in $u\in J_{N}$,
\begin{align*}
 & \lim_{c\to\infty}\lim_{N\to\infty}\frac{\int_{A_{2}}\Phi_{N,\beta,u}^{\left(1\right)}\left(q\right)dq}{\exp\left\{ N\Frf\left(E_{0},q_{*}\right)-\left(\beta E_{0}N+\beta u\right)q_{*}^{p}\right\} }\\
 & =\lim_{c\to\infty}\left(1-q_{*}^{2}\right)^{-3/2}\frac{1}{\sqrt{2\pi}}\int_{\left(-c,c\right)}\exp\left\{ \frac{1}{2}\frac{\partial^{2}}{\partial q^{2}}\Frf\left(q_{*}\right)s^{2}\right\} ds\\
 & =\left(1-q_{*}^{2}\right)^{-3/2}\left|\frac{\partial^{2}}{\partial q^{2}}\Frf\left(E_{0},q_{*}\right)\right|^{-1/2}.
\end{align*}
This proves (\ref{eq:32}) and therefore (\ref{eq:13-2}). By (\ref{eq:16}),
the last equation also proves (\ref{eq:13-3}), which completes the
proof of Proposition \ref{prop:means}.\qed

\subsection{\label{sub:2nd}A second moment calculation}

For $\boldsymbol{\sigma}$, $\boldsymbol{\sigma}'$, $\boldsymbol{\sigma}_{0}\in\mathbb{S}^{N-1}$
define the projective overlap of $\boldsymbol{\sigma}$ and $\boldsymbol{\sigma}'$
relative to $\boldsymbol{\sigma}_{0}$ by 
\begin{equation}
R_{\boldsymbol{\sigma}_{0}}\left(\boldsymbol{\sigma},\boldsymbol{\sigma}'\right)=R\left(\boldsymbol{\sigma}-R(\boldsymbol{\sigma},\boldsymbol{\sigma}_{0})\boldsymbol{\sigma}_{0},\,\boldsymbol{\sigma}'-R(\boldsymbol{\sigma}',\boldsymbol{\sigma}_{0})\boldsymbol{\sigma}_{0}\right).\label{eq:projective overlap}
\end{equation}
For any Borel set $B\subset\mathbb{S}^{N-1}$ and $I_{R}\subset\left[-1,1\right]$,
define the subset 
\begin{equation}
T_{\boldsymbol{\sigma}_{0}}\left(B;I_{R}\right)\triangleq\left\{ \left(\boldsymbol{\sigma},\boldsymbol{\sigma}'\right)\in B\times B\,:\, R_{\boldsymbol{\sigma}_{0}}\left(\boldsymbol{\sigma},\boldsymbol{\sigma}'\right)\in I_{R}\right\} .\label{eq:Tsigma}
\end{equation}
For any Borel set $B_{2}\subset\left(\mathbb{S}^{N-1}\right)^{2}$
define
\begin{equation}
\left(Z\times Z\right)_{N,\beta}\left(B_{2}\right)\triangleq\int_{B_{2}}\exp\left\{ -\beta\left(H_{N}\left(\boldsymbol{\sigma}\right)+H_{N}\left(\boldsymbol{\sigma}'\right)\right)\right\} d\mu_{N}\otimes\mu_{N}\left(\boldsymbol{\sigma},\boldsymbol{\sigma}'\right),\label{eq:ZtimesZ}
\end{equation}
and, by an abuse of notation,
\begin{equation}
\left(Z\times Z\right)_{N,\beta}\left({\rm Band}\left(\boldsymbol{\sigma}_{0},q,q'\right);I_{R}\right)\triangleq\left(Z\times Z\right)_{N,\beta}\left(T_{\boldsymbol{\sigma}_{0}}\left({\rm Band}\left(\boldsymbol{\sigma}_{0},q,q'\right);I_{R}\right)\right).\label{eq:ZtimesZ2}
\end{equation}
Note that $\left(Z\times Z\right)_{N,\beta}\left({\rm Band}\left(\boldsymbol{\sigma}_{0},q,q'\right);\left[-1,1\right]\right)=\left(Z_{N,\beta}\left({\rm Band}\left(\boldsymbol{\sigma}_{0},q,q'\right)\right)\right)^{2}$. 
\begin{lem}
\label{lem:Z_2nd_mom}For large enough $\beta$ we have the following.
Let $a_{N}>0$ be a sequence such that $a_{N}/N\to0$ and set $J_{N}=\left(m_{N}-a_{N},m_{N}+a_{N}\right)$.
Define $C_{*}$ by (\ref{eq:Cstar}). Then:
\begin{enumerate}
\item \label{enu:1_lem2nd}For any $c>0$,
\begin{equation}
\lim_{N\to\infty}\sup_{u\in J_{N}}\left|\frac{1}{N}\log\left(\mathbb{E}_{u,0}\left\{ \left(Z_{N,\beta}\left({\rm Band}\left(cN^{-1/2}\right)\right)\right)^{2}\right\} \right)-2\Frf\left(E_{0},q_{*}\right)\right|=0.\label{eq:29}
\end{equation}

\item \label{enu:2_lem2nd}For any $c>0$ and $\rho_{0}>0$,
\begin{equation}
\limsup_{N\to\infty}\sup_{u\in J_{N}}\frac{1}{N}\log\left(\mathbb{E}_{u,0}\left\{ \left(Z\times Z\right)_{N,\beta}\left({\rm Band}\left(cN^{-1/2}\right);\left[-1,1\right]\setminus\left(-\rho_{0},\rho_{0}\right)\right)\right\} \right)<2\Frf\left(E_{0}q_{*}\right).\label{eq:12}
\end{equation}

\item \label{enu:3_lem2nd}For any $c>0$,
\begin{equation}
\lim_{\rho\to\infty}\limsup_{N\to\infty}\sup_{u\in J_{N}}\frac{\mathbb{E}_{u,0}\left\{ \left(Z\times Z\right)_{N,\beta}\left({\rm Band}\left(cN^{-1/2}\right);\left[-1,1\right]\setminus\left(-\rho N^{-1/2},\rho N^{-1/2}\right)\right)\right\} }{\left(\mathbb{E}_{u,0}\left\{ Z_{N,\beta}\left({\rm Band}\left(cN^{-1/2}\right)\right)\right\} \right)^{2}}=0.\label{eq:12-4}
\end{equation}

\item \label{enu:4_lem2nd}For any $c>0$,
\begin{equation}
\lim_{N\to\infty}\sup_{u\in J_{N}}\left|\frac{\mathbb{E}_{u,0}\left\{ \left(Z_{N,\beta}\left({\rm Band}\left(cN^{-1/2}\right)\right)\right)^{2}\right\} }{\left(\mathbb{E}_{u,0}\left\{ Z_{N,\beta}\left({\rm Band}\left(cN^{-1/2}\right)\right)\right\} \right)^{2}}-\frac{1}{\sqrt{C_{*}}}\right|=0.\label{eq:29-2}
\end{equation}

\end{enumerate}
\end{lem}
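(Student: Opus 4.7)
Mimicking the co-area reduction used in the proof of Lemma~\ref{lem:Z_1st_mom}, I would apply the co-area formula with the map $(\boldsymbol{\sigma},\boldsymbol{\sigma}')\mapsto(q_{1},q_{2},r)$, where $q_{i}=R(\boldsymbol{\sigma}_{i},\hat{\mathbf{n}})$ and $r=R_{\hat{\mathbf{n}}}(\boldsymbol{\sigma},\boldsymbol{\sigma}')$, to obtain
\[
\mathbb{E}_{u,0}\bigl\{(Z\times Z)_{N,\beta}\bigl({\rm Band}(cN^{-1/2});I_{R}\bigr)\bigr\}=\frac{\omega_{N-1}\omega_{N-2}}{\omega_{N}^{2}}\iint_{A_{2}^{2}}\int_{I_{R}}\Phi_{N,\beta,u}^{(2)}\,dr\,dq_{1}dq_{2},
\]
with $A_{2}=(q_{*}-cN^{-1/2},q_{*}+cN^{-1/2})$ and $\Phi_{N,\beta,u}^{(2)}=(1-q_{1}^{2})^{\frac{N-3}{2}}(1-q_{2}^{2})^{\frac{N-3}{2}}(1-r^{2})^{\frac{N-4}{2}}\Xi_{N,\beta,u}^{(2)}$. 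Using $R(\boldsymbol{\sigma},\boldsymbol{\sigma}')=q_{1}q_{2}+r\sqrt{(1-q_{1}^{2})(1-q_{2}^{2})}$ and the binomial theorem to expand $R(\boldsymbol{\sigma},\boldsymbol{\sigma}')^{p}$, Corollary~\ref{cor:conditional laws} yields that $(H_{N}(\boldsymbol{\sigma}),H_{N}(\boldsymbol{\sigma}'))$ is jointly Gaussian under $\mathbb{P}_{u,0}$ with means $uq_{i}^{p}$ and cross covariance $N\sum_{k=2}^{p}\alpha_{k}(q_{1})\alpha_{k}(q_{2})r^{k}$, so
\[
\Xi_{N,\beta,u}^{(2)}=\exp\!\Bigl\{-\beta u(q_{1}^{p}+q_{2}^{p})+\tfrac{N\beta^{2}}{2}\!\sum_{k=2}^{p}\!\bigl(\alpha_{k}(q_{1})^{2}+\alpha_{k}(q_{2})^{2}+2\alpha_{k}(q_{1})\alpha_{k}(q_{2})r^{k}\bigr)\Bigr\}.
\]

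\textbf{The rate function.} Absorbing the volume exponents and replacing $u\in J_{N}$ by $-E_{0}N$ (admissible up to an $e^{o(N)}$ factor uniform in $u$, since $u$ enters $\Xi^{(2)}$ only through $\exp\{-\beta u(q_{1}^{p}+q_{2}^{p})\}$ and $|u+E_{0}N|\le a_{N}=o(N)$), the integrand equals the polynomial prefactor $(1-q_{1}^{2})^{-3/2}(1-q_{2}^{2})^{-3/2}(1-r^{2})^{-2}$ times $\exp\{NF_{N}(q_{1},q_{2},r)\}$, with
\[
F_{N}\to\Frf(E_{0},q_{1})+\Frf(E_{0},q_{2})+\psi(q_{1},q_{2},r),\quad\psi\triangleq\tfrac{1}{2}\log(1-r^{2})+\beta^{2}\!\sum_{k=2}^{p}\alpha_{k}(q_{1})\alpha_{k}(q_{2})r^{k}.
\]
Since $\psi(q_{1},q_{2},0)\equiv 0$, the Hessian of the limit at $(q_{*},q_{*},0)$ is the diagonal matrix $\mathrm{diag}(\partial_{q}^{2}\Frf(E_{0},q_{*}),\partial_{q}^{2}\Frf(E_{0},q_{*}),-C_{*})$, where $\partial_{r}^{2}\psi|_{(q_{*},q_{*},0)}=-1+2\beta^{2}\alpha_{2}(q_{*})^{2}=-1+2\beta^{2}\chi_{1}^{2}=-C_{*}<0$ follows from $\alpha_{2}(q_{*})=\chi_{1}$ and (\ref{eq:Cstar}). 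The main obstacle is to verify that $(q_{*},q_{*},0)$ is the \emph{unique} global maximum of the limit on $A_{2}^{2}\times(-1,1)$ for large $\beta$. In the $q_{i}$ directions this is supplied by Lemma~\ref{lem:Z_1st_mom}; in the $r$ direction, (\ref{eq:7}) gives that $\beta\chi_{1}$ is a $\beta$-independent constant strictly less than $1/\sqrt{2}$, while $\beta^{2}\alpha_{k}(q_{*})^{2}=O(\beta^{2-k})\to 0$ for $k\ge 3$ (since $1-q_{*}^{2}=O(\beta^{-1})$), so that as $\beta\to\infty$ the function $\psi(q_{*},q_{*},\cdot)$ converges uniformly on $[-1+\delta,1-\delta]$ to the strictly concave $\tfrac{1}{2}\log(1-r^{2})+\beta^{2}\chi_{1}^{2}r^{2}$, uniquely maximized at $r=0$; together with $\psi(q_{*},q_{*},r)\to-\infty$ as $|r|\to 1$, compactness yields $\sup_{|r|\ge r_{0}}\psi(q_{*},q_{*},r)<0$ for every $r_{0}>0$ and $\beta$ large.

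\textbf{Proofs of (\ref{enu:1_lem2nd})--(\ref{enu:4_lem2nd}).} Parts (\ref{enu:1_lem2nd}) and (\ref{enu:2_lem2nd}) follow by standard Laplace asymptotics: (\ref{enu:1_lem2nd}) from matching the exponential rate to $2\Frf(E_{0},q_{*})$ at $(q_{*},q_{*},0)$, and (\ref{enu:2_lem2nd}) from the strict inequality $\sup_{|r|\ge\rho_{0}}\psi(q_{*},q_{*},r)<0$, which persists after adding the nonpositive correction $\Frf(E_{0},q_{i})-\Frf(E_{0},q_{*})\le 0$ on $A_{2}$. For (\ref{enu:3_lem2nd}) and (\ref{enu:4_lem2nd}) I would perform a sharp Laplace computation by rescaling $s_{i}=\sqrt{N}(q_{i}-q_{*})\in(-c,c)$ and $t=\sqrt{N}r$: using $\omega_{N-1}/\omega_{N}\sim\sqrt{N/(2\pi)}$ so that $\omega_{N-1}\omega_{N-2}(2\pi/N)^{3/2}/\omega_{N}^{2}\to 1$, and the diagonal Hessian above, one obtains, as $c\to\infty$,
\[
\mathbb{E}_{u,0}\{Z_{N,\beta}({\rm Band}(cN^{-1/2}))^{2}\}=(1+o(1))(1-q_{*}^{2})^{-3}|\partial_{q}^{2}\Frf(E_{0},q_{*})|^{-1}C_{*}^{-1/2}\,e^{2N\Frf(E_{0},q_{*})-2(\beta E_{0}N+\beta u)q_{*}^{p}}.
\]
Dividing by $\mathfrak{V}_{N,\beta}(u)^{2}$ from (\ref{eq:13-3}), which contributes the matching $(1-q_{*}^{2})^{-3}|\partial_{q}^{2}\Frf(E_{0},q_{*})|^{-1}$ and the same exponential factor, yields the limit $1/\sqrt{C_{*}}$, proving (\ref{enu:4_lem2nd}); restricting the $t$-integral to $|t|>\rho$ reduces (\ref{enu:3_lem2nd}) to the Gaussian tail estimate $\int_{|t|>\rho}e^{-C_{*}t^{2}/2}dt\to 0$ as $\rho\to\infty$. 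Uniformity in $u\in J_{N}$ is preserved throughout because $u$ enters only through $\exp\{-2\beta uq_{*}^{p}\}$, which is absorbed in $\mathfrak{V}_{N,\beta}(u)^{2}$.
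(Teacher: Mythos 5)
Your reduction is the same as the paper's: the co-area formula in $(q_{1},q_{2},\varrho)$, the Gaussian computation of $\Xi^{(2)}_{N,\beta,u}$ (your $\sum_{k\ge2}\alpha_{k}(q_{1})\alpha_{k}(q_{2})\varrho^{k}$ is exactly the paper's expanded covariance in (\ref{eq:Xi2})), and the identification of the exponential rate $2\Frf(E_{0},q_{*})+\psi$ with $\partial_{\varrho}^{2}\psi(q_{*},q_{*},0)=-C_{*}$. Your uniqueness argument for the maximizer at $\varrho=0$ (uniform convergence of $\psi(q_{*},q_{*},\cdot)$ to the strictly concave even function $\tfrac12\log(1-\varrho^{2})+(\beta\chi_{1})^{2}\varrho^{2}$, using $\beta\chi_{1}<1/\sqrt{2}$ and $\beta^{2}\alpha_{k}(q_{*})^{2}\to0$ for $k\ge3$) is a mild variant of the paper's derivative bound (\ref{eq:31}) and works, so parts (\ref{enu:1_lem2nd}) and (\ref{enu:2_lem2nd}) are fine (at exponential scale the $u$-dependence is indeed an $e^{o(N)}$ factor).

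There is, however, a genuine gap in your treatment of parts (\ref{enu:3_lem2nd}) and (\ref{enu:4_lem2nd}). These assert convergence for \emph{every fixed} $c>0$, uniformly over $J_{N}$ with $a_{N}=o(N)$, and the normalizer is $\left(\mathbb{E}_{u,0}\{Z_{N,\beta}({\rm Band}(cN^{-1/2}))\}\right)^{2}$, not $\mathfrak{V}_{N,\beta}(u)^{2}$. You compute the second moment sharply ``as $c\to\infty$'' and then divide by $\mathfrak{V}_{N,\beta}(u)^{2}$ invoking (\ref{eq:13-3}); but (\ref{eq:13-3}) belongs to Proposition \ref{prop:means}, which assumes $a_{N}=o(\sqrt{N})$ and is itself only an iterated limit in $c$. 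For fixed $c$ the truncated Gaussian factors in the $q_{1},q_{2}$ variables do not disappear, and for $a_{N}$ between $\sqrt{N}$ and $N$ the residual factor $\exp\{-\beta(u+E_{0}N)(q_{i}^{p}-q_{*}^{p})\}$ can be of size $e^{o(\sqrt{N})}$ (since $\beta|u+E_{0}N|\,|q_{i}-q_{*}|=o(N)\cdot O(N^{-1/2})$), so your sharp asymptotic for the numerator, and the cancellation against $\mathfrak{V}^{2}$, are not uniform over $J_{N}$. The repair is the paper's route: write the ratio as $\int\Phi^{(2)}\big/\int\Phi^{(1)}\Phi^{(1)}$ and note that $\Xi^{(2)}/(\Xi^{(1)}\Xi^{(1)})$ contains no $u$ at all, so after bounding by the supremum over $(q_{1},q_{2})$ of the one-dimensional $\varrho$-integral as in (\ref{eq:34}), all $u$- and $q$-dependence and the $c$-truncation cancel identically, and the rescaled integral converges uniformly to $\int\frac{1}{\sqrt{2\pi}}e^{-C_{*}\varrho^{2}/2}d\varrho=1/\sqrt{C_{*}}$. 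This fixed-$c$, $a_{N}=o(N)$ uniformity is used downstream (Corollary \ref{cor:2nd_moment}, Lemma \ref{lem:ConcentrationZ}, and the proof of (\ref{eq:RegIbd})), so the distinction is not cosmetic.
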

\begin{proof}
Using the co-area formula with the mapping 
\begin{equation}
\left(\boldsymbol{\sigma},\boldsymbol{\sigma}'\right)\mapsto\left(q_{1},q_{2},\varrho\right)=\left(R\left(\boldsymbol{\sigma},\hat{\mathbf{n}}\right),R\left(\boldsymbol{\sigma}',\hat{\mathbf{n}}\right),R_{\hat{\mathbf{n}}}\left(\boldsymbol{\sigma},\boldsymbol{\sigma}'\right)\right),\label{eq:21}
\end{equation}
we have that, for $I_{R}=\left[-1,1\right]$ or $I_{R}=\left[-1,1\right]\setminus\left(-\rho_{0},\rho_{0}\right)$,
\begin{align}
 & \negthickspace\negthickspace\mathbb{E}_{u,0}\left\{ \left(Z\times Z\right)_{N,\beta}\left({\rm Band}\left(cN^{-1/2}\right);I_{R}\right)\right\} \label{eq:17}\\
 & =\int_{q_{*}-cN^{-1/2}}^{q_{*}+cN^{-1/2}}dq_{1}\int_{q_{*}-cN^{-1/2}}^{q_{*}+cN^{-1/2}}dq_{2}\int_{I_{R}}d\varrho\Phi_{N,\beta,u}^{\left(2\right)}\left(q_{1},q_{2},\varrho\right),\nonumber 
\end{align}
where, using Corollary \ref{cor:conditional laws} and (\ref{eq:1220-2}),

\begin{align}
\Phi_{N,\beta,u}^{\left(2\right)}\left(q_{1},q_{2},\varrho\right) & \triangleq\frac{\omega_{N-1}\omega_{N-2}}{\omega_{N}^{2}}\left(1-q_{1}^{2}\right)^{\frac{N-3}{2}}\left(1-q_{2}^{2}\right)^{\frac{N-3}{2}}\left(1-\varrho^{2}\right)^{\frac{N-4}{2}}\Xi_{N,\beta,u}^{\left(2\right)}\left(q_{1},q_{2},\varrho\right),\label{eq:Phi2}\\
\Xi_{N,\beta,u}^{\left(2\right)}\left(q_{1},q_{2},\varrho\right) & \triangleq\mathbb{E}_{u,0}\left\{ \exp\left\{ -\beta H_{N}\left(\boldsymbol{\sigma}\right)-\beta H_{N}\left(\boldsymbol{\sigma}'\right)\right\} \right\} \label{eq:Xi2}\\
 & =\Xi_{N,\beta,u}^{\left(1\right)}\left(q_{1}\right)\cdot\Xi_{N,\beta,u}^{\left(1\right)}\left(q_{2}\right)\cdot\exp\left\{ \beta^{2}N\left(\left(\left(1-q_{1}^{2}\right)^{\frac{1}{2}}\left(1-q_{2}^{2}\right)^{\frac{1}{2}}\varrho+q_{1}q_{2}\right)^{p}\right.\right.\nonumber \\
 & \left.\left.\vphantom{\left(\left(1-r_{1}^{2}\right)^{-\frac{1}{2}}\left(1-r_{2}^{2}\right)^{-\frac{1}{2}}\left\langle \theta_{1},\theta_{2}\right\rangle +r_{1}r_{2}\right)^{p}}-q_{1}^{p}q_{2}^{p}-pq_{1}^{p-1}q_{2}^{p-1}\left(1-q_{1}^{2}\right)^{\frac{1}{2}}\left(1-q_{2}^{2}\right)^{\frac{1}{2}}\varrho\right)\right\} ,\nonumber 
\end{align}
and where the relation between $\left(\boldsymbol{\sigma},\boldsymbol{\sigma}'\right)$
and $\left(q_{1},q_{2},\varrho\right)$ in the last equation is as
in (\ref{eq:21}). It follows that 
\begin{align}
\lim_{N\to\infty}\sup_{u\in J_{N}}\frac{1}{N} & \left|\log\left(\mathbb{E}_{u,0}\left\{ \left(Z\times Z\right)_{N,\beta}\left({\rm Band}\left(cN^{-1/2}\right);I_{R}\right)\right\} \right)\vphantom{\left(\Phi_{N,\beta,-NE_{0}}^{\left(2\right)}\left(r_{*},r_{*},\rho\right)\right)}\right.\label{eq:11}\\
 & \left.-\sup_{\varrho\in I_{R}}\log\left(\Phi_{N,\beta,-NE_{0}}^{\left(2\right)}\left(q_{*},q_{*},\varrho\right)\right)\right|=0.\nonumber 
\end{align}

Set 
\[
A_{0}\left(\varrho\right)=\lim_{N\to\infty}\frac{1}{N}\log\left(\Phi_{N,\beta,-NE_{0}}^{\left(2\right)}\left(q_{*},q_{*},\varrho\right)\right)
\]
and note that for any $\delta>0$ the convergence is uniform in $\varrho\in\left(-1+\delta,1-\delta\right)$.
Also, for small enough $\delta>0$, 
\[
\limsup_{N\to\infty}\sup_{\varrho\notin\left(-1+\delta,1-\delta\right)}\frac{1}{N}\log\left(\Phi_{N,\beta,-NE_{0}}^{\left(2\right)}\left(q_{*},q_{*},\varrho\right)\right)
\]
is as negative as we wish. Therefore, part (\ref{enu:2_lem2nd}) of
the lemma will follow if we show that $A_{0}\left(\varrho\right)$
attains its maximum over $\left(-1,1\right)$ uniquely at $\varrho=0$.
Since $A_{0}\left(0\right)=2\Frf\left(E_{0},q_{*}\right)$, the latter
also implies part (\ref{enu:1_lem2nd}) of the lemma.

We note that 
\[
\sup_{\rho\in I_{R}}A_{0}\left(\varrho\right)=\log\left(1-q_{*}^{2}\right)+2\beta E_{0}q_{*}^{p}+\beta^{2}\left(1-q_{*}^{2p}-pq_{*}^{2p-2}\left(1-q_{*}^{2}\right)\right)+\sup_{\varrho\in I_{R}}A_{1}\left(\varrho\right),
\]
where
\[
A_{1}\left(\varrho\right)=\frac{1}{2}\log\left(1-\varrho^{2}\right)+\beta^{2}\left(\left(\left(1-q_{*}^{2}\right)\varrho+q_{*}^{2}\right)^{p}-q_{*}^{2p}-pq_{*}^{2p-2}\left(1-q_{*}^{2}\right)\varrho\right).
\]

By expanding $\left(\left(1-q_{*}^{2}\right)\rho+q_{*}^{2}\right)^{p}$,
one can verify that for $\varrho\in\left(0,1\right)$, $A_{0}\left(\varrho\right)>A_{0}\left(-\varrho\right)$.
Also,
\begin{align}
\frac{d}{d\varrho}A_{1}\left(\varrho\right) & =-\frac{\varrho}{1-\varrho^{2}}+\beta^{2}\left(p\left(\left(1-q_{*}^{2}\right)\varrho+q_{*}^{2}\right)^{p-1}\left(1-q_{*}^{2}\right)-pq_{*}^{2p-2}\left(1-q_{*}^{2}\right)\right)\nonumber \\
 & =-\frac{\varrho}{1-\varrho^{2}}+p\beta^{2}\left(1-q_{*}^{2}\right)\sum_{k=1}^{p-1}\binom{p-1}{k}\left(\left(1-q_{*}^{2}\right)\varrho\right)^{k}q_{*}^{2\left(p-1-k\right)}\nonumber \\
 & <-\varrho C_{\beta}^{*}\left(1+o\left(1\right)\right),\label{eq:31}
\end{align}
as $\beta\to\infty$, uniformly in $\varrho$, where we relied on
the fact that as $\beta\to\infty$, $q_{*}\to1$. As noted immediately
after its definition (\ref{eq:Cstar}), $C_{\beta}^{*}$ is positive.\textcolor{red}{{}
}Hence, if $\beta$ is large enough, the derivative $\frac{d}{d\varrho}A_{1}\left(\varrho\right)$
is negative for $\rho\in\left(0,1\right)$. This implies that the
maximum of $A_{1}\left(\varrho\right)$ in $\left(-1,1\right)$ is
attained uniquely at $\varrho=0$ and proves parts (\ref{enu:1_lem2nd})
and (\ref{enu:2_lem2nd}) of the lemma.

We move on to the next parts. First note that from parts (\ref{enu:1_lem2nd})
and (\ref{enu:2_lem2nd}) of the lemma there exists some sequence
$\rho_{N}\in\left(0,1\right)$ with $\rho_{N}\to0$, as $N\to\infty$,
such that
\[
\lim_{N\to\infty}\sup_{u\in J_{N}}\frac{\mathbb{E}_{u,0}\left\{ \left(Z\times Z\right)_{N,\beta}\left({\rm Band}\left(cN^{-1/2}\right);\left[-1,1\right]\setminus\left(-\rho_{N},\rho_{N}\right)\right)\right\} }{\left(\mathbb{E}_{u,0}\left\{ Z_{N,\beta}\left({\rm Band}\left(cN^{-1/2}\right)\right)\right\} \right)^{2}}=0,
\]
for any $c>0$. In order to prove part (\ref{enu:3_lem2nd}) it is
therefore enough to show that, with $I_{R,N}:=I_{R,N}\left(\rho\right)\triangleq\left(-\rho_{N},\rho_{N}\right)\setminus\left(-\rho N^{-1/2},\rho N^{-1/2}\right)$,
\[
\lim_{\rho\to\infty}\lim_{N\to\infty}\sup_{u\in J_{N}}\frac{\mathbb{E}_{u,0}\left\{ \left(Z\times Z\right)_{N,\beta}\left({\rm Band}\left(cN^{-1/2}\right);I_{R,N}\right)\right\} }{\left(\mathbb{E}_{u,0}\left\{ Z_{N,\beta}\left({\rm Band}\left(cN^{-1/2}\right)\right)\right\} \right)^{2}}=0.
\]
By substitution of (\ref{eq:16}) and (\ref{eq:17}), this is equivalent
to
\[
\lim_{\rho\to\infty}\lim_{N\to\infty}\sup_{u\in J_{N}}\frac{\int_{q_{*}-cN^{-1/2}}^{q_{*}+cN^{-1/2}}dq_{1}\int_{q_{*}-cN^{-1/2}}^{q_{*}+cN^{-1/2}}dq_{2}\int_{I_{R,N}}d\varrho\Phi_{N,\beta,u}^{\left(2\right)}\left(q_{1},q_{2},\varrho\right)}{\int_{q_{*}-cN^{-1/2}}^{q_{*}+cN^{-1/2}}dq_{1}\int_{q_{*}-cN^{-1/2}}^{q_{*}+cN^{-1/2}}dq_{2}\Phi_{N,\beta,u}^{\left(1\right)}\left(q_{1}\right)\Phi_{N,\beta,u}^{\left(1\right)}\left(q_{2}\right)}=0.
\]
Therefore, in order to prove (\ref{eq:12-4}) it is enough to show
that
\[
\lim_{\rho\to\infty}\lim_{N\to\infty}\sup_{u\in J_{N}}\sup_{q_{i}:\left|q_{i}-q_{*}\right|\leq cN^{-1/2}}\frac{\int_{I_{R,N}}d\rho\Phi_{N,\beta,u}^{\left(2\right)}\left(q_{1},q_{2},\varrho\right)}{\Phi_{N,\beta,u}^{\left(1\right)}\left(q_{1}\right)\Phi_{N,\beta,u}^{\left(1\right)}\left(q_{2}\right)}=0,
\]
which from (\ref{eq:Phi1}), (\ref{eq:Phi2}), and the fact that $\sqrt{N}\omega_{N}/\omega_{N-1}\to\sqrt{2\pi}$
as $N\to\infty$, is equivalent to 
\begin{equation}
\lim_{\rho\to\infty}\lim_{N\to\infty}\sup_{u\in J_{N}}\sup_{q_{i}:\left|q_{i}-q_{*}\right|\leq cN^{-1/2}}\sqrt{\frac{N}{2\pi}}\int_{I_{R,N}}d\varrho\left(1-\varrho^{2}\right)^{\frac{N-4}{2}}\frac{\Xi_{N,\beta,u}^{\left(2\right)}\left(q_{1},q_{2},\varrho\right)}{\Xi_{N,\beta,u}^{\left(1\right)}\left(q_{1}\right)\Xi_{N,\beta,u}^{\left(1\right)}\left(q_{2}\right)}=0.\label{eq:30}
\end{equation}

From (\ref{eq:Xi2}), 
\begin{align}
\left(1-\varrho^{2}\right)^{\frac{N-4}{2}}\frac{\Xi_{N,\beta,u}^{\left(2\right)}\left(q_{1},q_{2},\varrho\right)}{\Xi_{N,\beta,u}^{\left(1\right)}\left(q_{1}\right)\Xi_{N,\beta,u}^{\left(1\right)}\left(q_{2}\right)} & =\left(1-\varrho^{2}\right)^{-2}\exp\left\{ -\frac{1}{2}C_{*}N\varrho^{2}+N\beta^{2}\sum_{i=2}^{p}K_{i,N}\left(q_{1},q_{2}\right)\varrho^{i}\right\} ,\label{eq:34}
\end{align}
where the coefficients $K_{i,N}:=K_{i,N}\left(q_{1},q_{2}\right)$
satisfy 
\[
\lim_{N\to\infty}K_{2,N}\left(q_{1},q_{2}\right)=0\mbox{\,\,\,\ and\,\,\,}\left|K_{i,N}\left(q_{1},q_{2}\right)\right|<K,\,\,\forall i\leq p,
\]
for some appropriate constant $K>0$, uniformly in $q_{1}$, $q_{2}\in\left(q_{*}-cN^{-1/2},q_{*}+cN^{-1/2}\right)$
and independently of $u$. Hence, by a change of variables, the left-hand
side of (\ref{eq:30}) is bounded from above by 
\[
\lim_{\rho\to\infty}\limsup_{N\to\infty}\int_{\sqrt{N}I_{R,N}}\frac{1}{\sqrt{2\pi}}e^{-\frac{1}{2}C_{*}\varrho^{2}}d\varrho\leq\lim_{\rho\to\infty}\int_{\left(-\rho,\rho\right)^{c}}\frac{1}{\sqrt{2\pi}}e^{-\frac{1}{2}C_{*}\varrho^{2}}d\varrho,
\]
where we have used the fact that $\rho_{N}\to0$ as $N\to\infty$
to neglect high powers of $\varrho$ in the limit. Part (\ref{enu:3_lem2nd})
of the lemma follows.

From (\ref{eq:12-4}), in order to prove (\ref{eq:29-2}) it is enough
to prove that, with $I_{R,N}^{\prime}:=I_{R,N}^{\prime}\left(\rho\right)\triangleq\left(-\rho N^{-1/2},\rho N^{-1/2}\right)$,
\[
\lim_{\rho\to\infty}\limsup_{N\to\infty}\sup_{u\in J_{N}}\left|\frac{\mathbb{E}_{u,0}\left\{ \left(Z\times Z\right)_{N,\beta}\left({\rm Band}\left(\epsilon_{N}\right);I_{R,N}^{\prime}\right)\right\} }{\left(\mathbb{E}_{u,0}\left\{ Z_{N,\beta}\left({\rm Band}\left(cN^{-1/2}\right)\right)\right\} \right)^{2}}-\frac{1}{\sqrt{C_{*}}}\right|=0.
\]
 Thus, by similar arguments to the above, it is sufficient to show
that
\[
\lim_{\rho\to\infty}\limsup_{N\to\infty}\sup_{u\in J_{N}}\sup_{q_{i}:\left|q_{i}-q_{*}\right|\leq cN^{-1/2}}\left|\sqrt{\frac{N}{2\pi}}\int_{I_{R,N}^{\prime}}d\varrho\left(1-\varrho^{2}\right)^{\frac{N-4}{2}}\frac{\Xi_{N,\beta,u}^{\left(2\right)}\left(q_{1},q_{2},\varrho\right)}{\Xi_{N,\beta,u}^{\left(1\right)}\left(q_{1}\right)\Xi_{N,\beta,u}^{\left(1\right)}\left(q_{2}\right)}-\frac{1}{\sqrt{C_{*}}}\right|=0.
\]
Using (\ref{eq:34}) and a change of variables, this is equivalent
to
\[
\lim_{\rho\to\infty}\left|\int_{\left(-\rho,\rho\right)}\frac{1}{\sqrt{2\pi}}e^{-\frac{1}{2}C_{*}\varrho^{2}}d\varrho-\frac{1}{\sqrt{C_{*}}}\right|=0,
\]
which completes the proof of part (\ref{enu:4_lem2nd}) of the lemma.
\end{proof}
We finish the subsection with the following corollary.
\begin{cor}
\label{cor:2nd_moment}Assume that $\epsilon_{N},\,\epsilon_{N}^{\prime}\to0$
and $N^{-1}\log\left(\epsilon_{N}^{\prime}+\epsilon_{N}\right)\to0$,
as $N\to\infty$. Then Lemma \ref{lem:Z_2nd_mom} is also true if
we replace everywhere ${\rm Band}\left(cN^{-1/2}\right)$ by ${\rm Band}\left(\hat{\mathbf{n}},q_{*}-\epsilon_{N},q_{*}+\epsilon_{N}^{\prime}\right)$.\end{cor}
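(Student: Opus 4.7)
The plan is to rerun each step of the proof of Lemma~\ref{lem:Z_2nd_mom} with the $(q_1,q_2)$ integration window replaced by $(q_*-\epsilon_N,q_*+\epsilon_N')$. The hypothesis $N^{-1}\log(\epsilon_N+\epsilon_N')\to 0$ is tailored so that the logarithm of the window width is $o(N)$; this is precisely what is needed for the exponential-scale statements, while the refined statements reduce to uniform-in-$(q_1,q_2)$ estimates on a shrinking neighbourhood of $(q_*,q_*)$.

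For parts (\ref{enu:1_lem2nd}) and (\ref{enu:2_lem2nd}), the co-area representations (\ref{eq:16}) and (\ref{eq:17}) give matching upper and lower bounds on the exponential scale. The upper bound comes from replacing the integrand by its supremum on the window (and, for part (\ref{enu:2_lem2nd}), from the strict inequality $\sup_{\varrho\in I_R}A_0(\varrho)<A_0(0)=2\Frf(E_0,q_*)$ established in the proof of Lemma~\ref{lem:Z_2nd_mom} when $I_R$ is bounded away from $0$). The lower bound for part (\ref{enu:1_lem2nd}) comes from restricting the $q_1,q_2$ integration to a subwindow of width $(\epsilon_N+\epsilon_N')/4$ and the $\varrho$-integration to a small fixed interval around $0$. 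Both bounds produce the same exponential rate by continuity of $\Frf(E_0,\cdot)$ at $q_*$ and of $A_0(\cdot)$ at $0$; the window width contributes only $e^{o(N)}$ by hypothesis.

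For parts (\ref{enu:3_lem2nd}) and (\ref{enu:4_lem2nd}) I would write the ratio as
\[
\frac{\int\!\!\int \Phi_{N,\beta,u}^{(1)}(q_1)\Phi_{N,\beta,u}^{(1)}(q_2)\,G_N(q_1,q_2)\,dq_1 dq_2}{\int\!\!\int \Phi_{N,\beta,u}^{(1)}(q_1)\Phi_{N,\beta,u}^{(1)}(q_2)\,dq_1 dq_2},
\]
where $G_N(q_1,q_2):=\int_{I_R}\Phi_{N,\beta,u}^{(2)}(q_1,q_2,\varrho)/(\Phi_{N,\beta,u}^{(1)}(q_1)\Phi_{N,\beta,u}^{(1)}(q_2))\,d\varrho$, so that the ratio lies between $\inf$ and $\sup$ of $G_N$ over the window. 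Equation~(\ref{eq:34}) expresses $G_N$ explicitly in terms of coefficients $K_{i,N}(q_1,q_2)$ for which $K_{2,N}\to 0$ and $|K_{i,N}|\le K$ uniformly over $(q_1,q_2)$ in any vanishing neighbourhood of $(q_*,q_*)$, and in particular on $(q_*-\epsilon_N,q_*+\epsilon_N')^2$ by continuity. The change of variable $\varrho\mapsto\sqrt{N}\varrho$ then reduces $G_N$ to a Gaussian integral with variance $1/C_*$, yielding the limit $1/\sqrt{C_*}$ in part (\ref{enu:4_lem2nd}) and vanishing mass outside $(-\rho,\rho)$ as $\rho\to\infty$ in part (\ref{enu:3_lem2nd}).

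I expect the only bookkeeping subtlety to be confirming the uniform control of $G_N$ over a window that is possibly much larger (or much smaller) than $cN^{-1/2}$; but the proof of Lemma~\ref{lem:Z_2nd_mom} in Section~\ref{sub:2nd} already relies only on continuity of the $K_{i,N}$ near $(q_*,q_*)$ and not on any specific rate of proximity, so no new estimate is required beyond those already established.
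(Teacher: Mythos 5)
Your proposal is correct and is essentially the paper's own argument: the paper proves the corollary precisely by rerunning the proof of Lemma \ref{lem:Z_2nd_mom} with $\pm cN^{-1/2}$ replaced by $-\epsilon_{N}$ and $\epsilon_{N}^{\prime}$, the hypothesis $N^{-1}\log(\epsilon_{N}+\epsilon_{N}^{\prime})\to0$ handling the exponential-scale parts and the uniform-in-$(q_{1},q_{2})$ control of the $K_{i,N}$ near $(q_{*},q_{*})$ handling parts (\ref{enu:3_lem2nd}) and (\ref{enu:4_lem2nd}). Your write-up just makes these checks explicit.
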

\begin{proof}
The corollary follows by replacing everywhere $-cN^{-1/2}$ and $cN^{-1/2}$
by $-\epsilon_{N}$ and $\epsilon_{N}^{\prime}$, respectively, in
the proof of Lemma \ref{lem:Z_2nd_mom}.
\end{proof}

\subsection{\label{sub:conv_2to3+}Convergence with `3-and-above' spins averaged}

Recall the definition of the conditional probability $\mathbb{P}_{u,0,\mathbf{\mathbf{A}}}$
and random matrix $\mathbf{G}\left(\hat{\mathbf{n}}\right):=\mathbf{G}_{N-1}\left(\hat{\mathbf{n}}\right)$
given in Remark \ref{rem:conditional laws} and (\ref{eq:G}), respectively.
If $X$ is some random variable, viewing $\mathbb{E}_{u,0,\mathbf{\mathbf{A}}}\left\{ X\right\} $
as a (deterministic) function of $\mathbf{\mathbf{A}}$, $\mathbb{E}_{u,0,\mathbf{\mathbf{G}\left(\hat{\mathbf{n}}\right)}}\left\{ X\right\} $
is a random variable (measurable with respect to $\mathbf{G}\left(\hat{\mathbf{n}}\right)$).
In this subsection we prove the following lemma. 
\begin{lem}
\textcolor{red}{\label{lem:BaikLee}}For large enough $\beta$ we
have the following. Let $\epsilon_{N}>0$ be a sequence such that
$\epsilon_{N}\to0$, as $N\to\infty$. With $Y_{*}$ given by (\ref{eq:48}),

\[
\lim_{N\to\infty}\sup_{u\in\mathbb{R}}\left|\mathbb{P}\left\{ \frac{\mathbb{E}_{u,0,\mathbf{\mathbf{G}\left(\hat{\mathbf{n}}\right)}}\left\{ Z_{N,\beta}\left({\rm Band}\left(\epsilon_{N}\right)\right)\right\} }{\mathbb{E}_{u,0}\left\{ Z_{N,\beta}\left({\rm Band}\left(\epsilon_{N}\right)\right)\right\} }\leq t\right\} -\mathbb{P}\left\{ e^{Y_{*}}\leq t\right\} \right|=0.
\]
\end{lem}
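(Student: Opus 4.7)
The plan is to reduce the ratio to a $q$-weighted integral of a normalized spherical $2$-spin partition function on a codimension-one sub-sphere, apply Theorem \ref{thm:BaikLee} in its high-temperature regime, and verify that the $q$-weighting is asymptotically irrelevant. Combining Corollary \ref{cor:conditional laws} with the independence of the fields $\bar{H}_{N}^{\hat{\mathbf{n}},k}$ across $k$ (Lemma \ref{lem:HNdecomposition}) and computing the Gaussian exponential moments of the $k\geq 3$ part in the numerator and of the $k\geq 2$ part in the denominator yields, for each $\boldsymbol{\sigma}$ with $q=q(\boldsymbol{\sigma})$,
\[
\mathbb{E}_{u,0,\mathbf{G}}\{e^{-\beta H_{N}(\boldsymbol{\sigma})}\}\,/\,\mathbb{E}_{u,0}\{e^{-\beta H_{N}(\boldsymbol{\sigma})}\} \;=\; e^{-\beta\bar{H}_{N}^{\hat{\mathbf{n}},2}(\boldsymbol{\sigma}) - \tfrac{\beta^{2}}{2}N\alpha_{2}^{2}(q)},
\]
where $\bar{H}_{N}^{\hat{\mathbf{n}},2}$ is the deterministic quadratic form of \eqref{eq:Hnhat2} (determined by $\mathbf{G}=\mathbf{G}_{N-1}(\hat{\mathbf{n}})$). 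The common factor in numerator and denominator of the ratio depends on $\boldsymbol{\sigma}$ only through $q$, so the co-area decomposition $d\mu_{N}\propto(1-q^{2})^{(N-3)/2}dq\,d\mu_{N-1}(\tilde{\boldsymbol{\sigma}})$ gives
\[
\frac{\mathbb{E}_{u,0,\mathbf{G}}\{Z_{N,\beta}({\rm Band}(\epsilon_{N}))\}}{\mathbb{E}_{u,0}\{Z_{N,\beta}({\rm Band}(\epsilon_{N}))\}} \;=\; \int_{q_{*}-\epsilon_{N}}^{q_{*}+\epsilon_{N}} \tilde{Z}^{(2)}_{\mathbf{G}}(q)\,\pi_{u,N}(dq),
\]
where $\pi_{u,N}$ is a probability measure on the narrow band (the sole carrier of $u$-dependence) and $\tilde{Z}^{(2)}_{\mathbf{G}}(q):=e^{-\tfrac{\beta^{2}}{2}N\alpha_{2}^{2}(q)}\int_{\mathbb{S}^{N-2}}e^{-\beta\bar{H}_{N}^{\hat{\mathbf{n}},2}|_{q}(\tilde{\boldsymbol{\sigma}})}\,d\mu_{N-1}(\tilde{\boldsymbol{\sigma}})$.

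By \eqref{eq:1220-2} and \eqref{eq:beff}, $\tilde{Z}^{(2)}_{\mathbf{G}}(q)$ has the distribution of the centered log-partition function of a pure $2$-spin model on $\mathbb{S}^{N-2}$ at effective inverse temperature $\beta_{\text{eff}}(q)=\beta|\alpha_{2}(q)|\sqrt{N/(N-1)}$, and its annealed expectation equals the denominator normalization, i.e., $\log\tilde{Z}^{(2)}_{\mathbf{G}}(q)=\log Z^{\mathrm{pure\,2}}_{N-1,\beta_{\text{eff}}(q)}-(N-1)\mathscr{P}_{2}(\beta_{\text{eff}}(q))$ whenever $\beta_{\text{eff}}(q)<1/\sqrt{2}$. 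At $q=q_{*}$, $\alpha_{2}(q_{*})=\chi_{1}$ by \eqref{eq:39} and $\beta\chi_{1}<1/\sqrt{2}$ by \eqref{eq:7}, so $\beta_{\text{eff}}(q_{*})\to\beta\chi_{1}$ lies strictly below the $2$-spin transition; Theorem \ref{thm:BaikLee} in its high-temperature form \eqref{eq:BaikLee-high} (applied with sphere size $N-1$ and inverse temperature $\beta_{\text{eff}}(q_{*})$) combined with the identity $1-2(\beta\chi_{1})^{2}=C_{*}$ from \eqref{eq:Cstar} then gives
\[
\log \tilde{Z}^{(2)}_{\mathbf{G}}(q_{*}) \;\overset{d}{\to}\; \mathcal{N}\!\bigl(\tfrac{1}{4}\log C_{*},\,-\tfrac{1}{2}\log C_{*}\bigr) \;=\; Y_{*}.
\]

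It remains to show $\sup_{q\in(q_{*}-\epsilon_{N},q_{*}+\epsilon_{N})}|\log\tilde{Z}^{(2)}_{\mathbf{G}}(q)-\log\tilde{Z}^{(2)}_{\mathbf{G}}(q_{*})|\to 0$ in probability. Because $\bar{H}_{N}^{\hat{\mathbf{n}},2}|_{q}(\tilde{\boldsymbol{\sigma}})=\lambda(q)Q(\tilde{\boldsymbol{\sigma}})$ with $\lambda(q)=\tfrac{1}{2}q^{p-2}(1-q^{2})\tfrac{N}{N-1}$ a deterministic scalar and $Q(\tilde{\boldsymbol{\sigma}})=\tilde{\boldsymbol{\sigma}}^{T}\mathbf{G}\tilde{\boldsymbol{\sigma}}$ the same underlying quadratic form for all $q$, $\log\tilde{Z}^{(2)}_{\mathbf{G}}(\cdot)$ depends on $q$ only through $\beta_{\text{eff}}(q)$; in the subcritical regime the spherical integral admits an explicit saddle-point representation in the eigenvalues of $\mathbf{G}$ via \eqref{eq:GMrel}, and the derivative of the centered free energy in $\beta_{\text{eff}}$ is tight in $N$ by standard bounds on linear spectral statistics of GOE matrices away from the soft edge. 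Since $|\beta_{\text{eff}}(q)-\beta_{\text{eff}}(q_{*})|=O(\epsilon_{N})$ throughout the band, this yields the claimed uniform control. Sandwiching the weighted average between $\inf_{q}$ and $\sup_{q}$ of $\tilde{Z}^{(2)}_{\mathbf{G}}(q)$ (both converging in distribution to $e^{Y_{*}}$) and using continuity of the law of $e^{Y_{*}}$ deliver the uniform-in-$u$ convergence of distribution functions. The main obstacle is this last regularity step; the strict subcriticality $\beta_{\text{eff}}(q_{*})<1/\sqrt{2}$, automatic from \eqref{eq:7} once $\beta$ is large enough that $q_{*}$ is defined, is precisely what keeps the argument away from the BBP-type transition of \eqref{eq:BaikLee-low} and reduces the bound to an elementary estimate.
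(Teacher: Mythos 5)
Your proposal is correct and follows essentially the same route as the paper: the same cancellation of the 3-and-above spins that reduces the ratio to a $q$-weighted average of the normalized $2$-spin partition function at the effective temperature (the paper's (\ref{eq:84})), the same application of the high-temperature case (\ref{eq:BaikLee-high}) of Theorem \ref{thm:BaikLee} at $q_{*}$ via $\alpha_{2}(q_{*})=\chi_{1}$ and $1-2(\beta\chi_{1})^{2}=C_{*}$, and the same sandwich between $\inf_{q}$ and $\sup_{q}$ over the shrinking band to dispose of the $u$-dependent weights uniformly. The only point where you are lighter than the paper is the uniform-in-$q$ equicontinuity of the centered free energy over the band, which you describe as an elementary estimate; the paper obtains exactly this by rerunning part of \cite{BaikLee} (Corollary 5.2 through Eq.\ (5.33)) uniformly in the effective inverse temperature and invoking the process-level CLT for linear eigenvalue statistics from \cite{BaiSilverstein} --- the same spectral mechanism you sketch, but noticeably more work than a one-line bound.
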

\begin{proof}
Assume throughout the proof that $\beta$ is fixed, but large enough.
We define 
\begin{equation}
c_{{\rm eff}}\left(N,q\right)=\sqrt{\frac{1}{2}\frac{N}{N-1}}\alpha_{2}\left(q\right)\,\,\,\mbox{and}\,\,\,\beta_{{\rm eff}}\left(N,q\right)=\beta\cdot c_{{\rm eff}}\left(N,q\right).\label{eq:beff-1}
\end{equation}
Define the matrix $\mathbf{G}_{0}:=\mathbf{G}_{0,N-1}=\sqrt{\frac{N}{p\left(p-1\right)}}\mathbf{G}_{N-1}\left(\hat{\mathbf{n}}\right)$
and random field
\[
\forall\boldsymbol{\sigma}\in\mathbb{S}^{N-2}:\,\,\, H_{N-1}^{\mathbf{G}_{0}}\left(\boldsymbol{\sigma}\right)=\frac{1}{\sqrt{N-1}}\boldsymbol{\sigma}^{T}\mathbf{G}_{0}\boldsymbol{\sigma},
\]
and note that (see (\ref{eq:22}) for the definition of the restriction)
\begin{equation}
\bar{H}_{N}^{\hat{\mathbf{n}},2}|_{q}\left(\boldsymbol{\sigma}\right)=c_{{\rm eff}}\left(N,q\right)H_{N-1}^{\mathbf{G}_{0}}\left(\boldsymbol{\sigma}\right)\,\,\,\mbox{and}\,\,\, Z_{N-1,\beta}\left(\bar{H}_{N}^{\hat{\mathbf{n}},2}|_{q}\right)=Z_{N-1,\beta_{{\rm eff}}\left(N,q\right)}\left(H_{N-1}^{\mathbf{G}_{0}}\right),\label{eq:HG}
\end{equation}
where for a real function $f$ on $\mathbb{S}^{N-1}$, by abuse of
notation, $Z_{N,\beta}(f)$ denotes the corresponding partition function
\begin{equation}
Z_{N,\beta}(f)=\int\exp\{-\beta f(\boldsymbol{\sigma})\}d\mu_{N}(\boldsymbol{\sigma}).\label{eq:Z(f)}
\end{equation}

Similarly to (\ref{eq:16}), from Lemmas \ref{lem:HNdecomposition}
and \ref{lem:Hhat_expressions} we have that
\begin{align}
 & \negthickspace\negthickspace\negthickspace\frac{\mathbb{E}_{u,0,\mathbf{\mathbf{G}\left(\hat{\mathbf{n}}\right)}}\left\{ Z_{N,\beta}\left({\rm Band}\left(\epsilon_{N}\right)\right)\right\} }{\mathbb{E}_{u,0}\left\{ Z_{N,\beta}\left({\rm Band}\left(\epsilon_{N}\right)\right)\right\} }\nonumber \\
 & =\frac{\int_{q_{*}-\epsilon_{N}}^{q_{*}+\epsilon_{N}}\frac{\omega_{N-1}}{\omega_{N}}e^{-\beta uq^{p}}\left(1-q^{2}\right)^{\frac{N-3}{2}}Z_{\beta}\left(\bar{H}_{N}^{\hat{\mathbf{n}},2}|_{q}\right)\mathbb{E}\left\{ e^{-\beta\bar{H}_{N}^{\hat{\mathbf{n}},3+}\left(\boldsymbol{\sigma}_{q}\right)}\right\} dq}{\int_{q_{*}-\epsilon_{N}}^{q_{*}+\epsilon_{N}}\frac{\omega_{N-1}}{\omega_{N}}e^{-\beta uq^{p}}\left(1-q^{2}\right)^{\frac{N-3}{2}}\mathbb{E}\left\{ Z_{\beta}\left(\bar{H}_{N}^{\hat{\mathbf{n}},2}|_{q}\right)\right\} \mathbb{E}\left\{ e^{-\beta\bar{H}_{N}^{\hat{\mathbf{n}},3+}\left(\boldsymbol{\sigma}_{q}\right)}\right\} dq},\label{eq:84}
\end{align}
where $\boldsymbol{\sigma}_{q}$ is an arbitrary point such that $R\left(\boldsymbol{\sigma}_{q},\hat{\mathbf{n}}\right)=q$.
From the relation (\ref{eq:HG}), since $\beta_{{\rm eff}}\left(N,q_{*}\right)\to\frac{1}{2}\sqrt{1-C_{*}}$
and $\epsilon_{N}\to0$ as $N\to\infty$, in order to prove the lemma
it is sufficient to show that: 
\begin{enumerate}
\item $Y_{N}^{\mathbf{G}}\left(\frac{1}{2}\sqrt{1-C_{*}}\right)\overset{d}{\to}Y_{*}$
as $N\to\infty$, where we define, for any $\beta_{0}>0$, the random
variable
\begin{align*}
Y_{N}^{\mathbf{G}}\left(\beta_{0}\right) & \triangleq\log\left(Z_{N-1,\beta_{0}}\left(H_{N-1}^{\mathbf{G}_{0}}\right)\right)-\log\left(\mathbb{E}\left\{ Z_{N-1,\beta_{0}}\left(H_{N-1}^{\mathbf{G}_{0}}\right)\right\} \right)\\
 & =\log\left(Z_{N-1,\beta_{0}}\left(H_{N-1}^{\mathbf{G}_{0}}\right)\right)-\left(N-1\right)\beta_{0}^{2}.
\end{align*}

\item For any small enough $\tau>0$, the random process $\left\{ Y_{N}^{\mathbf{G}}\left(\beta_{0}\right)\right\} _{\beta_{0}\in D(\delta)}$
on the interval $D(\tau):=\frac{1}{2}\sqrt{1-C_{\beta}^{*}}+\left[-\tau,\tau\right]$
converges in distribution as $N\to\infty$ (or in fact, even just
tight in $N\geq1$) in the space of continuous functions on $D(\tau)$,
equipped with the supremum norm.
\end{enumerate}
The Gaussian random matrix $\mathbf{G}_{0}$ is an $N-1\times N-1$
symmetric matrix whose on-or-above-diagonal entries are independent
with variance $1$ off the diagonal and $2$ on the diagonal. Thus,
the first item above follows from Theorem \ref{thm:BaikLee} since
$H_{N-1}^{\mathbf{G}_{0}}\left(\boldsymbol{\sigma}\right)\overset{d}{=}\sqrt{2}H_{N-1}^{\mbox{pure}\,2}\left(\boldsymbol{\sigma}\right)$
and $\frac{1}{2}\sqrt{1-C_{*}}<\frac{1}{2}$. In order to prove the
second item we rely on certain calculations from the proof of \cite[Theorem 2.10]{BaikLee}
and a result from \cite{BaiSilverstein} concerning convergence of
linear statistics of Wigner matrices. In \cite[Eq. (2.14)]{BaikLee}
the quantity $\widehat{\gamma}(\beta_{0})$ is defined. In the sentence
following this definition, it is noted that $\widehat{\gamma}(\beta_{0})$
is a decreasing function of $\beta_{0}\in(0,\beta_{c}$) and as $\beta_{0}\nearrow\beta_{c}$,
$\widehat{\gamma}(\beta_{0})\searrow C_{+}$ where $C_{+}$ and $\beta_{c}$
are given in \cite[Condition 2.3, Definition 2.8]{BaikLee} and in
our case (of Wigner matrices) $C_{+}=2$ and $\beta_{c}=1/2$. Hence,
if $\tau$ is small enough, then 
\begin{equation}
\inf_{\beta_{0}\in D(\tau)}\widehat{\gamma}(\beta_{0})>C_{+}.\label{eq:ineq1}
\end{equation}
In the proof of \cite[Corollary 5.2]{BaikLee}, the variable $\delta$
is chosen so that $\delta\in\left(0,(\widehat{\gamma}(\beta_{0})-C_{+})/2\right)$
for some fixed $\beta_{0}$. Replacing this requirement on $\delta$
by 
\begin{equation}
\delta\in\left(0,(\inf_{\beta_{0}\in D(\tau)}\widehat{\gamma}(\beta_{0})-C_{+})/2\right),\label{eq:2803-1}
\end{equation}
and using the fact that the derivatives up to order $3$ of $\widehat{G}(z)$
(defined in \cite[Eq. (5.1)]{BaikLee}) are bounded uniformly on $[C_{+}+\delta,K]$
for any $\delta$,$K>0$, all the statements and proofs from \cite[Corollary (5.2)]{BaikLee}
until \cite[Eq. (5.33)]{BaikLee} hold with the following changes,
assuming $\tau$ is small enough. First, any equation that depends
on $\beta_{0}$ (either directly, or implicitly by depending on $\widehat{\gamma}=\widehat{\gamma}(\beta_{0})$
or $\gamma=\gamma(\beta_{0})$; where the latter of the two is defined
in \cite[Lemma 4.1]{BaikLee}) holds simultaneously for all $\beta_{0}\in D(\tau)$.
In particular, equations that hold with probability that goes to $1$
as $N\to\infty$ for fixed $\beta_{0}$ also hold simultaneously for
all $\beta_{0}\in D(\tau)$ with such probability. Second, any of
the estimates of the form $O(t_{N})$ hold uniformly in $\beta_{0}\in D(\tau)$;
that is, in any estimate where a term of the form $O(t_{N})$ appears,
we can replace it by a sequence $\alpha_{N}(\beta_{0})$ satisfying
$\sup_{\beta_{0}\in D(\tau)}|\alpha_{N}(\beta_{0})|=O(t_{N})$.

Hence, from (the modified) \cite[Eq. (5.33)]{BaikLee} (using Definition
2.13, (3.11), (A.4) and (A.7) of \cite{BaikLee}), we have that with
probability tending to $1$ as $N\to\infty$,%
\footnote{We note that, as verified with the authors of \cite{BaikLee}, in
the version currently on the arXiv, there is a small typo in \cite[Eq. (5.33)]{BaikLee}
and the $O(N^{-1+\epsilon})$ should be replaced by $O(N^{-2+\epsilon})$.%
} 
\begin{equation}
Y_{N}^{\mathbf{G}}\left(\beta_{0}\right)=-\frac{1}{2}\mathcal{N}_{\varphi}+\log\left(2\beta_{0}\right)-\frac{1}{2}\log\left(f_{2}\left(\beta_{0}\right)\right)-\frac{1}{2}\log\left(1+\frac{\mathcal{N}_{\psi}}{\left(N-1\right)f_{2}\left(\beta_{0}\right)}\right)+\alpha_{N}(\beta_{0}),\label{eq:85}
\end{equation}
where $|\alpha_{N}(\beta_{0})|=O\left(N^{-1+a}\right)$ with arbitrarily
chosen $a>0$, and where for any function $\phi\left(\beta_{0},x\right)$
with $\lambda_{i}$ denoting the eigenvalues of $\mathbf{G}_{0}/\sqrt{N-1}$,
we abbreviate $\mathcal{N}_{\phi}:=\mathcal{N}_{\phi}\left(\beta_{0},\left(\lambda_{i}\right)_{i=1}^{N-1}\right)$
where 
\begin{equation}
\mathcal{N}_{\phi}\left(\beta_{0},\left(z_{i}\right)_{i=1}^{N-1}\right)=\sum_{i=1}^{N-1}\phi\left(\beta_{0},z_{i}\right)-\left(N-1\right)\int_{-2}^{2}\phi\left(\beta_{0},x\right)d\nu\left(x\right),\label{eq:N_phi}
\end{equation}
$\nu=\nu^{*}$ is the semicircle law given in (\ref{eq:semicirc}),
and with $\hat{\gamma}=2\beta_{0}+(2\beta_{0})^{-1}$, the functions
$\varphi$, $\psi$ and $f_{2}$ are given by
\begin{align}
\varphi\left(\beta_{0},x\right) & =\log\left(\hat{\gamma}-x\right),\,\,\psi\left(\beta_{0},x\right)=\left(\hat{\gamma}-x\right)^{-2},\,\, f_{2}\left(\beta_{0}\right)=-\frac{1}{2}+\frac{\hat{\gamma}}{2\sqrt{\hat{\gamma}^{2}-4}}.\label{eq:f_2}
\end{align}
In particular, $f_{2}\left(\beta_{0}\right)$ is continuously differentiable
in a neighborhood of $\frac{1}{2}\sqrt{1-C_{*}}$, and bounded away
from $0$ uniformly in $\beta_{0}\in D(\tau)$. 

Finally, assuming $\tau$ is small enough, the convergence in distribution
as $N\to\infty$ of the processes 
\[
\left\{ \mathcal{N}_{\varphi\left(\beta_{0},\lambda_{i}\right)}\right\} _{\beta_{0}\in D(\tau)}\,\,\,\mbox{and}\,\,\,\left\{ \mathcal{N}_{\psi\left(\beta_{0},\lambda_{i}\right)}\right\} _{\beta_{0}\in D(\tau)}
\]
in the space of continuous function with the supremum norm, follows
from \cite[Example 9.3]{BaiSilverstein}. Combined with (\ref{eq:85}),
this concludes the proof.
\end{proof}

\subsection{\label{sub:pfGaussFlucts}Proof of Proposition \ref{prop:Gaussflucts}}

In view of Lemma \ref{lem:BaikLee}, Proposition \ref{prop:Gaussflucts}
will follows if we prove the following lemma.
\begin{lem}
\label{lem:ConcentrationZ}For large enough $\beta$ we have the following.
Let $a_{N},\,\epsilon_{N}>0$ be sequences such $a_{N}=o(N)$, $\epsilon_{N}=o(1)$,
and set $J_{N}=\left(m_{N}-a_{N},m_{N}+a_{N}\right)$. Then

\[
\forall\delta>0:\,\,\,\lim_{N\to\infty}\sup_{u\in J_{N}}\mathbb{P}_{u,0}\left\{ \left|\frac{Z_{N,\beta}\left({\rm Band}\left(\epsilon_{N}\right)\right)}{\mathbb{E}_{u,0,\mathbf{\mathbf{G}\left(\hat{\mathbf{n}}\right)}}\left\{ Z_{N,\beta}\left({\rm Band}\left(\epsilon_{N}\right)\right)\right\} }-1\right|\geq\delta\right\} =0.
\]
\end{lem}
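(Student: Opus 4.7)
The plan combines a Chebyshev inequality applied conditionally on $\mathbf{G}(\hat{\mathbf{n}})$ with a direct second-moment comparison. Write $Z=Z_{N,\beta}({\rm Band}(\epsilon_{N}))$, $W=\mathbb{E}_{u,0,\mathbf{G}(\hat{\mathbf{n}})}\{Z\}$ and $M=\mathbb{E}_{u,0}\{Z\}$. The tower property gives $\mathbb{E}_{u,0}\{(Z-W)^{2}\}=\mathbb{E}_{u,0}\{Z^{2}\}-\mathbb{E}_{u,0}\{W^{2}\}$, so for any $\eta>0$,
\[
\mathbb{P}_{u,0}\bigl\{|Z/W-1|\geq\delta\bigr\}\leq\mathbb{P}_{u,0}\bigl\{W/M<\eta\bigr\}+\frac{\mathbb{E}_{u,0}\{Z^{2}\}-\mathbb{E}_{u,0}\{W^{2}\}}{(\delta\eta M)^{2}}.
\]
Lemma~\ref{lem:BaikLee} implies $\sup_{u\in\mathbb{R}}\mathbb{P}_{u,0}\{W/M<\eta\}\to\mathbb{P}\{e^{Y_{*}}<\eta\}$, which vanishes as $\eta\to0$ since $Y_{*}$ is a finite Gaussian. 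Hence the lemma reduces to proving
\begin{equation}
\lim_{N\to\infty}\sup_{u\in J_{N}}\frac{\mathbb{E}_{u,0}\{Z^{2}\}-\mathbb{E}_{u,0}\{W^{2}\}}{M^{2}}=0.\label{eq:plan-goal}
\end{equation}

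To this end I apply the co-area formula as in the proof of Lemma~\ref{lem:Z_2nd_mom}. By Lemma~\ref{lem:HNdecomposition} and Corollary~\ref{cor:conditional laws}, taking the conditional expectation over the $3+$-spin components before squaring (in $W$) versus after (in $Z$) removes precisely the cross terms $\alpha_{k}(q_{1})\alpha_{k}(q_{2})\varrho^{k}$ for $k\geq3$. Consequently
\[
\mathbb{E}_{u,0}\{Z^{2}\}-\mathbb{E}_{u,0}\{W^{2}\}=\iiint\Psi_{N,\beta,u}(q_{1},q_{2},\varrho)\bigl(\Gamma(q_{1},q_{2},\varrho)-1\bigr)\,dq_{1}\,dq_{2}\,d\varrho,
\]
where $\Psi_{N,\beta,u}$ is obtained from $\Phi_{N,\beta,u}^{(2)}$ of~\eqref{eq:Phi2} by keeping only the $k=2$ cross term in the exponent, and $\Gamma(q_{1},q_{2},\varrho)=\exp\{\beta^{2}N\sum_{k=3}^{p}\alpha_{k}(q_{1})\alpha_{k}(q_{2})\varrho^{k}\}$. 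Let $\nu$ denote the probability measure proportional to $\Psi_{N,\beta,u}$. Conditional Jensen gives $\mathbb{E}_{u,0}\{W^{2}\}\leq\mathbb{E}_{u,0}\{Z^{2}\}$, and Lemma~\ref{lem:Z_2nd_mom}(\ref{enu:4_lem2nd}) bounds $\mathbb{E}_{u,0}\{Z^{2}\}/M^{2}$ uniformly, so~\eqref{eq:plan-goal} reduces to $\sup_{u\in J_{N}}\mathbb{E}_{\nu}\{\Gamma-1\}\to0$.

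To control $\mathbb{E}_{\nu}\{\Gamma-1\}$ I combine the near-Gaussian tail of $\nu$ in $\varrho$ with sharp estimates on $\Gamma$. The identity $\sum_{k=0}^{p}\alpha_{k}(q_{*})^{2}=1$ together with a Taylor expansion (using $1-q_{*}^{2}=O(1/\beta)$ by the definition of $q_{*}$) gives $\sum_{k\geq3}\alpha_{k}(q_{*})^{2}=O((1-q_{*}^{2})^{3})=O(\beta^{-3})$, hence $|\log\Gamma|\leq CN|\varrho|^{3}/\beta$ globally. As in~\eqref{eq:31}, the $\varrho$-marginal of $\nu$ has density bounded by $C'e^{-C_{*}N\varrho^{2}/2}$ with $C_{*}>0$ independent of $\beta$. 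I split the integral into (i)~$|\varrho|\leq N^{-1/2}\log N$, (ii)~$N^{-1/2}\log N<|\varrho|<\varrho_{0}$ for a small fixed $\varrho_{0}$, and (iii)~$|\varrho|\geq\varrho_{0}$: in~(i) the integrand is $O(\beta^{-1}N^{-1/2}\log^{3}N)=o(1)$; in~(ii), once $\beta$ is large enough that $\varrho_{0}\leq C_{*}\beta/(4C)$, the combined exponent satisfies $CN|\varrho|^{3}/\beta-C_{*}N\varrho^{2}/2\leq-C_{*}N\varrho^{2}/4$ and a Gaussian tail estimate gives an exponentially small contribution; in~(iii) the crude bound $\Gamma\leq e^{CN/\beta}$ combined with $\nu(|\varrho|\geq\varrho_{0})\leq C'e^{-C_{*}\varrho_{0}^{2}N/2}$ is exponentially small provided $\beta>2C/(C_{*}\varrho_{0}^{2})$.

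The main obstacle is region~(iii): because $\Gamma$ can grow like $e^{CN/\beta}$, concentration of $\nu$ near $\varrho=0$ alone is not enough; the estimate closes only because the decay rate $C_{*}/2$ of $\nu$ is bounded away from zero independently of $\beta$ while the growth rate $C/\beta$ of $\Gamma$ is vanishingly small as $\beta\to\infty$ through $(1-q_{*}^{2})^{3}=O(\beta^{-3})$. This balance is precisely the reason the lemma is stated only for large enough $\beta$. Uniformity in $u\in J_{N}$ is automatic, since $u$ enters $\Psi_{N,\beta,u}$ only through the factor $\exp\{-\beta u(q_{1}^{p}+q_{2}^{p})\}$, which is separable in $q_{1},q_{2}$ and cancels between the numerator and denominator of $\mathbb{E}_{\nu}$.
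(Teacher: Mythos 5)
Your argument is correct, and it reaches the conclusion by a somewhat different organization than the paper. The paper conditions on the Hessian: it constructs high-probability sets $\mathcal{A}_{N,u}$ of matrices (using Lemma \ref{lem:Z_2nd_mom}(\ref{enu:3_lem2nd}) with Corollary \ref{cor:2nd_moment}, and Lemma \ref{lem:BaikLee} to lower bound $W=\mathbb{E}_{u,0,\mathbf{G}(\hat{\mathbf{n}})}\{Z\}$), and then applies Chebyshev under $\mathbb{P}_{u,0,\mathbf{A}}$, splitting the conditional second moment into near/far projected-overlap pieces $Q_{1},\dots,Q_{4}$; the near-overlap piece is handled by the pointwise bound $Q_{1}/Q_{3}\leq\sup\exp\{\beta^{2}{\rm Cov}_{N}^{\hat{\mathbf{n}},3+}\}$ with a cutoff $\rho_{N}N^{1/3}\to0$, and the far-overlap piece by dividing the already-proved bound on $Q_{2}/(\mathbb{E}_{u,0}Z)^{2}$ by the BaikLee lower bound on $W$. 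You instead stay unconditional: the tower identity $\mathbb{E}_{u,0}\{(Z-W)^{2}\}=\mathbb{E}_{u,0}\{Z^{2}\}-\mathbb{E}_{u,0}\{W^{2}\}$ plus the event $\{W\geq\eta M\}$ replaces the good-set construction, and the co-area identity showing the difference is exactly $\int\Psi(\Gamma-1)$ with $\Gamma=\exp\{\beta^{2}N\sum_{k\geq3}\alpha_{k}(q_{1})\alpha_{k}(q_{2})\varrho^{k}\}$ is the same covariance mechanism the paper exploits in $Q_{1}/Q_{3}$, but you must then control it on all of $[-1,1]$, which forces your three-region analysis; the far region closes only because $C_{*}$ is $\beta$-independent while $\sum_{k\geq3}\alpha_{k}^{2}(q_{*})=O(\beta^{-3})$, which is a correct and nicely explicit quantification of where "large $\beta$" enters (the paper needs large $\beta$ for the same underlying reason, via Lemma \ref{lem:Z_2nd_mom}(\ref{enu:2_lem2nd})). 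What your route buys is avoiding the two families of matrix sets and the conditional Chebyshev; what it costs is the explicit marginal-density estimate in $\varrho$. Three harmless imprecisions to fix when writing this up: (i) in the BaikLee step you should note that $W/M$ is a function of $\mathbf{G}(\hat{\mathbf{n}})$ alone, which is independent of $(H_{N}(\hat{\mathbf{n}}),\nabla H_{N}(\hat{\mathbf{n}}))$ by Lemma \ref{lem:Hhat_expressions}, so its law under $\mathbb{P}_{u,0}$ equals that under $\mathbb{P}$ (the paper makes this remark); (ii) the uniform boundedness of $\mathbb{E}_{u,0}\{Z^{2}\}/M^{2}$ for the $\epsilon_{N}$-band should be cited through Corollary \ref{cor:2nd_moment} rather than Lemma \ref{lem:Z_2nd_mom}(\ref{enu:4_lem2nd}) directly, and the relevant display is (\ref{eq:34}) rather than (\ref{eq:31}); (iii) the conditional $\varrho$-density given $(q_{1},q_{2})$ carries a normalization of order $\sqrt{N}$, so the correct bound is $C'\sqrt{N}e^{-cN\varrho^{2}}$ with $c$ slightly below $C_{*}/2$ (uniformly in $q_{1},q_{2}$ in the band, using $\alpha_{2}(q_{i})=\chi_{1}+o(1)$), which does not affect any of your three regions since all your gains there are exponential or of order $\log^{3}N/\sqrt{N}$.
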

\begin{proof}
Denote 
\[
X\left(u\right)=\frac{Z_{N,\beta}\left({\rm Band}\left(\epsilon_{N}\right)\right)}{\mathbb{E}_{u,0,\mathbf{\mathbf{G}\left(\hat{\mathbf{n}}\right)}}\left\{ Z_{N,\beta}\left({\rm Band}\left(\epsilon_{N}\right)\right)\right\} }.
\]
It is enough to show that there exist subsets $\mathcal{A}:=\mathcal{A}_{N,u}$
of the space of real symmetric $N-1\times N-1$ matrices such that
\begin{equation}
\lim_{N\to\infty}\inf_{u\in J_{N}}\mathbb{P}\left\{ \mathbf{G}_{N-1}\left(\hat{\mathbf{n}}\right)\in\mathcal{A}_{N,u}\right\} =1,\label{eq:53}
\end{equation}
(which is equivalent to the same with $\mathbb{P}_{u,0}$, by Lemma
\ref{lem:Hhat_expressions}) and 
\begin{equation}
\forall\delta>0:\,\,\,\lim_{N\to\infty}\sup_{u\in J_{N}}\sup_{\mathbf{A}\in\mathcal{A}_{N,u}}\mathbb{P}_{u,0,\mathbf{A}}\left\{ \left|X\left(u\right)-1\right|\geq\delta\right\} =0.\label{eq:52}
\end{equation}
Since $\mathbb{E}_{u,0,\mathbf{A}}\left\{ X\left(u\right)\right\} =1$
by definition, (\ref{eq:52}) follows by Chebyshev's inequality if
we show that
\begin{equation}
\lim_{N\to\infty}\sup_{u\in J_{N}}\sup_{\mathbf{A}\in\mathcal{A}_{N,u}}\mathbb{E}_{u,0,\mathbf{A}}\left\{ \left(X\left(u\right)\right)^{2}\right\} \leq1.\label{eq:54}
\end{equation}

Let $\rho_{N}\in\left(0,1\right)$ be an arbitrary sequence such that
$\rho_{N}\to0$ and $\rho_{N}\sqrt{N}\to\infty$ as $N\to\infty$.
Define the quantities $Q_{i}:=Q_{i,u,\mathbf{A}}$ by 
\begin{align*}
Q_{1} & \triangleq\mathbb{E}_{u,0,\mathbf{A}}\left\{ \left(Z\times Z\right)_{N,\beta}\left({\rm Band}\left(\epsilon_{N}\right);\left(-\rho_{N},\rho_{N}\right)\right)\right\} ,\\
Q_{2} & \triangleq\mathbb{E}_{u,0,\mathbf{A}}\left\{ \left(Z\times Z\right)_{N,\beta}\left({\rm Band}\left(\epsilon_{N}\right);\left(-\rho_{N},\rho_{N}\right)^{c}\right)\right\} ,\\
Q_{3} & \triangleq\int_{T_{\hat{\mathbf{n}}}\left({\rm Band}\left(\epsilon_{N}\right);\left(-\rho_{N},\rho_{N}\right)\right)}\prod_{i=1,2}\mathbb{E}_{u,0,\mathbf{A}}\left\{ e^{-\beta H_{N}\left(\boldsymbol{\sigma}_{i}\right)}\right\} d\mu_{N}\otimes\mu_{N}\left(\boldsymbol{\sigma}_{1},\boldsymbol{\sigma}_{2}\right),\\
Q_{4} & \triangleq\int_{T_{\hat{\mathbf{n}}}\left({\rm Band}\left(\epsilon_{N}\right);\left(-\rho_{N},\rho_{N}\right)^{c}\right)}\prod_{i=1,2}\mathbb{E}_{u,0,\mathbf{A}}\left\{ e^{-\beta H_{N}\left(\boldsymbol{\sigma}_{i}\right)}\right\} d\mu_{N}\otimes\mu_{N}\left(\boldsymbol{\sigma}_{1},\boldsymbol{\sigma}_{2}\right),
\end{align*}
where $\left(-\rho_{N},\rho_{N}\right)^{c}=\left[-1,1\right]\setminus\left(-\rho_{N},\rho_{N}\right)$
and $T_{\boldsymbol{\sigma}}\left(B;I\right)$ and $\left(Z\times Z\right)_{N,\beta}\left(B;I\right)$
are defined in (\ref{eq:Tsigma}) and (\ref{eq:ZtimesZ}). Since 
\[
\mathbb{E}_{u,0,\mathbf{A}}\left\{ \left(X\left(u\right)\right)^{2}\right\} =\frac{Q_{1}+Q_{2}}{Q_{3}+Q_{4}}\leq\frac{Q_{1}}{Q_{3}}+\frac{Q_{2}}{Q_{3}+Q_{4}},
\]
(\ref{eq:54}) will follow if we show that 
\begin{equation}
\lim_{N\to\infty}\sup_{u\in J_{N}}\sup_{\mathbf{A}\in\mathcal{A}_{N,u}}\frac{Q_{2}}{Q_{3}+Q_{4}}=0,\label{eq:55}
\end{equation}
and 
\begin{equation}
\limsup_{N\to\infty}\sup_{u\in J_{N}}\sup_{\mathbf{A}}\frac{Q_{1}}{Q_{3}}\leq1,\label{eq:56-1}
\end{equation}
where in the last equality the supremum in $\mathbf{A}$ is over all
real, symmetric $N-1\times N-1$ matrices. 

By part (\ref{enu:3_lem2nd}) of Lemma \ref{lem:Z_2nd_mom} and Corollary
\ref{cor:2nd_moment}, there exist sets $\mathcal{A}_{N,u}^{\left(1\right)}$
such that (\ref{eq:53}) holds with $\mathcal{A}_{N,u}^{\left(1\right)}$
instead of $\mathcal{A}_{N,u}$ and 
\[
\lim_{N\to\infty}t_{N}=0,\,\,\,\mbox{with}\,\, t_{N}=\sup_{u\in J_{N}}\sup_{\mathbf{A}\in\mathcal{A}_{N,u}^{\left(1\right)}}\frac{Q_{2}}{\left(\mathbb{E}_{u,0}\left\{ Z_{N,\beta}\left({\rm Band}\left(\epsilon_{N}\right)\right)\right\} \right)^{2}}=0.
\]
By Lemma \ref{lem:BaikLee}, since 
\[
Q_{3}+Q_{4}=\left(\mathbb{E}_{u,0,\mathbf{A}}\left\{ Z_{N,\beta}\left({\rm Band}\left(\epsilon_{N}\right)\right)\right\} \right)^{2},
\]
we have that 
\[
\lim_{N\to\infty}\inf_{u\in\mathbb{R}}\mathbb{P}\left\{ \frac{\mathbb{E}_{u,0,\mathbf{\mathbf{G}\left(\hat{\mathbf{n}}\right)}}\left\{ Z_{N,\beta}\left({\rm Band}\left(\epsilon_{N}\right)\right)\right\} }{\mathbb{E}_{u,0}\left\{ Z_{N,\beta}\left({\rm Band}\left(\epsilon_{N}\right)\right)\right\} }\geq t_{N}^{1/4}\right\} =1.
\]
In other words, there exist sets $\mathcal{A}_{N,u}^{\left(2\right)}$
such that (\ref{eq:53}) holds with $\mathcal{A}_{N,u}^{\left(2\right)}$
instead of $\mathcal{A}_{N,u}$ and 
\[
\inf_{u\in\mathbb{R}}\inf_{\mathbf{A}\in\mathcal{A}_{N,u}^{\left(2\right)}}\frac{Q_{3}+Q_{4}}{\left(\mathbb{E}_{u,0}\left\{ Z_{N,\beta}\left({\rm Band}\left(\epsilon_{N}\right)\right)\right\} \right)^{2}}\geq t_{N}^{1/2}.
\]
Setting $\mathcal{A}_{N,u}=\mathcal{A}_{N,u}^{\left(1\right)}\cap\mathcal{A}_{N,u}^{\left(2\right)}$
 we have that (\ref{eq:53}) and (\ref{eq:55}) hold.

Lastly, note that for any real, symmetric $N-1\times N-1$ matrix
$\mathbf{A}$, with all suprema taken over $(\boldsymbol{\sigma}_{1},\boldsymbol{\sigma}_{2})\in T_{\hat{\mathbf{n}}}\left({\rm Band}\left(\epsilon_{N}\right);\left(-\rho_{N},\rho_{N}\right)\right)$,
\begin{align*}
\frac{Q_{1}}{Q_{3}} & \leq\sup\frac{\mathbb{E}_{u,0,\mathbf{A}}\left\{ \prod_{i=1,2}e^{-\beta H_{N}\left(\boldsymbol{\sigma}_{i}\right)}\right\} }{\prod_{i=1,2}\mathbb{E}_{u,0,\mathbf{A}}\left\{ e^{-\beta H_{N}\left(\boldsymbol{\sigma}_{i}\right)}\right\} }\\
 & =\sup\frac{\mathbb{E}_{u,0,\mathbf{A}}\left\{ \prod_{i=1,2}e^{-\beta\bar{H}_{N}^{\hat{\mathbf{n}},3+}\left(\boldsymbol{\sigma}_{i}\right)}\right\} }{\prod_{i=1,2}\mathbb{E}_{u,0,\mathbf{A}}\left\{ e^{-\beta\bar{H}_{N}^{\hat{\mathbf{n}},3+}\left(\boldsymbol{\sigma}_{i}\right)}\right\} }\\
 & =\sup\exp\left\{ \beta^{2}{\rm Cov}_{N}^{\hat{\mathbf{n}},3+}\left(\boldsymbol{\sigma}_{1},\boldsymbol{\sigma}_{2}\right)\right\} \leq\exp\left\{ N\beta^{2}\sum_{k=3}^{p}\binom{p}{k}\rho_{N}^{k}\right\} ,
\end{align*}
where the inequality in the first line follows since $\int_{T}f(t)d\mu/\int_{T}g(t)d\mu\leq\sup_{t\in T}f(t)/g(t)$
for positive functions, the equality in the second line follows from
Lemmas \ref{lem:HNdecomposition} and \ref{lem:Hhat_expressions},
the equality in the third line follows since the law of $\bar{H}_{N}^{\hat{\mathbf{n}},3+}$
under $\mathbb{P}_{u,0,\mathbf{A}}$ is the same as its unconditional
law, and the inequality in the third line follows from
\begin{align}
{\rm Cov}_{N}^{\hat{\mathbf{n}},3+}\left(\boldsymbol{\sigma}_{1},\boldsymbol{\sigma}_{2}\right) & \triangleq\mathbb{E}\left\{ \bar{H}_{N}^{\hat{\mathbf{n}},3+}\left(\boldsymbol{\sigma}_{1}\right)\bar{H}_{N}^{\hat{\mathbf{n}},3+}\left(\boldsymbol{\sigma}_{2}\right)\right\} \label{eq:Knhat}\\
 & =N\sum_{k=3}^{p}\binom{p}{k}\left(\sqrt{1-q_{1}^{2}}\sqrt{1-q_{2}^{2}}R_{\hat{\mathbf{n}}}\left(\boldsymbol{\sigma}_{1},\boldsymbol{\sigma}_{2}\right)\right)^{k}q_{1}^{p-k}q_{2}^{p-k},\nonumber 
\end{align}
which follows, with $q_{i}=R(\boldsymbol{\sigma}_{i},\hat{\mathbf{n}})$,
from (\ref{eq:1220-2}) and (\ref{eq:Hbar decomposition}). If we
choose $\rho_{N}$ such that $\rho_{N}N^{1/3}\to0$ as $N\to\infty$,
then (\ref{eq:56-1}) holds. This completes the proof.
\end{proof}

\section{\label{sec:range(q***,q**)}mass of bands under $\mathbb{P}_{u,0}$:
the range $(q_{{\rm LS}},q_{**})$}

As pointed out in Section \ref{sec:outline}, in order to bound contributions
to the partition function related to overlap range $\left(q_{{\rm LS}},q_{**}\right)$
we derive upper bounds for the corresponding free energy of bands.
Throughout the section we shall use 
\begin{equation}
{\rm Band}={\rm Band}\left(\hat{\mathbf{n}},q_{1},q_{2}\right)\label{eq:1229-1}
\end{equation}
as an abbreviation for a band with general overlaps. Define%
\footnote{The fact that the limit in (\ref{eq:lambda_F}) exists follows from
the spherical Parisi formula \cite{Talag,Chen}. We could avoid relying
on the formula by simply replacing the limit with the supremum limit
and modify the next result appropriately.%
} 
\begin{equation}
\FrfF\left(E,q\right)=\frac{1}{2}\log\left(1-q^{2}\right)+\beta Eq^{p}+\lim_{N\to\infty}\frac{1}{N}\mathbb{E}\left\{ \log\left(Z_{N-1,\beta}\left(\bar{H}_{N}^{\hat{\mathbf{n}},2+}|_{q}\right)\right)\right\} ,\label{eq:lambda_F}
\end{equation}
where $\bar{H}_{N}^{\hat{\mathbf{n}},2+}|_{q}$, defined by (\ref{eq:3+})
and (\ref{eq:22}), is a mixed spherical models on $\mathbb{S}^{N-2}$,
and where $Z_{N,\beta}(f)$ is given by (\ref{eq:Z(f)}). From a standard
concentration argument we have the following.
\begin{lem}
\label{lem:Lambda_F_bound}Let $E>0$ be some positive number and
set $J_{N}=\left[-EN,EN\right]$. Then, for any $\beta$, 
\begin{equation}
\limsup_{N\to\infty}\sup_{u\in J_{N}}\left|\frac{1}{N}\mathbb{E}_{u,0}\left\{ \log\left(Z_{N,\beta}\left({\rm Band}\right)\right)\right\} -\sup_{q\in\left(q_{1},q_{2}\right)}\FrfF\left(-\frac{u}{N},q\right)\right|\leq0.\label{eq:15031}
\end{equation}

\end{lem}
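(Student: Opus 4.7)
The plan is to combine the coarea formula with Corollary \ref{cor:conditional laws} to turn the band partition function into a one-dimensional integral over the overlap, then apply a Laplace-type argument together with Gaussian concentration of the conditional free energy. By Corollary \ref{cor:conditional laws}, the law of $H_N$ under $\mathbb{P}_{u,0}$ is that of $uq^p(\boldsymbol{\sigma})+\bar{H}_N^{\hat{\mathbf{n}},2+}(\boldsymbol{\sigma})$, and applying the coarea formula of (\ref{eq:16}) to $\boldsymbol{\sigma}\mapsto R(\boldsymbol{\sigma},\hat{\mathbf{n}})$ yields
\begin{equation*}
Z_{N,\beta}({\rm Band})\overset{d}{=}\frac{\omega_{N-1}}{\omega_N}\int_{q_1}^{q_2}e^{N\Psi_N(q,u)}\,dq,\quad \Psi_N(q,u):=\tfrac{N-3}{2N}\log(1-q^2)-\tfrac{\beta u}{N}q^p+\tfrac{1}{N}\log Z_{N-1,\beta}\bigl(\bar{H}_N^{\hat{\mathbf{n}},2+}|_q\bigr).
\end{equation*}
By (\ref{eq:lambda_F}), $\mathbb{E}\Psi_N(q,u)\to\FrfF(-u/N,q)$ pointwise; equicontinuity in $q$ (obtained by bounding $\partial_q\frac{1}{N}\mathbb{E}\log Z_{N-1,\beta}$ via Gaussian integration by parts and the boundedness of Gibbs overlaps) upgrades this to uniform convergence on $[q_1,q_2]$, uniformly in $u\in J_N$.

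\textbf{Upper bound.} The trivial estimate $\int e^{N\Psi_N(q,u)}\,dq\le (q_2-q_1)\sup_q e^{N\Psi_N(q,u)}$ gives $\frac{1}{N}\log Z_{N,\beta}({\rm Band})\le\sup_q\Psi_N(q,u)+O(N^{-1}\log N)$. Since $\mathrm{Var}(\bar{H}_N^{\hat{\mathbf{n}},2+}|_q(\boldsymbol{\sigma}))\le N$ uniformly in $\boldsymbol{\sigma},q$, Borell--TIS applied to the $O(\beta\sqrt N)$-Lipschitz functional $\log Z_{N-1,\beta}(\bar{H}_N^{\hat{\mathbf{n}},2+}|_q)$ gives pointwise concentration of $\Psi_N(q,u)$ at scale $N^{-1/2}$. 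Combining this with the smoothness of $q\mapsto\alpha_k(q)$, which controls the $q$-Lipschitz behavior of the family $\{\bar{H}_N^{\hat{\mathbf{n}},2+}|_q\}_q$, a net argument over a fine grid in $[q_1,q_2]$ upgrades this to uniform concentration, whence $\mathbb{E}\sup_q\Psi_N(q,u)\le\sup_q\mathbb{E}\Psi_N(q,u)+o(1)$ uniformly in $u\in J_N$. Combined with the uniform convergence of $\mathbb{E}\Psi_N$, this yields the upper half of (\ref{eq:15031}).

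\textbf{Lower bound and uniformity.} Fix $\epsilon>0$ and choose $q^*=q^*(u)\in[q_1,q_2]$ approximately maximizing $\FrfF(-u/N,\cdot)$. Restricting the coarea integral to $[q^*-\epsilon,q^*+\epsilon]\cap[q_1,q_2]$ and applying Jensen's inequality against the uniform probability measure on this interval gives $\frac{1}{N}\log Z_{N,\beta}({\rm Band})\ge\frac{1}{2\epsilon}\int_{q^*-\epsilon}^{q^*+\epsilon}\Psi_N(q,u)\,dq+O(N^{-1}\log N)$. Taking expectations, interchanging $\mathbb{E}$ and $\int dq$ by Fubini, and invoking the uniform convergence $\mathbb{E}\Psi_N\to\FrfF$, the RHS tends to $\frac{1}{2\epsilon}\int_{q^*-\epsilon}^{q^*+\epsilon}\FrfF(-u/N,q)\,dq$, which converges to $\FrfF(-u/N,q^*)=\sup_q\FrfF(-u/N,q)$ as $\epsilon\to0$ by continuity of $\FrfF(-u/N,\cdot)$. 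Uniformity in $u\in J_N$ is automatic since the $u$-dependence enters only through the explicit linear term $-\frac{\beta u}{N}q^p$, with $-u/N$ ranging in a compact set on which $\FrfF$ is equicontinuous. The principal technical hurdle is the uniform-in-$q$ concentration used in the upper bound: while pointwise Borell--TIS is routine, the net argument requires a quantitative Lipschitz bound in $q$ on $\log Z_{N-1,\beta}(\bar{H}_N^{\hat{\mathbf{n}},2+}|_q)$, which in turn rests on Borell--TIS control of the sup norm of each pure $k$-spin component---an $O(N)$ bound holding with exponentially high probability.
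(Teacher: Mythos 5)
Your proposal is correct and uses essentially the same approach as the paper: the identity from Corollary \ref{cor:conditional laws} together with the coarea formula to reduce to a one-dimensional integral in $q$, Gaussian concentration of $\frac{1}{N}\log Z_{N-1,\beta}\bigl(\bar{H}_{N}^{\hat{\mathbf{n}},2+}|_{q}\bigr)$ about its mean, and Lipschitz-in-$q$ control resting on exponential-tail sup-norm bounds for the pure $k$-spin components; the paper merely organizes the endgame differently, partitioning $(q_{1},q_{2})$ into short intervals and bounding the uniform deviation from $\mathbb{E}\log Z$ at an endpoint rather than your Laplace/net upper bound and Jensen lower bound. The only point worth adding is the paper's one-line reduction to $-1<q_{1},q_{2}<1$ (using that $\FrfF(-u/N,q)\to-\infty$ uniformly as $|q|\to1$), which your compactness and net arguments implicitly require when the band touches $|q|=1$.
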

Define 
\begin{align}
\FrfFt\left(E,q\right) & =\frac{1}{2}\log\left(1-q^{2}\right)+\beta Eq^{p}\label{eq:LambdaF2}\\
 & +\frac{1}{N}\log\mathbb{E}\left\{ Z_{N-1,\beta}\left(\bar{H}_{N}^{\hat{\mathbf{n}},3+}|_{q}\right)\right\} +\lim_{N\to\infty}\frac{1}{N}\mathbb{E}\left\{ \log\left(Z_{N-1,\beta}\left(\bar{H}_{N}^{\hat{\mathbf{n}},2}|_{q}\right)\right)\right\} .\nonumber 
\end{align}
(Where the term involving $\bar{H}_{N}^{\hat{\mathbf{n}},3+}$ above
does not depend on $N$.)
\begin{lem}
\label{lem:Lambda_F2}For any $\beta$, $E$ and $q$, with $\mathscr{P}_{2}\left(\beta\right)$
defined by (\ref{eq:P2}), 
\begin{equation}
\FrfF\left(E,q\right)\leq\FrfFt\left(E,q\right)=\frac{1}{2}\log\left(1-q^{2}\right)+\beta Eq^{p}+\mathscr{P}_{2}\left(\left|\alpha_{2}\left(q\right)\right|\beta\right)+\frac{1}{2}\beta^{2}\sum_{k=3}^{p}\alpha_{k}^{2}\left(q\right).\label{eq:a7}
\end{equation}

\end{lem}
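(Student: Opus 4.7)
\smallskip

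\noindent\textbf{Proof plan.} The strategy is to exploit the orthogonal decomposition from Lemma~\ref{lem:HNdecomposition} to split the field $\bar{H}_{N}^{\hat{\mathbf{n}},2+}|_{q}$ into its $2$-spin piece and an independent ``higher spin'' piece, then handle the higher piece by a Jensen (annealed) bound and identify the $2$-spin piece via Corollary~\ref{cor:2spinFE}.

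\smallskip

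\emph{Step 1 (decomposition and constant variance).} By Lemma~\ref{lem:HNdecomposition},
\[
\bar{H}_{N}^{\hat{\mathbf{n}},2+}|_{q}=\bar{H}_{N}^{\hat{\mathbf{n}},2}|_{q}+\bar{H}_{N}^{\hat{\mathbf{n}},3+}|_{q},
\]
and the two summands are independent centered Gaussian fields on $\mathbb{S}^{N-2}$. Since $q(\boldsymbol{\sigma})=q$ is constant on the relevant level set, (\ref{eq:1220-2}) gives $\bar{H}_{N}^{\hat{\mathbf{n}},k}|_{q}(\boldsymbol{\sigma})=\alpha_{k}(q)\sqrt{N/(N-1)}\,H_{N-1}^{{\rm pure\,}k}(\boldsymbol{\sigma})$, so in particular $\mathrm{Var}(\bar{H}_{N}^{\hat{\mathbf{n}},3+}|_{q}(\boldsymbol{\sigma}))=N\sum_{k=3}^{p}\alpha_{k}^{2}(q)$ does not depend on $\boldsymbol{\sigma}$.

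\smallskip

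\emph{Step 2 (Jensen gives the inequality).} Condition on $\bar{H}_{N}^{\hat{\mathbf{n}},2}|_{q}$ and integrate out $\bar{H}_{N}^{\hat{\mathbf{n}},3+}|_{q}$. Using the constancy of the variance above and Fubini,
\[
\mathbb{E}\Bigl[Z_{N-1,\beta}\bigl(\bar{H}_{N}^{\hat{\mathbf{n}},2+}|_{q}\bigr)\,\Big|\,\bar{H}_{N}^{\hat{\mathbf{n}},2}|_{q}\Bigr]=\mathbb{E}\bigl\{Z_{N-1,\beta}\bigl(\bar{H}_{N}^{\hat{\mathbf{n}},3+}|_{q}\bigr)\bigr\}\cdot Z_{N-1,\beta}\bigl(\bar{H}_{N}^{\hat{\mathbf{n}},2}|_{q}\bigr),
\]
where the first factor equals $\exp\{\tfrac12\beta^{2}N\sum_{k=3}^{p}\alpha_{k}^{2}(q)\}$. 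Applying Jensen's inequality to $\log$ and then averaging over $\bar{H}_{N}^{\hat{\mathbf{n}},2}|_{q}$ yields
\[
\mathbb{E}\log Z_{N-1,\beta}\bigl(\bar{H}_{N}^{\hat{\mathbf{n}},2+}|_{q}\bigr)\leq\log\mathbb{E}\bigl\{Z_{N-1,\beta}\bigl(\bar{H}_{N}^{\hat{\mathbf{n}},3+}|_{q}\bigr)\bigr\}+\mathbb{E}\log Z_{N-1,\beta}\bigl(\bar{H}_{N}^{\hat{\mathbf{n}},2}|_{q}\bigr).
\]
Dividing by $N$, taking $N\to\infty$, and adding $\tfrac12\log(1-q^{2})+\beta Eq^{p}$ to both sides gives $\FrfF(E,q)\leq\FrfFt(E,q)$.

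\smallskip

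\emph{Step 3 (identifying the two pieces).} The annealed computation in Step~2 already gives
\[
\tfrac{1}{N}\log\mathbb{E}\bigl\{Z_{N-1,\beta}\bigl(\bar{H}_{N}^{\hat{\mathbf{n}},3+}|_{q}\bigr)\bigr\}=\tfrac{1}{2}\beta^{2}\sum_{k=3}^{p}\alpha_{k}^{2}(q),
\]
which accounts for one term of (\ref{eq:a7}). For the $2$-spin piece, by Step~1 and Gaussian symmetry $Z_{N-1,\beta}(\bar{H}_{N}^{\hat{\mathbf{n}},2}|_{q})\stackrel{d}{=}Z_{N-1,\beta_{\rm eff}}(H_{N-1}^{{\rm pure\,}2})$ with $\beta_{\rm eff}=\beta|\alpha_{2}(q)|\sqrt{N/(N-1)}\to\beta|\alpha_{2}(q)|$. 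By Corollary~\ref{cor:2spinFE} the quenched free energy of the pure $2$-spin model converges to $\mathscr{P}_{2}(\cdot)$, and by Gaussian concentration of the free energy (Borell--TIS) this convergence also holds for the expectation; continuity (in fact convexity) of $\beta\mapsto\mathscr{P}_{2}(\beta)$ then absorbs the harmless $\sqrt{N/(N-1)}$ factor to give
\[
\lim_{N\to\infty}\tfrac{1}{N}\mathbb{E}\log Z_{N-1,\beta}\bigl(\bar{H}_{N}^{\hat{\mathbf{n}},2}|_{q}\bigr)=\mathscr{P}_{2}\bigl(\beta|\alpha_{2}(q)|\bigr).
\]
Combining the two identifications yields the displayed formula for $\FrfFt(E,q)$ in (\ref{eq:a7}).

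\smallskip

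\emph{Main point/obstacle.} Nothing in this is hard: the only steps requiring a little care are (i) upgrading the convergence in Corollary~\ref{cor:2spinFE} from convergence of the random free energy to convergence of its mean (standard Gaussian concentration) and (ii) handling the $\sqrt{N/(N-1)}$ shift in the effective inverse temperature (via convexity of $\mathscr{P}_{2}$, or simply continuity). The conceptual content is entirely encapsulated in the independence of the $k=2$ and $k\geq3$ components of the decomposition (\ref{eq:Hbar decomposition}) together with the constancy of the variance of $\bar{H}_{N}^{\hat{\mathbf{n}},3+}|_{q}$ along the sphere.
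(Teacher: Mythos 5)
Your proof is correct and follows essentially the same route as the paper: conditioning on the $2$-spin component (equivalently on $\mathbf{G}(\hat{\mathbf{n}})$), applying Jensen's inequality to integrate out the independent $3$-and-above spins, whose variance is constant on the slice, and identifying the $2$-spin limit via Corollary \ref{cor:2spinFE}. Your explicit handling of the $\sqrt{N/(N-1)}$ effective-temperature factor and of the passage to convergence of expectations (concentration plus continuity of $\mathscr{P}_{2}$) is just a slightly more detailed version of the paper's brief appeal to monotonicity of the partition function in temperature.
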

We also derive bounds on the fluctuations.
\begin{lem}
\label{lem:devFE-RegIV}For any $\beta$, $u$ and $x>0$, 
\begin{equation}
\mathbb{P}_{u,0}\left\{ \left|\log\left(Z_{N,\beta}\left({\rm Band}\right)\right)-\mathbb{E}_{u,0}\log\left(Z_{N,\beta}\left({\rm Band}\right)\right)\right|>Nx\right\} \leq e^{-\frac{Nx^{2}}{2V\beta^{2}}},\label{eq:1226-5}
\end{equation}
where, with $\alpha_{k}\left(q\right)$ defined in (\ref{eq:alpha_k}),
\[
V:=V\left(q_{1},q_{2}\right)=\sum_{k=2}^{p}\sup_{q\in\left(q_{1},q_{2}\right)}\left(\alpha_{k}\left(q\right)\right)^{2}.
\]

\end{lem}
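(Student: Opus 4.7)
\textbf{Proof plan for Lemma \ref{lem:devFE-RegIV}.} The strategy is to reduce to an unconditional statement via Corollary \ref{cor:conditional laws}, recognize $\log Z_{N,\beta}({\rm Band})$ as a Lipschitz function of i.i.d.\ standard Gaussians whose Lipschitz constant is controlled by the supremum of the variance of $\bar{H}_{N}^{\hat{\mathbf{n}},2+}$ on the band, and then invoke standard Gaussian concentration (Borell--TIS).

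First I would use Corollary \ref{cor:conditional laws} to replace the conditional law $\mathbb{P}_{u,0}$ by the unconditional law of the shifted field $u q^p(\boldsymbol{\sigma}) + \bar{H}_{N}^{\hat{\mathbf{n}},2+}(\boldsymbol{\sigma})$. Since the deterministic shift $u q^p(\boldsymbol{\sigma})$ only contributes a common deterministic factor to both $Z_{N,\beta}({\rm Band})$ and $\mathbb{E}_{u,0} Z_{N,\beta}({\rm Band})$ inside the logarithm (in fact it can be absorbed into the integrand pointwise), it cancels in the difference $\log Z - \mathbb{E}\log Z$. Hence it suffices to control fluctuations of
\[
F := \log \int_{{\rm Band}} \exp\bigl\{-\beta \bar{H}_{N}^{\hat{\mathbf{n}},2+}(\boldsymbol{\sigma})\bigr\}\, d\mu_{N}(\boldsymbol{\sigma})
\]
around its mean.

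Next, using the representation (\ref{eq:1220-1}) (combined with (\ref{eq:Hbar decomposition}) and (\ref{eq:3+})), I would write the centered Gaussian field $\bar{H}_{N}^{\hat{\mathbf{n}},2+}(\boldsymbol{\sigma})$ as a linear combination $\sum_{j} a_{j}(\boldsymbol{\sigma})\, g_{j}$ of a finite family of i.i.d.\ standard normals $(g_{j})$ (indexed by the disorder coefficients $J'_{i_{1},\dots,i_{k},N,\dots,N}$ for $2 \leq k \leq p$), where the deterministic coefficients $a_{j}(\boldsymbol{\sigma})$ satisfy
\[
\sum_{j} a_{j}(\boldsymbol{\sigma})^{2} \;=\; \mathrm{Var}\bigl(\bar{H}_{N}^{\hat{\mathbf{n}},2+}(\boldsymbol{\sigma})\bigr) \;=\; N \sum_{k=2}^{p} \alpha_{k}^{2}\bigl(q(\boldsymbol{\sigma})\bigr).
\]
In particular, for $\boldsymbol{\sigma} \in {\rm Band}$ one has $q(\boldsymbol{\sigma}) \in (q_{1},q_{2})$, so $\sum_{j} a_{j}(\boldsymbol{\sigma})^{2} \leq N V$.

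Now I would view $F$ as a function $F(g)$ of the Gaussian vector $g=(g_{j})$. Differentiating under the integral,
\[
\partial_{g_{j}} F \;=\; -\beta \int_{{\rm Band}} a_{j}(\boldsymbol{\sigma})\, \frac{e^{-\beta \bar{H}_{N}^{\hat{\mathbf{n}},2+}(\boldsymbol{\sigma})}}{\int_{{\rm Band}} e^{-\beta \bar{H}_{N}^{\hat{\mathbf{n}},2+}(\boldsymbol{\sigma}')}\, d\mu_{N}(\boldsymbol{\sigma}')}\, d\mu_{N}(\boldsymbol{\sigma}) \;=\; -\beta\, \mathbb{E}_{\widehat{G}}\bigl[a_{j}(\boldsymbol{\sigma})\bigr],
\]
where $\widehat{G}$ denotes the (random) Gibbs measure on the band associated with the field $\bar{H}_{N}^{\hat{\mathbf{n}},2+}$. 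By Jensen's inequality applied coordinatewise and then summed in $j$,
\[
\|\nabla_{g} F\|^{2} \;=\; \beta^{2} \sum_{j} \bigl(\mathbb{E}_{\widehat{G}}[a_{j}(\boldsymbol{\sigma})]\bigr)^{2} \;\leq\; \beta^{2}\, \mathbb{E}_{\widehat{G}}\Bigl[\,\sum_{j} a_{j}(\boldsymbol{\sigma})^{2}\Bigr] \;\leq\; \beta^{2} N V,
\]
the last inequality holding deterministically in $g$ because $\widehat{G}$ is supported on ${\rm Band}$. Thus $F$ is a $\beta\sqrt{NV}$-Lipschitz function of $g$.

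Finally I would invoke Gaussian concentration for Lipschitz functions (Borell--TIS): for any $t>0$,
\[
\mathbb{P}\bigl\{\,|F - \mathbb{E} F| > t\,\bigr\} \;\leq\; e^{-t^{2}/(2 \beta^{2} N V)},
\]
and setting $t = N x$ gives exactly (\ref{eq:1226-5}). The only potential subtlety is in the first step, namely making sure that under $\mathbb{P}_{u,0}$ the random integrand can be replaced by its unconditional equivalent and that the derivative computation is justified (both of which follow from Corollary \ref{cor:conditional laws} and the smooth dependence of the integral on the finite-dimensional Gaussian disorder); beyond this, the argument is a direct application of the Gaussian Lipschitz concentration principle.
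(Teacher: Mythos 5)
Your argument is correct and is essentially the paper's own proof: the paper likewise uses Corollary \ref{cor:conditional laws} to realize the conditional field as an explicit function $h(\boldsymbol{\sigma},u,\mathbf{J})$ of finitely many i.i.d.\ standard Gaussians, bounds $\sum_{j}\bigl(\partial_{g_{j}}\log Z_{N,\beta}({\rm Band})\bigr)^{2}\leq\beta^{2}NV$ by the same Gibbs-average/Jensen computation (using that the Gibbs measure is supported on the band), and concludes by standard Gaussian concentration for Lipschitz functions. One small correction: the deterministic term $uq^{p}(\boldsymbol{\sigma})$ varies over the band, so it is not a common factor and does not simply cancel out of $\log Z_{N,\beta}({\rm Band})$; the right move (which your parenthetical already hints at, and which the paper takes) is to keep it as a $g$-independent tilt inside the integrand, which leaves the derivative computation and the Lipschitz constant unchanged.
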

The rest of the section is taken up with the proofs of the three lemmas.

\subsection*{Proof of Lemma \ref{lem:Lambda_F_bound}}

Using the fact that as $|q|\to1$ , $\FrfF\left(\frac{u}{N},q\right)\to-\infty$
uniformly in $u\in J_{N}$, we may, as we will, make the assumption
that $-1<q_{1},q_{2}<1$. By partitioning $(q_{1},q_{2})$ to finitely
many intervals and letting the maximal length of an interval approach
$0$, it is enough to show that the left-hand side of (\ref{eq:15031})
is smaller than $c\left(q_{2}-q_{1}\right)$ for some $c>0$ independent
of $N$. 

By Corollary \ref{cor:conditional laws}, similarly to (\ref{eq:16}),
under $\mathbb{P}_{u,0}$, $Z_{N,\beta}\left({\rm Band}\right)$ has
the same distribution as
\[
\int_{q_{1}}^{q_{2}}\frac{\omega_{N-1}}{\omega_{N}}\left(1-q^{2}\right)^{\frac{N-3}{2}}e^{-\beta uq^{p}}Z_{N-1,\beta}\left(\bar{H}_{N}^{\hat{\mathbf{n}},2+}|_{q}\right)dq.
\]
Therefore, the proof is concluded if we show that 
\begin{equation}
X_{0}\triangleq\sup_{q\in\left(q_{1},q_{2}\right)}\left|\frac{1}{N}\log\left(Z_{\beta}\left(\bar{H}_{N}^{\hat{\mathbf{n}},2+}|_{q}\right)\right)-\frac{1}{N}\mathbb{E}\left\{ \log\left(Z_{\beta}\left(\bar{H}_{N}^{\hat{\mathbf{n}},2+}|_{q_{1}}\right)\right)\right\} \right|\label{eq:a}
\end{equation}
satisfies 
\begin{equation}
\limsup_{N\to\infty}\frac{1}{N}\log\left(\mathbb{P}\left\{ |X_{0}|\geq t+\left(q_{2}-q_{1}\right)c_{1}\right\} \right)\leq-c_{2}t^{2},\label{eq:a1}
\end{equation}
for some $c_{1},\, c_{2}>0$.

Using (\ref{eq:1220-2}) one has (for example, by differentiating
by $q$ the first logarithm in (\ref{eq:a})) that (\ref{eq:a1})
follows if we prove that for any $2\leq k\leq p$ and large enough
$N$, 
\begin{equation}
\mathbb{P}\left\{ \max_{\boldsymbol{\sigma}\in\mathbb{S}^{N-2}}\left|H_{N-1}^{{\rm pure\,}k}\left(\boldsymbol{\sigma}\right)\right|\geq tN+c_{1}^{\left(k\right)}N\right\} \leq e^{-c_{2}^{\left(k\right)}Nt^{2}}\label{eq:maxboundPspin}
\end{equation}
for appropriate constants $c_{1}^{\left(k\right)},\, c_{2}^{\left(k\right)}>0$.
The bound of (\ref{eq:maxboundPspin}) follows by Corollary \ref{cor:min}
for $k\geq3$ and the connection to GOE matrices as in (\ref{eq:HG})
for $k=2$ and the Borell-TIS inequality \cite[Theorem 2.1.1]{RFG}.\qed

\subsection*{Proof of Lemma \ref{lem:Lambda_F2}}

Let $\mathbb{E}_{\mathbf{G}\left(\hat{\mathbf{n}}\right)}\left\{ \cdot\right\} $
denote the conditional expectation given $\mathbf{G}\left(\hat{\mathbf{n}}\right)$
(defined in (\ref{eq:G})). By (\ref{eq:Hnhat2}) and the independence
of $\bar{H}_{N}^{\hat{\mathbf{n}},k}$ for different $k$,
\[
\mathbb{E}_{\mathbf{G}\left(\hat{\mathbf{n}}\right)}\left\{ Z_{N-1,\beta}\left(\bar{H}_{N}^{\hat{\mathbf{n}},2+}|_{q}\right)\right\} =\mathbb{E}\left\{ e^{\beta\bar{H}_{N}^{\hat{\mathbf{n}},3+}\left(\boldsymbol{\sigma}_{q}\right)}\right\} \cdot Z_{N-1,\beta}\left(\bar{H}_{N}^{\hat{\mathbf{n}},2}|_{q}\right),
\]
where $\boldsymbol{\sigma}_{q}$ is an arbitrary point such that $R\left(\boldsymbol{\sigma}_{q},\hat{\mathbf{n}}\right)=q$.
Thus, by Jensen's inequality and (\ref{eq:1220-2}), 
\begin{align}
 & \mathbb{E}\left\{ \log\left(Z_{N-1,\beta}\left(\bar{H}_{N}^{\hat{\mathbf{n}},2+}|_{q}\right)\right)\right\} \leq\mathbb{E}\left\{ \log\left(\mathbb{E}_{\mathbf{G}\left(\hat{\mathbf{n}}\right)}\left\{ Z_{N-1,\beta}\left(\bar{H}_{N}^{\hat{\mathbf{n}},2+}|_{q}\right)\right\} \right)\right\} \label{eq:0101-1}\\
 & =\frac{1}{2}N\beta^{2}\sum_{k=3}^{p}\left(\alpha_{k}\left(q\right)\right)^{2}+\mathbb{E}\left\{ \log\left(Z_{N-1,\beta}\left(\bar{H}_{N}^{\hat{\mathbf{n}},2}|_{q}\right)\right)\right\} \nonumber \\
 & =\log\left(\mathbb{E}\left\{ Z_{N-1,\beta}\left(\bar{H}_{N}^{\hat{\mathbf{n}},3+}|_{q}\right)\right\} \right)+\mathbb{E}\left\{ \log\left(Z_{N-1,\beta}\left(\bar{H}_{N}^{\hat{\mathbf{n}},2}|_{q}\right)\right)\right\} .\nonumber 
\end{align}

By (\ref{eq:1220-2}), $Z_{N-1,\beta}\left(\bar{H}_{N}^{\hat{\mathbf{n}},2}|_{q}\right)$
is equal to the partition function of $H_{N-1}^{{\rm pure\,}2}\left(\boldsymbol{\sigma}\right)$
at temperature $\beta\left|\alpha_{2}\left(q\right)\right|\cdot\sqrt{\frac{N}{N-1}}$.
Thus, by Corollary \ref{cor:2spinFE} (and using the monotonicity
in temperature of the partition function) we have that
\[
\lim_{N\to\infty}\frac{1}{N}\mathbb{E}\left\{ \log\left(Z_{N-1,\beta}\left(\bar{H}_{N}^{\hat{\mathbf{n}},2}|_{q}\right)\right)\right\} =\mathscr{P}_{2}\left(\beta\right).
\]
This completes the proof.\qed

\subsection*{Proof of Lemma \ref{lem:devFE-RegIV}}

We will show that $\frac{1}{N}\log Z_{N,\beta}\left({\rm Band}\right)$
is a Lipschitz function of a set of i.i.d standard Gaussian variables
with Lipschitz constant bounded from above by $\beta\sqrt{V/N}$.
The required bound will follow by standard concentration inequalities
(see e.g. \cite[Lemma 2.3.3]{Matrices}). By Corollary \ref{cor:conditional laws},
(\ref{eq:1220-2}) and (\ref{eq:Hamiltonian}) we have the following.
The law of the field $\{H_{N}(\boldsymbol{\sigma})\}_{\boldsymbol{\sigma}}$
under $\mathbb{P}_{u,0}$ is identical to that of $\{h\left(\boldsymbol{\sigma},u,\mathbf{J}\right)\}_{\boldsymbol{\sigma}}$
where $\mathbf{J}=(J_{i_{1},...,i_{k}})$ is an array of random variables
with $1\leq i_{j}\leq N$, $2\leq k\leq p$, whose elements are i.i.d
standard Gaussian variables, and where the (deterministic) function
$h$ is given by 
\begin{equation}
h\left(\boldsymbol{\sigma},u,\mathbf{x}\right)=u\left(R\left(\boldsymbol{\sigma},\hat{\mathbf{n}}\right)\right)^{p}+\alpha_{k}\left(q\left(\boldsymbol{\sigma}\right)\right)\sum_{k=2}^{p}\frac{N^{1/2}}{\left(N-1\right)^{k/2}}\sum_{i_{1},...,i_{k}=1}^{N}x_{i_{1},...,i_{k}}\tilde{\sigma}_{i_{1}}\cdots\tilde{\sigma}_{i_{k}},\label{eq:h}
\end{equation}
where $\mathbf{x}=(x_{i_{1},...,i_{k}})$ is an array of real numbers
as above, and $\tilde{\boldsymbol{\sigma}}=\left(\tilde{\sigma}_{1},...,\tilde{\sigma}_{N-1}\right)$
is the vector of norm $\sqrt{N-1}$ defined in (\ref{eq:69}).

For any $i_{1},...,i_{k}$,
\begin{align*}
D_{i_{1},...,i_{k}} & \triangleq\frac{d}{dx_{i_{1},...,i_{k}}}\log\left(\int_{{\rm Band}}\exp\left\{ -\beta h\left(\boldsymbol{\sigma},u,\mathbf{x}\right)\right\} d\mu_{N}\left(\boldsymbol{\sigma}\right)\right)\\
 & =-\beta\frac{N^{1/2}}{\left(N-1\right)^{k/2}}\cdot\frac{\int_{{\rm Band}}\alpha_{k}\left(q\left(\boldsymbol{\sigma}\right)\right)\tilde{\sigma}_{i_{1}}\cdots\tilde{\sigma}_{i_{k}}\exp\left\{ -\beta h\left(\boldsymbol{\sigma},u,\mathbf{x}\right)\right\} d\mu_{N}\left(\boldsymbol{\sigma}\right)}{\int_{{\rm Band}}\exp\left\{ -\beta h\left(\boldsymbol{\sigma},u,\mathbf{x}\right)\right\} d\mu_{N}\left(\boldsymbol{\sigma}\right)}.
\end{align*}
The ratio of integrals in the last equation can be viewed as an expectation
under a Gibbs measure on the band which corresponds to (\ref{eq:h}).
Denote expectation by this measure by $\left\langle \,\cdot\,\right\rangle _{h}$,
so that the ratio is simply $\langle\alpha_{k}\left(q\left(\boldsymbol{\sigma}\right)\right)\tilde{\sigma}_{i_{1}}\cdots\tilde{\sigma}_{i_{k}}\rangle_{h}$.
We then have 
\begin{align}
\sum_{k=2}^{p}\sum_{i_{1},...,i_{k}=1}^{N}\left(D_{i_{1},...,i_{k}}\right)^{2} & =\sum_{k=2}^{p}\beta^{2}\frac{N}{\left(N-1\right)^{k}}\sum_{i_{1},...,i_{k}=1}^{N}\langle\alpha_{k}\left(q\left(\boldsymbol{\sigma}\right)\right)\tilde{\sigma}_{i_{1}}\cdots\tilde{\sigma}_{i_{k}}\rangle_{h}^{2}\label{eq:0304}\\
 & \leq\sum_{k=2}^{p}\beta^{2}\frac{N}{\left(N-1\right)^{k}}\sup_{q\in(q_{1},q_{2})}\left(\alpha_{k}\left(q\right)\right)^{2}\sum_{i_{1},...,i_{k}=1}^{N}\left\langle (\tilde{\sigma}_{i_{1}}\cdots\tilde{\sigma}_{i_{k}})^{2}\right\rangle _{h}.\nonumber 
\end{align}
Note that
\[
\sum_{i_{1},...,i_{k}=1}^{N}\left\langle (\tilde{\sigma}_{i_{1}}\cdots\tilde{\sigma}_{i_{k}})^{2}\right\rangle _{h}=\left\langle \left\Vert \tilde{\boldsymbol{\sigma}}\right\Vert _{2}^{2k}\right\rangle _{h}=\left(N-1\right)^{k}.
\]
Since the Lipschitz constant mentioned in the beginning is equal to
$N^{-1}$ times the square root of (\ref{eq:0304}), the proof is
completed.\qed

\section{\label{sec:Bounds-on-contributions}Bounds on contributions to $Z_{N,\beta}$}

In this section we derive the bounds on contributions required for
the proof of Theorem \ref{thm:Geometry}. In Section \ref{sub:Restricting-to-caps}
we make precise the argument about restriction to caps outlined in
Section \ref{sec:outline}. In Section \ref{sub:Overlap-depth-regions}
we define various overlap-depth regions and state the bounds we shall
need for each. We prove them in Section \ref{sub:pfbds} using the
results on the conditional law of masses derived in Sections \ref{sec:range(q**,1)}
and \ref{sec:range(q***,q**)} and corollaries of the Kac-Rice formula
from Appendix I.

\subsection{\label{sub:Restricting-to-caps}Restriction to caps}

In (\ref{eq:Reg*bd}) below we shall prove that for any $\delta>0$
, w.h.p $Z_{N,\beta}\geq\exp\left\{ N\left(\Frf\left(E_{0},q_{*}\right)-\delta\right)\right\} $.
Observe that 
\begin{equation}
Z_{N,\beta}\left(\left\{ \boldsymbol{\sigma}:H_{N}\left(\boldsymbol{\sigma}\right)\geq u\right\} \right)\leq\mu_{N}\left(\left\{ \boldsymbol{\sigma}:H_{N}\left(\boldsymbol{\sigma}\right)\geq u\right\} \right)e^{-\beta u}\leq e^{-\beta u}.\label{eq:LSbd}
\end{equation}
Hence, setting 
\begin{equation}
u_{{\rm LS}}:=u_{{\rm LS}}\left(\beta\right)=-\Frf\left(E_{0},q_{*}\right)N/\beta,\label{eq:uLS}
\end{equation}
for any $\delta>0$ 
\begin{equation}
Z_{N,\beta}\left(\left\{ \boldsymbol{\sigma}:H_{N}\left(\boldsymbol{\sigma}\right)\geq u_{{\rm LS}}+\delta N\right\} \right)/Z_{N,\beta}\overset{N\to\infty}{\longrightarrow}0.\label{eq:lvlset}
\end{equation}
The (random) set in (\ref{eq:lvlset}) is related to critical points
by the following lemma. Recall that $q_{{\rm LS}}=1-C_{{\rm LS}}\frac{\log\beta}{\beta}$
was defined in (\ref{eq:qLS}) with an arbitrary $C_{{\rm LS}}>\left(2p\left(E_{0}-E_{\infty}\right)\right)^{-1}$.
\begin{lem}
\label{lem:LSnegligible}For large enough $\beta$, for small enough
$\delta:=\delta(\beta)>0$ , 
\begin{equation}
\lim_{N\to\infty}\mathbb{P}\left\{ \left\{ \boldsymbol{\sigma}:H_{N}\left(\boldsymbol{\sigma}\right)<u_{{\rm LS}}+\delta N\right\} \subset\cup_{\boldsymbol{\sigma}_{0}\in\mathscr{C}_{N}\left(-\infty,u_{{\rm LS}}+\delta N\right)}{\rm Cap}_{N}\left(\boldsymbol{\sigma}_{0},q_{{\rm LS}}\right)\right\} =1.\label{eq:LSandCaps}
\end{equation}
\end{lem}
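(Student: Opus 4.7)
The idea is to reduce the covering property to a high-probability lower bound on $H_{N}$ restricted to small spheres of fixed overlap $q_{{\rm LS}}$ around critical points. First, every connected component $C$ of the open set $\{\boldsymbol{\sigma}:H_{N}(\boldsymbol{\sigma})<u_{{\rm LS}}+\delta N\}$ attains its infimum on $\bar{C}$ at some interior point $\boldsymbol{\sigma}_{0}\in C$ (since $C$ is open-and-closed in the sub-level set, $\partial C\subset\{H_{N}=u_{{\rm LS}}+\delta N\}$, which strictly exceeds the infimum on $C$), and such a $\boldsymbol{\sigma}_{0}$ is automatically a critical point lying in $\mathscr{C}_{N}(-\infty,u_{{\rm LS}}+\delta N)$. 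If with high probability, for every such $\boldsymbol{\sigma}_{0}$,
\[
\inf_{R(\boldsymbol{\sigma},\boldsymbol{\sigma}_{0})=q_{{\rm LS}}}H_{N}(\boldsymbol{\sigma})>u_{{\rm LS}}+\delta N,
\]
then joining any $\boldsymbol{\sigma}\in C$ to $\boldsymbol{\sigma}_{0}$ by a path in $C$ and applying the intermediate-value theorem to $R(\cdot,\boldsymbol{\sigma}_{0})$ forces $R(\boldsymbol{\sigma},\boldsymbol{\sigma}_{0})\geq q_{{\rm LS}}$, i.e., $C\subset{\rm Cap}(\boldsymbol{\sigma}_{0},q_{{\rm LS}})$, which is exactly (\ref{eq:LSandCaps}).

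For a fixed critical point, which by rotational invariance we take to be $\hat{\mathbf{n}}$, I would condition on $H_{N}(\hat{\mathbf{n}})=u\leq u_{{\rm LS}}+\delta N$ and $\nabla H_{N}(\hat{\mathbf{n}})=0$. By Corollary \ref{cor:conditional laws}, the field on the sub-sphere $\{R(\boldsymbol{\sigma},\hat{\mathbf{n}})=q_{{\rm LS}}\}$ has the unconditional distribution of $uq_{{\rm LS}}^{p}+\bar{H}_{N}^{\hat{\mathbf{n}},2+}|_{q_{{\rm LS}}}(\boldsymbol{\sigma})$, and by Lemma \ref{lem:HNdecomposition} the stochastic remainder is a sum of independent scaled pure $k$-spin models, $k=2,\dots,p$, with coefficients $\alpha_{k}(q_{{\rm LS}})$. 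Since $1-q_{{\rm LS}}=C_{{\rm LS}}\log\beta/\beta$ is small, the $k\geq 3$ components are dwarfed by the $k=2$ component (as $\alpha_{k}(q_{{\rm LS}})/\alpha_{2}(q_{{\rm LS}})=O((1-q_{{\rm LS}})^{(k-2)/2})$); combining Corollary \ref{cor:2spinFE} for the ground state of the pure $2$-spin with a crude Borell--TIS estimate for the $k\geq 3$ remainder gives
\[
\mathbb{E}\bigl[\min\nolimits_{R(\boldsymbol{\sigma},\hat{\mathbf{n}})=q_{{\rm LS}}}\bar{H}_{N}^{\hat{\mathbf{n}},2+}|_{q_{{\rm LS}}}(\boldsymbol{\sigma})\bigr]=-\sqrt{2}\,\alpha_{2}(q_{{\rm LS}})(1+o(1))N=-pE_{\infty}(1-q_{{\rm LS}})N(1+o(1)),
\]
using $2\sqrt{p(p-1)}=pE_{\infty}$. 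Combined with the expansion $uq_{{\rm LS}}^{p}=u+p|u|(1-q_{{\rm LS}})+O(N(1-q_{{\rm LS}})^{2})$ and the fact that $|u|/N$ is close to $E_{0}$, the mean minimum exceeds $u$ by $p(E_{0}-E_{\infty})(1-q_{{\rm LS}})N(1+o(1))$. A short calculation using $1-q_{*}=O(1/\beta)$ yields $u_{{\rm LS}}/N+E_{0}=\log\beta/(2\beta)+O(1/\beta)$, so the mean minimum exceeds $u_{{\rm LS}}+\delta N$ by at least
\[
\Bigl(p(E_{0}-E_{\infty})C_{{\rm LS}}-\frac{1}{2}\Bigr)\frac{\log\beta}{\beta}N-\delta N+o\!\left(\frac{N\log\beta}{\beta}\right),
\]
which is $\Theta(N\log\beta/\beta)$ and strictly positive for $\beta$ large and $\delta$ small, precisely because $C_{{\rm LS}}>(2p(E_{0}-E_{\infty}))^{-1}$ as assumed in (\ref{eq:qLS}).

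The Gaussian field $\bar{H}_{N}^{\hat{\mathbf{n}},2+}|_{q_{{\rm LS}}}$ has pointwise variance at most $N\sum_{k\geq 2}\alpha_{k}^{2}(q_{{\rm LS}})=O(N(\log\beta/\beta)^{2})$, so by the Borell--TIS inequality \cite[Theorem 2.1.1]{RFG} the minimum concentrates around its mean on the scale $O(\sqrt{N}\log\beta/\beta)$, which is much smaller than the $\Theta(N\log\beta/\beta)$ gap just computed. Hence for each fixed $u\leq u_{{\rm LS}}+\delta N$, the conditional probability that the desired inequality fails at $\hat{\mathbf{n}}$ is at most $e^{-c(\beta)N}$ for some $c(\beta)>0$. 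Finally, to upgrade to a uniform bound over all $\boldsymbol{\sigma}_{0}\in\mathscr{C}_{N}(-\infty,u_{{\rm LS}}+\delta N)$ I would integrate this conditional tail against the Kac--Rice first-moment density of critical points (cf.\ Appendix~I and Theorem \ref{thm:ABAC}), obtaining
\[
\mathbb{E}\bigl|\bigl\{\boldsymbol{\sigma}_{0}\in\mathscr{C}_{N}(-\infty,u_{{\rm LS}}+\delta N):\inf\nolimits_{R(\cdot,\boldsymbol{\sigma}_{0})=q_{{\rm LS}}}H_{N}\leq u_{{\rm LS}}+\delta N\bigr\}\bigr|\leq\exp\bigl\{N\bigl(\Theta_{p}(u_{{\rm LS}}/N+\delta)-c(\beta)\bigr)+o(N)\bigr\}.
\]
Since $\Theta_{p}(-E_{0})=0$ and $u_{{\rm LS}}/N+\delta\to-E_{0}$ as $\delta\to 0$ and $\beta\to\infty$, the exponent can be made strictly negative for $\beta$ large and $\delta$ small, whence Markov's inequality closes the proof. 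The main obstacle is precisely this quantitative balance: the threshold $C_{{\rm LS}}>(2p(E_{0}-E_{\infty}))^{-1}$ is chosen so that the mean excess $p(E_{0}-E_{\infty})C_{{\rm LS}}\log\beta/\beta$ strictly beats both the $\log\beta/(2\beta)$ bias of $u_{{\rm LS}}/N$ above $-E_{0}$ and the residual Kac--Rice complexity $\Theta_{p}(u_{{\rm LS}}/N+\delta)$.
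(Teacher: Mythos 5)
Your proposal follows essentially the same route as the paper's proof: reduce the covering statement to the event that no critical point with value below $u_{\rm LS}+\delta N$ has the sub-sphere $\{R(\cdot,\boldsymbol{\sigma}_{0})=q_{\rm LS}\}$ dipping below that level, analyze the conditional field there via the decomposition of Section \ref{sec:Decomposition} (dominant $2$-spin/GOE term with minimal eigenvalue near $-2$, negligible $k\geq3$ terms), and close with a Kac--Rice first moment plus Markov; your quantitative balance, with the gap $\bigl(p(E_{0}-E_{\infty})C_{\rm LS}-\tfrac{1}{2}\bigr)\frac{\log\beta}{\beta}N$ driven by the assumption $C_{\rm LS}>\left(2p(E_{0}-E_{\infty})\right)^{-1}$, is exactly the computation in the paper.

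One imprecision: your claim that for \emph{each} fixed $u\leq u_{\rm LS}+\delta N$ the conditional probability $\mathbb{P}_{u,0}\{\inf_{R(\cdot,\hat{\mathbf{n}})=q_{\rm LS}}H_{N}\leq u_{\rm LS}+\delta N\}$ is at most $e^{-c(\beta)N}$ is not literally true, since for $u$ far below $-E_{0}N$ the conditional mean $uq_{\rm LS}^{p}$ already lies below the threshold and the probability is close to $1$; your appeal to ``$|u|/N$ close to $E_{0}$'' silently assumes $u$ is not much below $-E_{0}N$. The paper sidesteps this by discarding $\mathscr{C}_{N}(-\infty,-E_{0}N)$ via Corollary \ref{cor:min} and then using that the conditional probability is monotone in $u$, so the worst case is $u=-E_{0}N$ (this is how Lemma \ref{lem:15} is applied). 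Alternatively you could split the Kac--Rice integral, using that $\Theta_{p}(E)$ is strictly negative and decreasing for $E$ below $-E_{0}$ so that deep levels contribute negligibly even with the trivial bound $1$ on the probability; either patch makes your argument complete.
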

\begin{proof}
Any connected component of $\left\{ \boldsymbol{\sigma}:H_{N}\left(\boldsymbol{\sigma}\right)<u_{{\rm LS}}+\delta N\right\} $
contains at least one critical point $\boldsymbol{\sigma}_{0}\in\mathscr{C}_{N}\left(-\infty,u_{{\rm LS}}+\delta N\right)$
(one local minimum point, in particular). Thus, it will be enough
to show that w.h.p for any critical $\boldsymbol{\sigma}_{0}\in\mathscr{C}_{N}\left(-\infty,u_{{\rm LS}}+\delta N\right)$
the connected component of $\boldsymbol{\sigma}_{0}$ is contained
in ${\rm Cap}_{N}\left(\boldsymbol{\sigma}_{0},q_{{\rm LS}}\right)$.
This will follow if we show that 
\begin{equation}
\limsup_{N\to\infty}\frac{1}{N}\log\mathbb{E}\left|\left\{ \boldsymbol{\sigma}_{0}\in\mathscr{C}_{N}\left(-E_{0}N,u_{{\rm LS}}+\delta N\right):\,\inf_{\boldsymbol{\sigma}:R(\boldsymbol{\sigma},\boldsymbol{\sigma}_{0})=q_{{\rm LS}}}H_{N}\left(\boldsymbol{\sigma}\right)<u_{{\rm LS}}+\delta N\right\} \right|<0,\label{eq:08038}
\end{equation}
since by Corollary \ref{cor:min}, $\mathbb{P}\left\{ \mathscr{C}_{N}\left(-\infty,-E_{0}N\right)=\emptyset\right\} \to1$
as $N\to\infty$. We recall that $-\Frf\left(E_{0},q_{*}\right)/\beta=u_{{\rm LS}}/N\to-E_{0}$
as $\beta\to\infty$, $\Theta_{p}\left(x\right)$ is continuous, and
$\Theta_{p}\left(-E_{0}\right)=0$. Also, 
\[
\mathbb{P}_{u,0}\left\{ \inf_{\boldsymbol{\sigma}:R(\boldsymbol{\sigma},\hat{\mathbf{n}})=q_{{\rm LS}}}H_{N}\left(\boldsymbol{\sigma}\right)\leq u_{{\rm LS}}+\delta N\right\} =\mathbb{P}_{u,0}\left\{ \inf_{\boldsymbol{\sigma}\in\mathbb{S}^{N-2}}H_{N}|_{q_{{\rm LS}}}\left(\boldsymbol{\sigma}\right)\leq u_{{\rm LS}}+\delta N\right\} 
\]
is decreasing with $u$. By Lemma \ref{lem:15}, in order to finish
the proof it will be enough to show that
\begin{equation}
\limsup_{\beta\to\infty}\limsup_{N\to\infty}\frac{1}{N}\log\mathbb{P}_{-E_{0}N,0}\left\{ \inf_{\boldsymbol{\sigma}\in\mathbb{S}^{N-2}}H_{N}|_{q_{{\rm LS}}}\left(\boldsymbol{\sigma}\right)\leq u_{{\rm LS}}+\delta N\right\} <0.\label{eq:0101-9}
\end{equation}

By Corollary \ref{cor:conditional laws}, Lemma \ref{lem:HNdecomposition}
and (\ref{eq:Hnhat2}), the probability in (\ref{eq:0101-9}) is bounded
from above by
\[
\mathbb{P}\left\{ \frac{1}{\sqrt{2}}\alpha_{2}\left(q_{{\rm LS}}\right)N\lambda_{N}+\sum_{k=3}^{p}\alpha_{k}\left(q_{{\rm LS}}\right)\sqrt{\frac{N}{N-1}}\inf_{\boldsymbol{\sigma}}H_{N-1}^{{\rm pure\,}k}\left(\boldsymbol{\sigma}\right)\leq u_{{\rm LS}}+\delta N+E_{0}\left(q_{{\rm LS}}\right)^{p}N\right\} ,
\]
where $\lambda_{N}$ is the minimal eigenvalue of an $N-1$ dimensional
GOE matrix and $\lambda_{N}$ and $\left\{ H_{N-1}^{{\rm pure\,}k}\left(\boldsymbol{\sigma}\right)\right\} _{\boldsymbol{\sigma}}$,
$k\geq3$, are independent of each other.

From Theorem \ref{thm:ABAC} and since $\alpha_{k}\left(q_{{\rm LS}}\right)=O\left(\left(\log\beta/\beta\right)^{k/2}\right)$,
for any $t>0$, 
\begin{equation}
\limsup_{\beta\to\infty}\limsup_{N\to\infty}\frac{1}{N}\log\mathbb{P}\left\{ \sum_{k=3}^{p}\alpha_{k}\left(q_{{\rm LS}}\right)\sqrt{\frac{N}{N-1}}\inf_{\boldsymbol{\sigma}}H_{N-1}^{{\rm pure\,}k}\left(\boldsymbol{\sigma}\right)\leq t\frac{\log\beta}{\beta}N\right\} <0.\label{eq:0101-10}
\end{equation}
From some calculus and the definitions (\ref{eq:39}), (\ref{eq:7}),
(\ref{eq:qLS}) and (\ref{eq:alpha_k}), one has that $q_{*}=1-\frac{\chi_{1}}{\sqrt{2p(p-1)}}+O\left(\beta^{-2}\right)$
and 
\begin{align*}
\lim_{\beta\to\infty}\frac{\beta}{\log\beta}\frac{1}{N}\left(u_{{\rm LS}}+E_{0}q_{{\rm LS}}^{p}N\right) & =\frac{1}{2}-E_{0}pC_{{\rm LS}},\\
\lim_{\beta\to\infty}\frac{\beta}{\log\beta}\frac{1}{\sqrt{2}}\alpha_{2}\left(q_{{\rm LS}}\right) & =\frac{1}{2}pE_{\infty}C_{{\rm LS}}.
\end{align*}
Therefore, based on (\ref{eq:0101-10}), the left-hand side of (\ref{eq:0101-9})
is bounded from above by 
\[
\limsup_{\beta\to\infty}\limsup_{N\to\infty}\frac{1}{N}\log\left\{ \frac{1}{2}pE_{\infty}C_{{\rm LS}}\lambda_{N}\leq\frac{1}{2}-E_{0}pC_{{\rm LS}}+\delta/2\right\} .
\]
Since for any $\epsilon>0$, $\limsup_{N\to\infty}\frac{1}{N}\log\mathbb{P}\left\{ \lambda_{N}\leq-2-\epsilon\right\} <0$
(see e.g.\cite[Lemma 6.3]{BDG}) and since we assumed that $C_{{\rm LS}}>\left(2p\left(E_{0}-E_{\infty}\right)\right)^{-1}$,
the proof is completed.
\end{proof}

\subsection{\label{sub:Overlap-depth-regions}Overlap-depth regions and bounds}

For the convenience of the reader, we recall that
\begin{align*}
{\rm Cont}_{N,\beta}\left(A\times B\right) & =Z_{N,\beta}\left(\cup_{\boldsymbol{\sigma}_{0}\in\mathscr{C}_{N}\left(B\right)}\left\{ \boldsymbol{\sigma}:R\left(\boldsymbol{\sigma},\boldsymbol{\sigma}_{0}\right)\in A\right\} \right),\\
{\rm Reg}_{*}\left(c,\kappa,\kappa'\right) & =\left(q_{*}-cN^{-1/2},q_{*}+cN^{-1/2}\right)\times\left(m_{N}-\kappa',m_{N}+\kappa\right).
\end{align*}
Let $\tau_{N}>0$ be a sequence such that $\tau_{N}\to0$ as $N\to\infty$,
required to satisfy a certain relation which will be specified shortly
(see Remark \ref{rem:tauN}). In order to bound the contribution of
${\rm Reg}_{{\rm UB}}\left(\delta,c,\kappa,\kappa'\right)$ (see \ref{eq:RegUB})
we define the following regions, 
\begin{align*}
{\rm Reg}_{{\rm I}}\left(c,\kappa'\right) & =\left(\left(q_{**},1\right)\setminus\left(q_{*}-cN^{-1/2},\, q_{*}+cN^{-1/2}\right)\right)\times\left(m_{N}-\kappa',\,-E_{0}N+\tau_{N}N\right),\\
{\rm Reg}_{{\rm II}}\left(c,\kappa\right) & =\left(q_{*}-cN^{-1/2},\, q_{*}+cN^{-1/2}\right)\times\left(m_{N}+\kappa,\,-E_{0}N+\tau_{N}N\right),\\
{\rm Reg}_{{\rm III}}\left(\tau,\delta\right) & =\left(q_{**},1\right)\times\left(-E_{0}N+\tau N,\, u_{{\rm LS}}+\delta N\right),\\
{\rm Reg}_{{\rm IV}}\left(\delta\right) & =\left(q_{{\rm LS}},q_{**}\right)\times\left(m_{N}-\kappa',\, u_{{\rm LS}}+\delta N\right).
\end{align*}

\begin{figure}[h]
~\\
~\\
~\\
\begin{overpic}[width=0.65\textwidth]{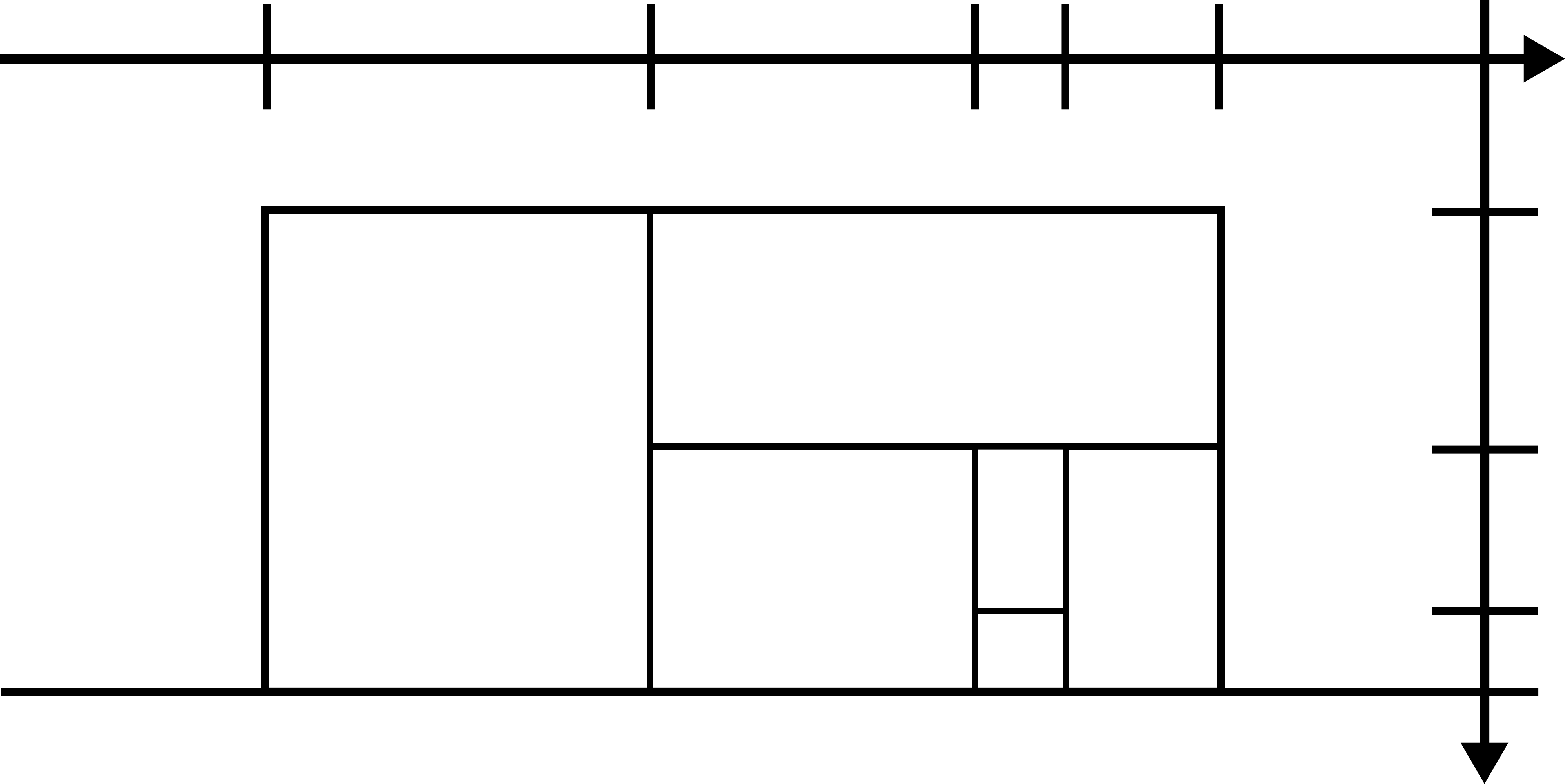}
\put (77,52) {$1$} 
\put (40,52) {$q_{**}$} 
\put (58,52) {$q_*\pm cN^{-\frac{1}{2}}$}
\put (16,52) {$q_{\scriptscriptstyle\rm{LS}}$}
\put (102,46) {Overlap} 
\put (100,36) {$u_{\scriptscriptstyle\rm{LS}}+\delta N$} 
\put (100,10.5) {$m_N+\kappa$} 
\put (100,5.5) {$m_N-\kappa'$} 
\put (100,20.7) {$-E_0 N+\tau_N N$}
\put (26,19) {\huge IV} 
\put (56.5,26.8) {\huge III}
\put (63,14.2) {\huge II}
\put (51.4,11.3) {\huge I}
\put (71.8,11.3) {\huge I}
\put (63.6,5.35) {\huge *}
\put (102,0.5) {Depth} 
\end{overpic}~\\
~\\
\protect\caption{\label{fig:8.1}Regions diagram. The letters correspond to subscripts
in the definitions of the regions. (for ${\rm Reg}_{{\rm III}}\left(\tau,\delta\right)$,
above we take $\tau=\tau_{N}$)}
\end{figure}

We now state the bounds we need on the regions above. Let $\ftail:\left(0,\infty\right)\to\left(0,\infty\right)$
be a function, which will be fixed henceforth, such that (with $c_{p}$
defined in (\ref{eq:c_p})) 
\begin{equation}
\lim_{\kappa\to\infty}\left(\mathbb{P}\left\{ e^{Y_{*}}\leq\ftail\left(\kappa\right)\right\} \right)^{1/2}\int_{0}^{\kappa}dv\cdot e^{c_{p}v}=0.\label{eq:ftail}
\end{equation}
We remind the reader that the definitions of $\Frf\left(E,q\right)$
and $\mathfrak{V}_{N,\beta}\left(u\right)$ are given in (\ref{eq:6})
and (\ref{eq:V(u)}) respectively.
\begin{prop}
\label{prop:bounds}For large enough $\beta$, for any positive $c,\,\tau,\,\kappa'$
and small enough $\delta$, 
\begin{align}
\lim_{\kappa,c\to\infty}\liminf_{N\to\infty}\mathbb{P}\left\{ {\rm Cont}_{N,\beta}\left({\rm Reg}_{*}\left(c,\kappa,\kappa'\right)\right)\geq\ftail\left(\kappa\right)\mathfrak{V}_{N,\beta}\left(m_{N}+\kappa\right)\right\} =1,\label{eq:Reg*bd}\\
\lim_{c\to\infty}\limsup_{N\to\infty}\mathbb{E}\left\{ {\rm Cont}_{N,\beta}\left({\rm Reg}_{{\rm I}}\left(c,\kappa'\right)\right)\right\} /\mathfrak{V}_{N,\beta}\left(m_{N}\right)=0,\label{eq:RegIbd}\\
\lim_{\kappa\to\infty}\limsup_{N\to\infty}\mathbb{E}\left\{ {\rm Cont}_{N,\beta}\left({\rm Reg}_{{\rm II}}\left(c,\kappa\right)\right)\right\} /\mathfrak{V}_{N,\beta}\left(m_{N}\right)=0,\label{eq:RegIIbd}\\
\limsup_{N\to\infty}\frac{1}{N}\log\left(\mathbb{E}\left\{ {\rm Cont}_{N,\beta}\left({\rm Reg}_{{\rm III}}\left(\tau,\delta\right)\right)\right\} \right)<\Frf\left(E_{0},q_{*}\right),\label{eq:RegIIIbd}\\
\limsup_{N\to\infty}\frac{1}{N}\log\left(\mathbb{P}\left\{ \frac{1}{N}\log\left({\rm Cont}_{N,\beta}\left({\rm Reg}_{{\rm IV}}\left(\delta\right)\right)\right)>\Frf\left(E_{0},q_{*}\right)-\delta\right\} \right)<0.\label{eq:RegIVbd}
\end{align}
\end{prop}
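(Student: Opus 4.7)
All five bounds start from the Kac-Rice formula of Appendix I, which, for any measurable $A\subset[-1,1]$ and $B\subset\mathbb{R}$, gives
\[
\mathbb{E}\bigl\{{\rm Cont}_{N,\beta}(A\times B)\bigr\}\leq\int_{B}\rho_{N}(u)\,\mathbb{E}_{u,0}\bigl\{Z_{N,\beta}\bigl({\rm Band}(\hat{\mathbf{n}},A)\bigr)\bigr\}\,du,
\]
where $\rho_{N}(u)$ is the Kac-Rice density of critical values. Its integrated exponential rate is $\Theta_{p}(u/N)$ by Theorem \ref{thm:ABAC}, and the local behavior near $u=m_{N}$ is governed by the Poisson intensity $e^{c_{p}x}dx$ of Theorem \ref{thm:ext proc}. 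The conditional means and deviations on the right-hand side have already been controlled in Sections \ref{sec:range(q**,1)}--\ref{sec:range(q***,q**)}.

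\textbf{Upper bounds (\ref{eq:RegIbd})--(\ref{eq:RegIIIbd}).} Each follows by combining the Kac-Rice display with Markov's inequality and the appropriate first-moment estimate. For (\ref{eq:RegIbd}), the overlap range $(q_{**},1)\setminus(q_{*}\pm cN^{-1/2})$ is handled by (\ref{eq:13}) of Lemma \ref{lem:Z_1st_mom}, which gives a rate strictly below $\Frf(E_{0},q_{*})$ for the conditional expectation, while the $u$-integral over $(m_{N}-\kappa',-E_{0}N+\tau_{N}N)$ contributes rate at most $0$ because $\Theta_{p}(-E_{0})=0$. For (\ref{eq:RegIIbd}), the tight estimate (\ref{eq:13-3}) reduces the ratio $\rho_{N}(u)\mathbb{E}_{u,0}\{Z_{N,\beta}({\rm Band}(cN^{-1/2}))\}/\mathfrak{V}_{N,\beta}(m_{N})$ to roughly $e^{(c_{p}-\beta q_{*}^{p})(u-m_{N})}$, and $\int_{\kappa}^{\infty}e^{(c_{p}-\beta q_{*}^{p})v}dv\to0$ as $\kappa\to\infty$, valid once $\beta$ is large enough that $\beta q_{*}^{p}>c_{p}$. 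For (\ref{eq:RegIIIbd}), a variational argument using the convexity of $\Theta_{p}$ and the tightness of (\ref{eq:1812_2}) shows that $\sup_{E\leq-E_{0}+\tau,\,q\in(q_{**},1)}[\Theta_{p}(-E)+\Frf(E,q)]$ is attained uniquely at $(E_{0},q_{*})$ with value $\Frf(E_{0},q_{*})$, and pinning $E$ at distance $\tau$ from $-E_{0}$ costs a definite positive amount (after possibly shrinking $\delta$).

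\textbf{Upper bound (\ref{eq:RegIVbd}).} Here Jensen's inequality on $\mathbb{E}_{u,0}\{Z\}$ is not tight, so I instead use Kac-Rice and Markov to bound the probability that some critical point in the relevant depth slab has band mass exceeding $e^{N(\Frf(E_{0},q_{*})-\delta)}$. The inner probability is handled by Lemma \ref{lem:devFE-RegIV}, which gives Gaussian deviations for $\log Z_{N,\beta}({\rm Band})$ around $\mathbb{E}_{u,0}\log Z_{N,\beta}({\rm Band})$, and the latter is controlled by Lemma \ref{lem:Lambda_F_bound} followed by the sharpening in Lemma \ref{lem:Lambda_F2}. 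The key deterministic input is that on $(q_{{\rm LS}},q_{**})$ one has $|\alpha_{2}(q)|\beta<1/\sqrt{2}$ by (\ref{eq:s2}) and the definition of $q_{**}$, so that $\mathscr{P}_{2}$ in (\ref{eq:a7}) is purely quadratic and $\FrfFt(E_{0},q)=\Frf(E_{0},q)$ on this range; since $q_{**}$ is a critical point of $\Frf(E_{0},\cdot)$ strictly below the maximum value $\Frf(E_{0},q_{*})$, we have a positive gap which, combined with the Gaussian factor from Lemma \ref{lem:devFE-RegIV}, absorbs the Kac-Rice prefactor for small $\delta$.

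\textbf{Lower bound (\ref{eq:Reg*bd}).} By Corollary \ref{cor:min} and Theorem \ref{thm:ext proc}, for $\kappa,\kappa'$ large the critical values in $(m_{N}-\kappa',m_{N}+\kappa)$ converge jointly to a Poisson sample from $e^{c_{p}x}dx$; with probability tending to one their number is at least $k_{0}(\kappa)\asymp\int_{0}^{\kappa}e^{c_{p}v}dv$. Conditional on the centers $\boldsymbol{\sigma}_{0}^{i}$, Proposition \ref{prop:Gaussflucts} together with (\ref{eq:13-3}) gives $Z_{N,\beta}({\rm Band}_{i}(cN^{-1/2}))/\mathfrak{V}_{N,\beta}(H_{N}(\boldsymbol{\sigma}_{0}^{i}))\to e^{Y_{*}^{(i)}}$ in distribution, with the $Y_{*}^{(i)}$ asymptotically decoupled across different $i$ since, by Corollary \ref{cor:orth}, deep critical points are (anti-)orthogonal, which decouples the local Hessians driving the fluctuations. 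Making this decoupling quantitative and uniform in $\kappa$ is the main obstacle; once done, a second-moment (Paley-Zygmund) argument applied to the count $\sum_{i}\mathbf{1}\{e^{Y_{*}^{(i)}}>\eta_{0}(\kappa)\}$ shows that $\max_{i}e^{Y_{*}^{(i)}}>\eta_{0}(\kappa)$ with probability tending to one, the square root in (\ref{eq:ftail}) arising precisely as the Cauchy-Schwarz bound on the residual covariance against the Poisson count $k_{0}(\kappa)$. Since $\mathfrak{V}_{N,\beta}(u)\geq\mathfrak{V}_{N,\beta}(m_{N}+\kappa)$ for $u\leq m_{N}+\kappa$, summing the contribution of the largest band yields the asserted lower bound.
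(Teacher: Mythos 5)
Your overall skeleton (Kac--Rice combined with the conditional estimates of Sections \ref{sec:range(q**,1)}--\ref{sec:range(q***,q**)}) is the paper's, and your treatments of ${\rm Reg}_{\rm II}$ and ${\rm Reg}_{\rm III}$ follow the paper's lines in spirit. But three steps are genuinely broken. First, for (\ref{eq:Reg*bd}) you route the proof through an asymptotic decoupling of the fluctuations $Y_{*}^{(i)}$ at different critical points plus a Paley--Zygmund argument, and you yourself flag the quantitative decoupling as an unresolved obstacle; no such joint control is needed, and none is supplied. The argument is purely first-moment: Theorem \ref{thm:ext proc} guarantees at least one critical value in $(m_{N},m_{N}+\kappa)$, and Lemma \ref{lem:16} bounds the expected number of critical points there whose band mass falls below $\ftail(\kappa)\mathfrak{V}_{N,\beta}(m_{N}+\kappa)$ by $C\int_{0}^{\kappa}e^{c_{p}v}\left(\mathbb{P}_{m_{N}+v,0}\left\{ \cdot\right\} \right)^{1/2}dv$, which vanishes by Propositions \ref{prop:means} and \ref{prop:Gaussflucts} together with (\ref{eq:ftail}); the square root in (\ref{eq:ftail}) is the Cauchy--Schwarz loss inside the Kac--Rice formula (separating the Hessian determinant from the indicator), not a covariance against the Poisson count. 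Your version also silently identifies the law $\mathbb{P}_{u,0}$ at a fixed point with the law seen at the random points $\boldsymbol{\sigma}_{0}^{i}$ --- exactly the passage that Lemma \ref{lem:16} is designed to justify.

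Second, for (\ref{eq:RegIbd}) the estimate (\ref{eq:13}) only covers ${\rm Cap}\setminus{\rm Band}(\epsilon)$ for \emph{fixed} $\epsilon$; on the annuli at distance between $cN^{-1/2}$ and $o(1)$ from $q_{*}$ the exponential rate equals $\Frf(E_{0},q_{*})$, so ``a rate strictly below'' is false there, and an exponential-scale bound can never yield a ratio to $\mathfrak{V}_{N,\beta}(m_{N})$ tending to $0$ as $c\to\infty$. One needs prefactor-precision control: the Gaussian-tail factor $r(c)\to0$ from Proposition \ref{prop:means} and Corollary \ref{cor:2nd_moment}, the second-moment Kac--Rice bound of Lemma \ref{lem:18}, and $\beta q_{*}^{p}>c_{p}$ to integrate over $u$. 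Third, your ``key deterministic input'' for (\ref{eq:RegIVbd}) is backwards: for $q\in(q_{{\rm LS}},q_{**})$ one has $\alpha_{2}(q)>\chi_{3}>\chi_{2}=1/(\beta\sqrt{2})$, so $\beta|\alpha_{2}(q)|>1/\sqrt{2}$, the effective $2$-spin model is in its \emph{low}-temperature phase, $\mathscr{P}_{2}$ is not quadratic, and $\FrfFt(E_{0},q)<\Frf(E_{0},q)$ strictly; indeed near $q_{{\rm LS}}$ the annealed quantity $\Frf(E_{0},q)$ exceeds $\Frf(E_{0},q_{*})$, which is precisely why Section \ref{sec:range(q***,q**)} exists. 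The gap you need is between $\Frf(E_{0},q_{*})$ and $\FrfFt(E_{0},q)$, and establishing it --- and verifying that the Gaussian deviation cost $L_{\epsilon}$ of Lemma \ref{lem:devFE-RegIV} beats $\sup_{E}\Theta_{p}(E)$ uniformly, using the scaling of $\alpha_{2}$ near $q_{{\rm LS}}$ --- is the substantive content of the paper's proof, which your proposal skips.
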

\begin{rem}
\label{rem:tauN}We assume that $\tau_{N}$ satisfies $\mathbb{E}\left\{ {\rm Cont}_{N,\beta}\left({\rm Reg}_{{\rm III}}\left(\tau_{N},\delta\right)\right)\right\} /\mathfrak{V}_{N,\beta}\left(m_{N}\right)\to0$,
as $N\to\infty$. The existence of such sequence follows from (\ref{eq:RegIIIbd})
and the fact that $N^{-1}\log\left(\mathfrak{V}_{N,\beta}\left(m_{N}\right)\right)\to\Frf\left(E_{0},q_{*}\right)$.
\end{rem}

\subsection{\label{sub:pfbds}Proof of Proposition \ref{prop:bounds}}

\subsubsection{Proof of (\ref{eq:Reg*bd})}

Denoting 
\[
{\rm Band}_{*}\left(\boldsymbol{\sigma}_{0}\right)={\rm Band}\left(\boldsymbol{\sigma}_{0},q_{*}-cN^{-1/2},q_{*}+cN^{-1/2}\right).
\]
to finish the proof it will certainly be enough to show that
\[
\lim_{\kappa,c\to\infty}\liminf_{N\to\infty}\mathbb{P}\left\{ \exists\boldsymbol{\sigma}_{0}\in\mathscr{C}_{N}\left(m_{N},m_{N}+\kappa\right):Z_{N,\beta}\left({\rm Band}_{*}\left(\boldsymbol{\sigma}_{0}\right)\right)\geq\ftail\left(\kappa\right)\mathfrak{V}_{N,\beta}\left(m_{N}+\kappa\right)\right\} =1.
\]
This will follow if we show that
\begin{equation}
\lim_{\kappa\to\infty}\liminf_{N\to\infty}\mathbb{P}\left\{ \left|\mathscr{C}_{N}\left(m_{N},m_{N}+\kappa\right)\right|\geq1\right\} =1,\label{eq:1225-1}
\end{equation}
and
\begin{equation}
\lim_{\kappa,c\to\infty}\limsup_{N\to\infty}\mathbb{E}\left|\left\{ \boldsymbol{\sigma}_{0}\in\mathscr{C}_{N}\left(m_{N},m_{N}+\kappa\right):\frac{Z_{N,\beta}\left({\rm Band}_{*}\left(\boldsymbol{\sigma}_{0}\right)\right)}{\mathfrak{V}_{N,\beta}\left(m_{N}+\kappa\right)}<\ftail\left(\kappa\right)\right\} \right|=0.\label{eq:1225-2}
\end{equation}
Equation (\ref{eq:1225-1}) follows from Theorem \ref{thm:ext proc}.
By Lemma \ref{lem:16}, the left-hand side of (\ref{eq:1225-2}) is
bounded from above by
\begin{equation}
\lim_{\kappa,c\to\infty}\limsup_{N\to\infty}C\cdot\int_{0}^{\kappa}dv\cdot e^{c_{p}v}\left(\mathbb{P}_{m_{N}+v,0}\left\{ \frac{Z_{N,\beta}\left({\rm Band}_{*}\left(\boldsymbol{\sigma}_{0}\right)\right)}{\mathfrak{V}_{N,\beta}\left(m_{N}+\kappa\right)}<\ftail\left(\kappa\right)\right\} \right)^{1/2}.\label{eq:1225-4}
\end{equation}
By Propositions \ref{prop:means}, \ref{prop:Gaussflucts} and the
fact that $\mathfrak{V}_{N,\beta}\left(m_{N}+v\right)$ decreases
with $v$, (\ref{eq:1225-4}) is bounded from above by 
\begin{align*}
 & \lim_{\kappa,c\to\infty}\limsup_{N\to\infty}C\cdot\int_{0}^{\kappa}dv\cdot e^{c_{p}v}\left(\mathbb{P}_{m_{N}+v,0}\left\{ \frac{Z_{N,\beta}\left({\rm Band}_{*}\left(\boldsymbol{\sigma}_{0}\right)\right)}{\mathfrak{V}_{N,\beta}\left(m_{N}+v\right)}<\ftail\left(\kappa\right)\right\} \right)^{1/2}\\
 & \leq\lim_{\kappa\to\infty}C\left(\mathbb{P}\left\{ e^{Y_{*}}\leq\ftail\left(\kappa\right)\right\} \right)^{1/2}\int_{0}^{\kappa}dv\cdot e^{c_{p}v}.
\end{align*}
Therefore the lemma follows from our assumption on $\ftail\left(\kappa\right)$.\qed

\subsubsection{Proof of (\ref{eq:RegIbd})}

Set $J_{N}=\left(m_{N}-\kappa',\,-E_{0}N+\tau_{N}N\right)$ and 
\begin{align*}
{\rm Band}_{{\rm I}}\left(\boldsymbol{\sigma}_{0}\right) & ={\rm Band}\left(\boldsymbol{\sigma}_{0},q_{**},1\right)\setminus{\rm Band}\left(\boldsymbol{\sigma}_{0},q_{*}-cN^{-1/2},q_{*}+cN^{-1/2}\right).
\end{align*}
Of course, 
\begin{equation}
{\rm Cont}_{N,\beta}\left({\rm Reg}_{{\rm I}}\left(c,\kappa',\tau_{N}N\right)\right)\leq\sum_{\boldsymbol{\sigma}_{0}\in\mathscr{C}_{N}\left(J_{N}\right)}Z_{N,\beta}\left({\rm Band}_{{\rm I}}\left(\boldsymbol{\sigma}_{0}\right)\right),\label{eq:1225-6}
\end{equation}
so an appropriate bound on the expectation of the right-hand side
of (\ref{eq:1225-6}) is sufficient.

We will first bound the expectation of the same with ${\rm Band}_{{\rm I}}\left(\boldsymbol{\sigma}_{0}\right)$
replaced by 
\[
{\rm Band}_{{\rm I}}^{\prime}\left(\boldsymbol{\sigma}_{0},\epsilon\right)={\rm Band}\left(\boldsymbol{\sigma}_{0},q_{**},1\right)\setminus{\rm Band}\left(\boldsymbol{\sigma}_{0},q_{*}-\epsilon,q_{*}+\epsilon\right).
\]
By Lemma \ref{lem:Z_1st_mom} and Corollary \ref{cor:conditional laws},
with $\varphi\left(x\right)=\Frf\left(E_{0},q_{*}\right)-a\left(\epsilon\right)$
(i.e., independent of $x$) and small enough $a\left(\epsilon\right)>0$,
with $I(\delta)=(-E_{0},-E_{0}+\delta)$,
\begin{align*}
\limsup_{N\to\infty}\sup_{u\in NI(\delta)}\left\{ \frac{1}{N}\log\left(\mathbb{E}_{u,0}\left\{ Z_{N,\beta}\left({\rm Band}_{{\rm I}}^{\prime}\left(\hat{\mathbf{n}},\epsilon\right)\right)\right\} \right)-\varphi\left(\frac{u}{N}\right)\right\} \\
\leq\limsup_{N\to\infty}\left\{ \frac{1}{N}\log\left(\mathbb{E}_{-E_{0},N,0}\left\{ Z_{N,\beta}\left({\rm Band}_{{\rm I}}^{\prime}\left(\hat{\mathbf{n}},\epsilon\right)\right)\right\} \right)-\varphi\left(\frac{u}{N}\right)\right\}  & \leq0.
\end{align*}
Therefore, by Lemma \ref{lem:17}, for any $\delta$, $\epsilon>0$,
\begin{align*}
 & \limsup_{N\to\infty}\frac{1}{N}\log\left(\mathbb{E}\left\{ \sum_{\boldsymbol{\sigma}_{0}\in\mathscr{C}_{N}\left(NI(\delta)\right)}Z_{N,\beta}\left({\rm Band}_{{\rm I}}^{\prime}\left(\boldsymbol{\sigma}_{0},\epsilon\right)\right)\right\} \right)\\
 & \quad\quad\leq\sup_{E\in I(\delta)}\left\{ \Theta_{p}\left(E\right)+\Frf\left(E_{0},q_{*}\right)-a\left(\epsilon\right)\right\} .
\end{align*}
Therefore, from the fact that $\Theta_{p}$ is continuous and $\Theta_{p}\left(-E_{0}\right)=0$
and $J_{N}\subset I(\delta)$ for large $N$ and any $\delta$,
\[
\limsup_{N\to\infty}\frac{1}{N}\log\left(\mathbb{E}\left\{ \sum_{\boldsymbol{\sigma}_{0}\in\mathscr{C}_{N}\left(J_{N}\right)}Z_{N,\beta}\left({\rm Band}_{{\rm I}}^{\prime}\left(\boldsymbol{\sigma}_{0},\epsilon\right)\right)\right\} \right)\leq\Frf\left(E_{0},q_{*}\right)-a\left(\epsilon\right).
\]
Since this holds true for any $\epsilon$ and $N^{-1}\log\left(\mathfrak{V}_{N,\beta}\left(m_{N}\right)\right)\overset{N\to\infty}{\to}\Frf\left(E_{0},q_{*}\right)$,
there exists a sequence $\epsilon_{N}>0$ such that $\epsilon_{N}\overset{N\to\infty}{\to}0$
for which
\begin{equation}
\limsup_{N\to\infty}\frac{\mathbb{E}\left\{ \sum_{\boldsymbol{\sigma}_{0}\in\mathscr{C}_{N}\left(J_{N}\right)}Z_{N,\beta}\left({\rm Band}_{{\rm I}}^{\prime}\left(\boldsymbol{\sigma}_{0},\epsilon_{N}\right)\right)\right\} }{\mathfrak{V}_{N,\beta}\left(m_{N}\right)}\leq0.\label{eq:1225-5}
\end{equation}

What remains is to prove an appropriate upper bound for the expectations
of the masses $\sum_{\boldsymbol{\sigma}_{0}\in\mathcal{C}\left(J_{N}\right)}Z_{N,\beta}\left({\rm Band}_{{\rm I}}^{\left(i\right)}\left(\boldsymbol{\sigma}_{0}\right)\right)$,
$i=1,2$, with 
\begin{align*}
{\rm Band}_{{\rm I}}^{\left(1\right)}\left(\boldsymbol{\sigma}_{0}\right) & ={\rm Band}_{N}\left(\boldsymbol{\sigma}_{0},q_{*}-\epsilon_{N},q_{*}-cN^{-1/2}\right),\\
{\rm Band}_{{\rm I}}^{\left(2\right)}\left(\boldsymbol{\sigma}_{0}\right) & ={\rm Band}_{N}\left(\boldsymbol{\sigma}_{0},q_{*}+cN^{-1/2},q_{*}+\epsilon_{N}\right),
\end{align*}
where we assume without loss of generality that $\epsilon_{N}>cN^{-1/2}$.

By Corollary \ref{cor:conditional laws}, 
\[
\mathbb{E}_{u,0}\left\{ \left(Z_{N,\beta}\left({\rm Band}_{{\rm I}}^{\left(1\right)}\left(\hat{\mathbf{n}}\right)\right)\right)^{2}\right\} =e^{-2\beta q_{*}^{p}\left(u-m_{N}\right)\left(1+o\left(1\right)\right)}\mathbb{E}_{m_{N},0}\left\{ \left(Z_{N,\beta}\left({\rm Band}_{{\rm I}}^{\left(1\right)}\left(\hat{\mathbf{n}}\right)\right)\right)^{2}\right\} ,
\]
uniformly in $u\in J_{N}$, as $N\to\infty$. From Corollary \ref{cor:2nd_moment},
(\ref{eq:29-2}), (\ref{eq:13-3}), and (\ref{eq:13-2}), uniformly
in $u\in J_{N}$, as $N\to\infty$, 
\begin{align*}
 & \negthickspace\negthickspace\negthickspace\mathbb{E}_{m_{N},0}\left\{ \left(Z_{N,\beta}\left({\rm Band}_{{\rm I}}^{\left(1\right)}\left(\hat{\mathbf{n}}\right)\right)\right)^{2}\right\} \\
 & \leq(1+o(1))\frac{1}{\sqrt{C_{*}}}\left(\mathbb{E}_{m_{N},0}\left\{ Z_{N,\beta}\left({\rm Band}_{{\rm I}}^{\left(1\right)}\left(\hat{\mathbf{n}}\right)\right)\right\} \right)^{2}\\
 & \leq(1+o(1))\left(\mathfrak{V}_{N,\beta}\left(m_{N}\right)r\left(c\right)\right)^{2},
\end{align*}
for some $r\left(c\right)\overset{c\to\infty}{\to}0$.

Now, Lemma \ref{lem:18} yields, for large enough $N$,
\begin{align*}
 & \negthickspace\negthickspace\negthickspace\mathbb{E}\left\{ \sum_{\boldsymbol{\sigma}_{0}\in\mathcal{C}\left(J_{N}\right)}Z_{N,\beta}\left({\rm Band}_{{\rm I}}^{\left(1\right)}\left(\boldsymbol{\sigma}_{0}\right)\right)\right\} \\
 & \leq C\mathfrak{V}_{N,\beta}\left(m_{N}\right)r\left(c\right)\int_{J_{N}}du\cdot e^{c_{p}\left(u-m_{N}\right)-\beta q_{*}^{p}\left(u-m_{N}\right)\left(1+o\left(1\right)\right)}\\
 & \leq C\mathfrak{V}_{N,\beta}\left(m_{N}\right)r\left(c\right)y\left(\beta\right),
\end{align*}
for some $y\left(\beta\right)<\infty$, assuming $\beta$ is large
enough so $\beta q_{*}^{p}>c_{p}$. By similar arguments, a similar
inequality holds for $B_{I}^{\left(2\right)}\left(\boldsymbol{\sigma}_{0},\epsilon\right)$.
Combined with (\ref{eq:1225-5}) and (\ref{eq:1225-6}) this proves
the lemma.\qed

\subsubsection{Proof (\ref{eq:RegIIbd})}

Set $J_{N}=\left(m_{N}+\kappa,\,-E_{0}N+\tau_{N}N\right)$ and 
\begin{align*}
{\rm Band}_{{\rm II}}\left(\boldsymbol{\sigma}_{0}\right) & ={\rm Band}\left(\boldsymbol{\sigma}_{0},q_{*}-cN^{-1/2},q_{*}+cN^{-1/2}\right).
\end{align*}
We have that 
\[
{\rm Cont}_{N,\beta}\left({\rm Reg}_{{\rm II}}\left(c,\kappa\right)\right)\leq\sum_{\boldsymbol{\sigma}_{0}\in\mathscr{C}_{N}\left(J_{N}\right)}Z_{N,\beta}\left({\rm Band}_{{\rm II}}\left(\boldsymbol{\sigma}_{0}\right)\right).
\]

By similar arguments to those used in the proof of (\ref{eq:RegIbd}),
we have here that for large $N$
\begin{equation}
\mathbb{E}\left\{ \sum_{\boldsymbol{\sigma}_{0}\in\mathscr{C}_{N}\left(J_{N}\right)}Z_{N,\beta}\left({\rm Band}_{{\rm II}}\left(\boldsymbol{\sigma}_{0}\right)\right)\right\} \leq C\mathfrak{V}_{N,\beta}\left(m_{N}\right)\int_{J_{N}}du\cdot e^{c_{p}\left(u-m_{N}\right)-\beta q_{*}^{p}\left(u-m_{N}\right)\left(1+o\left(1\right)\right)},\label{eq:08037}
\end{equation}
where $C>0$ is a universal constant (in particular, independent of
$\beta$, which will be useful in the proof of Proposition \ref{prop:large beta}).
In this case however we are taking the limit in $\kappa$ instead
of $c$, and since for large $\beta$ 
\[
\lim_{\kappa\to\infty}\limsup_{N\to\infty}\int_{J_{N}}du\cdot e^{c_{p}\left(u-m_{N}\right)}e^{-\beta q_{*}^{p}\left(u-m_{N}\right)\left(1+o\left(1\right)\right)}=0,
\]
the proof is completed.\qed

\subsubsection{Proof of (\ref{eq:RegIIIbd})}

Recall that $u_{{\rm LS}}=-\Frf\left(E_{0},q_{*}\right)N/\beta$ and
set 
\begin{align*}
J_{N}=NJ & =\left(\left(-E_{0}+\tau\right)N,\,-\Frf\left(E_{0},q_{*}\right)N/\beta+\delta N\right),\\
{\rm Cap}_{{\rm III}}\left(\boldsymbol{\sigma}_{0}\right) & ={\rm Cap}\left(\boldsymbol{\sigma}_{0},q_{**}\right).
\end{align*}
We have that 
\[
{\rm Cont}_{N,\beta}\left({\rm Reg}_{{\rm III}}\left(\tau,\delta\right)\right)\leq\sum_{\boldsymbol{\sigma}_{0}\in\mathscr{C}_{N}\left(J_{N}\right)}Z_{N,\beta}\left({\rm Cap}_{{\rm III}}\left(\boldsymbol{\sigma}_{0}\right)\right).
\]

By Corollary \ref{cor:conditional laws}, uniformly in $u\in J_{N}$,
\[
\mathbb{E}_{u,0}\left\{ Z_{N,\beta}\left({\rm Cap}_{{\rm III}}\left(\hat{\mathbf{n}}\right)\right)\right\} \leq e^{-\beta q_{**}^{p}\left(u+E_{0}N\right)}\mathbb{E}_{-E_{0}N,0}\left\{ Z_{N,\beta}\left({\rm Cap}_{{\rm III}}\left(\hat{\mathbf{n}}\right)\right)\right\} ,
\]
and therefore by Proposition \ref{prop:means},
\[
\limsup_{N\to\infty}\sup_{u\in J_{N}}\left\{ \frac{1}{N}\log\left(\mathbb{E}_{u,0}\left\{ Z_{N,\beta}\left({\rm Cap}_{{\rm III}}\left(\hat{\mathbf{n}}\right)\right)\right\} \right)-\left(\Frf\left(E_{0},q_{*}\right)-\beta q_{**}^{p}\left(\frac{u}{N}+E_{0}\right)\right)\right\} \leq0.
\]

From Lemma \ref{lem:17}, 
\[
\limsup_{N\to\infty}\frac{1}{N}\log\left(\mathbb{E}\left\{ \sum_{\boldsymbol{\sigma}_{0}\in\mathscr{C}_{N}\left(J_{N}\right)}Z_{N,\beta}\left({\rm Cap}_{{\rm III}}\left(\hat{\mathbf{n}}\right)\right)\right\} \right)\leq\sup_{E\in J}\left\{ \Theta_{p}\left(E\right)+\Frf\left(E_{0},q_{*}\right)-\beta q_{**}^{p}\left(E+E_{0}\right)\right\} .
\]
For large enough $\beta$, the supremum is equal to 
\[
\Theta_{p}\left(-E_{0}+\tau\right)+\Frf\left(E_{0},q_{*}\right)-\tau\beta q_{**}^{p}
\]
and is strictly less than $\Frf\left(E_{0},q_{*}\right)$. This completes
the proof. \qed

\subsubsection{Proof of (\ref{eq:RegIVbd})}

Let $\delta>0$ and set 
\[
J_{N}=NJ=N\left(-E_{0},\,-\Frf\left(E_{0},q_{*}\right)/\beta+\delta\right)
\]
and note that (see (\ref{eq:uLS}), (\ref{eq:m_N})) for large $N$,
$(m_{N}-\kappa',\, u_{{\rm LS}}+\delta N)\subset J_{N}$. Let $q_{1}<q_{2}$
be some overlaps in $(-1,1)$ and denote $W_{N,\beta}\left(\boldsymbol{\sigma}_{0}\right)=\frac{1}{N}\log Z_{N,\beta}({\rm Band}(\boldsymbol{\sigma}_{0},q_{1},q_{2}))$.
Let $\epsilon>0$ be a number such that 
\begin{equation}
\forall q\in(q_{{\rm LS}},q_{**}):\,\,\,\Frf\left(E_{0},q_{*}\right)-\FrfFt\left(E_{0},q\right)>\epsilon.\label{eq:a6}
\end{equation}
By Lemmas \ref{lem:Lambda_F_bound}, \ref{lem:Lambda_F2} and \ref{lem:devFE-RegIV},
as $N\to\infty$, 
\begin{align*}
 & \sup_{u\in J_{N}}\mathbb{P}_{u,0}\left\{ W_{N,\beta}\left(\hat{\mathbf{n}}\right)\geq\Frf\left(E_{0},q_{*}\right)-\epsilon\right\} \leq\mathbb{P}_{-NE_{0},0}\left\{ W_{N,\beta}\left(\hat{\mathbf{n}}\right)\geq\Frf\left(E_{0},q_{*}\right)-\epsilon\right\} \\
 & \leq\mathbb{P}_{-NE_{0},0}\left\{ W_{N,\beta}\left(\hat{\mathbf{n}}\right)-\mathbb{E}_{-NE_{0},0}W_{N,\beta}\left(\hat{\mathbf{n}}\right)\geq\Frf\left(E_{0},q_{*}\right)-\epsilon-\sup_{q\in\left(q_{1},q_{2}\right)}\FrfFt\left(E_{0},q\right)-o(1)\right\} \\
 & \leq\exp\left\{ -N\left(L_{\epsilon}\left(q_{1},q_{2}\right)-o(1)\right)\right\} ,
\end{align*}
where 
\[
L_{\epsilon}\left(q_{1},q_{2}\right)\triangleq\frac{\inf_{q\in\left(q_{1},q_{2}\right)}\left(\Frf\left(E_{0},q_{*}\right)-\epsilon-\FrfFt\left(E_{0},q\right)\right)^{2}}{2\beta^{2}\sum_{k=2}^{p}\sup_{q\in\left(q_{1},q_{2}\right)}\left(\alpha_{k}\left(q\right)\right)^{2}}.
\]
By Lemma \ref{lem:15}
\begin{align}
\limsup_{N\to\infty}\frac{1}{N}\log\left(\mathbb{E}\left|\left\{ \boldsymbol{\sigma}_{0}\in\mathscr{C}_{N}\left(J_{N}\right):\, W_{N,\beta}\left(\boldsymbol{\sigma}_{0}\right)\geq\Frf\left(E_{0},q_{*}\right)-\epsilon\right\} \right|\right) & \leq\sup_{E\in J}\Theta_{p}\left(E\right)-L_{\epsilon}\left(q_{1},q_{2}\right).\label{eq:e1}
\end{align}

Recall that $\Frf\left(E_{0},q_{*}\right)/\beta\to E_{0}$ as $\beta\to\infty$,
and note that $\Theta_{p}\left(E\right)$ increases in $E<0$. Thus,
for any given $\tau>0$, if $\beta$ is large enough and $\delta$
is small enough, then by Theorem \ref{thm:ABAC} 
\begin{equation}
\limsup_{N\to\infty}\frac{1}{N}\log\left(\mathbb{E}\left|\left\{ \boldsymbol{\sigma}_{0}\in\mathscr{C}_{N}\left(J_{N}\right)\right\} \right|\right)=\sup_{E\in J}\Theta_{p}\left(E\right)<\tau.\label{eq:a5}
\end{equation}
We claim that in order to finish the proof it will be sufficient to
show that there exist positive $\epsilon$, $c$ and $\beta_{0}$
such that (\ref{eq:a6}) holds and for any $\beta>\beta_{0}$ and
there exists $l:=l\left(\beta\right)$ such that for any $q_{1}$,
$q_{2}\in[q_{{\rm LS}},q_{**}]$ such that $0<q_{2}-q_{1}<l$ we have
that
\[
L_{\epsilon}\left(q_{1},q_{2}\right)>c.
\]
To see this, note that if the above holds and we choose $\beta_{0}$
large enough and $\delta$ small enough then, first we have from (\ref{eq:a5})
that 
\[
\limsup_{N\to\infty}\frac{1}{N}\log\left(\mathbb{P}\left\{ \frac{1}{N}\log\left(\left|\left\{ \boldsymbol{\sigma}_{0}\in\mathscr{C}_{N}\left(J_{N}\right)\right\} \right|\right)\geq\epsilon/2\right\} \right)\leq-\epsilon/4,
\]
and second 
\[
\limsup_{N\to\infty}\frac{1}{N}\log\left(\mathbb{P}\left\{ \exists\boldsymbol{\sigma}_{0}\in\mathscr{C}_{N}\left(J_{N}\right):\,\frac{1}{N}\log\left(Z_{N,\beta}\left({\rm Band}(\boldsymbol{\sigma}_{0},q_{{\rm LS}},q_{**})\right)\right)\geq\Frf\left(E_{0},q_{*}\right)-\epsilon\right\} \right)\leq-c/2.
\]

With $\beta$ fixed, 
\[
L_{\epsilon}\left(q_{1},q_{2}\right)\overset{q_{2}\searrow q_{1}}{\to}L_{\epsilon}\left(q_{1}\right)\triangleq\frac{\left(\Frf\left(E_{0},q_{*}\right)-\epsilon-\FrfFt\left(E_{0},q_{1}\right)\right)^{2}}{2\beta^{2}\sum_{k=2}^{p}\left(\alpha_{k}\left(q_{1}\right)\right)^{2}}
\]
uniformly in $q_{1}\in[q_{{\rm LS}},q_{**}]$. Hence, it will be enough
to show that, for large enough $\beta$, 
\begin{equation}
\sup_{q\in\left(q_{{\rm LS}},q_{**}\right)}L_{\epsilon}\left(q\right)>c.\label{eq:0101-4}
\end{equation}

Using the approximations 
\[
q_{*}=1-\frac{\chi_{1}}{\sqrt{2p(p-1)}}+O\left(\beta^{-2}\right),\,\,\, q_{**}=1-\frac{\chi_{3}}{\sqrt{2p(p-1)}}+O\left(\beta^{-2}\right),
\]
one can verify that, since $\Frf\left(E,q\right)\geq\FrfFt\left(E,q\right)$
(e.g., from (\ref{eq:a7}) and the fact that $\sum_{k=3}^{p}\alpha_{k}^{2}\left(q\right)=1-\sum_{k=0}^{2}\alpha_{k}^{2}\left(q\right)$),
\begin{equation}
\liminf_{\beta\to\infty}\left\{ \Frf\left(E_{0},q_{*}\right)-\FrfFt\left(E_{0},q_{**}\right)\right\} \geq\lim_{\beta\to\infty}\left(\Frf\left(E_{0},q_{*}\right)-\Frf\left(E_{0},q_{**}\right)\right)=M-m>0,\label{eq:0703-1}
\end{equation}
where $M$ and $m$ are the unique local maximum and minimum in $(0,\infty)$,
respectively, of the function $2^{-1}\log t-\sqrt{2}tE_{0}/E_{\infty}+2^{-1}t^{2}$.

If $q\in\left(q_{{\rm LS}},q_{**}\right)$, then $0<q<q_{c}$ (assuming
$\beta$ is large and all of those overlaps are defined) and thus
$\left|\alpha_{2}\left(q\right)\beta\right|>\sqrt{\frac{1}{2}}$.
By some calculus 
\[
\sup_{q\in\left(q_{{\rm LS}},q_{**}\right)}\left|\alpha_{k}\left(q\right)\cdot\frac{d}{dq}\alpha_{k}\left(q\right)\right|\leq t_{k}'\left(\frac{\beta}{\log\beta}\right)^{-k+1},
\]
for some $t_{k}>0$ and, using the fact that $q_{{\rm LS}}\to1$ as
$\beta\to\infty$, 
\begin{equation}
\lim_{\beta\to\infty}\sup_{q\in\left(q_{{\rm LS}},q_{**}\right)}\left|\frac{1}{\beta}\frac{d}{dq}\FrfFt\left(E_{0},q\right)-p\left(E_{0}-E_{\infty}\right)\right|=0.\label{eq:0101-7-1}
\end{equation}
It follows from (\ref{eq:0703-1}) and (\ref{eq:0101-7-1}) that for
large enough $\beta$ and $q\in(q_{{\rm LS}},q_{**})$, for some $c_{1}$,
$c_{2}>0$, 
\begin{equation}
\Frf\left(E_{0},q_{*}\right)-\FrfFt\left(E_{0},q\right)\geq2c_{1}+c_{2}\beta\left(q_{**}-q\right).\label{eq:0101-5-1}
\end{equation}
Since $q_{{\rm LS}}\to1$ as $\beta\to\infty$, from the definition
\ref{eq:alpha_k} of $\alpha_{k}\left(q\right)$, for any $q\in[q_{{\rm LS}},q_{**}]$,
\begin{equation}
\sum_{k=2}^{p}\left(\alpha_{k}\left(q\right)\right)^{2}\leq c_{3}\left(\alpha_{2}\left(q\right)\right)^{2},\label{eq:0101-6-1}
\end{equation}
for some $c_{3}>1$ for any $\beta$ large enough. Hence, with $\epsilon=c_{1}$,
if $1-2\left(1-q_{**}\right)<q\leq q_{**}$, then $\alpha_{2}\left(q\right)\leq2\alpha_{2}\left(q_{**}\right)$
and therefore 
\[
L_{c_{1}}\left(q\right)\geq\frac{c_{1}^{2}}{2\beta^{2}c_{3}\left(2\alpha_{2}\left(q_{**}\right)\right)^{2}};
\]
and if $q_{{\rm LS}}\leq q\leq1-2\left(1-q_{**}\right)$ then $q_{**}-q\geq(1-q)/2$
and, for an appropriate $c_{4}>0$ assuming $\beta$ is large, 
\[
L_{c_{1}}\left(q\right)\geq\frac{\left(c_{2}\beta\frac{1}{2}\left(1-q\right)\right)^{2}}{2\beta^{2}c_{3}\left(\alpha_{2}\left(q\right)\right)^{2}}\geq c_{4}.
\]
Since $\beta\cdot\alpha_{2}(q_{**})$ is independent of $\beta$,
the proof is completed.\qed

\section{\label{sec:thm1}Proof of Theorem \ref{thm:Geometry}}

\subsection{Proof of part (\ref{enu:Geometry1})}

For the definitions of the various regions used below see Section
\ref{sub:Overlap-depth-regions} and (\ref{eq:RegUB}). Throughout
the proof $\beta$ and the positive constants $c$, $\kappa$, $\kappa'$
are assumed to be large enough and $\delta$ is assumed to be small
enough whenever needed. Let $\epsilon>0$ be an arbitrary small number.
For given $\kappa$ and $\kappa'$, assume $k$ is large enough so
that, by Corollary \ref{cor:min} with probability at least $1-\epsilon$
for large $N$
\[
Z_{N,\beta}\left(\cup_{i\in[k]}{\rm Band}_{i}(cN^{-1/2})\right)\geq{\rm Cont}_{N,\beta}\left({\rm Reg}_{*}\left(c,\kappa,\kappa'\right)\right).
\]
From Lemma \ref{lem:LSnegligible} and Corollary \ref{cor:min}, with
probability at least $1-\epsilon$ for large $N$, $H_{N}(\boldsymbol{\sigma}_{0}^{1})>m_{N}-\kappa'$
and 
\begin{align*}
Z_{N,\beta}\left(\mathbb{S}^{N-1}\setminus\cup_{i\in[k]}{\rm Band}_{i}(cN^{-1/2})\right) & \leq Z_{N,\beta}\left(\left\{ \boldsymbol{\sigma}:H_{N}\left(\boldsymbol{\sigma}\right)\geq u_{{\rm LS}}+\delta N\right\} \right)\\
 & +{\rm Cont}_{N,\beta}\left({\rm Reg}_{{\rm UB}}\left(\delta,c,\kappa,\kappa'\right)\right),
\end{align*}
where ${\rm Reg}_{{\rm UB}}(\delta,c,\kappa,\kappa')$ is contained,
up to a set of Lebesgue measure zero, in 
\[
{\rm Reg}_{{\rm I}}\left(c,\kappa'\right)\cup{\rm Reg}_{{\rm II}}\left(c,\kappa\right)\cup{\rm Reg}_{{\rm III}}(\tau_{N},\delta)\cup{\rm Reg}_{{\rm IV}}\left(\delta\right).
\]

By Proposition \ref{prop:bounds}, (\ref{eq:LSbd}) and (\ref{eq:uLS}),
Remark \ref{rem:tauN}, and the facts that
\[
N^{-1}\log\left(\mathfrak{V}_{N,\beta}\left(m_{N}\right)\right)\overset{N\to\infty}{\to}\Frf\left(E_{0},q_{*}\right)
\]
and $\ftail\left(\kappa\right)\mathfrak{V}_{N,\beta}\left(m_{N}+\kappa\right)/\mathfrak{V}_{N,\beta}\left(m_{N}\right)>C$
for some $C>0$ independent of $N$, we have that with probability
at least $1-3\epsilon$ for large $N$
\[
\frac{Z_{N,\beta}\left(\mathbb{S}^{N-1}\setminus\cup_{i\in[k]}{\rm Band}_{i}(cN^{-1/2})\right)}{Z_{N,\beta}\left(\cup_{i\in[k]}{\rm Band}_{i}(cN^{-1/2})\right)}<\epsilon,
\]
and the proof is completed.\qed

\subsection{Proof of part (\ref{enu:Geometry2})}

Let $\epsilon,\,\delta>0$ and $k\geq1$ be arbitrary. Choose $\kappa:=\kappa(k,\epsilon)>0$
large enough such that by Corollary \ref{cor:min}, setting $J_{N}=\left(m_{N}-\kappa,m_{N}+\kappa\right)$,
with probability at least $1-\epsilon$ for large $N$
\[
\mathscr{C}_{N}\left(J_{N}\right)\supset\left\{ \boldsymbol{\sigma}_{0}^{i}:\, i\in[k]\right\} .
\]
Define (see definition (\ref{eq:ZtimesZ2})) 
\[
D_{\boldsymbol{\sigma}}^{(1)}=\frac{\left(Z\times Z\right)_{N,\beta}\left({\rm Band}\left(\boldsymbol{\sigma},cN^{-1/2}\right);\left[-1,1\right]\setminus\left(-\rho'N^{-1/2},\rho'N^{-1/2}\right)\right)}{\left(\mathbb{E}_{u,0}\left\{ Z_{N,\beta}\left({\rm Band}\left(\hat{\mathbf{n}},cN^{-1/2}\right)\right)\right\} \right)^{2}}
\]
with 
\[
{\rm Band}\left(\boldsymbol{\sigma},cN^{-1/2}\right)={\rm Band}\left(\boldsymbol{\sigma},q_{*}-cN^{-1/2},q_{*}+cN^{-1/2}\right).
\]
Using part \ref{enu:3_lem2nd} of Lemma \ref{lem:Z_2nd_mom}, assume
$\rho':=\rho'(\delta,\epsilon,\kappa)>0$ is large enough such that
for large $N$, $\sup_{u\in J_{N}}\mathbb{E}_{u,0}D_{\hat{\mathbf{n}}}^{(1)}$
is small enough so that by Lemma \ref{lem:16} 
\[
\mathbb{P}\left\{ \exists\boldsymbol{\sigma}_{0}\in\mathscr{C}_{N}\left(J_{N}\right):\, D_{\boldsymbol{\sigma}_{0}}^{(1)}>\delta\right\} <\epsilon.
\]

Define 
\[
D_{\boldsymbol{\sigma}}^{(2)}=\left(\frac{Z_{N,\beta}\left({\rm Band}\left(\boldsymbol{\sigma},cN^{-1/2}\right)\right)}{\mathbb{E}_{u,0}\left\{ Z_{N,\beta}\left({\rm Band}\left(\hat{\mathbf{n}},cN^{-1/2}\right)\right)\right\} }\right)^{2}.
\]
Assume $\delta':=\delta'(\epsilon,\kappa)$ is small enough so that
by Proposition \ref{prop:Gaussflucts} for large $N$,$\sup_{u\in J_{N}}\mathbb{P}_{u,0}\{D_{\hat{\mathbf{n}}}^{(2)}<\delta'\}$
is small enough so that by Lemma \ref{lem:16}
\[
\mathbb{P}\left\{ \exists\boldsymbol{\sigma}_{0}\in\mathscr{C}_{N}\left(J_{N}\right):\, D_{\boldsymbol{\sigma}_{0}}^{(2)}<\delta'\right\} <\epsilon.
\]

Combining the above we have that with probability at least $1-3\epsilon$,
for large $N$ and any $i\in[k]$,
\begin{equation}
D_{\boldsymbol{\sigma}_{0}^{i}}^{(1)}/D_{\boldsymbol{\sigma}_{0}^{i}}^{(2)}\leq\delta/\delta'.\label{eq:w/w}
\end{equation}
Since (see (\ref{eq:projective overlap}) for the definition of $R_{\boldsymbol{\sigma}_{0}}(\boldsymbol{\sigma},\boldsymbol{\sigma}')$)
\begin{align}
R\left(\boldsymbol{\sigma},\boldsymbol{\sigma}^{\prime}\right) & =R(\boldsymbol{\sigma},\boldsymbol{\sigma}_{0})R(\boldsymbol{\sigma}',\boldsymbol{\sigma}_{0})\nonumber \\
 & +R_{\boldsymbol{\sigma}_{0}}(\boldsymbol{\sigma},\boldsymbol{\sigma}')N^{-1}\left\Vert \boldsymbol{\sigma}-\boldsymbol{\sigma}_{0}R(\boldsymbol{\sigma},\boldsymbol{\sigma}_{0})\right\Vert \left\Vert \boldsymbol{\sigma}'-\boldsymbol{\sigma}_{0}R(\boldsymbol{\sigma}',\boldsymbol{\sigma}_{0})\right\Vert ,\label{eq:08031}
\end{align}
if $\boldsymbol{\sigma},\boldsymbol{\sigma}^{\prime}\in{\rm Band}\left(\boldsymbol{\sigma}_{0},cN^{-1/2}\right)$
and $|R_{\boldsymbol{\sigma}_{0}}(\boldsymbol{\sigma},\boldsymbol{\sigma}')|<\rho'N^{-1/2}$
, then
\[
\left|R\left(\boldsymbol{\sigma},\boldsymbol{\sigma}^{\prime}\right)-q_{*}^{2}\right|<\rho N^{-1/2}
\]
for some $\rho:=\rho(c,\rho')$ independent of $N$. Thus, under (\ref{eq:w/w})
\[
G_{N,\beta}^{c,i}\otimes G_{N,\beta}^{c,i}\left\{ \left|R\left(\boldsymbol{\sigma},\boldsymbol{\sigma}^{\prime}\right)\mp q_{*}^{2}\right|>\rho N^{-1/2}\right\} <\delta/\delta'.
\]
By choosing $\delta$ small enough compared to $\delta'$, (\ref{eq:2601})
follows for the case $\pm i=i$. For even $p$, the case $\pm i=-i$
follows  from the fact that $H_{N}\left(\boldsymbol{\sigma}\right)=H_{N}\left(-\boldsymbol{\sigma}\right)$.\qed

\subsection{\label{sub:pfThm1Pt3}Proof of part (\ref{enu:Geometry3})}

We first prove the following lemma.
\begin{lem}
\label{lem:center}For any $q\in(-1,1)$ there exists a function $\delta_{q}\left(\epsilon\right)>0$
with $\lim_{\epsilon\to0^{+}}\delta_{q}\left(\epsilon\right)=0$ such
that the following holds. Let $q\in(-1,1)$, $0<\epsilon<|q|,\,1-|q|$,
let $M$ be a fixed deterministic measure on ${\rm Band}\left(\boldsymbol{\sigma}_{0},q-\epsilon,q+\epsilon\right)$
(where the dimension $N$ is fixed) and assume that 
\begin{equation}
M\otimes M\left\{ |R(\boldsymbol{\sigma},\boldsymbol{\sigma}')-q^{2}|>\epsilon\right\} <\epsilon.\label{eq:10031}
\end{equation}
Then, denoting $\boldsymbol{\sigma}_{\perp}=\boldsymbol{\sigma}-\boldsymbol{\sigma}_{0}R\left(\boldsymbol{\sigma},\boldsymbol{\sigma}_{0}\right)$,

\begin{equation}
\sup_{\boldsymbol{\tau}\in\mathbb{S}^{N-1}}M\left\{ \left|R(\boldsymbol{\sigma}_{\perp},\boldsymbol{\tau})\right|>\delta_{q}(\epsilon)\right\} \leq\delta_{q}(\epsilon).\label{eq:08032-1}
\end{equation}
\end{lem}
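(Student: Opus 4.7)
The plan is to bound the operator norm of the second-moment matrix $\Sigma := \int \boldsymbol{\sigma}_{\perp} \boldsymbol{\sigma}_{\perp}^{T}\, dM(\boldsymbol{\sigma})$ via its Frobenius norm, which the two-point hypothesis (\ref{eq:10031}) controls directly, and then conclude by Markov's inequality. I first reduce to the case that $M$ is a probability measure; the degenerate case of very small total mass is handled by the trivial bound $M\{\cdot\} \leq M(\mathbb{S}^{N-1})$, absorbed into the final choice of $\delta_{q}$.

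The key algebraic identity, obtained by direct expansion of $\boldsymbol{\sigma}_{\perp} = \boldsymbol{\sigma} - R(\boldsymbol{\sigma},\boldsymbol{\sigma}_{0})\boldsymbol{\sigma}_{0}$ using $\|\boldsymbol{\sigma}_{0}\|^{2} = N$, is
\[
\boldsymbol{\sigma}_{\perp} \cdot \boldsymbol{\sigma}'_{\perp} \;=\; N R(\boldsymbol{\sigma}, \boldsymbol{\sigma}') - N R(\boldsymbol{\sigma}, \boldsymbol{\sigma}_{0}) R(\boldsymbol{\sigma}', \boldsymbol{\sigma}_{0}).
\]
On the good event $\{|R(\boldsymbol{\sigma},\boldsymbol{\sigma}') - q^{2}| \leq \epsilon\}$ (of $M\otimes M$-measure $\geq 1-\epsilon$), combined with the support condition $R(\cdot,\boldsymbol{\sigma}_{0}) \in [q-\epsilon, q+\epsilon]$, the right-hand side is bounded in absolute value by $c_{q} N \epsilon$ for some $c_{q}$ depending only on $q$; on the complementary bad event, $|\boldsymbol{\sigma}_{\perp} \cdot \boldsymbol{\sigma}'_{\perp}| \leq \|\boldsymbol{\sigma}_{\perp}\|\|\boldsymbol{\sigma}'_{\perp}\| \leq N$. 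Squaring and integrating, Fubini identifies $\iint (\boldsymbol{\sigma}_{\perp} \cdot \boldsymbol{\sigma}'_{\perp})^{2}\, dM\, dM = \|\Sigma\|_{F}^{2}$, hence
\[
\lambda_{\max}(\Sigma)^{2} \;\leq\; \|\Sigma\|_{F}^{2} \;\leq\; c_{q}^{2} N^{2} \epsilon^{2} + N^{2} \epsilon \;\leq\; C_{q} N^{2} \epsilon.
\]

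For any $\boldsymbol{\tau}\in\mathbb{S}^{N-1}$, $\int (\boldsymbol{\sigma}_{\perp} \cdot \boldsymbol{\tau})^{2}\, dM = \boldsymbol{\tau}^{T} \Sigma \boldsymbol{\tau} \leq \lambda_{\max}(\Sigma) \cdot N \leq C_{q}^{1/2} N^{2} \sqrt{\epsilon}$. Using the pointwise lower bound $\|\boldsymbol{\sigma}_{\perp}\|^{2} \geq N(1-q^{2})/2$ (valid on the support of $M$ for $\epsilon$ small) to convert this into a bound on $R(\boldsymbol{\sigma}_{\perp}, \boldsymbol{\tau})^{2}$, and then applying Markov's inequality, I obtain $M\{|R(\boldsymbol{\sigma}_{\perp}, \boldsymbol{\tau})| > \delta\} \leq C'_{q} \sqrt{\epsilon}/\delta^{2}$ uniformly in $\boldsymbol{\tau}$. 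Setting $\delta_{q}(\epsilon) := (C'_{q})^{1/3}\epsilon^{1/6}$ closes the loop with (\ref{eq:08032-1}).

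No serious obstacle arises; the conceptually important observation is that bounding the operator norm of $\Sigma$ via its Frobenius norm is precisely what promotes the scalar hypothesis (\ref{eq:10031}) into information uniform in the test direction $\boldsymbol{\tau}$. The exponent $1/6$ in $\delta_{q}$ is far from optimal but is sufficient for the application to part (\ref{enu:Geometry3}) of Theorem \ref{thm:Geometry}.
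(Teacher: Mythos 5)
Your proof is correct, and it takes a genuinely different route from the paper. You control the second-moment matrix $\Sigma=\int\boldsymbol{\sigma}_{\perp}\boldsymbol{\sigma}_{\perp}^{T}\,dM$: the identity $\boldsymbol{\sigma}_{\perp}\cdot\boldsymbol{\sigma}'_{\perp}=N\bigl(R(\boldsymbol{\sigma},\boldsymbol{\sigma}')-R(\boldsymbol{\sigma},\boldsymbol{\sigma}_{0})R(\boldsymbol{\sigma}',\boldsymbol{\sigma}_{0})\bigr)$ turns hypothesis (\ref{eq:10031}) into a bound $\|\Sigma\|_{F}^{2}\leq C_{q}N^{2}\epsilon$, whence $\lambda_{\max}(\Sigma)\leq N\sqrt{C_{q}\epsilon}$, and Chebyshev plus the pointwise bound $\|\boldsymbol{\sigma}_{\perp}\|^{2}=N(1-R(\boldsymbol{\sigma},\boldsymbol{\sigma}_{0})^{2})\geq N(1-q^{2})/2$ gives a bound uniform in $\boldsymbol{\tau}$ and, importantly for the application, independent of $N$ (the factors of $N$ cancel). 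The paper instead argues by contradiction: sampling $k$ i.i.d. points from $M$, (\ref{eq:10031}) makes the normalized projections nearly orthonormal while each correlates at least $\delta$ with $\boldsymbol{\tau}$, and the Gram-matrix computation $\mathbf{x}\mathbf{A}\mathbf{x}^{T}\leq1-k\delta^{2}+k^{2}\delta^{2}\rho_{q}(\epsilon)<0$ for $k>1/\delta^{2}$ yields the contradiction. The underlying linear-algebra fact is the same (no more than roughly $\delta^{-2}$ almost-orthonormal vectors can all correlate with a fixed direction), but your spectral packaging is direct and quantitative, producing an explicit rate $\delta_{q}(\epsilon)\sim\epsilon^{1/6}$, whereas the paper's argument is purely qualitative. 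Two shared, harmless caveats: both proofs implicitly treat $M$ as a probability measure (the paper samples i.i.d. from $M$ and uses $\delta^{k}$ as a probability), which is how the lemma is applied; and your lower bound on $\|\boldsymbol{\sigma}_{\perp}\|^{2}$ needs $\epsilon$ bounded away from $1-|q|$, say $\epsilon\leq(1-|q|)/2$, with larger $\epsilon$ absorbed by setting $\delta_{q}(\epsilon)=1$ there, as you indicate.
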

\begin{proof}
Assume towards contradiction that there exist $q\in(-1,1)$, $\delta>0$,
which will be fixed from now on, and $\epsilon>0$ as small as we
wish such that (\ref{eq:10031}) holds and for some $\boldsymbol{\tau}\in\mathbb{S}^{N-1}$,
\begin{equation}
M\left\{ \left|R(\boldsymbol{\sigma}_{\perp},\boldsymbol{\tau})\right|>\delta\right\} \geq\delta.\label{eq:10032}
\end{equation}
Let $\boldsymbol{\sigma}_{j}$, $1\leq j$, be distributed according
to $M^{\otimes\infty}$ (i.e., be an i.i.d sequence of samples from
$M$), and define $\boldsymbol{\sigma}_{j,\perp}$ similarly to $\boldsymbol{\sigma}_{\perp}$.
By (\ref{eq:10031}) and (\ref{eq:08031}) there exists a deterministic
function $\rho_{q}(\epsilon)$ such that $\lim_{\epsilon\to0^{+}}\rho_{q}\left(\epsilon\right)=0$
and, for any $k\geq1$, 
\begin{align*}
 & \negthickspace\negthickspace M^{\otimes\infty}\left\{ \max_{i<j\leq k}\left|R\left(\boldsymbol{\sigma}_{i,\perp},\boldsymbol{\sigma}_{j,\perp}\right)\right|>\rho_{q}(\epsilon)\right\} \\
 & \leq M^{\otimes\infty}\left\{ \max_{i<j\leq k}\left|R\left(\boldsymbol{\sigma}_{i},\boldsymbol{\sigma}_{j}\right)-q^{2}\right|>\epsilon\right\} \leq\frac{k(k-1)}{2}\epsilon.
\end{align*}
From (\ref{eq:10032}), 
\[
M^{\otimes\infty}\left\{ \min_{i\leq k}\left|R\left(\boldsymbol{\sigma}_{i,\perp},\boldsymbol{\tau}\right)\right|>\delta\right\} \geq\delta^{k}.
\]
For arbitrary $k\geq1$, assuming $\epsilon$ is such that $\delta^{k}>k(k-1)\epsilon/2$,
we conclude that there exists deterministic vectors, related by $\mathbf{v}_{0}=\boldsymbol{\tau}/\left\Vert \boldsymbol{\tau}\right\Vert ,\,\mathbf{v}_{1}=\boldsymbol{\sigma}_{1,\perp}/\|\boldsymbol{\sigma}_{1,\perp}\|,...,\mathbf{v}_{k}=\boldsymbol{\sigma}_{k,\perp}/\|\boldsymbol{\sigma}_{k,\perp}\|$
to realizations of the vectors above, such that the matrix $(\mathbf{v}_{i}\cdot\mathbf{v}_{j})$
is of the from 
\[
\mathbf{A}=\left(\begin{array}{ccccc}
1 & \delta_{1} & \delta_{2} & \cdots & \delta_{k}\\
\delta_{1} & 1 & \rho_{1,2} & \cdots & \rho_{1,k}\\
\delta_{2} & \rho_{1,2} & \ddots &  & \vdots\\
\vdots & \vdots &  & \ddots & \rho_{k-1,k}\\
\delta_{k} & \rho_{1,k} & \cdots & \rho_{k-1,k} & 1
\end{array}\right)
\]
with $|\delta_{i}|>\delta$ and $|\rho_{i,j}|\leq\rho_{q}(\epsilon)$.
With $\mathbf{x}=\left(1,-\delta_{1},...,-\delta_{k}\right)$, $\left\Vert \mathbf{v}_{0}-\delta_{1}\mathbf{v}_{1}-\cdots-\delta_{k}\mathbf{v}_{k}\right\Vert _{2}^{2}=\mathbf{x}\mathbf{A}\mathbf{x}^{T}$.
However, by a direct computation one sees that $\mathbf{x}\mathbf{A}\mathbf{x}^{T}\leq1-k\delta^{2}+k^{2}\delta^{2}\rho_{q}(\epsilon)$,
which is negative assuming $k>1/\delta^{2}$ and $\epsilon$ is small
enough. We conclude that our initial assumption is false and therefore
the proof is completed.
\end{proof}
Denote $q_{i}=R(\boldsymbol{\sigma},\boldsymbol{\sigma}_{0}^{i})$
and $q_{j}=R(\boldsymbol{\sigma}^{\prime},\boldsymbol{\sigma}_{0}^{j})$.
From part (\ref{enu:Geometry2}) of Theorem \ref{thm:Geometry} and
Lemma \ref{lem:center} applied once with $\boldsymbol{\tau}=\boldsymbol{\sigma}$
and $M=G_{N,\beta}^{c,j}$ and once with $\boldsymbol{\tau}=\boldsymbol{\sigma}_{0}^{j}$
and $M=G_{N,\beta}^{c,i}$, for arbitrary $\delta>0$, with probability
that goes to $1$ as $N\to\infty$, 
\begin{align}
G_{N,\beta}^{c,i}\otimes G_{N,\beta}^{c,j}\left\{ \frac{1}{N}\left|\left\langle \boldsymbol{\sigma},\boldsymbol{\sigma}^{\prime}-q_{j}\boldsymbol{\sigma}_{0}^{j}\right\rangle \right|\leq\delta\right\}  & \geq1-\delta,\nonumber \\
G_{N,\beta}^{c,i}\otimes G_{N,\beta}^{c,j}\left\{ \frac{1}{N}\left|\left\langle \boldsymbol{\sigma}_{0}^{j},\boldsymbol{\sigma}-q_{i}\boldsymbol{\sigma}_{0}^{i}\right\rangle \right|\leq\delta\right\}  & \geq1-\delta.\label{eq:2803-2}
\end{align}
Whenever the two events above occur
\[
\frac{1}{N}\left|\left\langle \boldsymbol{\sigma},\boldsymbol{\sigma}^{\prime}\right\rangle -q_{j}\left\langle \boldsymbol{\sigma},\boldsymbol{\sigma}_{0}^{j}\right\rangle +q_{j}\left(\left\langle \boldsymbol{\sigma}_{0}^{j},\boldsymbol{\sigma}\right\rangle -q_{i}\left\langle \boldsymbol{\sigma}_{0}^{j},\boldsymbol{\sigma}_{0}^{i}\right\rangle \right)\right|\leq2\delta,
\]
and since $|q_{i}-q_{*}|$, $|q_{j}-q_{*}|<cN^{-1/2}$, for large
$N$,
\[
\left|R(\boldsymbol{\sigma},\boldsymbol{\sigma}^{\prime})-q_{*}^{2}R(\boldsymbol{\sigma}_{0}^{i},\boldsymbol{\sigma}_{0}^{j})\right|\leq3\delta.
\]
The proof is completed by Corollaries \ref{cor:min} and \ref{cor:orth}.\qed

\section{\label{sec:thmFE}Proofs of Theorem \ref{thm:free-energy}, Proposition
\ref{prop:large beta} and Corollary \ref{cor:FE_u_Hess_rep}}

\subsection{Proof of Theorem \ref{thm:free-energy}}

Let $\epsilon>0$ be an arbitrary number. For large enough $\kappa$,
$c>0$, by (\ref{eq:thm1_1}) and Corollary \ref{cor:min}, for large
$N$
\begin{equation}
\mathbb{P}\left\{ \left|NF_{N,\beta}-\log\left(C_{N,\beta}^{c,\kappa}\right)\right|>\epsilon\right\} <\epsilon,\label{eq:08034}
\end{equation}
with $C_{N,\beta}^{c,\kappa}={\rm Cont}_{N,\beta}\left({\rm Reg}_{*}\left(c,\kappa,\kappa\right)\right)$.
Hence to finish the proof it is sufficient to prove that for any $\delta>0$,
there exist $c_{0}(\delta)$, $\kappa_{0}(\delta)>0$ such that for
any $c>c_{0}(\delta)$, $\kappa>\kappa_{0}(\delta)$ there exists
$t=t(\delta,c,\kappa)>0$ such that for large $N$, 
\begin{align*}
\mathbb{P}\left\{ \left|\log\left(C_{N,\beta}^{c,\kappa}\right)-N\Frf\left(E_{0},q_{*}\right)+\frac{\beta q_{*}^{p}}{2c_{p}}\log N\right|>t\right\}  & <\delta.
\end{align*}
This follows from (\ref{eq:Reg*bd}) and (\ref{eq:08037}).\qed

\subsection{Proof of Proposition \ref{prop:large beta}}

In Corollary \ref{cor:min} it is stated that $\mathbb{P}\{H_{N}(\boldsymbol{\sigma}_{0}^{1})-m_{N}\geq x\}\to\exp\left\{ -c_{p}^{-1}e^{c_{p}x}\right\} $,
as $N\to\infty$. In view of this and the fact that $\lim_{\beta\to\infty}q_{*}=1$,
to finish the proof it is enough to prove that for any $\delta>0$,
\[
\limsup_{\beta\to\infty}\limsup_{N\to\infty}\mathbb{P}\left\{ \left|\frac{1}{\beta q_{*}^{p}}\left(NF_{N,\beta}-\log\left(\mathfrak{V}_{N,\beta}\left(m_{N}\right)\right)\right)+H_{N}\left(\boldsymbol{\sigma}_{0}^{1}\right)-m_{N}\right|\geq\delta\right\} \leq\delta.
\]
From part (\ref{enu:Geometry1}) of Theorem \ref{thm:Geometry}, the
above will follow if we show that for any $\delta>0$, 
\begin{align}
\limsup_{\beta\to\infty}\limsup_{c,k\to\infty}\limsup_{N\to\infty}\mathbb{P}\left\{ \left|\frac{1}{\beta q_{*}^{p}}\left[\log\Big(\sum_{i\in[k]}Z_{N,\beta}\left({\rm Band}_{i}\left(cN^{-1/2}\right)\right)\Big)\right.\right.\right.\label{eq:b7}\\
\left.\left.\left.\vphantom{\sum_{i\in[k]}}-\log\left(Z_{N,\beta}\left({\rm Band}_{1}\left(cN^{-1/2}\right)\right)\right)\right]\right|\geq\delta\right\}  & \leq\delta,\nonumber 
\end{align}
and
\begin{align}
\limsup_{\beta\to\infty}\limsup_{c\to\infty}\limsup_{N\to\infty}\mathbb{P}\left\{ \left|\frac{1}{\beta q_{*}^{p}}\left[\log\left(Z_{N,\beta}\left({\rm Band}_{1}\left(cN^{-1/2}\right)\right)\right)\right.\right.\right.\label{eq:b3}\\
\left.\left.\vphantom{\frac{1}{\beta q_{*}^{p}}}\left.-\log\left(\mathfrak{V}_{N,\beta}\left(m_{N}\right)\right)\right]+H_{N}\left(\boldsymbol{\sigma}_{0}^{1}\right)-m_{N}\right|\geq\delta\right\}  & \leq\delta.\nonumber 
\end{align}
(Where we used the fact that the bands above are disjoint for large
enough $\beta$ and $N$, since $\lim_{\beta\to\infty}q_{*}=1$ and
by Corollary \ref{cor:orth}.)

Fix $\beta>0$ which will be assumed to be large enough wherever needed.
Let $\delta>0$ be arbitrary. First, we prove (\ref{eq:b3}) by showing
that, with high probability, $H_{N}\left(\boldsymbol{\sigma}_{0}^{1}\right)$
is not far from $m_{N}$ and that for any critical value close enough
to $m_{N}$ the fluctuation of the mass of the corresponding band
from its expectation is not large. Choose $\kappa=\kappa(\delta)$
large enough so that by Corollary \ref{cor:min},%
\footnote{We remark that $\kappa$ is independent of $\beta$, as will be all
other parameters we choose in the rest of the proof. This is of course
crucial to the proof.%
}
\begin{equation}
\limsup_{N\to\infty}\mathbb{P}\{\left|H_{N}(\boldsymbol{\sigma}_{0}^{1})-m_{N}\right|\geq\kappa\}<\delta/8.\label{eq:b1}
\end{equation}
Denote $Z_{N,\beta,c}(\boldsymbol{\sigma}_{0})=Z_{N,\beta}\left({\rm Band}\left(\boldsymbol{\sigma}_{0},q_{*}-cN^{-1/2},q_{*}+cN^{-1/2}\right)\right)$.
Choose $t=t(\delta,\kappa)$ sufficiently large so that by Propositions
\ref{prop:means} and \ref{prop:Gaussflucts} and Lemma \ref{lem:16},
\begin{align}
\limsup_{c\to\infty}\limsup_{N\to\infty}\mathbb{P}\left\{ \exists\boldsymbol{\sigma}_{0}\in\mathscr{C}_{N}\left(m_{N}-\kappa,m_{N}+\kappa\right):\right.\label{eq:b2}\\
\left.\left|\log\left(Z_{N,\beta,c}(\boldsymbol{\sigma}_{0})\right)-\log\left(\mathfrak{V}_{N,\beta}\left(H_{N}(\boldsymbol{\sigma}_{0})\right)\right)\right|\geq t\right\}  & <\delta/8.\nonumber 
\end{align}
Since (by the definition of $\mathfrak{V}_{N,\beta}(u)$ (\ref{eq:V(u)}))
\[
\log\left(\mathfrak{V}_{N,\beta}\left(H_{N}(\boldsymbol{\sigma}_{0})\right)\right)-\log\left(\mathfrak{V}_{N,\beta}\left(m_{N}\right)\right)=-H_{N}\left(\boldsymbol{\sigma}_{0}\right)+m_{N},
\]
(\ref{eq:b1}) and (\ref{eq:b2}) imply (\ref{eq:b3}). They also
imply a lower bound on the mass corresponding to $H_{N}\left(\boldsymbol{\sigma}_{0}^{1}\right)$.
Namely, 
\begin{align*}
\limsup_{c\to\infty}\limsup_{N\to\infty}\mathbb{P}\left\{ \log\left(Z_{N,\beta}\left({\rm Band}_{1}\left(cN^{-1/2}\right)\right)\right)\right.\\
\left.\leq\log\left(\mathfrak{V}_{N,\beta}\left(m_{N}-\kappa\right)\right)-t\right\}  & <\delta/4.
\end{align*}
Therefore, using (\ref{eq:08037}) we can choose $\kappa'=\kappa'(\delta,\kappa,t)$
such that for any fixed $\kappa''>0$, 
\begin{equation}
\limsup_{c\to\infty}\limsup_{N\to\infty}\mathbb{P}\left\{ \frac{\sum_{\boldsymbol{\sigma}_{0}\in\mathscr{C}_{N}\left(m_{N}-\kappa'',m_{N}-\kappa'\right)}Z_{N,\beta,c}(\boldsymbol{\sigma}_{0})}{Z_{N,\beta}\left({\rm Band}_{1}\left(cN^{-1/2}\right)\right)}\geq\frac{\delta}{4}\right\} <\delta/2,\label{eq:b6}
\end{equation}
uniformly in $\beta\geq\beta_{0}$ for some large enough $\beta_{0}$
(which is independent of our choice of all other parameters). That
is, we have an upper bound on the masses of bands corresponding to
critical values far enough from $m_{N}$ (within a microscopic distance).

Lastly, we need to bound the weights of the bands corresponding to
critical values larger than $m_{N}-\kappa'$ that are not equal to
$H_{N}\left(\boldsymbol{\sigma}_{0}^{1}\right)$. This is done by
bounding the number of such points and the fluctuations of the corresponding
masses from their expectation. Choose large enough $k=k(\delta,\kappa')$
so that from Corollary \ref{cor:min},
\[
\limsup_{N\to\infty}\mathbb{P}\left\{ \left|\mathscr{C}_{N}\left(-\infty,m_{N}-\kappa'\right)\right|\geq k\right\} <\delta/8.
\]
Similarly to (\ref{eq:b2}), choose $t'=t'(\delta,\kappa)$ large
enough so that
\begin{align}
\limsup_{c\to\infty}\limsup_{N\to\infty}\mathbb{P}\left\{ \exists\boldsymbol{\sigma}_{0}\in\mathscr{C}_{N}\left(-\infty,m_{N}-\kappa'\right):\right.\label{eq:b2-1}\\
\left.\left|\log\left(Z_{N,\beta,c}(\boldsymbol{\sigma}_{0})\right)-\log\left(\mathfrak{V}_{N,\beta}\left(H_{N}(\boldsymbol{\sigma}_{0})\right)\right)\right|\geq t'\right\}  & <\delta/8.\nonumber 
\end{align}
Since $\mathfrak{V}_{N,\beta}(u)$ is decreasing, it follows that
\begin{equation}
\limsup_{c\to\infty}\limsup_{N\to\infty}\mathbb{P}\left\{ \frac{\sum_{\boldsymbol{\sigma}_{0}\in\mathscr{C}_{N}\left(m_{N}-\kappa'',m_{N}-\kappa'\right)}Z_{N,\beta,c}(\boldsymbol{\sigma}_{0})}{\mathfrak{V}_{N,\beta}\left(H_{N}(\boldsymbol{\sigma}_{0}^{1})\right)}\geq ke^{t'}\right\} <\delta/2.\label{eq:b5}
\end{equation}
Since the bounds in (\ref{eq:b6}) and (\ref{eq:b5}) are independent
of $\beta$ (assuming it is large enough) and since the difference
in (\ref{eq:b7}) is divided by $\beta q_{*}^{p}$, (\ref{eq:b7})
follows from the above and the proof is completed.\qed

\subsection{Proof of Corollary \ref{cor:FE_u_Hess_rep}}

Let $0<a$ and set $J_{N}=(m_{N}-a,m_{N}+a)$. For any $c,\,\delta>0$,
by Lemmas \ref{lem:ConcentrationZ} and \ref{lem:16},
\[
\lim_{N\to\infty}\mathbb{E}\left|\left\{ \boldsymbol{\sigma}_{0}\in\mathscr{C}_{N}\left(J_{N}\right):\,\left|\frac{Z_{N,\beta,c}(\boldsymbol{\sigma}_{0})}{\mathbb{E}_{H_{N}(\boldsymbol{\sigma}_{0}),0,\mathbf{\mathbf{G}}(\boldsymbol{\sigma}_{0})}\left\{ Z_{N,\beta,c}(\hat{\mathbf{n}})\right\} }-1\right|\geq\delta\right\} \right|=0,
\]
where $Z_{N,\beta,c}(\boldsymbol{\sigma}_{0})=Z_{N,\beta}\left({\rm Band}\left(\boldsymbol{\sigma}_{0},q_{*}-cN^{-1/2},q_{*}+cN^{-1/2}\right)\right)$.
It therefore also follows that there exist positive sequences $a_{N}>0$,
$\delta_{N}=o(1)$, $c_{N}=o(N^{1/2})$ with $a_{N},\, c_{N}\to\infty$,
as $N\to\infty$, such that the above holds with them instead of the
corresponding constants. By Corollary \ref{cor:min}, there exists
a sequence $k_{N}\geq1$ such that $k_{N}\to\infty$ and 
\[
\lim_{N\to\infty}\mathbb{P}\left\{ \forall i\in[k_{N}]:\,\left|Z_{N,\beta,c_{N}}(\boldsymbol{\sigma}_{0}^{i})/Z_{N,i}-1\right|<\delta_{N}\right\} =1,
\]
where we define
\begin{equation}
Z_{N,i}\triangleq\mathbb{E}_{H_{N}(\boldsymbol{\sigma}_{0}^{i}),0,\mathbf{\mathbf{G}}(\boldsymbol{\sigma}_{0}^{i})}\left\{ Z_{N,\beta,c_{N}}(\hat{\mathbf{n}})\right\} .\label{eq:ZNi}
\end{equation}
By Corollaries \ref{cor:min} and \ref{cor:orth}, by choosing $k_{N}$
that increases slower if needed, we also have that 
\[
\lim_{N\to\infty}\mathbb{P}\left\{ \forall i,j\in[k_{N}],\, i\neq\pm j:\,\left|R\left(\boldsymbol{\sigma}_{0}^{i},\boldsymbol{\sigma}_{0}^{j}\right)\right|\leq\tau_{N}\right\} =1,
\]
for some $\tau_{N}=o(1)$. For large $\beta$, since $q_{*}(\beta)\to1$
as $\beta\to\infty$, for $i$ and $j$ with $\left|R\left(\boldsymbol{\sigma}_{0}^{i},\boldsymbol{\sigma}_{0}^{j}\right)\right|\leq\tau_{N}$,
assuming $N$ is large enough,
\[
\cap_{a=i,j}{\rm Band}\left(\boldsymbol{\sigma}_{0}^{a},q_{*}-c_{N}N^{-1/2},q_{*}+c_{N}N^{-1/2}\right)=\emptyset
\]
implying that 
\[
\lim_{N\to\infty}\mathbb{P}\Big\{\sum_{i\in[k_{N}]}Z_{N,\beta,c_{N}}(\boldsymbol{\sigma}_{0}^{i})\leq Z_{N,\beta}\Big\}=1.
\]
Combined with (\ref{eq:thm1_1}) this completes the proof.\qed

\section{\label{sec:chaos}Proof of Theorem \ref{thm:temp_chaos}}

Define the overlap distribution 
\begin{equation}
\zeta_{N}\left(\cdot\right)=\mathbb{E}\left\{ M_{N,1}\otimes M_{N,2}\left\{ R\left(\boldsymbol{\sigma},\boldsymbol{\sigma}^{\prime}\right)\in\cdot\right\} \right\} \label{eq:zeta}
\end{equation}
and let $\mathcal{B}\left(q,\epsilon\right)\subset\mathbb{R}$ denote
the ball of radius $\epsilon$ centered at $q$. For $\beta_{1}$,
$\beta_{2}>0$ set 
\begin{equation}
q_{12}=q_{*}\left(\beta_{1}\right)q_{*}\left(\beta_{2}\right){\rm \,\,\,\, and\,\,\,}\mathcal{Q}=\left\{ 0,q_{12},\left(-1\right)^{p+1}q_{12}\right\} .\label{eq:q12Q}
\end{equation}
Theorem \ref{thm:temp_chaos} follows directly from the following
proposition, which also contains information about the overlap distribution.
\begin{prop}
\label{prop:temp_chaos}For large enough $\beta_{1},\beta_{2}>0$,
either different or equal, with $M_{N,i}=G_{N,\beta_{i}}$, there
exists $v>0$ such that the following holds. For any $\epsilon>0$,
\begin{align}
 & \mbox{any \ensuremath{q_{0}\in\mathcal{Q}}\,\ is charged}:\, & \liminf_{N\to\infty}\zeta_{N}\left(\mathcal{B}\left(q_{0},\epsilon\right)\right)>v,\label{eq:1402-1-1}\\
 & q\notin\mathcal{Q}\mbox{ vanish}: & \lim_{N\to\infty}\zeta_{N}\left(\left(\cup_{q\in\mathcal{Q}}\mathcal{B}\left(q,\epsilon\right)\right)^{c}\right)=0.\label{eq:1402-2-1}
\end{align}
\end{prop}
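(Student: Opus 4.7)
My strategy exploits the key feature of Theorem \ref{thm:Geometry} highlighted in the introduction: the bands ${\rm Band}_{i,\beta}(cN^{-1/2})$ are centred at critical points $\boldsymbol{\sigma}_0^i$ of $H_N$ which do \emph{not} depend on $\beta$; only the radius $q_*(\beta)$ does. Applying Theorem \ref{thm:Geometry}(\ref{enu:Geometry1}) at both $\beta_1$ and $\beta_2$, for $k,c$ large, outside an event of probability $\epsilon$, the product measure $G_{N,\beta_1}\otimes G_{N,\beta_2}$ is supported on $\bigcup_{i,j\in[k]}{\rm Band}_{i,\beta_1}(cN^{-1/2})\times{\rm Band}_{j,\beta_2}(cN^{-1/2})$ with pair-weights
\[
w_{ij}^{N}=G_{N,\beta_1}({\rm Band}_{i,\beta_1}(cN^{-1/2}))\cdot G_{N,\beta_2}({\rm Band}_{j,\beta_2}(cN^{-1/2})).
\]
The problem then splits into (i) computing the overlap on each product band, and (ii) showing that $\{w_{ij}^N\}$ does not collapse onto a single pair.

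For (i), I use the identity (\ref{eq:08031}) with $\boldsymbol{\sigma}_0=\boldsymbol{\sigma}_0^i$:
\[
R(\boldsymbol{\sigma},\boldsymbol{\sigma}')=R(\boldsymbol{\sigma},\boldsymbol{\sigma}_0^i)\,R(\boldsymbol{\sigma}',\boldsymbol{\sigma}_0^i)+R_{\boldsymbol{\sigma}_0^i}(\boldsymbol{\sigma},\boldsymbol{\sigma}')\cdot\tfrac{\|\boldsymbol{\sigma}-R(\boldsymbol{\sigma},\boldsymbol{\sigma}_0^i)\boldsymbol{\sigma}_0^i\|\,\|\boldsymbol{\sigma}'-R(\boldsymbol{\sigma}',\boldsymbol{\sigma}_0^i)\boldsymbol{\sigma}_0^i\|}{N}.
\]
Here $R(\boldsymbol{\sigma},\boldsymbol{\sigma}_0^i)\approx q_*(\beta_1)$; applying (\ref{eq:08031}) once more at $\boldsymbol{\sigma}_0=\boldsymbol{\sigma}_0^j$ together with Lemma \ref{lem:center} (whose hypothesis (\ref{eq:10031}) holds for $M=G_{N,\beta_2}^{c,j}$ by Theorem \ref{thm:Geometry}(\ref{enu:Geometry2})) yields $R(\boldsymbol{\sigma}',\boldsymbol{\sigma}_0^i)\approx q_*(\beta_2)\,R(\boldsymbol{\sigma}_0^i,\boldsymbol{\sigma}_0^j)$, which by Corollary \ref{cor:orth} equals $q_*(\beta_2)$ if $i=j$, $-q_*(\beta_2)$ if $i=-j$ (even $p$), and $o(1)$ otherwise. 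The residual $R_{\boldsymbol{\sigma}_0^i}(\boldsymbol{\sigma},\boldsymbol{\sigma}')$ is handled as in Section \ref{sub:pfThm1Pt3} via Lemma \ref{lem:center} applied to $G_{N,\beta_1}^{c,i}$ with $\boldsymbol{\tau}=\boldsymbol{\sigma}'$ when $i\ne\pm j$. When $i=\pm j$ the same residual requires the two-temperature analogue of Theorem \ref{thm:Geometry}(\ref{enu:Geometry2}): I would repeat the proofs of Lemma \ref{lem:Z_2nd_mom}, Corollary \ref{cor:2nd_moment} and Lemma \ref{lem:ConcentrationZ} with inverse temperatures $\beta_1,\beta_2$ on the two factors, replacing the common $\beta$ in (\ref{eq:17})--(\ref{eq:Xi2}); the sign argument (\ref{eq:31}) that forces the projected-overlap maximiser to be $\varrho=0$ only uses that both inverse temperatures are large and therefore carries over unchanged. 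Plugging back gives $R(\boldsymbol{\sigma},\boldsymbol{\sigma}')\to 0,\,q_{12},\,-q_{12}$ in the three cases, proving (\ref{eq:1402-2-1}).

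For (ii) and the charging statement (\ref{eq:1402-1-1}), I would combine Corollary \ref{cor:FE_u_Hess_rep} at both temperatures with Theorem \ref{thm:ext proc}: via the representation (\ref{eq:ZNi}), the ranked normalised masses at $\beta_1$ and $\beta_2$ are driven by the \emph{same} limiting Poisson process $\xi_\infty$ acting on the common data $(H_N(\boldsymbol{\sigma}_0^i),\nabla^2 H_N(\boldsymbol{\sigma}_0^i))$, with only bounded, non-degenerate multiplicative fluctuations $e^{Y_*}$ coming from Proposition \ref{prop:Gaussflucts}. In the joint limit the two ranked weight vectors therefore have a non-degenerate joint distribution; with positive probability the top two weights at both temperatures are simultaneously bounded away from $0$, which produces $v>0$ with $w_{11}^N\geq v$ (giving mass near $q_{12}$) and $\sum_{j\ne\pm 1}w_{1j}^N\geq v$ (giving mass near $0$). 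For even $p$, the symmetry $H_N(-\boldsymbol{\sigma})=H_N(\boldsymbol{\sigma})$ forces $w_{-i,-i}^N=w_{ii}^N$ and $w_{i,-i}^N=w_{-i,i}^N$, covering $-q_{12}$. The main obstacle is precisely this simultaneous non-degeneracy: quantitatively ruling out that both Gibbs measures concentrate on the \emph{same} single band. I expect this to follow from the mutual independence of the Gaussian factors entering (\ref{eq:ZNi}) across distinct top critical points, together with the Poisson structure of $\xi_\infty$, but delivering the argument cleanly is the technical crux of the proof.
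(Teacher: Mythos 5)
Your identification of the limiting overlap values, i.e.\ (\ref{eq:1402-2-1}), is essentially the paper's argument (it is packaged there as Lemma \ref{lem:choastocrtpts}). One remark: the two-temperature rerun of Lemma \ref{lem:Z_2nd_mom} that you propose for the case $i=\pm j$ is not needed. The deterministic Lemma \ref{lem:center}, applied exactly as in Section \ref{sub:pfThm1Pt3} — once with $M=G_{N,\beta_{2}}^{c,j}$ and $\boldsymbol{\tau}=\boldsymbol{\sigma}$, once with $M=G_{N,\beta_{1}}^{c,i}$ and $\boldsymbol{\tau}=\boldsymbol{\sigma}_{0}^{j}$ — only uses part (\ref{enu:Geometry2}) of Theorem \ref{thm:Geometry} at each single temperature, and already gives $\left|R(\boldsymbol{\sigma},\boldsymbol{\sigma}^{\prime})-q_{12}R(\boldsymbol{\sigma}_{0}^{i},\boldsymbol{\sigma}_{0}^{j})\right|\le3\delta$ for \emph{arbitrary} $i,j$, including $i=\pm j$; no joint (cross-temperature) moment computation enters.

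The genuine gap is (\ref{eq:1402-1-1}), which is the heart of the absence-of-chaos statement, and which you explicitly leave open: you reduce it to a ``simultaneous non-degeneracy'' of the two ranked weight vectors and hope to extract it from Corollary \ref{cor:FE_u_Hess_rep}, the representation (\ref{eq:ZNi}) and the Poisson structure of Theorem \ref{thm:ext proc}. Those results do not, by themselves, control the \emph{joint} law of the band masses at the two temperatures, so the step ``with positive probability the top two weights at both temperatures are simultaneously bounded away from $0$'' is exactly what remains to be proved. The paper closes this by a much more elementary, quantitative argument that makes any joint-distribution analysis unnecessary: by (\ref{eq:1225-2}), with high probability \emph{every} $\boldsymbol{\sigma}_{0}\in\mathscr{C}_{N}(m_{N},m_{N}+\kappa)$ satisfies $Z_{N,\beta_{j}}({\rm Band}_{\beta_{j},c}(\boldsymbol{\sigma}_{0}))\ge\ftail(\kappa)\,\mathfrak{V}_{N,\beta_{j}}(m_{N}+\kappa)$ for $j=1,2$; by Theorem \ref{thm:free-energy} (see (\ref{eq:F-1})), with high probability $Z_{N,\beta_{j}}\le e^{t}\,\mathfrak{V}_{N,\beta_{j}}(m_{N}+\kappa)$; and by Theorem \ref{thm:ext proc} there are, with high probability, at least four critical values in $(m_{N},m_{N}+\kappa)$, hence (using Corollary \ref{cor:orth}) two non-antipodal deep critical points. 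Consequently every deep band carries relative Gibbs mass at least $a=\ftail(\kappa)e^{-t}>0$ at \emph{both} temperatures simultaneously, so the pairs $(\boldsymbol{\sigma}_{1},\boldsymbol{\sigma}_{1})$, $(\boldsymbol{\sigma}_{1},\boldsymbol{\sigma}_{2})$, and for even $p$ also $(\boldsymbol{\sigma}_{1},-\boldsymbol{\sigma}_{1})$, each carry product weight at least $a^{2}$; combined with Lemma \ref{lem:choastocrtpts} this charges every $q_{0}\in\mathcal{Q}$ with a uniform $v>0$. In particular, no independence of the Gaussian data across critical points, and no argument ruling out that both measures concentrate on one band, is required — the mass lower bound is uniform over all deep bands, which is what your proposal is missing.
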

\begin{rem}
\label{rem:multipleoverlap}For even $p\geq4$, (\ref{eq:1402-2-1})
was already known -- it was proved by Panchenko and Talagrand \cite{multipleoverlap}. 
\end{rem}
In the next subsections we relate the overlap distribution (\ref{eq:zeta})
to critical points and prove Proposition \ref{prop:temp_chaos}.

\subsection{\label{sub:chcrtpts}Chaotic behavior and critical points}

This section is devoted to an auxiliary result which reduces questions
about chaos to ones about the behavior of the critical points and
values of the Hamiltonian $H_{N}\left(\boldsymbol{\sigma}\right)$
under perturbations. Suppose $H_{N}^{(1)}\left(\boldsymbol{\sigma}\right)$
and $H_{N}^{(2)}\left(\boldsymbol{\sigma}\right)$ are two copies
of the Hamiltonian $H_{N}\left(\boldsymbol{\sigma}\right)$ defined
on the same probability space, but not necessarily independent, and
let $G_{N,\beta}^{(1)}$ and $G_{N,\beta}^{(2)}$ denote the corresponding
Gibbs measures. Similarly, let $\boldsymbol{\sigma}_{0}^{i,(1)}$
and $\boldsymbol{\sigma}_{0}^{i,(2)}$ be the enumerations of the
critical points of $H_{N}^{(1)}\left(\boldsymbol{\sigma}\right)$
and $H_{N}^{(2)}\left(\boldsymbol{\sigma}\right)$, respectively,
and with $W_{i}^{(n)}=Z_{N,\beta_{n}}\left({\rm Band}_{i,\beta_{n}}^{(n)}\left(cN^{-1/2}\right)\right)$
(defined accordingly by (\ref{eq:B_sigma}) and (\ref{eq:rel partition}))
define 
\begin{equation}
\zeta_{N,c,k}^{{\rm crt}}\left(\cdot\right)\triangleq\mathbb{E}\left\{ \sum_{i,j\in[k]}W_{i}^{(1)}W_{j}^{(2)}\delta_{q_{12}R(\boldsymbol{\sigma}_{0}^{i,(1)},\boldsymbol{\sigma}_{0}^{j,(2)})}\left(\cdot\right)\right\} ,\label{eq:12037}
\end{equation}
where $q_{12}$ is given in (\ref{eq:q12Q}). Finally, let $d_{BL}$
denote the bounded Lipschitz metric (see e.g. \cite[Appendix D]{LDbook}).
\begin{lem}
\label{lem:choastocrtpts}Assume $\beta_{1},\,\beta_{2}$ are large
enough and let $M_{N,1}=G_{N,\beta_{1}}^{(1)}$ and $M_{N,2}=G_{N,\beta_{2}}^{(2)}$.
Then
\begin{equation}
\lim_{c,k\to\infty}\lim_{N\to\infty}d_{BL}\left(\zeta_{N}\left(\cdot\right),\,\zeta_{N,c,k}^{{\rm crt}}\left(\cdot\right)\right)=0.\label{eq:12033}
\end{equation}
\end{lem}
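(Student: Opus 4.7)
My plan is to combine Theorem \ref{thm:Geometry} parts (\ref{enu:Geometry1}) and (\ref{enu:Geometry2}), applied separately to each of the two models $H_N^{(n)}$, $n=1,2$, with the geometric overlap decomposition used in the proof of Theorem \ref{thm:Geometry}(\ref{enu:Geometry3}) in Section \ref{sub:pfThm1Pt3}. Abbreviate $q_n=q_*(\beta_n)$, $W_i^{(n)}=G_{N,\beta_n}^{(n)}({\rm Band}_{i,\beta_n}^{(n)}(cN^{-1/2}))$, and let $G^{c,i,(n)}$ denote the conditional Gibbs measure on the $i$-th band of model $n$.

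First I would use Theorem \ref{thm:Geometry}(\ref{enu:Geometry1}) separately on $G_{N,\beta_1}^{(1)}$ and $G_{N,\beta_2}^{(2)}$ to split each Gibbs measure into a sum of restrictions to the top-$k$ bands plus a remainder whose total mass vanishes in expectation as $N\to\infty$ then $c,k\to\infty$. For any bounded test function $f$ this yields
\[
\int f\,d\zeta_N=\sum_{i,j\in[k]}\mathbb{E}\Bigl\{W_i^{(1)}W_j^{(2)}\int f(R(\boldsymbol{\sigma},\boldsymbol{\sigma}'))\,d(G^{c,i,(1)}\otimes G^{c,j,(2)})\Bigr\}+r_{N,c,k}(f),
\]
with $\sup_{\|f\|_\infty\le 1}|r_{N,c,k}(f)|\to 0$ in the stated order of limits.

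The heart of the argument is to show that for every $\delta>0$ and every fixed $(i,j)\in[k]^2$, there is a disorder event with probability tending to $1$ on which
\begin{equation}
G^{c,i,(1)}\otimes G^{c,j,(2)}\Bigl\{\bigl|R(\boldsymbol{\sigma},\boldsymbol{\sigma}')-q_{12}R(\boldsymbol{\sigma}_0^{i,(1)},\boldsymbol{\sigma}_0^{j,(2)})\bigr|>C\delta\Bigr\}\le 3\delta,\label{eq:planmain}
\end{equation}
for a constant $C$ independent of $N$. Writing $\boldsymbol{\sigma}=R(\boldsymbol{\sigma},\boldsymbol{\sigma}_0^{i,(1)})\boldsymbol{\sigma}_0^{i,(1)}+\boldsymbol{\sigma}_\perp$ and the analogous decomposition of $\boldsymbol{\sigma}'$ relative to $\boldsymbol{\sigma}_0^{j,(2)}$, taking inner products and dividing by $N$ gives
\[
R(\boldsymbol{\sigma},\boldsymbol{\sigma}')=R(\boldsymbol{\sigma},\boldsymbol{\sigma}_0^{i,(1)})R(\boldsymbol{\sigma}',\boldsymbol{\sigma}_0^{j,(2)})R(\boldsymbol{\sigma}_0^{i,(1)},\boldsymbol{\sigma}_0^{j,(2)})+T_1+T_2+T_3,
\]
where $T_1,T_2$ are the cross terms involving one center and one perpendicular component and $T_3=N^{-1}\langle\boldsymbol{\sigma}_\perp,\boldsymbol{\sigma}'_\perp\rangle$. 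The two radial overlaps lie within $cN^{-1/2}$ of $q_1$ and $q_2$, so the leading term equals $q_{12}R(\boldsymbol{\sigma}_0^{i,(1)},\boldsymbol{\sigma}_0^{j,(2)})+O(cN^{-1/2})$. Theorem \ref{thm:Geometry}(\ref{enu:Geometry2}), applied to each model separately, guarantees that on an event of probability tending to $1$ both $G^{c,i,(1)}$ and $G^{c,j,(2)}$ satisfy the hypothesis of Lemma \ref{lem:center} with arbitrarily small $\epsilon$. Applying its conclusion with $M=G^{c,j,(2)}$, $\boldsymbol{\tau}=\boldsymbol{\sigma}_0^{i,(1)}$ controls $|T_1|$; with $M=G^{c,i,(1)}$, $\boldsymbol{\tau}=\boldsymbol{\sigma}_0^{j,(2)}$ controls $|T_2|$; and for $T_3$, fixing $\boldsymbol{\sigma}$ and applying the lemma to $G^{c,j,(2)}$ with $\boldsymbol{\tau}=\sqrt{N}\boldsymbol{\sigma}_\perp/\|\boldsymbol{\sigma}_\perp\|$, then using Fubini, yields the same bound. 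A union over the three error events gives (\ref{eq:planmain}).

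Combining these ingredients, for any bounded $1$-Lipschitz $f$,
\[
\Bigl|\int f\,d\zeta_N-\int f\,d\zeta_{N,c,k}^{{\rm crt}}\Bigr|\le |r_{N,c,k}(f)|+\bigl(3\delta\|f\|_\infty+C\delta\bigr)\sum_{i,j\in[k]}\mathbb{E}(W_i^{(1)}W_j^{(2)}),
\]
and the last sum is bounded by $1$; letting $N\to\infty$, then $c,k\to\infty$, then $\delta\to 0$ yields (\ref{eq:12033}). The hard part will be the control of the bilinear term $T_3$, in which both vectors are random and (in applications such as disorder chaos) measurable with respect to different but coupled Hamiltonians; the key is that Lemma \ref{lem:center} delivers a bound uniform over the test direction $\boldsymbol{\tau}$, which enables the Fubini step, while Theorem \ref{thm:Geometry}(\ref{enu:Geometry2}) supplies its hypothesis to $G^{c,i,(1)}$ and $G^{c,j,(2)}$ separately, so their joint good event has probability approaching $1$ regardless of the coupling of $H_N^{(1)}$ and $H_N^{(2)}$.
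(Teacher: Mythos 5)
Your proposal is correct and follows essentially the same route as the paper: Theorem \ref{thm:Geometry}(\ref{enu:Geometry2}) supplies the hypothesis of Lemma \ref{lem:center} for each of the two band measures separately, its uniform-in-$\boldsymbol{\tau}$ conclusion (plus Fubini for the random test direction) pins the cross-overlap at $q_{12}R(\boldsymbol{\sigma}_{0}^{i,(1)},\boldsymbol{\sigma}_{0}^{j,(2)})$ for each fixed pair of bands, and part (\ref{enu:Geometry1}) then reduces $\zeta_{N}$ to the top-$k$ bands. The only immaterial difference is algebraic: the paper telescopes the overlap in two steps, testing $\boldsymbol{\sigma}^{\prime}_{\perp}$ against the full sample $\boldsymbol{\sigma}$, so your error terms $T_{1}$ and $T_{3}$ are absorbed into a single application of Lemma \ref{lem:center} instead of two.
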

\begin{proof}
Define the conditional measure 
\[
G_{N,\beta_{n}}^{c,i,(n)}(\cdot)=G_{N,\beta_{n}}^{(n)}(\cdot\cap{\rm Band}_{i,\beta_{n}}^{(n)}\left(cN^{-1/2}\right))/G_{N,\beta_{n}}^{(n)}({\rm Band}_{i,\beta_{n}}^{(n)}\left(cN^{-1/2}\right)).
\]
Similarly to (\ref{eq:2803-2}) and the discussion surrounding it,
by Lemma \ref{lem:center} and using part (\ref{enu:Geometry2}) of
Theorem \ref{thm:Geometry}, for any fixed $i$ and $j$, for any
$\delta>0$, with probability that goes to $1$ as $N\to\infty$,
\[
G_{N,\beta_{1}}^{c,i,(1)}\otimes G_{N,\beta_{2}}^{c,j,(2)}\left\{ \left|R(\boldsymbol{\sigma},\boldsymbol{\sigma}^{\prime})-q_{12}R(\boldsymbol{\sigma}_{0}^{i,(1)},\boldsymbol{\sigma}_{0}^{j,(2)})\right|<\delta\right\} >1-\delta.
\]
The proof is completed by part (\ref{enu:Geometry1}) of Theorem \ref{thm:Geometry}.
\end{proof}

\subsection{\label{sub:pfproptmpch}Proof of Proposition \ref{prop:temp_chaos}}

The key to the proof is that both Gibbs measures $G_{N,\beta_{i}}$
are concentrated around the critical points of the same random field
$H_{N}(\boldsymbol{\sigma})$ which through (\ref{eq:12033}) determine
the overlap distribution. More precisely, in view of Lemmas \ref{lem:choastocrtpts},
\ref{cor:min} and \ref{cor:orth}, we have that (\ref{eq:1402-2-1})
holds and that in order to prove (\ref{eq:1402-1-1}) we only need
to show that there exist $\kappa>0$ and $a,\, a'\in(0,1)$ such that
for large enough $c$,
\begin{equation}
\left\{ \exists\boldsymbol{\sigma}_{1},\boldsymbol{\sigma}_{2}\in\mathscr{C}_{N}(m_{N}-\kappa,m_{N}+\kappa):\,\boldsymbol{\sigma}_{1}\neq\pm\boldsymbol{\sigma}_{2},\, G_{N,\beta_{i}}\left({\rm Band}_{\beta_{j},c}\left(\boldsymbol{\sigma}_{i}\right)\right)>a,\,\forall i,j=1,2\right\} ,\label{eq:12031}
\end{equation}
occurs with probability at least $a'$, for large enough $N$, with
\[
{\rm Band}_{\beta_{j},c}\left(\boldsymbol{\sigma}\right)={\rm Band}\left(\boldsymbol{\sigma},q_{*}(\beta_{j})-cN^{-1/2},q_{*}(\beta_{j})+cN^{-1/2}\right).
\]

Let $\epsilon>0$ be an arbitrarily small number and choose large
enough $\kappa=\kappa(\epsilon)$ such that the following three hold.
Firstly, for any large enough $c>0$, by (\ref{eq:1225-2}), for $j=1,\,2$,
\[
\limsup_{N\to\infty}\mathbb{E}\left|\left\{ \boldsymbol{\sigma}_{0}\in\mathscr{C}_{N}\left(m_{N},m_{N}+\kappa\right):\,\frac{Z_{N,\beta_{j}}\left({\rm Band}_{\beta_{j},c}\left(\boldsymbol{\sigma}\right)\right)}{\mathfrak{V}_{N,\beta_{j}}\left(m_{N}+\kappa\right)}<\ftail\left(\kappa\right)\right\} \right|<\epsilon,
\]
where $\ftail$ satisfies (\ref{eq:ftail}). Secondly, by Theorem
\ref{thm:free-energy}, (\ref{eq:V(u)}) and (\ref{eq:m_N}), for
some large enough $t=t(\kappa,\epsilon)$
\begin{equation}
\limsup_{N\to\infty}\mathbb{P}\left\{ NF_{N,\beta_{j}}-\log\left(\mathfrak{V}_{N,\beta_{j}}\left(m_{N}+\kappa\right)\right)>t\right\} <\epsilon.\label{eq:F-1}
\end{equation}
And thirdly, by Theorem \ref{thm:ext proc} 
\[
\liminf_{N\to\infty}\mathbb{P}\left\{ \mathscr{C}_{N}\left(m_{N},m_{N}+\kappa\right)<4\right\} <\epsilon.
\]
For this choice of parameters, we have that with probability at least
$a'=1-3\epsilon$, there exist $\boldsymbol{\sigma}_{1},\boldsymbol{\sigma}_{2}$
as in (\ref{eq:12031}) with the lower bound satisfied with $a=\ftail(\kappa)e^{-t}$
for large $c$. This completes the proof.\qed

\section{\label{sec:discussion}Concluding remarks}

\subsection*{The process of log-masses}

One can associate a point process to the log-masses of the bands (say
with some $c_{N}=o(N^{1/2})$, going to $\infty$), 
\[
\xi_{N,\beta}^{W}=\sum\delta_{W_{N,\beta}^{i}},\,\,\,\, W_{N,\beta}^{i}=\log\left(Z_{N,\beta}\left({\rm Band}_{i}\left(c_{N}N^{-1/2}\right)\right)\right).
\]
Similarly to the point process of extremal critical values (\ref{eq:xi_N}),
we predict that $(1+\iota_{p})^{-1}\xi_{N,\beta}^{W}$, shifted by
and appropriate term of $m_{N}^{W}=\log\left(\mathfrak{V}_{N,\beta}\left(m_{N}\right)\right)+O(1)$,
converges as $N\to\infty$ to a Poisson point process, however with
intensity $\exp\left\{ \frac{c_{p}x}{\beta q_{*}^{p}}\right\} dx$;
this would also determine the limiting law of fluctuations of the
free energy. Evidence for this can be found in Corollary \ref{cor:conditional laws}
and the proof of Corollary \ref{cor:FE_u_Hess_rep}, and through the
connection of Poisson point processes of exponential intensities,
the Poisson-Dirichlet distribution, and the distribution of pure state
masses; see \cite{PanchenkoBook,TalagrandPstates,JagannathApxUlt}.
The convergence of $\xi_{N,\beta}^{W}$ should be possible to prove
by establishing invariance under perturbations of the disorder and
using Liggett's characterization of shifted Poisson point processes
\cite{Liggett}, as done for the point process of critical points
in \cite{pspinext}. This will be studied in future work.

\subsection*{$\protect\Frf\left(E_{0},q_{*}\right)$ and the Parisi functional}

Theorem \ref{thm:free-energy} implies that $F_{N,\beta}$ converges
in distribution to $\Frf\left(E_{0},q_{*}\right)$ which must coincide
with the spherical variant of the Parisi formula \cite{pSPSG,Talag,Chen}.
We recall that pure spherical models are known to exhibit 1-step replica
symmetry breaking \cite[Proposition 2.2]{Talag}. The Parisi functional
corresponding to $m\delta_{0}+(1-m)\delta_{q}$ (see \cite[(1.16)]{multipleoverlap},
where in our case $\xi\left(q\right)=q^{p}$) is given by
\begin{equation}
P\left(m,q\right)=\frac{1}{2}\left(\beta^{2}\left(1-q^{p}+mq^{p}\right)+\log\left(1-q\right)+\frac{1}{m}\log\left(1+\frac{mq}{1-q}\right)\right).\label{eq:2501-1}
\end{equation}

Reassuringly, Theorem \ref{thm:Geometry} implies that the limiting
overlap distribution function (restricted to $\left[0,1\right]$)
is of the from $m\delta_{0}+(1-m)\delta_{q}$ with $q=q_{*}^{2}$.
Our prediction regarding the point process of log-masses suggests,
using the connection to the Poisson-Dirichlet distribution, that $m$
should be equal to $m_{*}=c_{p}/\left(\beta q_{*}^{p}\right)$. Therefore,
one would expect that 
\begin{equation}
\Frf\left(E_{0},q_{*}\right)=P\left(m_{*},q_{*}^{2}\right).\label{eq:a12}
\end{equation}
This can be verified by a direct calculation.%
\footnote{Substituting  (\ref{eq:2501-1}) and (\ref{eq:6}) into (\ref{eq:a12}),
canceling like-terms and dividing by $\beta q_{*}^{p}/2$, one finds
that (\ref{eq:a12}) holds if and only if 
\[
c_{p}+\frac{1}{c_{p}}\log\left(\frac{1-q_{*}^{2}\left(1-c_{p}/\left(\beta q_{*}^{p}\right)\right)}{1-q_{*}^{2}}\right)-2E_{0}+\beta pq_{*}^{p-2}\left(1-q_{*}^{2}\right)=0.
\]
Using (\ref{eq:c_p}), (\ref{eq:39}) and (\ref{eq:a39}), by straightforward
algebra, one can see that the left-hand side of the above is equal
to $2\Theta_{p}\left(-E_{0}\right)$ (see (\ref{eq:Theta_p})), which
by the definition of $E_{0}$ (\ref{eq:E0}), verifies (\ref{eq:a12}). %
}

\subsection*{Temperature chaos in mixed models}

Suppose $H_{N}^{{\rm m}}(\boldsymbol{\sigma})=\sum_{p\geq2}\gamma_{p}H_{N,p}(\boldsymbol{\sigma})$,
where $H_{N,p}(\boldsymbol{\sigma})$ are independent pure spherical
models. We recall that the mixed model $H_{N}^{{\rm m}}(\boldsymbol{\sigma})$
is called generic if and only if $\sum_{p\geq2}p^{-1}\Indicator\left(\gamma_{p}\neq0\right)=\infty$.
Panchenko \cite{PanchenkoChaos} proved that mixed even $p$-spin
Ising spin glass models (with or without external field) exhibit temperature
chaos. His methods are expected to apply to spherical spin glasses
as well. In contrast, we have seen that if $\gamma_{p_{0}}=c>0$ for
some $p_{0}\geq3$ and $\gamma_{p}=0$ for all $p\neq p_{0}$ then
chaos does not occur. The transition in chaotic behavior as soon as
one moves from a pure model to a generic model, even if $\gamma_{p}$,
$p\neq p_{0}$, are extremely small, may seem surprising. 

First moment calculations for the critical points as in Theorem \ref{thm:ABAC}
were carried out for the mixed case by Auffinger and Ben Arous in
\cite{ABA2}. Assuming that $\gamma_{p_{0}}=c$ for some $p_{0}\geq3$
and $\gamma_{p}$, $p\neq p_{0}$, are sufficiently small, we have
checked that the second moment computation of \cite{2nd} works for
$H_{N}^{{\rm m}}(\boldsymbol{\sigma})$ (using a certain truncation
argument), at least on the exponential level (i.e., a result similar
to \cite[Theorem 3]{2nd} can be proved). We expect that, under the
same assumption, the proof of the concentration result, Theorem \ref{thm:2ndConcentration},
and the convergence of the extremal process, Theorem \ref{thm:ext proc},
proved in \cite{2nd,pspinext}, also carry over to the mixed case.

One significant difference, however, between the pure and mixed models
is related to the decomposition of Section \ref{sec:Decomposition}.
In the mixed case, instead of just one variable, in the right-hand
side of (\ref{eq:EucderivativeJprime}) and (\ref{eq:euc1}) we have
a sum over $p$ of the expressions corresponding to the case of pure
$p$-spin, multiplied by $\gamma_{p}$. Consequently, upon conditioning
on $H_{N}^{{\rm m}}(\hat{\mathbf{n}})=u$, $\nabla H_{N}^{{\rm m}}(\hat{\mathbf{n}})=0$
and $\nabla^{2}H_{N}^{{\rm m}}(\hat{\mathbf{n}})=\mathbf{A}$, the
terms in the decomposition that correspond to $0$, $1$ and $2$
are not deterministic, in contrast to the pure case. Therefore, we
expect that in the mixed case it is possible that the contribution
related to critical values which are roughly equal to some $u=m_{N}+v$
is not monotone in $v$.

Suppose that the convergence of the extremal process $\xi_{N}$ of
(\ref{eq:xi_N}) and the fact that for large $\beta$ the Gibbs measure
is supported on bands around critical points with critical values
approximately $m_{N}$ (defined appropriately) do carry over to some
mixed model $H_{N}^{{\rm m}}(\boldsymbol{\sigma})$. Even if this
is case, the occurrence of chaos could be explained by the following
mechanism, related to the above: the bands that carry most of the
Gibbs mass correspond to critical values which are approximately equal
to $m_{N}+v_{N}(\beta)$ where $\lim_{N\to\infty}|v_{N}(\beta)-v_{N}(\beta')|=\infty$,
for large $\beta\neq\beta'$, and therefore the centers of the relevant
bands for $\beta$ are orthogonal to those corresponding to $\beta'$
(assuming an equivalent of Corollary \ref{cor:orth} holds for $H_{N}^{{\rm m}}(\boldsymbol{\sigma})$).
In the language of the extremal point process of critical values $\xi_{N}$
and the point process of log-masses $\xi_{N,\beta}^{W}$ the corresponding
picture is that the leading particles of $\xi_{N,\beta}^{W}$ correspond
(by being related to the same critical point of the Hamiltonian) to
points of $\xi_{N}$ of depth that depends on $\beta$.

\subsection*{Transition to disorder chaos of the Gibbs measure and ground state}

Let $H_{N}^{\prime}\left(\boldsymbol{\sigma}\right)$ be an i.i.d
copy of $H_{N}\left(\boldsymbol{\sigma}\right)$ and set, for $t\in\left[0,1\right]$,
\begin{equation}
H_{N,t}\left(\boldsymbol{\sigma}\right)=\left(1-t\right)H_{N}\left(\boldsymbol{\sigma}\right)+\sqrt{2t-t^{2}}H_{N}^{\prime}\left(\boldsymbol{\sigma}\right).\label{eq:Hnt}
\end{equation}
Denote the Gibbs measure of $H_{N,t}$ by $G_{N,t,\beta}$. For $\beta=\infty$
and odd $p$ let $G_{N,\infty}$ be the probability measure concentrated
at the global minimum point of $H_{N}\left(\boldsymbol{\sigma}\right)$
and define $G_{N,t,\infty}$ similarly. For even $p$, for which there
are two global minimum points a.s., assume that each is sampled with
probability $1/2$. We say that disorder chaos occurs if
\begin{equation}
\exists q_{0}\in\left[-1,1\right],\,\forall\epsilon>0:\,\,\,\lim_{N\to\infty}\mathbb{E}\left\{ M_{N,1}\otimes M_{N,2}\left\{ \left|R\left(\boldsymbol{\sigma},\boldsymbol{\sigma}^{\prime}\right)-q_{0}\right|>\epsilon\right\} \right\} =0\label{eq:disorder-1}
\end{equation}
holds with $M_{N,1}=G_{N,\beta}$ and $M_{N,2}=G_{N,t,\beta}$, for
any $t\in\left(0,1\right)$. Disorder chaos of the ground state, or
ground state chaos for short, is defined similarly with $\beta=\infty$.
Chen, Hsieh, Hwang and Sheu \cite{ChenHHS} proved disorder chaos
for spherical mixed (and in particular, pure as we consider) even
$p$-spin model, with or without external fields. For the same models,
Chen and Sen \cite{ChenSen} proved ground state chaos. See also \cite{ChenDisChaos}
and \cite{ChenVar} by Chen for related results for models with Ising
spins.

Once it is known that chaos occurs, it is natural to ask whether chaos
can be seen on a finer scale, i.e., when we take $t_{N}\to0$. In
this regard, we mention that Chatterjee \cite{ChatterjeeDisChaos}
proved for Ising mixed even $p$-spin models without an external field
and $\beta<\infty$ that (\ref{eq:disorder-1}) holds with $M_{N,1}=G_{N,\beta}$
and $M_{N,2}=G_{N,t_{N},\beta}$ with any $t_{N}$ such that $t_{N}\log N\to\infty$.
For the pure spherical models the following precise transition holds,
for large enough $\beta\in\left(0,\infty\right)$ or $\beta=\infty$: 
\begin{enumerate}
\item for any $s>0$, with $t_{N}=s/N$, (\ref{eq:disorder-1}) does not
hold with $M_{N,1}=G_{N,\beta}$ and $M_{N,2}=G_{N,t_{N},\beta}$;
\item in contrast, it does hold if $t_{N}=s_{N}/N=o(1)$ assuming $s_{N}\to\infty$
as $N\to\infty$.
\end{enumerate}
In Appendix II we prove the first of the two statements based on a
certain mixing property for the deepest critical values under perturbations
of the disorder, which relies on tools developed by Zeitouni and the
author in \cite{pspinext}. We also explain in Appendix II how the
second statement can be proved by an extension of certain results
of \cite{pspinext}. A full proof of this extension would require
a rewrite of a significant part of \cite{pspinext}. Instead of doing
so, we will refer to the original text and detail the required changes.

\section*{\label{sec:KR}Appendix I: the Kac-Rice formula and related auxiliary
results}

The Kac-Rice formula \cite[Theorem 12.1.1]{RFG} is a basic tool in
our analysis, allowing us to relate several quantities of interest
to the conditional probability $\mathbb{P}_{u,0}$. Below we prove
a number of simple derivatives of the formula; some using the results
of Section \ref{sub:crtpts} on critical points and values.

The Kac-Rice formula can be used to express the mean number of critical
points $\boldsymbol{\sigma}_{0}$ on the sphere at which the values
of some other fields belong to some target set. For us, those other
fields will be usually related to masses of bands around $\boldsymbol{\sigma}_{0}$.
The variant of the Kac-Rice formula that we use is \cite[Theorem 12.1.1]{RFG}.
In the notation of \cite[Theorem 12.1.1]{RFG} we will consider situations
where, with some function $g_{N}\left(\boldsymbol{\sigma}\right)$
\begin{equation}
M=\mathbb{S}^{N-1},\,\,\, f\left(\boldsymbol{\sigma}\right)=\nabla H_{N}\left(\boldsymbol{\sigma}\right),\,\,\, u=0\in\mathbb{R}^{N-1},\,\,\, h\left(\boldsymbol{\sigma}\right)=\left(H_{N}\left(\boldsymbol{\sigma}\right),g_{N}\left(\boldsymbol{\sigma}\right)\right),\label{eq:KRpar}
\end{equation}
where we recall that, with $E=(E_{i})_{i=1}^{N-1}$ being an orthonormal
frame field on the sphere, we denote 
\[
\nabla H_{N}\left(\hat{\mathbf{n}}\right)=\left(E_{i}H_{N}\left(\hat{\mathbf{n}}\right)\right)_{i=1}^{N-1}\mbox{\,\,\,\, and\,\,}\nabla^{2}H_{N}\left(\hat{\mathbf{n}}\right)=\left(E_{i}E_{j}H_{N}\left(\hat{\mathbf{n}}\right)\right)_{i,j=1}^{N-1}.
\]

The application of \cite[Theorem 12.1.1]{RFG} requires $h\left(\boldsymbol{\sigma}\right)$
to satisfy certain non-degeneracy conditions - namely, conditions
(a)-(g) in \cite[Theorem 12.1.1]{RFG}. We will say that $g_{N}\left(\boldsymbol{\sigma}\right)$
is tame if the conditions are satisfied and if $\{h\left(\boldsymbol{\sigma}\right)\}_{\boldsymbol{\sigma}}$
is a stationary random field. Using (\ref{eq:Hamiltonian}), the conditions
are easy to check in any case we will apply the formula and this will
be left to the reader. Let 
\begin{equation}
\omega_{N}=\frac{2\pi^{N/2}}{\Gamma\left(N/2\right)}\label{eq:omega_vol-1}
\end{equation}
denote the surface area of the $N-1$-dimensional unit sphere. The
following is obtained from a direct application of the formula.
\begin{lem}
\label{lem:KR}Suppose that $g_{N}\left(\boldsymbol{\sigma}\right)$
is tame and $D_{N}$ and $J_{N}$ are some intervals, then 
\begin{align}
 & \mathbb{E}\left|\left\{ \boldsymbol{\sigma}_{0}\in\mathscr{C}_{N}\left(J_{N}\right):\, g_{N}\left(\boldsymbol{\sigma}_{0}\right)\in D_{N}\right\} \right|=\omega_{N}\left(\left(N-1\right)\frac{p-1}{2\pi}\right)^{\frac{N-1}{2}}\nonumber \\
 & \times\int_{J_{N}}du\frac{1}{\sqrt{2\pi N}}e^{-\frac{u^{2}}{2N}}\mathbb{E}_{u,0}\left\{ \left|\det\left(\frac{\nabla^{2}H_{N}\left(\hat{\mathbf{n}}\right)}{\sqrt{p\left(p-1\right)\left(N-1\right)/N}}\right)\right|\mathbf{1}\Big\{ g_{N}\left(\hat{\mathbf{n}}\right)\in D_{N}\Big\}\right\} .\label{eq:genericKR}
\end{align}
\end{lem}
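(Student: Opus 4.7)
The plan is to apply the Kac--Rice formula of \cite[Theorem 12.1.1]{RFG} directly with the parameters specified in (\ref{eq:KRpar}), and then perform three bookkeeping reductions: a stationarity reduction to a single point, a computation of the Gaussian density at zero for $\nabla H_N$, and an explicit conditioning on the level $H_N(\hat{\mathbf{n}}) = u$.

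First I would invoke \cite[Theorem 12.1.1]{RFG} with $M = \mathbb{S}^{N-1}$, $f = \nabla H_N$, and $h = (H_N, g_N)$, so that the event $\{f(\boldsymbol{\sigma}) = 0,\, h(\boldsymbol{\sigma}) \in J_N \times D_N\}$ picks out exactly critical points $\boldsymbol{\sigma}_0 \in \mathscr{C}_N(J_N)$ with $g_N(\boldsymbol{\sigma}_0) \in D_N$. Tameness of $g_N$ and the smoothness of $H_N$ ensure the non-degeneracy hypotheses; the Jacobian of $f$ is $\nabla^2 H_N$. This yields
\[
\mathbb{E}|\{\cdots\}| \;=\; \int_{\mathbb{S}^{N-1}} \mathbb{E}\!\left[|\det \nabla^2 H_N(\boldsymbol{\sigma})| \, \mathbf{1}\{H_N(\boldsymbol{\sigma}) \in J_N,\, g_N(\boldsymbol{\sigma}) \in D_N\} \,\Big|\, \nabla H_N(\boldsymbol{\sigma}) = 0\right] p_{\nabla H_N(\boldsymbol{\sigma})}(0) \, d\mathrm{Vol}(\boldsymbol{\sigma}).
\]

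Next I would exploit stationarity (the rotational invariance of the law of $\{h(\boldsymbol{\sigma})\}$, plus the canonical identification of tangent spaces via the frame field $E_i$) to collapse the integrand to its value at $\hat{\mathbf{n}}$. Since $\mathbb{S}^{N-1}$ has radius $\sqrt{N}$, its volume equals $\omega_N N^{(N-1)/2}$. By Lemma \ref{lem:Hhat_expressions}, $\nabla H_N(\hat{\mathbf{n}}) \sim \mathcal{N}(0, p \mathbf{I}_{N-1})$, so $p_{\nabla H_N(\hat{\mathbf{n}})}(0) = (2\pi p)^{-(N-1)/2}$. Pulling out the normalization $\sqrt{p(p-1)(N-1)/N}$ from the Hessian determinant produces a factor of $(p(p-1)(N-1)/N)^{(N-1)/2}$, and combining the three powers gives
\[
\omega_N N^{(N-1)/2} (2\pi p)^{-(N-1)/2} (p(p-1)(N-1)/N)^{(N-1)/2} \;=\; \omega_N \left((N-1)\tfrac{p-1}{2\pi}\right)^{(N-1)/2},
\]
which is the prefactor in (\ref{eq:genericKR}).

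Finally, I would disintegrate the conditional expectation according to the value of $H_N(\hat{\mathbf{n}})$: since by Lemma \ref{lem:Hhat_expressions} the variables $H_N(\hat{\mathbf{n}})$ and $\nabla H_N(\hat{\mathbf{n}})$ are independent, the conditional density of $H_N(\hat{\mathbf{n}})$ given $\nabla H_N(\hat{\mathbf{n}}) = 0$ is the unconditional Gaussian density $\frac{1}{\sqrt{2\pi N}} e^{-u^2/(2N)}$ (recall $H_N(\hat{\mathbf{n}}) = N^{1/2} J'_{N,\ldots,N} \sim \mathcal{N}(0,N)$). Restricting the outer integration to $u \in J_N$ and writing the remaining conditional expectation as $\mathbb{E}_{u,0}\{\,\cdot\,\}$ in the sense of Remark \ref{rem:conditional laws} yields (\ref{eq:genericKR}). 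The main (entirely routine) obstacle is merely keeping track of the normalization constants; the formula itself does all the work.
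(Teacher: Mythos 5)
Your proposal is correct and follows essentially the same route as the paper: a direct application of \cite[Theorem 12.1.1]{RFG} with the parameters (\ref{eq:KRpar}), reduction of the spherical integral to the single point $\hat{\mathbf{n}}$ by stationarity (yielding the factor $\omega_N N^{(N-1)/2}$), the density evaluation $p_{\nabla H_N(\hat{\mathbf{n}})}(0)=(2\pi p)^{-(N-1)/2}$ from Lemma \ref{lem:Hhat_expressions}, and disintegration over the value $u$ of $H_N(\hat{\mathbf{n}})$ using its independence from $\nabla H_N(\hat{\mathbf{n}})$. The bookkeeping of the normalization constants, including the rescaling of the Hessian, matches the paper's (\ref{eq:genericKR}) exactly.
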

\begin{proof}
Applying \cite[Theorem 12.1.1]{RFG} with (\ref{eq:KRpar}) yields
the integral formula 
\begin{align}
 & \mathbb{E}\left|\left\{ \boldsymbol{\sigma}_{0}\in\mathscr{C}_{N}\left(J_{N}\right):\, g_{N}\left(\boldsymbol{\sigma}_{0}\right)\in D_{N}\right\} \right|=\int_{\mathbb{S}^{N-1}}d\nu\left(\boldsymbol{\sigma}\right)\phi_{\nabla H_{N}\left(\boldsymbol{\sigma}\right)}\left(0\right)\label{eq:q0306}\\
 & \times\mathbb{E}\left\{ \left.\left|\det\left(\nabla^{2}H_{N}\left(\boldsymbol{\sigma}\right)\right)\right|\mathbf{1}\Big\{ H_{N}\left(\boldsymbol{\sigma}\right)\in J_{N},\, g_{N}\left(\boldsymbol{\sigma}\right)\in D_{N}\Big\}\,\right|\,\nabla H_{N}\left(\boldsymbol{\sigma}\right)=0\right\} ,\nonumber 
\end{align}
where $\phi_{\nabla H_{N}\left(\boldsymbol{\sigma}\right)}\left(x\right)$
is the Gaussian density of $\nabla H_{N}\left(\boldsymbol{\sigma}\right)$
and $\nu$ is the standard measure on the sphere (not normalized).
Since the integrand above is independent of $\boldsymbol{\sigma}$,%
\footnote{If we apply the Kac-Rice formula to compute the expectation as is
(\ref{eq:q0306}) with the additional restriction that $\boldsymbol{\sigma}_{0}$
belongs to some measurable subset $B\subset\mathbb{S}^{N-1}$, then
what changes in the integral formula on the right-hand side of (\ref{eq:q0306})
is that the domain of integration $\mathbb{S}^{N-1}$ is replaced
by $B$. Thus, the corresponding integrand is a continuous Radon-Nikodym
derivative w.r.t the Lebesgue measure which, since $\left(H_{N}\left(\boldsymbol{\sigma}\right),g_{N}\left(\boldsymbol{\sigma}\right)\right)$
is stationary, is invariant to rotations of the sphere. It is therefore
a constant function.%
} we can replace the integral with the value of the integrand evaluated
at $\boldsymbol{\sigma}=\hat{\mathbf{n}}$ and multiply by $\omega_{N}N^{\frac{N-1}{2}}$,
the volume of $\mathbb{S}^{N-1}$. By Lemma \ref{lem:Hhat_expressions},
$\nabla H_{N}\left(\hat{\mathbf{n}}\right)\sim\mathcal{N}\left(0,p\mathbf{I}_{N-1}\right)$,
so that $\phi_{\nabla H_{N}\left(\boldsymbol{\sigma}\right)}\left(0\right)=(2\pi p)^{-\frac{N-1}{2}}$,
and $H_{N}\left(\hat{\mathbf{n}}\right)$ and $\nabla H_{N}\left(\hat{\mathbf{n}}\right)$
are independent. The proof is completed by conditioning on $H_{N}\left(\hat{\mathbf{n}}\right)$
in addition to $\nabla H_{N}\left(\boldsymbol{\sigma}\right)=0$ and
some calculus.
\end{proof}
Several derivatives of (\ref{eq:genericKR}) are of use to us. Their
proofs will involve the rate function $\Theta_{p}\left(E\right)$
of Theorem \ref{thm:ABAC} which we now define explicitly. Let $\nu^{*}$
denote the semicircle measure, the density of which with respect to
Lebesgue measure is 
\begin{equation}
\frac{d\nu^{*}}{dx}=\frac{1}{2\pi}\sqrt{4-x^{2}}\mathbf{1}_{\left|x\right|\leq2},\label{eq:semicirc}
\end{equation}
and define the function (see, e.g., \cite[Proposition II.1.2]{logpotential})
\begin{align}
\Omega(x) & \triangleq\int_{\mathbb{R}}\log\left|\lambda-x\right|d\nu^{*}\left(\lambda\right)\label{eq:Omega}\\
 & =\begin{cases}
\frac{x^{2}}{4}-\frac{1}{2}, & \mbox{ if }0\leq\left|x\right|\leq2,\\
\frac{x^{2}}{4}-\frac{1}{2}-\left[\frac{\left|x\right|}{4}\sqrt{x^{2}-4}-\log\left(\sqrt{\frac{x^{2}}{4}-1}+\frac{\left|x\right|}{2}\right)\right], & \mbox{ if }\left|x\right|>2.
\end{cases}\nonumber 
\end{align}
The exponential growth rate function of (\ref{eq:1st_mom}) is given
\cite{A-BA-C} by 
\begin{equation}
\Theta_{p}\left(E\right)=\begin{cases}
\frac{1}{2}+\frac{1}{2}\log\left(p-1\right)-\frac{E^{2}}{2}+\Omega\left(\gamma_{p}E\right), & \mbox{ if }u<0,\\
\frac{1}{2}\log\left(p-1\right), & \mbox{ if }u\geq0,
\end{cases}\label{eq:Theta_p}
\end{equation}
 where $\gamma_{p}=\sqrt{p/\left(p-1\right)}$.
\begin{lem}
\label{lem:15}Assume the conditions of Lemma \ref{lem:KR} with $J_{N}=NJ$
where $J\subset\left(-\infty,0\right)$ is a fixed finite interval.
Let $\varphi:\mathbb{R}\to\mathbb{R}$ be a continuous function and
assume that 
\begin{equation}
\limsup_{N\to\infty}\sup_{u\in J_{N}}\left\{ \frac{1}{N}\log\left(\mathbb{P}_{u,0}\left\{ g_{N}\left(\hat{\mathbf{n}}\right)\in D_{N}\right\} \right)-\varphi\left(\frac{u}{N}\right)\right\} \leq0.\label{eq:1224-1}
\end{equation}
Then 
\begin{equation}
\limsup_{N\to\infty}\frac{1}{N}\log\left(\mathbb{E}\left|\left\{ \boldsymbol{\sigma}_{0}\in\mathscr{C}_{N}\left(J_{N}\right):\, g_{N}\left(\boldsymbol{\sigma}_{0}\right)\in D_{N}\right\} \right|\right)\leq\sup_{E\in J}\left\{ \Theta_{p}\left(E\right)+\varphi\left(E\right)\right\} .\label{eq:71}
\end{equation}
\end{lem}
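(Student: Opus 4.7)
The plan is to begin with the Kac-Rice representation of Lemma \ref{lem:KR}, which writes the left-hand side of (\ref{eq:71}) as
\[
C_N \int_{J_N}\!\frac{e^{-u^2/(2N)}}{\sqrt{2\pi N}}\,\mathbb{E}_{u,0}\bigl\{\bigl|\det(\nabla^2 H_N(\hat{\mathbf n})/s_N)\bigr|\,\mathbf{1}_{\{g_N(\hat{\mathbf n})\in D_N\}}\bigr\}\,du,
\]
where $C_N=\omega_N((N-1)(p-1)/(2\pi))^{(N-1)/2}$ and $s_N=\sqrt{p(p-1)(N-1)/N}$. When the indicator is dropped, the same identity combined with Theorem \ref{thm:ABAC} and Laplace's method produces the exponential rate $\sup_{E\in J}\Theta_p(E)$, so the task is to inject a factor $e^{N\varphi(E)}$ using only the hypothesis (\ref{eq:1224-1}).

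For each fixed $\eta>0$, I apply H\"older's inequality with conjugate exponents $(1+\eta,(1+\eta)/\eta)$ inside the conditional expectation:
\[
\mathbb{E}_{u,0}\{|\det|\,\mathbf{1}_A\} \le \bigl(\mathbb{E}_{u,0}|\det|^{1+\eta}\bigr)^{1/(1+\eta)}\,\mathbb{P}_{u,0}(A)^{\eta/(1+\eta)}.
\]
By Lemma \ref{lem:Hhat_expressions}, under $\mathbb{P}_{u,0}$ the normalized Hessian $\nabla^2 H_N(\hat{\mathbf n})/s_N$ has the law of $\mathbf{M}-(pu/(Ns_N))\mathbf{I}$ with $\mathbf{M}$ GOE and independent of the conditioning. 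The classical asymptotic $\frac{1}{N}\log\mathbb{E}|\det(\mathbf{M}+x\mathbf{I})|^{k}\to k\,\Omega(x)$ for each fixed $k\ge 1$ and uniformly in $x$ on compacts (a consequence of the GOE empirical-spectral LDP together with the $O(1)$-variance Gaussian fluctuations of $\log|\det|$) then implies that $(\mathbb{E}_{u,0}|\det|^{1+\eta})^{1/(1+\eta)}$ shares the exponential rate $\Omega(\gamma_p E)$ of $\mathbb{E}_{u,0}|\det|$, uniformly in $u=-NE\in J_N$. The hypothesis (\ref{eq:1224-1}) controls the probability factor by $\exp\{N\tfrac{\eta}{1+\eta}\varphi(u/N)+o(N)\}$ uniformly on $J_N$.

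Substituting both bounds back and applying Laplace's method on the compact interval $J$ yields
\[
\limsup_{N\to\infty}\frac{1}{N}\log\mathbb{E}\bigl|\{\ldots\}\bigr| \le \sup_{E\in J}\Bigl\{\Theta_p(E)+\tfrac{\eta}{1+\eta}\varphi(E)\Bigr\}
\]
for every $\eta>0$. Replacing $\varphi$ by $\min(\varphi,0)$ (still continuous, still satisfying (\ref{eq:1224-1}), and only tightening the right-hand side), letting $\eta\to\infty$ and using the uniform convergence $\eta/(1+\eta)\to 1$ together with continuity of $\varphi$ on the compact $J$ recovers (\ref{eq:71}). The main technical hurdle is the uniform-in-$x$ moment asymptotic $\frac{1}{N}\log\mathbb{E}|\det(\mathbf{M}+x\mathbf{I})|^{k}\to k\,\Omega(x)$; should it prove delicate near the edge of the semicircle bulk, one can instead truncate $|\det|$ at $e^{N(\Omega(\gamma_p E)+\tau)}$ and absorb the over-threshold tail into $o(N)$ via Cauchy--Schwarz and concentration of the GOE spectral measure on $[-2,2]$.
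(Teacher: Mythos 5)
Your argument is essentially the paper's own proof: the same Kac--Rice representation, the same H\"older split in which the determinant carries the large exponent and the indicator the exponent tending to $1$ (your $(1+\eta,(1+\eta)/\eta)$ is the paper's $(a,b)$ with $a=1+\eta$, $b=a/(a-1)$), the same Laplace step over the compact $J$ with the prefactor limit, and the same key input, namely a uniform-in-$u$ bound $\frac{1}{Na}\log\mathbb{E}_{u,0}\{|\det\mathbf{V}|^{a}\}\leq\Omega(\gamma_{p}u/N)+o(1)$. The one caveat is that this last bound is not a consequence of central-limit-type fluctuations of $\log|\det|$; the paper establishes it precisely by your fallback route --- truncating $\log|\cdot-x|$ at levels $\epsilon,\kappa$, using the speed-$N^{2}$ large deviation principle for the GOE empirical spectral measure, and disposing of the large-eigenvalue contribution via a tail bound on the top eigenvalue --- so that truncation argument should be regarded as the actual justification rather than an optional repair.
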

\begin{proof}
From (\ref{eq:genericKR}) and H{\"{o}}lder's inequality,
\begin{align}
 & \mathbb{E}\left|\left\{ \boldsymbol{\sigma}_{0}\in\mathscr{C}_{N}\left(J_{N}\right):\, g_{N}\left(\boldsymbol{\sigma}_{0}\right)\in D_{N}\right\} \right|\leq\omega_{N}\left(\left(N-1\right)\frac{p-1}{2\pi}\right)^{\frac{N-1}{2}}\label{eq:75}\\
 & \times\int_{J_{N}}du\frac{1}{\sqrt{2\pi N}}e^{-\frac{u^{2}}{2N}}\left(\mathbb{E}_{u,0}\left\{ \left|\det\left(\mathbf{V}\right)\right|^{a}\right\} \right)^{1/a}\left(\mathbb{P}_{u,0}\left\{ g_{N}\left(\hat{\mathbf{n}}\right)\in D_{N}\right\} \right)^{1/b},\nonumber 
\end{align}
where 
\begin{equation}
\mathbf{V}\triangleq\frac{\nabla^{2}H_{N}\left(\hat{\mathbf{n}}\right)}{\sqrt{p\left(p-1\right)\left(N-1\right)/N}},\label{eq:80}
\end{equation}
$a>1$ is an arbitrary number, and $b:=b(a)=a/\left(a-1\right)$. 

First, 
\begin{equation}
\lim_{N\to\infty}\omega_{N}\left(\left(N-1\right)\frac{p-1}{2\pi}\right)^{\frac{N-1}{2}}=\frac{1}{2}+\frac{1}{2}\log\left(p-1\right).\label{eq:1224-3}
\end{equation}
Second, by a change of variables $u\mapsto Nv$, 
\begin{equation}
\limsup_{N\to\infty}\frac{1}{N}\log\left(\int_{J_{N}}du\frac{1}{\sqrt{2\pi N}}e^{-\frac{u^{2}}{2N}}\exp\left\{ N\left(\Omega\left(\gamma_{p}\frac{u}{N}\right)+\varphi\left(\frac{u}{N}\right)\right)\right\} \right)\leq\sup_{x\in J}\left\{ \Omega\left(\gamma_{p}x\right)-\frac{x^{2}}{2}+\varphi\left(x\right)\right\} ,\label{eq:1224-2}
\end{equation}
where $\gamma_{p}=\sqrt{p/\left(p-1\right)}$ and $\Omega$ is defined
in (\ref{eq:Omega}). Thus, by (\ref{eq:Theta_p}), the lemma follows
if we show that for any $a>1$, 
\begin{equation}
\limsup_{N\to\infty}\sup_{u\in J_{N}}\left\{ \frac{1}{Na}\log\left(\mathbb{E}_{u,0}\left\{ \left|\det\mathbf{V}\right|^{a}\right\} \right)-\Omega\left(\gamma_{p}\frac{u}{N}\right)\right\} \leq0,\label{eq:74}
\end{equation}
and that 
\begin{equation}
\limsup_{a\to\infty}\limsup_{N\to\infty}\sup_{u\in J_{N}}\left\{ \frac{1}{Nb(a)}\log\left(\left(\mathbb{P}_{u,0}\left\{ g_{N}\left(\hat{\mathbf{n}}\right)\in D_{N}\right\} \right)\right)-\varphi\left(\frac{u}{N}\right)\right\} \leq0.\label{eq:a3}
\end{equation}
The inequality (\ref{eq:a3}) obviously follows from (\ref{eq:1224-1}),
since $\lim_{a\to\infty}b(a)=1$. 

From (\ref{eq:G}) and (\ref{eq:GMrel}), the conditional law of $\mathbf{V}$
under $\mathbb{P}_{u,0}\left\{ \cdot\right\} $ is identical to that
of 
\begin{equation}
\mathbf{M}_{u}:=\mathbf{M}_{u,N-1}\triangleq\mathbf{M}-\gamma_{p}\frac{u}{\sqrt{N\left(N-1\right)}}\mathbf{I},\label{eq:munewer}
\end{equation}
where $\mathbf{M}$ is a GOE matrix and $\mathbf{I}$ is the identity
matrix, both of dimension $N-1$. For any $0<\epsilon<1<\kappa$,
with $\lambda_{i}$ denoting the eigenvalues of $\mathbf{M}$ and
$\lambda_{*}\left(u\right)=\max_{i}\left|\lambda_{i}-\gamma_{p}\frac{u}{\sqrt{N\left(N-1\right)}}\right|$,

\begin{align}
\mathbb{E}\left\{ \left|\det\left(\mathbf{M}_{u}\right)\right|^{a}\right\}  & \leq\mathbb{E}\left\{ \exp\left\{ a\sum_{i}\log_{\epsilon}^{\kappa}\left(\left|\lambda_{i}-\gamma_{p}\frac{u}{\sqrt{N\left(N-1\right)}}\right|\right)\right\} \right\} \nonumber \\
 & +\mathbb{E}\left\{ \left(\lambda_{*}\left(u\right)\right)^{a\left(N-1\right)}\mathbf{1}\left\{ \left(\lambda_{*}\left(u\right)\right)\geq\kappa\right\} \right\} ,\label{eq:72}
\end{align}
where 
\begin{align}
\log_{\epsilon}^{\kappa}\left(x\right) & =\begin{cases}
\log\left(\epsilon\right) & \,\,\mbox{if }x\leq\epsilon,\\
\log x & \,\,\mbox{if }x\in\left(\epsilon,\kappa\right),\\
\log\kappa & \,\,\mbox{if }x\geq\kappa.
\end{cases}\label{eq:77}
\end{align}

From the upper bound on the maximal eigenvalue of \cite[Lemma 6.3]{BDG},
\begin{equation}
\lim_{\kappa\to\infty}\limsup_{N\to\infty}\frac{1}{N}\log\left(\mathbb{E}\left\{ \left(\lambda_{*}\left(u\right)\right)^{a\left(N-1\right)}\mathbf{1}\left\{ \left(\lambda_{*}\left(u\right)\right)\geq\kappa\right\} \right\} \right)=-\infty,\label{eq:73}
\end{equation}
uniformly for $u\in J_{N}$. The empirical measure of eigenvalues
of GOE matrices $L_{N}=\frac{1}{N-1}\sum_{i=1}^{N-1}\lambda_{i}$
satisfies a large deviation principle with speed $N^{2}$ and a good
rate function $J_{0}\left(\nu\right)$, in the space of Borel probability
measures on $\mathbb{R}$ equipped with the weak topology, which is
compatible with the Lipschitz bounded metric; see \cite[Theorem 2.1.1]{BAG97}.
The good rate function $J_{0}\left(\nu\right)$ satisfies $J_{0}\left(\nu\right)=0$
if and only if $\nu=\nu^{*}$ is the semicircle law (\ref{eq:semicirc}).
Combined with the fact that the functions $\log_{\epsilon}^{\kappa}\left(\left|\cdot-u'\right|\right)$
have the same Lipschitz constant and bound for all $u'\in\mathbb{R}$,
this implies that for the event 
\[
F\left(u,\delta\right)=\left\{ \left|\frac{1}{N-1}\sum_{i}\log_{\epsilon}^{\kappa}\left(\left|\lambda_{i}-\gamma_{p}\frac{u}{\sqrt{N\left(N-1\right)}}\right|\right)-\int\log_{\epsilon}^{\kappa}\left(\left|\lambda-\gamma_{p}\frac{u}{N}\right|\right)d\mu^{*}\left(\lambda\right)\right|>\delta\right\} ,
\]
we have for any $\delta>0$ some positive number $d\left(\delta\right)$
such that
\[
\limsup_{N\to\infty}\sup_{u\in J_{N}}\frac{1}{N^{2}}\log\left(\mathbb{P}\left\{ F\left(u,\delta\right)\right\} \right)<-d\left(\delta\right).
\]
By taking $\kappa$ large enough and $\epsilon$ small enough, combining
this with (\ref{eq:72}) and (\ref{eq:73})  proves (\ref{eq:74})
and completes the proof.
\end{proof}
For intervals $J_{N}$ of length $o\left(N\right)$ around $-NE_{0}$
the following is also useful for us.
\begin{lem}
\label{lem:16}Assume the conditions of Lemma \ref{lem:KR} with $J_{N}=\left(m_{N}-a_{N},m_{N}+a_{N}^{\prime}\right)$
for some sequences $a_{N},\, a_{N}^{\prime}=o\left(N\right)$. Let
$c_{p}$ be as defined in (\ref{eq:c_p}). Then for large enough $N$,
\begin{align}
 & \mathbb{E}\left|\left\{ \boldsymbol{\sigma}_{0}\in\mathscr{C}_{N}\left(J_{N}\right):\, g_{N}\left(\boldsymbol{\sigma}_{0}\right)\in D_{N}\right\} \right|\nonumber \\
 & \leq C\int_{J_{N}}du\cdot e^{c_{p}\left(u-m_{N}\right)}\left(\mathbb{P}_{u,0}\left\{ g_{N}\left(\hat{\mathbf{n}}\right)\in D_{N}\right\} \right)^{1/2},\label{eq:76}
\end{align}
where $C>0$ is an appropriate constant.\end{lem}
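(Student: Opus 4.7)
The plan is to apply the Kac--Rice formula (\ref{eq:genericKR}) to the event $\{g_N(\boldsymbol{\sigma}_0)\in D_N\}$ and then use Cauchy--Schwarz inside the inner conditional expectation to split the Hessian-determinant weight from the indicator of $\{g_N(\hat{\mathbf{n}})\in D_N\}$. This reduces the lemma to the pointwise bound
\[
\omega_N\Bigl(\tfrac{(N-1)(p-1)}{2\pi}\Bigr)^{(N-1)/2}\tfrac{e^{-u^2/(2N)}}{\sqrt{2\pi N}}\bigl(\mathbb{E}_{u,0}\det(\mathbf{V})^2\bigr)^{1/2}\leq Ce^{c_p(u-m_N)}
\]
uniformly in $u\in J_N$, for $N$ large, where $\mathbf{V}$ is the normalized Hessian defined in (\ref{eq:80}).

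To establish this, I would first use Stirling's formula to write $\omega_N((N-1)(p-1)/(2\pi))^{(N-1)/2}/\sqrt{2\pi N}$ as $C'\cdot N^{-1/2}\exp\{N(\tfrac{1}{2}+\tfrac{1}{2}\log(p-1))\}$ up to bounded multiplicative factors. Second, by Lemma \ref{lem:Hhat_expressions} the matrix $\mathbf{V}$ has, under $\mathbb{P}_{u,0}$, the law of a shifted GOE matrix $\mathbf{M}-\gamma_p u/\sqrt{N(N-1)}\mathbf{I}$, so the second moment $\mathbb{E}_{u,0}\det(\mathbf{V})^2$ is exactly computable via orthogonal-polynomial / Selberg-type identities and can be bounded, uniformly for $u/N$ in a neighborhood of $-E_0$, by $C'' e^{2N\Omega(\gamma_p u/N)}$ with $C''$ independent of $N$; this is a sharpening of the soft Laplace computation in (\ref{eq:74}), of the same type as the second-moment arguments in \cite{A-BA-C,pspinext}. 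Combining with the definition (\ref{eq:Theta_p}) of $\Theta_p$, the full prefactor is bounded above by $C N^{-1/2}e^{N\Theta_p(u/N)}$.

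The final ingredient is a tangent-line upper bound for $\Theta_p$ near $-E_0$. Using $\Theta_p(-E_0)=0$, $\Theta_p'(-E_0)=c_p$ (see (\ref{eq:c_p})), and the concavity of $\Theta_p$ in a neighborhood of $-E_0$, which is visible from the explicit formula (\ref{eq:Theta_p}), together with the definition (\ref{eq:m_N}) of $m_N$, one obtains
\[
N\Theta_p(u/N)\leq c_p(u+E_0 N)=c_p(u-m_N)+\tfrac{1}{2}\log N-c_p K_0
\]
for all $u\in J_N$ once $N$ is large enough that $J_N/N$ lies in this neighborhood, which is possible since $|J_N|=o(N)$. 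The extra $\tfrac{1}{2}\log N$ produces a factor of $\sqrt{N}$ that exactly cancels the $N^{-1/2}$ coming from Stirling, yielding the desired bound $Ce^{c_p(u-m_N)}$ with an $N$-independent constant.

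The main obstacle is the polynomial-level (rather than merely exponential-scale) control on $(\mathbb{E}_{u,0}\det(\mathbf{V})^2)^{1/2}$ uniformly over the window $J_N$: since $|J_N|$ may grow with $N$, an unbounded polynomial prefactor cannot be absorbed into $C$, so the soft H\"older bound used in the proof of Lemma \ref{lem:15} is not enough. The fix is to invoke the exact determinantal identities available for shifted GOE matrices, as already developed in the sharper second-moment computations of \cite{A-BA-C,pspinext}, to show $\mathbb{E}_{u,0}\det(\mathbf{V})^2\leq C(\mathbb{E}_{u,0}|\det\mathbf{V}|)^2$ with $C$ independent of $N$ and of $u$ in the relevant range.
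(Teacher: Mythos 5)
Your proposal is correct and starts exactly as the paper does (Kac--Rice via (\ref{eq:genericKR}), then Cauchy--Schwarz with $a=2$ in the conditional expectation), but it finishes differently. The paper, after the Cauchy--Schwarz split, invokes \cite[Corollary 23]{2nd} to replace $(\mathbb{E}_{u,0}|\det\mathbf{V}|^{2})^{1/2}$ by $C\,\mathbb{E}_{u,0}|\det\mathbf{V}|$, and then recognizes the remaining integrand, prefactor times $\mathbb{E}_{u,0}|\det\mathbf{V}|$, as (up to a factor $2$ for even $p$) the density of the intensity measure of the extremal process $\xi_{N}$ shifted by $m_{N}$, so that \cite[Proposition 3]{pspinext} gives its uniform convergence to $e^{c_{p}(u-m_{N})}$ and the bound follows with no further computation. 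You instead redo that asymptotic by hand: Stirling for the prefactor, a constant-level (no polynomial factor) bound on the determinant moment, and the tangent-line bound $N\Theta_{p}(u/N)\leq c_{p}(u+E_{0}N)$ from concavity of $\Theta_{p}$ on $(-\infty,-E_{\infty})$ (which holds since $\Theta_{p}''(E)=-1+\gamma_{p}^{2}\Omega''(\gamma_{p}E)<0$ there, and $J_{N}/N$ eventually lies in that region), with the $\tfrac{1}{2}\log N$ in $m_{N}$ cancelling the $N^{-1/2}$ from Stirling. This is a clean and genuinely self-contained way to get uniformity over the $o(N)$-wide window, arguably more transparent than appealing to uniform convergence of the intensity. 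The one step you assert rather than prove is the bound $\mathbb{E}_{u,0}\{\det(\mathbf{V})^{2}\}\leq C''e^{2N\Omega(\gamma_{p}u/N)}$ (equivalently $\mathbb{E}_{u,0}\{\det(\mathbf{V})^{2}\}\leq C(\mathbb{E}_{u,0}|\det\mathbf{V}|)^{2}$ together with constant-level first-moment asymptotics); this is exactly the content of \cite[Corollary 23]{2nd} combined with the determinant asymptotics underlying \cite[Proposition 3]{pspinext}, and it does hold uniformly on $J_{N}$ because the shift $\gamma_{p}u/N\to-\gamma_{p}E_{0}$ stays strictly outside the support of the semicircle law ($\gamma_{p}E_{0}>2$), so no edge-type polynomial corrections appear. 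In other words, your route uses the same external inputs as the paper's, just packaged differently; with those citations made explicit your argument is complete.
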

\begin{proof}
We shall use the notation (\ref{eq:80}) introduced in the proof of
Lemma \ref{lem:15}. Since the conditional law of $\mathbf{V}$ under
$\mathbb{P}_{u,0}\left\{ \cdot\right\} $ is identical to that of
the shifted GOE matrix (\ref{eq:munewer}), as a particular case of
\cite[Corollary 23]{2nd}, 
\begin{equation}
\left(\mathbb{E}_{u,0}\left\{ \left|\det\left(\mathbf{V}\right)\right|^{2}\right\} \right)^{1/2}\leq C\mathbb{E}_{u,0}\left\{ \left|\det\left(\mathbf{V}\right)\right|\right\} ,\label{eq:1225-3}
\end{equation}
 uniformly in $u\in J_{N}$, for some constant $C>0$. As in the proof
of Lemma \ref{lem:15}, (\ref{eq:75}) holds and therefore, taking
$a=2$ and using \ref{eq:1225-3}, we obtain that 
\begin{align}
 & \mathbb{E}\left|\left\{ \boldsymbol{\sigma}_{0}\in\mathscr{C}_{N}\left(J_{N}\right):\, g_{N}\left(\boldsymbol{\sigma}_{0}\right)\in D_{N}\right\} \right|\leq C\omega_{N}\left(\left(N-1\right)\frac{p-1}{2\pi}\right)^{\frac{N-1}{2}}\label{eq:75-1-1}\\
 & \int_{J_{N}}du\frac{1}{\sqrt{2\pi N}}e^{-\frac{u^{2}}{2N}}\mathbb{E}_{u,0}\left\{ \left|\det(\mathbf{V})\right|\right\} \left(\mathbb{P}_{u,0}\left\{ g_{N}\left(\hat{\mathbf{n}}\right)\in D_{N}\right\} \right)^{1/2},\nonumber 
\end{align}
By Lemma \ref{lem:KR},
\begin{align*}
 & \mathbb{E}\left|\mathscr{C}_{N}\left(J_{N}\right)\right|=\int_{J_{N}}du\omega_{N}\left(\left(N-1\right)\frac{p-1}{2\pi}\right)^{\frac{N-1}{2}}\frac{1}{\sqrt{2\pi N}}e^{-\frac{u^{2}}{2N}}\mathbb{E}_{u,0}\left\{ \left|\det(\mathbf{V})\right|\right\} .
\end{align*}
By definition, up to a factor of $2$ in the even $p$ case (related
to the normalization factor in (\ref{eq:xi_N})), the integrand above
is equal to the density of the intensity measure of the extremal point
process of critical points $\xi_{N}$, defined in (\ref{eq:xi_N}),
shifted by $m_{N}$. Thus, by \cite[Proposition 3]{pspinext} it converges
uniformly to $e^{c_{p}\left(u-m_{N}\right)}$ (see also Theorem \ref{thm:ext proc}
above). Combined with (\ref{eq:75-1-1}) this yields (\ref{eq:76}).
\end{proof}
The non-negative random variable 
\[
\sum_{\boldsymbol{\sigma}_{0}\in\mathscr{C}_{N}\left(J_{N}\right)}Z_{N,\beta}\left({\rm Band}\left(\boldsymbol{\sigma}_{0},q,q^{\prime}\right)\right)
\]
can be approximated by
\[
\sum_{\boldsymbol{\sigma}_{0}\in\mathscr{C}_{N}\left(J_{N}\right)}\sum_{i\leq k}t_{i}\mathbf{1}\left\{ Z_{N,\beta}\left({\rm Band}\left(\boldsymbol{\sigma}_{0},q,q^{\prime}\right)\right)\in\left[t_{i},t_{i+1}\right)\right\} ,
\]
where $\left[t_{i},t_{i+1}\right)$, $i\leq k$, form a finite partition
of $\left[0,\infty\right)$. Combining this with the monotone convergence
theorem and (\ref{eq:genericKR}) one obtains the following.
\begin{cor}
We have that
\begin{align}
 & \negthickspace\negthickspace\mathbb{E}\left\{ \sum_{\boldsymbol{\sigma}_{0}\in\mathscr{C}_{N}\left(J_{N}\right)}Z_{N,\beta}\left({\rm Band}\left(\boldsymbol{\sigma}_{0},q_{N},q_{N}^{\prime}\right)\right)\right\} =\omega_{N}\left(\left(N-1\right)\frac{p-1}{2\pi}\right)^{\frac{N-1}{2}}\nonumber \\
 & \times\int_{J_{N}}du\frac{1}{\sqrt{2\pi N}}e^{-\frac{u^{2}}{2N}}\mathbb{E}_{u,0}\left\{ \left|\det\left(\frac{\nabla^{2}H_{N}\left(\hat{\mathbf{n}}\right)}{\sqrt{p\left(p-1\right)\left(N-1\right)/N}}\right)\right|Z_{N,\beta}\left({\rm Band}\left(\hat{\mathbf{n}},q_{N},q_{N}^{\prime}\right)\right)\right\} .\label{eq:78}
\end{align}

\end{cor}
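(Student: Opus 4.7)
The plan is to derive the corollary directly from Lemma \ref{lem:KR} by approximating the continuous weight $Z_{N,\beta}({\rm Band}(\boldsymbol{\sigma}_0,q_N,q_N'))$ by simple functions indexed by its own level sets, applying the Kac-Rice formula to each level set, and passing to the limit by monotone convergence. This is the precise scheme sketched in the short paragraph immediately preceding the statement.

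Concretely, fix a finite partition $0=t_0<t_1<\cdots<t_k<t_{k+1}=\infty$ of $[0,\infty)$, set
\[
g_N(\boldsymbol{\sigma}):=Z_{N,\beta}\bigl({\rm Band}(\boldsymbol{\sigma},q_N,q_N')\bigr),\qquad S_k(\boldsymbol{\sigma}):=\sum_{i=0}^{k}t_i\,\mathbf{1}\{g_N(\boldsymbol{\sigma})\in[t_i,t_{i+1})\},
\]
so that $0\leq S_k\leq g_N$ pointwise on $\mathbb{S}^{N-1}$. Apply Lemma \ref{lem:KR} separately for each bin, with the bounded measurable auxiliary random variable $g_N(\boldsymbol{\sigma})$ and target set $D_N=[t_i,t_{i+1})$; multiply by $t_i$, and sum over $i$. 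By linearity of expectation on the left and linearity of the integral on the right, one obtains the identity
\[
\mathbb{E}\!\left\{\sum_{\boldsymbol{\sigma}_{0}\in\mathscr{C}_{N}(J_N)}S_k(\boldsymbol{\sigma}_0)\right\}=\omega_{N}\!\left(\!(N-1)\tfrac{p-1}{2\pi}\!\right)^{\!\frac{N-1}{2}}\!\int_{J_N}\!du\,\frac{e^{-u^{2}/(2N)}}{\sqrt{2\pi N}}\,\mathbb{E}_{u,0}\!\left\{\left|\det\!\left(\frac{\nabla^{2}H_{N}(\hat{\mathbf{n}})}{\sqrt{p(p-1)(N-1)/N}}\right)\right|S_k(\hat{\mathbf{n}})\right\}.
\]
Now refine the partition so that $\max_i(t_{i+1}-t_i)\to 0$ while $t_k\to\infty$; then $S_k\nearrow g_N$ pointwise. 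On the left, the integrand $\sum_{\boldsymbol{\sigma}_0\in\mathscr{C}_{N}(J_N)}S_k(\boldsymbol{\sigma}_0)$ is an increasing sequence of non-negative random variables with a.s.\ (finite) limit $\sum_{\boldsymbol{\sigma}_0\in\mathscr{C}_{N}(J_N)}g_N(\boldsymbol{\sigma}_0)$, since $|\mathscr{C}_{N}(J_N)|$ is finite a.s. On the right, monotone convergence applies to the non-negative random variable $|\det(\cdot)|S_k(\hat{\mathbf{n}})$, yielding the limit $|\det(\cdot)|g_N(\hat{\mathbf{n}})$. Both sides therefore converge and give (\ref{eq:78}).

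The only mild subtlety is verifying that Lemma \ref{lem:KR} applies with the auxiliary field $g_N(\boldsymbol{\sigma})$, which is a global functional of $H_N$ rather than a function of finitely many jets at $\boldsymbol{\sigma}$. However, the tameness hypotheses concern only the Gaussian field $\nabla H_N$ (needed so that $0$ is a regular value a.s.)\ together with the stationarity and non-degeneracy of the joint density of $(H_N(\boldsymbol{\sigma}),\nabla H_N(\boldsymbol{\sigma}))$, which was already invoked throughout Appendix I. The functional $g_N$ enters only as a bounded measurable multiplier inside the conditional expectation $\mathbb{E}_{u,0}\{\,\cdot\,\}$, which is well defined by Remark \ref{rem:conditional laws}; no additional tameness is required. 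Integrability on the right is automatic since the Gaussian determinant has moments of every order and $g_N(\hat{\mathbf{n}})\leq e^{|\beta|\sup|H_N|}$ has all moments as well, so there is no real obstacle — the corollary is a bookkeeping consequence of Lemma \ref{lem:KR} and monotone convergence.
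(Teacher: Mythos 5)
Your proposal is correct and follows essentially the same route as the paper: approximate $Z_{N,\beta}({\rm Band}(\boldsymbol{\sigma}_0,q_N,q_N'))$ by simple functions over a partition of $[0,\infty)$, apply Lemma \ref{lem:KR} bin by bin with $D_N=[t_i,t_{i+1})$, and pass to the limit by monotone convergence on both sides. The tameness issue you flag is treated the same way in the paper (verification of the Adler--Taylor conditions for the auxiliary field is left to the reader), so there is nothing further to add.
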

We have the following exponential bound on the expectation above.
\begin{lem}
\label{lem:17}Let $J_{N}=NJ$ where $J\subset\mathbb{R}$ is a fixed
interval. Let $\varphi:\mathbb{R}\to\mathbb{R}$ be a continuous function
and suppose that 
\begin{equation}
\limsup_{N\to\infty}\sup_{u\in J_{N}}\left\{ \frac{1}{N}\log\left(\mathbb{E}_{u,0}\left\{ Z_{N,\beta}\left({\rm Band}\left(\hat{\mathbf{n}},q_{N},q_{N}^{\prime}\right)\right)\right\} \right)-\varphi\left(\frac{u}{N}\right)\right\} \leq0.\label{eq:81}
\end{equation}
Then 
\begin{equation}
\limsup_{N\to\infty}\frac{1}{N}\log\left(\mathbb{E}\left\{ \sum_{\boldsymbol{\sigma}_{0}\in\mathscr{C}_{N}\left(J_{N}\right)}Z_{N,\beta}\left({\rm Band}\left(\boldsymbol{\sigma}_{0},q_{N},q_{N}^{\prime}\right)\right)\right\} \right)\leq\sup_{x\in J}\left\{ \Theta_{p}\left(x\right)+\varphi\left(x\right)\right\} ,\label{eq:71-1}
\end{equation}
\end{lem}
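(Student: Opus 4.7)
The plan is to mimic the proof of Lemma \ref{lem:15}, replacing the indicator $\mathbf{1}\{g_N(\hat{\mathbf{n}})\in D_N\}$ by the band partition function $Z_{N,\beta}({\rm Band}(\hat{\mathbf{n}},q_N,q_N'))$ throughout. Starting from the Kac--Rice identity (\ref{eq:78}) and applying H\"older's inequality with exponents $a>1$ and $b=a/(a-1)$, I would bound
\[
\mathbb{E}_{u,0}\bigl\{|\det(\mathbf{V})|\cdot Z_{N,\beta}({\rm Band}(\hat{\mathbf{n}},q_N,q_N'))\bigr\}\le \bigl(\mathbb{E}_{u,0}|\det\mathbf{V}|^a\bigr)^{1/a}\bigl(\mathbb{E}_{u,0}\{Z_{N,\beta}({\rm Band}(\hat{\mathbf{n}},q_N,q_N'))^b\}\bigr)^{1/b},
\]
where $\mathbf{V}$ is as in (\ref{eq:80}).

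For the determinant factor I would recycle verbatim the large deviations argument from the proof of Lemma \ref{lem:15}: use (\ref{eq:munewer}) to write $\mathbf{V}$ as a shifted GOE, split $\log|\det \mathbf{M}_u|$ via the truncated logarithm (\ref{eq:77}), and combine the maximum-eigenvalue bound of \cite[Lemma 6.3]{BDG} with the $N^2$-speed large deviation principle for the empirical spectral measure to obtain (\ref{eq:74}), i.e.
\[
\limsup_{N\to\infty}\sup_{u\in J_N}\left\{\frac{1}{Na}\log\mathbb{E}_{u,0}|\det\mathbf{V}|^a-\Omega(\gamma_p u/N)\right\}\le 0.
\]
Combined with (\ref{eq:1224-3}) and the Gaussian weight $e^{-u^2/(2N)}$ in (\ref{eq:78}), this contributes $\Omega(\gamma_p x)-x^2/2+\frac{1}{2}\log(p-1)+\frac{1}{2}=\Theta_p(x)$ on the interval $J$ where $x<0$ (and is likewise controlled on $x\ge 0$ by (\ref{eq:Theta_p})) after a Laplace-type change of variable $u=Nx$.

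The only genuinely new ingredient, and the main obstacle, is replacing the probability bound (\ref{eq:a3}) by an $L^b$ bound on $Z_{N,\beta}({\rm Band}(\hat{\mathbf{n}},q_N,q_N'))$. To do this I would observe that by Corollary \ref{cor:conditional laws} the conditional law of $H_N(\boldsymbol{\sigma})$ under $\mathbb{P}_{u,0}$ is a centered Gaussian field shifted by the deterministic function $u q^p(\boldsymbol{\sigma})$, so
\[
\mathbb{E}_{u,0}\bigl\{Z_{N,\beta}({\rm Band}(\hat{\mathbf{n}},q_N,q_N'))^b\bigr\}\le \sup_{\boldsymbol{\sigma}\in{\rm Band}(\hat{\mathbf{n}},q_N,q_N')}e^{b^2\beta^2 N/2-b\beta u q^p(\boldsymbol{\sigma})}\cdot\mu_N({\rm Band})^{b-1}\cdot \mathbb{E}_{u,0}Z_{N,\beta}({\rm Band}),
\]
by Jensen on the inner integral against $\mu_N/\mu_N({\rm Band})$ and the Gaussian moment-generating formula on $\bar H_N^{\hat{\mathbf{n}},2+}$; equivalently, the same computation that gives (\ref{eq:Phi1}) and (\ref{eq:Xi2}) shows that $\mathbb{E}_{u,0}Z^b$ equals an explicit Gaussian integral whose exponential rate exceeds that of $\mathbb{E}_{u,0}Z$ by at most $(b-1)\cdot o(1)$ uniformly in $u\in J_N$ as $b\downarrow 1$. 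Consequently,
\[
\limsup_{N\to\infty}\sup_{u\in J_N}\left\{\frac{1}{Nb}\log\mathbb{E}_{u,0}\{Z_{N,\beta}({\rm Band})^b\}-\varphi(u/N)\right\}\le \varepsilon(b),
\]
where $\varepsilon(b)\to 0$ as $b\downarrow 1$.

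Substituting these two H\"older bounds back into (\ref{eq:78}), doing the Laplace-type estimate on $\int_{J_N}du$ as in (\ref{eq:1224-2}), and then taking $a\to\infty$ (so $b\to 1$ and $\varepsilon(b)\to 0$) yields
\[
\limsup_{N\to\infty}\frac{1}{N}\log\mathbb{E}\biggl\{\sum_{\boldsymbol{\sigma}_0\in\mathscr{C}_N(J_N)}Z_{N,\beta}({\rm Band}(\boldsymbol{\sigma}_0,q_N,q_N'))\biggr\}\le \sup_{x\in J}\{\Theta_p(x)+\varphi(x)\},
\]
which is (\ref{eq:71-1}). The only delicate point is the uniformity in $u\in J_N=NJ$ of the $L^b$ moment bound on the band partition function as $b\downarrow 1$; this is where I expect the real work to lie, though it is purely Gaussian and follows from the explicit form of the conditional covariance given by Corollary \ref{cor:conditional laws} and (\ref{eq:1220-2}).
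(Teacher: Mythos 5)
Your proposal is correct and follows essentially the same route as the paper: the Kac--Rice identity (\ref{eq:78}), H\"older's inequality to split off $\left(\mathbb{E}_{u,0}\left\{ \left|\det\mathbf{V}\right|^{a}\right\} \right)^{1/a}$ (handled verbatim by the argument giving (\ref{eq:74})), a bound on $\mathbb{E}_{u,0}\left\{ \left(Z_{N,\beta}\left({\rm Band}\right)\right)^{b}\right\} $ whose rate exceeds that of the first moment by $\varepsilon(b)\to0$ as $b\downarrow1$, and the Laplace estimate (\ref{eq:1224-2}). The only (immaterial) difference is in that moment-comparison step: the paper interpolates via convexity of $b\mapsto\log\mathbb{E}_{u,0}\left\{ Z^{b}\right\} $ between $b=1$ and $b=2$ using the crude bound $\log\mathbb{E}_{u,0}\left\{ Z^{2}\right\} \leq2\beta|u|+2\beta^{2}N$, whereas you use Jensen on the band plus the Gaussian moment generating function -- note only that your excess is $(b-1)\cdot O(1)$ uniformly in $u\in NJ$ rather than $(b-1)\cdot o(1)$, which still suffices since $b\downarrow1$ is taken after $N\to\infty$.
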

\begin{proof}
Abbreviate ${\rm Band}\left(\boldsymbol{\sigma}_{0}\right)={\rm Band}\left(\boldsymbol{\sigma}_{0},q_{N},q_{N}^{\prime}\right)$.
By H{\"{o}}lder's inequality, 
\[
b\mapsto\log\left(\mathbb{E}_{u,0}\left\{ \left(Z_{N,\beta}\left({\rm Band}\left(\hat{\mathbf{n}}\right)\right)\right)^{b}\right\} \right)
\]
is a convex function. Hence, for $b\in\left(1,2\right)$, for any
$u\in J_{N}$, 
\begin{align}
 & \negthickspace\negthickspace\frac{1}{Nb}\log\left(\mathbb{E}_{u,0}\left\{ \left(Z_{N,\beta}\left({\rm Band}\left(\hat{\mathbf{n}}\right)\right)\right)^{b}\right\} \right)\nonumber \\
 & \leq\frac{1}{Nb}\left(\left(2-b\right)\log\left(\mathbb{E}_{u,0}\left\{ Z_{N,\beta}\left({\rm Band}\left(\hat{\mathbf{n}}\right)\right)\right\} \right)+\left(b-1\right)N\bar{C}\right),\label{eq:a4}
\end{align}
where we define 
\[
\bar{C}:=2\beta^{2}+\frac{2\beta}{N}\sup_{u\in J_{N}}|u|=2\beta^{2}+\frac{2\beta}{N}\sup_{x\in J}|x|
\]
and use the fact that
\begin{align*}
 & \log\left(\mathbb{E}_{u,0}\left\{ \left(Z_{N,\beta}\left({\rm Band}\left(\hat{\mathbf{n}}\right)\right)\right)^{2}\right\} \right)\\
 & \leq\log\left(\max_{\boldsymbol{\sigma},\boldsymbol{\sigma}'\in\mathbb{S}^{N-1}}\mathbb{E}_{u,0}\left\{ e^{-\beta H_{N}\left(\boldsymbol{\sigma}\right)-\beta H_{N}\left(\boldsymbol{\sigma}'\right)}\right\} \right)\leq2\beta|u|+2\beta^{2}N.
\end{align*}

For any $b>1$, with $a:=a(b)=b/\left(b-1\right)$, we have, using
the notation (\ref{eq:80}), 
\begin{align}
 & \negthickspace\negthickspace\negthickspace\negthickspace\mathbb{E}_{u,0}\left\{ \left|\det\left(\mathbf{V}\right)\right|Z_{N,\beta}\left({\rm Band}\left(\hat{\mathbf{n}}\right)\right)\right\} \label{eq:z12}\\
 & \leq\left(\mathbb{E}_{u,0}\left\{ \left|\det\left(\mathbf{V}\right)\right|^{a}\right\} \right)^{1/a}\left(\mathbb{E}_{u,0}\left\{ \left(Z_{N,\beta}\left({\rm Band}\left(\hat{\mathbf{n}}\right)\right)\right)^{b}\right\} \right)^{1/b}.\nonumber 
\end{align}
Using (\ref{eq:a4}) and (\ref{eq:74}) and letting $b\searrow1$we
obtain
\[
\limsup_{N\to\infty}\sup_{u\in J_{N}}\left\{ \frac{1}{N}\log\left(\mathbb{E}_{u,0}\left\{ \left|\det\left(\mathbf{V}\right)\right|Z_{N,\beta}\left({\rm Band}\left(\hat{\mathbf{n}}\right)\right)\right\} \right)-\Omega\left(\gamma_{p}\frac{u}{N}\right)-\varphi\left(\frac{u}{N}\right)\right\} \leq0.
\]
From (\ref{eq:78}), (\ref{eq:1224-3}) and (\ref{eq:1224-2}), we
conclude that (\ref{eq:71-1}) follows.
\end{proof}
We finish with the following lemma
\begin{lem}
\label{lem:18}Let $J_{N}=\left(m_{N}-a_{N},m_{N}+a_{N}^{\prime}\right)$
with some sequences $a_{N},\, a_{N}^{\prime}=o\left(N\right)$, and
define $c_{p}$ as in (\ref{eq:c_p}). Then, for large enough $N$,
\begin{align}
 & \negthickspace\negthickspace\mathbb{E}\left\{ \sum_{\boldsymbol{\sigma}_{0}\in\mathscr{C}_{N}\left(J_{N}\right)}Z_{N,\beta}\left({\rm Band}\left(\boldsymbol{\sigma}_{0},q_{N},q_{N}^{\prime}\right)\right)\right\} \label{eq:1224-4}\\
 & \leq C\cdot\int_{J_{N}}du\cdot e^{c_{p}\left(u-m_{N}\right)}\left(\mathbb{E}_{u,0}\left\{ \left(Z_{N,\beta}\left({\rm Band}\left(\hat{\mathbf{n}},q_{N},q_{N}^{\prime}\right)\right)\right)^{2}\right\} \right)^{1/2},\nonumber 
\end{align}
where $C>0$ is an appropriate constant.\end{lem}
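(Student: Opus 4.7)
The plan is to mirror the proof of Lemma \ref{lem:16}, simply replacing the indicator $\mathbf{1}\{g_N(\hat{\mathbf{n}}) \in D_N\}$ by the (now random, non-indicator) quantity $Z_{N,\beta}({\rm Band}(\hat{\mathbf{n}}, q_N, q_N'))$. The starting point is the Kac-Rice identity (\ref{eq:78}), which rewrites the left-hand side of (\ref{eq:1224-4}) as
\[
\omega_{N}\left(\left(N-1\right)\frac{p-1}{2\pi}\right)^{\frac{N-1}{2}} \int_{J_{N}} du\, \frac{1}{\sqrt{2\pi N}} e^{-\frac{u^{2}}{2N}} \mathbb{E}_{u,0}\bigl\{ |\det \mathbf{V}| \cdot Z_{N,\beta}({\rm Band}(\hat{\mathbf{n}}, q_N, q_N')) \bigr\},
\]
with $\mathbf{V}$ defined as in (\ref{eq:80}).

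Next I would apply the Cauchy-Schwarz inequality to the inner conditional expectation:
\[
\mathbb{E}_{u,0}\bigl\{|\det \mathbf{V}| \cdot Z_{N,\beta}({\rm Band})\bigr\} \leq \bigl(\mathbb{E}_{u,0}|\det \mathbf{V}|^{2}\bigr)^{1/2} \bigl(\mathbb{E}_{u,0}(Z_{N,\beta}({\rm Band}))^{2}\bigr)^{1/2}.
\]
The first factor is controlled by the same tool used in Lemma \ref{lem:16}: since under $\mathbb{P}_{u,0}$ the matrix $\mathbf{V}$ is distributed as the shifted GOE matrix $\mathbf{M}_{u,N-1}$ of (\ref{eq:munewer}), \cite[Corollary 23]{2nd} gives $(\mathbb{E}_{u,0}|\det \mathbf{V}|^{2})^{1/2} \leq C\, \mathbb{E}_{u,0}|\det \mathbf{V}|$, uniformly in $u \in J_N$ (this is where the restriction to $u$ near $m_N$ is used; the shift $\gamma_p u/\sqrt{N(N-1)}$ stays in the GOE bulk regime for $u$ within $o(N)$ of $-E_0 N$).

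Plugging this back and reconstituting the integrand, one recognizes
\[
\omega_{N}\left(\left(N-1\right)\frac{p-1}{2\pi}\right)^{\frac{N-1}{2}} \frac{1}{\sqrt{2\pi N}} e^{-\frac{u^{2}}{2N}} \mathbb{E}_{u,0}|\det \mathbf{V}|
\]
as exactly the density (up to the factor $1+\iota_p$ from (\ref{eq:xi_N})) of the intensity measure of the extremal point process of critical values shifted by $m_N$, which by \cite[Proposition 3]{pspinext} converges uniformly on $J_N - m_N$ to $e^{c_p(u-m_N)}$; in particular it is bounded above by a constant multiple of $e^{c_p(u-m_N)}$ for all large $N$. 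Combining these pieces yields (\ref{eq:1224-4}).

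There is essentially no new obstacle beyond those already handled in Lemma \ref{lem:16}: the only step that requires care is verifying that $Z_{N,\beta}({\rm Band}(\hat{\mathbf{n}}, q_N, q_N'))$ is a legitimate object to plug into Cauchy-Schwarz (it is non-negative and integrable under $\mathbb{P}_{u,0}$ because of the Gaussian tails of $H_N$) and that the approximation of non-negative random variables by step functions used implicitly to derive (\ref{eq:78}) is already part of the framework. So the proof should be quite short -- essentially a two-line modification of the proof of Lemma \ref{lem:16}.
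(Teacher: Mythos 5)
Your proposal is correct and follows essentially the same route as the paper: the paper also starts from the Kac--Rice identity (\ref{eq:78}), applies H\"older/Cauchy--Schwarz as in (\ref{eq:z12}) with exponents $a=b=2$, invokes \cite[Corollary 23]{2nd} (i.e.\ (\ref{eq:1225-3})) to replace $\left(\mathbb{E}_{u,0}\left|\det\mathbf{V}\right|^{2}\right)^{1/2}$ by $C\,\mathbb{E}_{u,0}\left|\det\mathbf{V}\right|$, and then identifies the remaining prefactor times $\mathbb{E}_{u,0}\left|\det\mathbf{V}\right|$ with the intensity of the extremal process, bounded via \cite[Proposition 3]{pspinext} by a constant multiple of $e^{c_{p}(u-m_{N})}$, exactly as in the argument following (\ref{eq:75-1-1}) in Lemma \ref{lem:16}. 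No gaps.
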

\begin{proof}
By (\ref{eq:78}), (\ref{eq:z12}) and (\ref{eq:1225-3}) it follows
that, for an appropriate $C>0$, for large $N$,
\begin{align*}
 & \mathbb{E}\left\{ \sum_{\boldsymbol{\sigma}_{0}\in\mathscr{C}_{N}\left(J_{N}\right)}Z_{N,\beta}\left({\rm Band}\left(\boldsymbol{\sigma}_{0},q_{N},q_{N}^{\prime}\right)\right)\right\} \leq C\cdot\omega_{N}\left(\left(N-1\right)\frac{p-1}{2\pi}\right)^{\frac{N-1}{2}}\\
 & \times\int_{J_{N}}du\frac{1}{\sqrt{2\pi N}}e^{-\frac{u^{2}}{2N}}\mathbb{E}_{u,0}\left\{ \left|\det(\mathbf{V})\right|\right\} \left(\mathbb{E}_{u,0}\left\{ \left(Z_{N,\beta}\left({\rm Band}\left(\hat{\mathbf{n}},q_{N},q_{N}^{\prime}\right)\right)\right)^{2}\right\} \right)^{1/2},
\end{align*}
where $\mathbf{V}$ is defined in (\ref{eq:80}). The proof now follows
from the argument used in the proof of Lemma \ref{lem:16} right after
(\ref{eq:75-1-1}).
\end{proof}

\section*{Appendix II: transition to disorder chaos}

The two statements with which we concluded the discussion about transition
to disorder chaos in Section \ref{sec:discussion} follow from the
two proposition below. Recall the definition (\ref{eq:zeta}) of $\zeta_{N}\left(\cdot\right)$
and, similarly to (\ref{eq:q12Q}), set in this section 
\begin{equation}
q_{12}=q_{*}\left(\beta\right)q_{*}\left(\beta\right){\rm \,\,\,\, and\,\,\,}\mathcal{Q}=\left\{ 0,q_{12},\left(-1\right)^{p+1}q_{12}\right\} .\label{eq:q12Q-1}
\end{equation}

\begin{prop}
\label{prop:disorder_chaos-1}For large enough $\beta\in\left(0,\infty\right)$
or $\beta=\infty$ and any $s>0$ we have the following. With $t_{N}=s/N$,
let $M_{N,1}=G_{N,\beta}$ be the Gibbs measure of $H_{N}\left(\boldsymbol{\sigma}\right)$
and $M_{N,2}=G_{N,t_{N},\beta}$ be the Gibbs measure of $H_{N,t_{N}}\left(\boldsymbol{\sigma}\right)$,
see (\ref{eq:Hnt}). Then there exists $v=v(s,\beta)$ such that for
any $\epsilon>0$,
\begin{align}
 & \mbox{any \ensuremath{q_{0}\in\mathcal{Q}}\,\ is charged}:\, & \liminf_{N\to\infty}\zeta_{N}\left(\mathcal{B}\left(q_{0},\epsilon\right)\right)>v,\label{eq:dis1}\\
 & q\notin\mathcal{Q}\mbox{ vanish}: & \lim_{N\to\infty}\zeta_{N}\left(\left(\cup_{q\in\mathcal{Q}}\mathcal{B}\left(q,\epsilon\right)\right)^{c}\right)=0.\label{eq:dis2}
\end{align}

\end{prop}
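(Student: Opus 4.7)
The plan is to proceed in close analogy with the proof of Proposition \ref{prop:temp_chaos}, replacing the identification of critical points of a single Hamiltonian by a near-identification between the critical points of $H_N$ and those of $H_{N,t_N}$. Since the proof of Lemma \ref{lem:choastocrtpts} only invokes Theorem \ref{thm:Geometry} separately on each of the two Gibbs measures, it applies verbatim with $(H_N^{(1)}, H_N^{(2)}) = (H_N, H_{N,t_N})$; the joint Gaussianity of the pair plays no role there. Thus it suffices to analyze
\[
\zeta_{N,c,k}^{\rm crt}(\cdot) = \mathbb{E}\Big\{\sum_{i,j\in[k]} W_i^{(1)} W_j^{(2)}\, \delta_{q_{12}\, R(\boldsymbol{\sigma}_0^{i,(1)},\boldsymbol{\sigma}_0^{j,(2)})}(\cdot)\Big\},
\]
where $(\boldsymbol{\sigma}_0^{i,(1)})_i$ and $(\boldsymbol{\sigma}_0^{j,(2)})_j$ enumerate the critical points of $H_N$ and of $H_{N,t_N}$, respectively, ordered by depth.

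The structural claim I would then establish is a matching-plus-orthogonality dichotomy for the deepest critical points of the two Hamiltonians: with probability tending to $1$ as $N\to\infty$, for each $i\in[k]$ there exists a unique $j\in[k]$ (for even $p$, a unique antipodal pair $\pm j$) with $|R(\boldsymbol{\sigma}_0^{i,(1)},\boldsymbol{\sigma}_0^{j,(2)})|>1-\epsilon$, while all unmatched pairs satisfy $|R|<\epsilon$. For the matching, note that the difference $H_{N,t_N}-H_N$ is a centered Gaussian field whose pointwise standard deviation, together with that of its gradient and Hessian in any fixed frame, is $O(\sqrt{s})$. By Corollary \ref{cor:CrtMin} together with the spectral analysis of the Hessian at deep critical points carried out in \cite{A-BA-C,2nd}, the deepest $k$ critical points of $H_N$ have Hessian eigenvalues bounded away from $0$ with high probability. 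A quantitative implicit function theorem then produces, for each such $\boldsymbol{\sigma}_0$, a critical point $\tilde{\boldsymbol{\sigma}}_0$ of $H_{N,t_N}$ with $\|\tilde{\boldsymbol{\sigma}}_0-\boldsymbol{\sigma}_0\|=O(\sqrt{s})$ and $|H_{N,t_N}(\tilde{\boldsymbol{\sigma}}_0)-H_N(\boldsymbol{\sigma}_0)|=O(\sqrt{s})$, whence $R(\boldsymbol{\sigma}_0,\tilde{\boldsymbol{\sigma}}_0)=1-O(s/N)$. Running the construction in both directions and invoking the tightness of $|\mathscr{C}_N(m_N-\kappa,m_N+\kappa)|$ from Corollary \ref{cor:min} gives a bijective matching among the deepest $k$ critical points.

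For the orthogonality of unmatched pairs, one extends the second-moment argument of \cite{2nd} by applying a two-point Kac-Rice formula to the jointly Gaussian pair $(H_N,H_{N,t_N})$: for fixed $s>0$ and $\epsilon>0$, the annealed count of pairs $(\boldsymbol{\sigma}^{(1)},\boldsymbol{\sigma}^{(2)})\in\mathscr{C}_N^{(1)}\times\mathscr{C}_N^{(2)}$ with both critical values near $-E_0 N$ and $|R(\boldsymbol{\sigma}^{(1)},\boldsymbol{\sigma}^{(2)})|\in(\epsilon,1-\epsilon)$ should decay exponentially in $N$. The calculation is a direct adaptation of the one in \cite{2nd,pspinext}, with the correlation between the two fields entering only through the parameter $1-t_N=1+O(1/N)$. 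Combining the matching, the orthogonality, and the existence (with positive probability) of at least one and at least two distinct deep critical points supplied by (\ref{eq:1225-1}), one obtains (\ref{eq:dis2}) and that each $q_0\in\mathcal{Q}$ is charged as required in (\ref{eq:dis1}): the values $\pm q_{12}$ arise from matched pairs, and $0$ from unmatched pairs.

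The main obstacle will be the second-moment analysis for pairs of critical points of the two correlated Hamiltonians, which requires a joint Kac-Rice formula with explicit control of the conditional Hessian spectrum of each field given the critical-value and gradient constraints of both. A secondary technical point is making the implicit-function-theorem matching uniform in $i\in[k]$ and in $N$, which calls for a uniform lower bound on the spectral gap of the Hessian at the top $k$ deepest critical points; this is expected in view of the GOE-plus-shift representation in Lemma \ref{lem:Hhat_expressions} and the BBP-type threshold used implicitly in \cite{A-BA-C,2nd}.
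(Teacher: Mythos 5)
Your reduction via Lemma \ref{lem:choastocrtpts} and the matching-plus-orthogonality dichotomy is the right skeleton, and it is essentially how the paper argues for (\ref{eq:dis2}). The genuine gap is in the last step, the deduction of (\ref{eq:dis1}): knowing that matched pairs have overlap $\approx q_{12}$ and unmatched pairs have overlap $\approx 0$ does not show that \emph{both} kinds of pairs carry non-vanishing weight under $M_{N,1}\otimes M_{N,2}$. For finite $\beta$ you need, as in the proof of Proposition \ref{prop:temp_chaos}, that with probability bounded below at least two distinct deep critical points (and their matches) carry band mass bounded below under both Gibbs measures, which uses (\ref{eq:1225-2}), Propositions \ref{prop:means}--\ref{prop:Gaussflucts} and the free-energy control, not just the existence statement (\ref{eq:1225-1}). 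For $\beta=\infty$ the problem is sharper: to charge $0$ you must show the global minimizer of $H_{N,t_N}$ sits, with probability bounded below, at the match of a \emph{different} critical point than the minimizer of $H_N$, and to charge $q_{12}$ that it sits at the match of the \emph{same} one. Your implicit-function-theorem bound $|H_{N,t_N}(\tilde{\boldsymbol{\sigma}}_0)-H_N(\boldsymbol{\sigma}_0)|=O(\sqrt{s})$ is only a magnitude estimate and is compatible with either no reordering or a deterministic reordering of the values; it cannot produce these positive-probability statements. The paper gets them from the precise form (\ref{eq:12035}) of the value perturbation, $H_N(\boldsymbol{\sigma})+\sqrt{2s/N}\,H_N'(\boldsymbol{\sigma})-2sC_0+o(1)$: since $H_N'$ is independent of $H_N$, the deep critical values receive (asymptotically) independent Gaussian shifts of variance $2s$ plus a common drift, and Theorem \ref{thm:ext proc} gives order-one gaps, so both orderings occur with probability bounded below. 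Note also that the common drift $-2sC_0$ is an $O(s)$ second-order effect that a first-order IFT expansion must be careful to capture; it is harmless for the ordering precisely because it is common to all points, but its presence shows the expansion has to be carried to the right order.

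A secondary point, about the route: the two-field Kac--Rice second-moment computation that you single out as the main obstacle is not needed. Once the matching is bijective between the deepest critical points of the two Hamiltonians (in the paper this is the content of the modified \cite[Lemmas 8, 10]{pspinext}, recorded as (\ref{eq:12035})--(\ref{eq:12036-1})), near-orthogonality of unmatched pairs follows from the single-field Corollary \ref{cor:orth} together with $R(\boldsymbol{\sigma},\boldsymbol{\sigma}')\geq 1-\delta_N$ by a triangle-inequality argument, so the annealed joint computation can be dropped entirely. Conversely, the matching step itself is where the real work lies: your IFT sketch is plausible, but to be a proof it needs the uniform spectral-gap lower bound at the top $k$ critical points, uniform control of third derivatives in the relevant neighborhoods, surjectivity onto the deep critical points of the perturbed field (your "both directions plus tightness" remark), and, if it is also to serve (\ref{eq:dis1}), the distributional identification of the value shift discussed above -- which is exactly what the imported perturbation lemmas of \cite{pspinext} supply in the paper.
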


\begin{prop}
\label{prop:disorder_chaos-2}For large enough $\beta\in\left(0,\infty\right)$
or $\beta=\infty$ and any sequence $t_{N}=s_{N}/N=o(1)$ with $s_{N}\to\infty$
as $N\to\infty$, with $M_{N,1}=G_{N,\beta}$ and $M_{N,2}=G_{N,t_{N},\beta}$,
\begin{equation}
\forall\epsilon>0:\,\,\,\lim_{N\to\infty}\zeta_{N}\left(\mathcal{B}\left(0,\epsilon\right)\right)=1.\label{eq:13031}
\end{equation}

\end{prop}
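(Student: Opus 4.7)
The approach is to apply Lemma \ref{lem:choastocrtpts}, which, although stated for two copies $H_{N}^{(1)}, H_{N}^{(2)}$ on a common probability space, applies here with $H_{N}^{(1)}=H_{N}$ and $H_{N}^{(2)}=H_{N,t_{N}}$: the interpolation (\ref{eq:Hnt}) is itself a pure $p$-spin Hamiltonian with the same covariance, so Theorem \ref{thm:Geometry} holds for each, which is all the lemma uses. This reduces (\ref{eq:13031}) to showing that for each fixed $c,k>0$ and $\epsilon>0$,
\[
\lim_{N\to\infty}\mathbb{P}\Big\{\max_{i,j\in[k]}\bigl|R(\boldsymbol{\sigma}_{0}^{i,(1)},\boldsymbol{\sigma}_{0}^{j,(2)})\bigr|>\epsilon\Big\}=0,
\]
where $\boldsymbol{\sigma}_{0}^{i,(1)}$ and $\boldsymbol{\sigma}_{0}^{j,(2)}$ are the deepest critical points of $H_{N}$ and $H_{N,t_{N}}$, respectively. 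Using Corollary \ref{cor:min} for each Hamiltonian, with probability tending to $1$ as $L\to\infty$ all these points lie in $\mathscr{C}_{N}(H_{N};m_{N}+[-L,L])$ and $\mathscr{C}_{N}(H_{N,t_{N}};m_{N}+[-L,L])$. The contribution from $|R|>1-\epsilon$ is handled by applying Corollary \ref{cor:orth} separately to $H_{N}$ and to $H_{N,t_{N}}$, combined with a first-moment argument for cross-pairs that are nearly antipodal. The remaining task is to bound, for fixed $L$, the expected number of cross-pairs $(\boldsymbol{\sigma},\boldsymbol{\sigma}')\in\mathscr{C}_{N}(H_{N};m_{N}+[-L,L])\times\mathscr{C}_{N}(H_{N,t_{N}};m_{N}+[-L,L])$ with $|R(\boldsymbol{\sigma},\boldsymbol{\sigma}')|\in(\epsilon,1-\epsilon)$ and show it tends to $0$ as $s_{N}\to\infty$.

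The key computation is a bivariate Kac-Rice formula, proved by extending Lemma \ref{lem:KR} to the joint intensity of pairs of critical points of two coupled Gaussian fields. By rotational invariance one places $\boldsymbol{\sigma}=\hat{\mathbf{n}}$ and integrates $\boldsymbol{\sigma}'$ over the latitudes $R(\hat{\mathbf{n}},\boldsymbol{\sigma}')=R$. The integrand depends on the cross-covariance structure, which for $\boldsymbol{\tau}_{1},\boldsymbol{\tau}_{2}\in\mathbb{S}^{N-1}$ satisfies $\mathbb{E}\{H_{N}(\boldsymbol{\tau}_{1})H_{N,t_{N}}(\boldsymbol{\tau}_{2})\}=(1-t_{N})N\,R(\boldsymbol{\tau}_{1},\boldsymbol{\tau}_{2})^{p}$, with analogous formulas for gradients and Hessians. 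Following the scheme of \cite{A-BA-C} and the joint second-moment analysis of \cite{2nd,pspinext}, the mean number of such pairs will take the form $\exp\{N\Psi_{p}(R,t_{N})+o(N)\}$ for an explicit exponent $\Psi_{p}$ with $\Psi_{p}(0,0)=2\Theta_{p}(-E_{0})=0$. The factor $(1-t_{N})$ in the cross-covariance is what drives the mechanism: at nonzero $R$ it introduces a positive-definite quadratic penalty in the joint Gaussian density of $(H_{N}(\hat{\mathbf{n}}),\nabla H_{N}(\hat{\mathbf{n}}),H_{N,t_{N}}(\boldsymbol{\sigma}'),\nabla H_{N,t_{N}}(\boldsymbol{\sigma}'))$ whose total effect at the critical point conditions is of order $Nt_{N}=s_{N}$ per pair. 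Consequently one should have $\Psi_{p}(R,t_{N})\le -c(R,\epsilon)t_{N}+o(t_{N})$ with $c(R,\epsilon)>0$ uniformly for $R\in[\epsilon,1-\epsilon]$, which together with the bound $s_{N}\to\infty$ gives the desired exponential decay.

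The main obstacle is to establish this strict negativity and to verify that the lower-order terms in the bivariate Kac-Rice evaluation do not overwhelm the $s_{N}$-penalty; in particular one needs uniform control of the determinantal factor, conditional on joint critical point conditions and on the cross-overlap $R$, analogous to the single-field estimates of \cite{A-BA-C,2nd}. As noted in the discussion preceding the proposition, carrying this out in full effectively requires reworking the relevant sections of \cite{A-BA-C,pspinext} in the bivariate setting; the plan is therefore to reduce to those papers by checking that each ingredient there (the Kac-Rice set-up, the determinantal asymptotics using Gaussian fluctuations of the empirical spectral measure, and the second-moment concentration) extends to the two-field situation with explicit tracking of the $t_{N}$-dependent cross terms, and to extract from that analysis the explicit coefficient of $s_{N}$ in $\Psi_{p}(R,t_{N})$. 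A second-moment refinement as in \cite{2nd} together with Markov's inequality then converts the mean estimate into the required probability bound, completing the proof.
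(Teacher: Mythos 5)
Your reduction via Lemma \ref{lem:choastocrtpts} is the same as the paper's, and a bivariate Kac--Rice bound for cross-pairs with overlap in $(\epsilon,1-\epsilon)$ could indeed be pushed through (though the mechanism there is not the order-$s_{N}$ penalty coming from the $(1-t_{N})$ cross-covariance: already at $t_{N}=0$ the expected number of pairs of deep critical points with overlap bounded away from $0$ and $\pm1$ decays exponentially in $N$, which is the second-moment estimate behind Corollary \ref{cor:orth}, so this regime is not where $s_{N}\to\infty$ is needed). The genuine gap is the near-aligned regime $R(\boldsymbol{\sigma}_{0}^{i,(1)},\boldsymbol{\sigma}_{0}^{j,(2)})>1-\epsilon$, which you dispose of in one sentence by ``applying Corollary \ref{cor:orth} separately to $H_{N}$ and $H_{N,t_{N}}$'' plus a first-moment bound for nearly antipodal cross-pairs. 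Corollary \ref{cor:orth} is a single-field statement and says nothing about a pair consisting of one deep critical point of $H_{N}$ and one of $H_{N,t_{N}}$ at overlap close to $+1$; and nearly aligned (not antipodal) cross-pairs are exactly the heart of the proposition: Proposition \ref{prop:disorder_chaos-1} shows that for $t_{N}=s/N$ with $s$ fixed such cross-pairs occur with probability bounded below (this is why $q_{*}^{2}$ is charged there), so any treatment of the near-$1$ regime must use $s_{N}\to\infty$ quantitatively, and your sketch never indicates where it enters. A bivariate Kac--Rice first moment over $R\in(1-\epsilon,1)$ is also far from routine, since as $R\to1$ and $t_{N}\to0$ the joint Gaussian structure of the two values and two gradients degenerates and the estimate must be uniform in that corner.

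The paper's route supplies precisely the missing quantitative input. By the perturbation analysis of \cite{pspinext} (Lemma \ref{lem:44} here), each $\boldsymbol{\sigma}_{0}\in\mathscr{C}_{N}(m_{N}-\kappa,m_{N}+\kappa)$ has a continuation $\mathscr{G}_{N,\kappa}(\boldsymbol{\sigma}_{0})\in\mathscr{C}_{N,t_{N}}(\mathbb{R})$ with $R(\boldsymbol{\sigma}_{0},\mathscr{G}_{N,\kappa}(\boldsymbol{\sigma}_{0}))\geq1-\delta_{N}$ whose $H_{N,t_{N}}$-value is $m_{N}+c_{p}s_{N}+o(s_{N})$: the critical value drifts up by $c_{p}s_{N}\to\infty$, so the continuation is never among the $k$ deepest critical points of $H_{N,t_{N}}$, which sit at height $m_{N}+O(1)$ by Corollary \ref{cor:min}. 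Corollary \ref{cor:orth} is then applied to the single field $H_{N,t_{N}}$, to the pair consisting of a deepest critical point $\boldsymbol{\sigma}_{0,t_{N}}^{j}$ and the continuation $\mathscr{G}_{N,\kappa}(\boldsymbol{\sigma}_{0}^{i})$ (distinct, since their values differ by at least $(c_{p}-\tau)s_{N}$), giving $|R(\boldsymbol{\sigma}_{0,t_{N}}^{j},\mathscr{G}_{N,\kappa}(\boldsymbol{\sigma}_{0}^{i}))|=o(1)$; the location stability $R(\boldsymbol{\sigma}_{0}^{i},\mathscr{G}_{N,\kappa}(\boldsymbol{\sigma}_{0}^{i}))\geq1-\delta_{N}$ then transfers this to $|R(\boldsymbol{\sigma}_{0}^{i},\boldsymbol{\sigma}_{0,t_{N}}^{j})|=o(1)$, handling all overlap values at once, after which Theorem \ref{thm:Geometry} and Lemma \ref{lem:choastocrtpts} conclude. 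Without this value-drift (or an equivalent mechanism making $s_{N}\to\infty$ bite in the near-$1$ regime), your argument cannot exclude the aligned cross-pairs, and the proof does not go through.
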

The rest of the appendix is devoted to the proof of Proposition \ref{prop:disorder_chaos-1}
and a sketch of a proof of Proposition \ref{prop:disorder_chaos-2}.

\subsection*{\label{sub:pfch1}Proof of Proposition \ref{prop:disorder_chaos-1}}

The main tool we use in the proof is the description of the perturbations
of critical points and values under perturbations of the disorder
developed by Zeitouni and the author in \cite{pspinext}. Roughly
speaking, when perturbations of the disorder are as in the statement
of Proposition \ref{prop:disorder_chaos-1}: (i) the position of deep
critical points does not change on the macroscopic level, and (ii)
the change in corresponding critical values is of order $O(1)$. Hence,
typically the collections of critical points around which the original
and perturbed Gibbs measures are concentrated are the same, up to
microscopic changes in their positions. Relying on this, by Lemmas
\ref{lem:choastocrtpts} and \ref{cor:orth} we will identify the
atoms of the overlap distribution as $N\to\infty$ limit. 

Fix $s>0$ throughout the proof and let $t_{N}=s/N$. Set 
\begin{equation}
e_{N}:=e_{N}(s)=\frac{s}{N}(\frac{3}{2}-\frac{s}{N})/\left(1-\frac{s}{N}\right)^{2}=O(1/N)\label{eq:eN}
\end{equation}
and note that with $H_{N}^{\prime}\left(\boldsymbol{\sigma}\right)$
being an independent copy of $H_{N}\left(\boldsymbol{\sigma}\right)$
in the definition (\ref{eq:Hnt}) of $H_{N,t}\left(\boldsymbol{\sigma}\right)$
we have that 
\begin{equation}
\frac{N}{N-s}H_{N,t_{N}}\left(\boldsymbol{\sigma}\right)=H_{N,s,e_{N}}^{+}\left(\boldsymbol{\sigma}\right),\label{eq:H+}
\end{equation}
where 
\begin{equation}
H_{N,s,r}^{+}\left(\boldsymbol{\sigma}\right)\triangleq H_{N}\left(\boldsymbol{\sigma}\right)+\sqrt{\frac{2s}{N}(1+r)}H_{N}^{\prime}\left(\boldsymbol{\sigma}\right).\label{eq:H++}
\end{equation}
In \cite{pspinext} the critical points and values of $H_{N}^{+}\left(\boldsymbol{\sigma}\right)=H_{N,s,r}^{+}\left(\boldsymbol{\sigma}\right)$
with $s=1/2$ and $r_{N}=0$ were related to those of the original
model $H_{N}\left(\boldsymbol{\sigma}\right)$, for large $N$. By
a simple modification the proofs carry over to the more general case
with fixed $s$ and any $r_{N}=o(1)$, and this variant is what we
use below. Define $\mathscr{C}_{N,t}\left(B\right)$ similarly to
$\mathscr{C}_{N}\left(B\right)$ using the random field $H_{N,t}\left(\boldsymbol{\sigma}\right)$.
By \cite[Lemma 8]{pspinext} (modified), for $C_{0}=(E_{0}-c_{p})/2$
(see (\ref{eq:E0}) and (\ref{eq:c_p})) there exists a sequence $\delta_{N}>0$
converging to $0$ as $N\to\infty$, such that for any $\kappa>0$,
with probability that goes to $1$ as $N\to\infty$ there exists a
mapping $\mathscr{G}_{N,\kappa}:\mathscr{C}_{N}\left(m_{N}-\kappa,m_{N}+\kappa\right)\to\mathscr{C}_{N,t_{N}}\left(\mathbb{R}\right)$
such that, if we denote $\boldsymbol{\sigma}'=\mathscr{G}_{N,\kappa}(\boldsymbol{\sigma})$,
then for any $\boldsymbol{\sigma}\in\mathscr{C}_{N}\left(m_{N}-\kappa,m_{N}+\kappa\right)$,
\begin{align}
\mbox{ val. pert.:\,\,} & \left|\frac{N}{N-s}H_{N,t_{N}}\left(\boldsymbol{\sigma}'\right)-\left(H_{N}\left(\boldsymbol{\sigma}\right)+\sqrt{\frac{2s}{N}}H_{N}^{\prime}\left(\boldsymbol{\sigma}\right)-2sC_{0}\right)\right|\leq\delta_{N},\label{eq:12035}\\
\mbox{ loc. pert.:\,\,} & R\left(\boldsymbol{\sigma},\boldsymbol{\sigma}'\right)\geq1-\delta_{N}.\nonumber 
\end{align}
Moreover, from \cite[Lemma 10]{pspinext} (modified) for any $\kappa'>0$,
\begin{equation}
\lim_{\kappa\to\infty}\liminf_{N\to\infty}\mathbb{P}\left\{ \mathscr{C}_{N,t_{N}}\left(m_{N}-\kappa',m_{N}+\kappa'\right)\subset\mathscr{G}_{N,\kappa}\left(\mathscr{C}_{N}\left(m_{N}-\kappa,m_{N}+\kappa\right)\right)\right\} =1.\label{eq:12036}
\end{equation}
We note that $H_{N}^{\prime}$ and $\mathscr{C}_{N}\left(m_{N}-\kappa,m_{N}+\kappa\right)$
are independent. Thus, from (\ref{eq:12035}), Corollary \ref{cor:min}
and an application of the union bound to bound $|H_{N}^{\prime}(\boldsymbol{\sigma}_{0})|$
uniformly in $\boldsymbol{\sigma}_{0}\in\mathscr{C}_{N}\left(m_{N}-\kappa,m_{N}+\kappa\right)$,
we also have
\begin{equation}
\lim_{\kappa\to\infty}\liminf_{N\to\infty}\mathbb{P}\left\{ \mathscr{C}_{N,t_{N}}\left(m_{N}-\kappa,m_{N}+\kappa\right)\supset\mathscr{G}_{N,\kappa}\left(\mathscr{C}_{N}\left(m_{N}-\kappa',m_{N}+\kappa'\right)\right)\right\} =1.\label{eq:12036-1}
\end{equation}

Denote by $\boldsymbol{\sigma}_{0,t_{N}}^{i}$ the critical points
of $H_{N,t_{N}}\left(\boldsymbol{\sigma}\right)$, similarly to $\boldsymbol{\sigma}_{0}^{i}$.
Let $\rho>0$ be an arbitrary number and fix an integer $i\neq0$,
assumed to be positive if $p$ is odd. From the above, we have that
for large enough real $\kappa>0$ and integer $k\geq1$, for large
$N$ with probability at least $1-\rho$, there exists $\mathscr{G}_{N,\kappa}$
as above, $\boldsymbol{\sigma}_{0}^{i}\in\mathscr{C}_{N}\left(m_{N}-\kappa,m_{N}+\kappa\right)$,
$\mathscr{G}_{N,\kappa}\left(\boldsymbol{\sigma}_{0}^{i}\right)=\boldsymbol{\sigma}_{0,t_{N}}^{j}$
for some $j\in[k]$, and $R(\boldsymbol{\sigma}_{0}^{i},\boldsymbol{\sigma}_{0,t_{N}}^{j})\geq1-\delta_{N}$.
By Corollaries \ref{cor:min} and \ref{cor:orth}, for large $N$
with probability at least $1-\rho$, for any $l\in[k]$, $l\neq\pm j$,
$|R(\boldsymbol{\sigma}_{0,t_{N}}^{l},\boldsymbol{\sigma}_{0,t_{N}}^{j})|\leq\delta_{N}^{\prime}$,
for some sequence $\delta_{N}^{\prime}=o(1)$. Thus, by Theorem \ref{thm:Geometry},
applied both to $H_{N}\left(\boldsymbol{\sigma}\right)$ and $H_{N,t_{N}}\left(\boldsymbol{\sigma}\right)$
(which are identically distributed), and by Lemma \ref{lem:choastocrtpts}
we have that (\ref{eq:dis2}) holds, in the case of large finite $\beta$.
By similar arguments to those in the proof of Proposition \ref{prop:temp_chaos},
(\ref{eq:dis1}) can also be proved.

Assuming $p$ is odd, let $\boldsymbol{\sigma}_{*}$ and $\boldsymbol{\sigma}_{*}^{t_{N}}$
be the global minimum of $H_{N}\left(\boldsymbol{\sigma}\right)$
and of $H_{N,t_{N}}\left(\boldsymbol{\sigma}\right)$, respectively.
From the above and Corollaries \ref{cor:min} and \ref{cor:orth},
two conclusions follow. First, for any $\delta>0$, 
\[
\lim_{N\to\infty}\mathbb{P}\left\{ R\left(\boldsymbol{\sigma}_{*},\boldsymbol{\sigma}_{*}^{t_{N}}\right)\in(-\delta,\delta)\cup(1-\delta,1)\right\} =1.
\]
Second, using Theorem \ref{thm:ext proc} and the fact that the Hamiltonian
$H_{N}^{\prime}\left(\boldsymbol{\sigma}\right)$ is independent of
$\mathscr{C}_{N}\left(\mathbb{R}\right)$, there exists $\rho>0$
small enough such that both the probability that $R\left(\boldsymbol{\sigma}_{*},\boldsymbol{\sigma}_{*}^{t_{N}}\right)\geq1-\delta$
and the probability that $\left|R\left(\boldsymbol{\sigma}_{*},\boldsymbol{\sigma}_{*}^{t_{N}}\right)\right|\leq\delta$
are larger than $\rho$. This completes the proof of Proposition \ref{prop:disorder_chaos-1}
for the case where $\beta=\infty$ and $p$ is odd. The case with
even $p$ follows similarly, by taking into account the fact that
$H_{N}\left(\boldsymbol{\sigma}\right)=H_{N}\left(-\boldsymbol{\sigma}\right)$.\qed

\subsection*{Sketch of proof of Proposition \ref{prop:disorder_chaos-2}}

As in the proof of Proposition \ref{prop:disorder_chaos-1}, here
too the perturbations of critical points and values will play an important
role. However, in contrast to the setting of Proposition \ref{prop:disorder_chaos-1},
perturbations of the critical values will diverge as $N\to\infty$
in the current setting.

Suppose that $H_{N,t}\left(\boldsymbol{\sigma}\right)$ is defined
by (\ref{eq:Hnt}) with $H_{N}^{\prime}\left(\boldsymbol{\sigma}\right)$
being an independent copy of $H_{N}\left(\boldsymbol{\sigma}\right)$.
Denote by $\mathscr{C}_{N,t}\left(\mathbb{R}\right)$ the set of critical
points of $H_{N,t}(\boldsymbol{\sigma})$ and set $C_{0}=(E_{0}-c_{p})/2$
(see (\ref{eq:E0}) and (\ref{eq:c_p})). Below we will explain how
the proof of Lemma 8 of \cite{pspinext} can be modified to obtain
the following. 
\begin{lem}
\label{lem:44}Suppose $s_{N}>0$ is a sequence such that $s_{N}=o(N)$
and $s_{N}\to\infty$ as $N\to\infty$ and set $t_{N}=s_{N}/N$. Then
there exists a sequence $\delta_{N}>0$ converging to $0$ as $N\to\infty$,
such that for any $\kappa>0$, with probability that goes to $1$
as $N\to\infty$, there exists a mapping $\mathscr{G}_{N,\kappa}:\mathscr{C}_{N}\left(m_{N}-\kappa,m_{N}+\kappa\right)\to\mathscr{C}_{N,t_{N}}\left(\mathbb{R}\right)$
such that, denoting $\boldsymbol{\sigma}'=\mathscr{G}_{N,\kappa}(\boldsymbol{\sigma})$,
for any $\boldsymbol{\sigma}\in\mathscr{C}_{N}\left(m_{N}-\kappa,m_{N}+\kappa\right)$
we have that

\begin{align}
\mbox{ val. pert.:\,\,} & \left|\frac{N}{N-s_{N}}H_{N,t_{N}}\left(\boldsymbol{\sigma}'\right)-\left(H_{N}\left(\boldsymbol{\sigma}\right)+\sqrt{\frac{2s_{N}}{N}}H_{N}^{\prime}\left(\boldsymbol{\sigma}\right)-2s_{N}C_{0}\right)\right|\leq\delta_{N}s_{N},\label{eq:12039}\\
\mbox{ loc. pert.:\,\,} & R\left(\boldsymbol{\sigma},\boldsymbol{\sigma}'\right)\geq1-\delta_{N}.\label{eq:120310}
\end{align}

\end{lem}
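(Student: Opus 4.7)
The plan is to follow the proof of \cite[Lemma~8]{pspinext} but to track error terms on the scale $o(s_N)$ rather than $o(1)$. By (\ref{eq:H+}) it is equivalent to work with $\tilde H_N(\boldsymbol{\sigma}) := H_N(\boldsymbol{\sigma}) + \epsilon_N H'_N(\boldsymbol{\sigma})$ in place of $\tfrac{N}{N-s_N}H_{N,t_N}$, where $\epsilon_N = \sqrt{(2s_N/N)(1+e_N)}$ and $e_N = O(s_N/N) = o(1)$. Since $\epsilon_N\to 0$ the perturbation remains small in the Euclidean sense and a local perturbation argument applies; the hypothesis $s_N\to\infty$ will enter only at the end, to relax the required error bound from $o(1)$ to $o(s_N)$.

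By Corollary \ref{cor:min}, the cardinality of $\mathscr{C}_N(m_N-\kappa,m_N+\kappa)$ is tight in $N$, so pointwise statements for a single $\boldsymbol{\sigma}$ in this set automatically become uniform after a tight union bound. Fix such a $\boldsymbol{\sigma}$. By Lemma \ref{lem:Hhat_expressions} and the semicircle law, with probability tending to $1$ the Hessian $\nabla^2 H_N(\boldsymbol{\sigma})$ is positive definite with smallest eigenvalue at least some $c(\kappa)>0$ (this uses $H_N(\boldsymbol{\sigma})/N = -E_0 + o(1)$ and $E_0 > E_\infty$). A Banach fixed-point argument for $\nabla\tilde H_N(\boldsymbol{\sigma}+\boldsymbol{v})=0$, posed in a geodesic ball of radius $O(\sqrt{s_N})$ in $T_{\boldsymbol{\sigma}}\mathbb{S}^{N-1}$, produces a unique critical point $\boldsymbol{\sigma}':=\mathscr{G}_{N,\kappa}(\boldsymbol{\sigma})$ of $\tilde H_N$ with
\begin{equation*}
\boldsymbol{v}^\ast = -\epsilon_N(\nabla^2 H_N(\boldsymbol{\sigma}))^{-1}\nabla H'_N(\boldsymbol{\sigma}) + O(\epsilon_N^2\sqrt{N}).
\end{equation*}
In particular $\|\boldsymbol{v}^\ast\|^2 = O(s_N)$, so $R(\boldsymbol{\sigma},\boldsymbol{\sigma}')=1-O(s_N/N)=1-o(1)$, proving (\ref{eq:120310}). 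Injectivity of $\mathscr{G}_{N,\kappa}$, and that its image lies in $\mathscr{C}_{N,t_N}(\mathbb{R})$, follow from Corollary \ref{cor:orth} together with the uniqueness in the fixed-point construction.

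A Taylor expansion of $\tilde H_N$ at $\boldsymbol{\sigma}$, using $\nabla H_N(\boldsymbol{\sigma})=0$ and the above form of $\boldsymbol{v}^\ast$, yields
\begin{equation*}
\tilde H_N(\boldsymbol{\sigma}') = H_N(\boldsymbol{\sigma}) + \epsilon_N H'_N(\boldsymbol{\sigma}) - \tfrac{\epsilon_N^2}{2}\,\nabla H'_N(\boldsymbol{\sigma})^T (\nabla^2 H_N(\boldsymbol{\sigma}))^{-1}\nabla H'_N(\boldsymbol{\sigma}) + \mathcal{R}_N(\boldsymbol{\sigma}),
\end{equation*}
where $\mathcal{R}_N(\boldsymbol{\sigma})$ collects the cubic and higher-order Taylor terms of $H_N$ and of $\epsilon_N H'_N$ evaluated on $\boldsymbol{v}^\ast$. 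Using $\|\boldsymbol{v}^\ast\|=O(\sqrt{s_N})$, tail bounds on the higher derivatives of $H_N$ and $H'_N$ (available from the decomposition of Section \ref{sec:Decomposition}, since the third derivative of a pure $p$-spin has operator norm $O(1/\sqrt{N})$ on the relevant scale), and $s_N=o(N)$, one checks $\mathcal{R}_N(\boldsymbol{\sigma})=O(s_N^{3/2}/\sqrt{N})=o(s_N)$; the discrepancy $(\epsilon_N - \sqrt{2s_N/N})H'_N(\boldsymbol{\sigma})=O(e_N\sqrt{s_N\log N})=o(s_N)$ is absorbed similarly via $|H'_N(\boldsymbol{\sigma})|=O(\sqrt{N\log N})$ by Gaussian tails.

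It remains to identify the quadratic form with $2s_NC_0$, which is the main technical step. Conditionally on $\boldsymbol{\sigma}$ being a critical point of $H_N$ and on $\nabla^2 H_N(\boldsymbol{\sigma})$, the vector $\nabla H'_N(\boldsymbol{\sigma})$ is independent (since $H'_N$ is) and distributed as $\mathcal{N}(0,p\mathbf{I}_{N-1})$ by Lemma \ref{lem:Hhat_expressions}. Hence the quadratic form has conditional mean $p\cdot\mathrm{tr}((\nabla^2 H_N(\boldsymbol{\sigma}))^{-1})$ and $O(\sqrt{N})$-scale fluctuations by Hanson-Wright, so that $\tfrac{\epsilon_N^2}{2}$ times the deviation is $O(s_N/\sqrt{N})=o(s_N)$. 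Using the decomposition $\nabla^2 H_N(\boldsymbol{\sigma}) = \mathbf{G}(\boldsymbol{\sigma}) - (p/N)H_N(\boldsymbol{\sigma})\mathbf{I}$ from Lemma \ref{lem:Hhat_expressions}, the GOE relation (\ref{eq:GMrel}), the semicircle law (\ref{eq:semicirc}), and $H_N(\boldsymbol{\sigma})/N=-E_0+o(1)$, the Stieltjes transform of the semicircle yields
\begin{equation*}
\tfrac{1}{N}\mathrm{tr}\bigl((\nabla^2 H_N(\boldsymbol{\sigma}))^{-1}\bigr) \longrightarrow \tfrac{2}{pE_\infty^2}\bigl(E_0 - \sqrt{E_0^2 - E_\infty^2}\bigr) = \tfrac{E_0-c_p}{p} = \tfrac{2C_0}{p},
\end{equation*}
using the definition of $c_p$ in (\ref{eq:c_p}); multiplying by $\tfrac{\epsilon_N^2}{2}\cdot p=(1+o(1))s_N$ gives $2s_NC_0+o(s_N)$ and completes the identification. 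The main obstacle is the quantitative bookkeeping: each concentration input (Hanson-Wright, eigenvalue concentration for the shifted GOE, the $o(1)$ rate of the trace limit, and the Taylor remainder) must be summoned with an explicit $o(s_N)$ rate after conditioning on $\boldsymbol{\sigma}\in\mathscr{C}_N(m_N-\kappa,m_N+\kappa)$, and this conditioning is the delicate part handled via the Kac-Rice framework of Appendix I and the eigenvalue/determinant concentration of \cite[Corollary~23]{2nd}.
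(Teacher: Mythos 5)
Your proposal is correct in strategy and, importantly, it recovers the right constant: the identification of the shift $-2s_{N}C_{0}$ through the conditional mean $p\,\mathrm{tr}\bigl((\nabla^{2}H_{N}(\boldsymbol{\sigma}))^{-1}\bigr)$ of the quadratic form, the Stieltjes transform of the semicircle law at $\gamma_{p}E_{0}>2$, and the definition (\ref{eq:c_p}) of $c_{p}$ is exactly the mechanism behind $C_{0}=(E_{0}-c_{p})/2$, and your order-of-magnitude bookkeeping (cubic remainder $O(s_{N}^{3/2}N^{-1/2})=o(s_{N})$, Hanson--Wright fluctuations $O(s_{N}N^{-1/2})$, displacement $\|\boldsymbol{v}^{*}\|=O(\sqrt{s_{N}})$ so $1-R=O(s_{N}/N)$) matches what is needed. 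The route, however, is genuinely different from the paper's. The paper first reduces Lemma \ref{lem:44} to Lemma \ref{lem:45} by the exact algebraic identity $\frac{N}{N-s_{N}'}H_{N,t_{N}'}=H_{N}+\sqrt{2s_{N}/N}\,H_{N}'$ with $s_{N}=s_{N}'(1+e_{N})$, and then proves Lemma \ref{lem:45} by rerunning the ``good critical points'' machinery of \cite[Lemma 8]{pspinext}: rescaled conditions $g_{i}(\boldsymbol{\sigma})\in B_{i}$ involving an auxiliary scale $\hat{s}_{N}$ with $s_{N}=o(\hat{s}_{N})$ and $\hat{s}_{N}^{3/2}=o(N^{1/2}s_{N})$ (the same cubic-remainder constraint you isolate), first-moment/Kac--Rice bounds showing all critical points in the window are good, and a trapping argument comparing the value at an approximate minimizer with the infimum on a sphere of radius $w_{0}N^{-1/2}\sqrt{s_{N}}$, rather than your Banach fixed-point construction. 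What your version buys is transparency and self-containment --- the origin of $C_{0}$ and the role of $s_{N}=o(N)$, $s_{N}\to\infty$ are visible at a glance; what the paper's version buys is that all the delicate measurability, non-degeneracy and chart-level details are inherited, as a diff, from the already-written proof in \cite{pspinext}.

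The one place where your sketch is thinner than it should be is precisely the part the paper outsources: statements about $H_{N}$ at its own critical points (uniform spectral gap of $\nabla^{2}H_{N}(\boldsymbol{\sigma}_{0})$, and concentration of $\frac{1}{N}\mathrm{tr}\bigl((\nabla^{2}H_{N}(\boldsymbol{\sigma}_{0}))^{-1}\bigr)$ at $2C_{0}/p$, uniformly over $\mathscr{C}_{N}(m_{N}-\kappa,m_{N}+\kappa)$) cannot be obtained by ``a tight union bound,'' since conditioning on being a critical point is a measure-zero event; they require the transfer of Lemma \ref{lem:16} (or Lemma \ref{lem:15}) with bad events formulated under $\mathbb{P}_{u,0}$, where the Hessian is the shifted GOE of (\ref{eq:GMrel}) and the needed large-deviation and resolvent-trace estimates are available. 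You do flag this in your last sentence, and the inputs exist, but in a full write-up this step (together with uniform bounds on second and third derivatives of $H_{N},H_{N}'$ near the critical points, needed for the contraction and remainder estimates) is the bulk of the work; by contrast, the $H_{N}'$-measurable events (values and gradients of the independent copy at the critical points, Hanson--Wright) can indeed be handled by conditioning on $H_{N}$ and a union bound over the tight number of points from Corollary \ref{cor:min}, as you do.
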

If for some $a\in(0,1/2)$, $\boldsymbol{\sigma}$ and $\boldsymbol{\sigma}'$
are two points that satisfy (\ref{eq:12039}) and 
\begin{equation}
\left|H_{N}\left(\boldsymbol{\sigma}\right)-m_{N}\right|,\left|N^{-1/2}H_{N}^{\prime}\left(\boldsymbol{\sigma}\right)\right|<s_{N}^{a},\label{eq:o2}
\end{equation}
then 
\begin{align*}
H_{N,t_{N}}\left(\boldsymbol{\sigma}'\right) & =\left(1-\frac{s_{N}}{N}\right)\left(H_{N}\left(\boldsymbol{\sigma}\right)+\sqrt{\frac{2s_{N}}{N}}H_{N}^{\prime}\left(\boldsymbol{\sigma}\right)-2s_{N}C_{0}\right)+o(s_{N})\\
 & =m_{N}+\left(E_{0}-2C_{0}\right)s_{N}+o(s_{N})=m_{N}+c_{p}s_{N}+o(s_{N}),
\end{align*}
where we used the fact that $m_{N}/N\to-E_{0}$. Thus, from Corollary
\ref{cor:min} and the fact that $H_{N}^{\prime}$ and $H_{N}$ are
independent, (\ref{eq:o2}) holds for any $\boldsymbol{\sigma}\in\mathscr{C}_{N}\left(m_{N}-\kappa,m_{N}+\kappa\right)$
with high probability, and for any $k\geq1$, 
\begin{align}
\lim_{\kappa\to\infty}\lim_{N\to\infty}\mathbb{P}\left\{ \forall i\in[k]:\,\boldsymbol{\sigma}_{0}^{i}\in\mathscr{C}_{N}\left(m_{N}-\kappa,m_{N}+\kappa\right),\vphantom{\left|H_{N,t}\left(\mathscr{G}_{N,\kappa}(\boldsymbol{\sigma}_{0}^{i})\right)-m_{N}-c_{p}s_{N}\right|}\right.\nonumber \\
\left.\left|H_{N,t_{N}}\left(\mathscr{G}_{N,\kappa}(\boldsymbol{\sigma}_{0}^{i})\right)-m_{N}-c_{p}s_{N}\right|\leq\tau s_{N}\right\}  & =1,\label{eq:a10}
\end{align}
where $\tau\in(0,c_{p})$ is an arbitrary constant.

Now, let $\epsilon'>0$, fix an arbitrary $k\geq1$ and let $i,\, j\in[k]$.
Let $\boldsymbol{\sigma}_{0,t_{N}}^{i}$ denote the critical points
of $H_{N,t_{N}}\left(\boldsymbol{\sigma}\right)$, defined similarly
to $\boldsymbol{\sigma}_{0}^{i}$. From the above, for large enough
$\kappa>0$, for large $N$ with probability at least $1-\epsilon'$,
there exists $\mathscr{G}_{N,\kappa}$ as above, $\boldsymbol{\sigma}_{0}^{i}\in\mathscr{C}_{N}\left(m_{N}-\kappa,m_{N}+\kappa\right)$,
and $H_{N,t_{N}}\left(\mathscr{G}_{N,\kappa}(\boldsymbol{\sigma}_{0}^{i})\right)\geq m_{N}+(c_{p}-\tau)s_{N}$,
and $R(\boldsymbol{\sigma}_{0}^{i},\mathscr{G}_{N,\kappa}(\boldsymbol{\sigma}_{0}^{i}))\geq1-\delta_{N}$
(where we recall that $c_{p}-\tau>0$). By Corollaries \ref{cor:min}
and \ref{cor:orth}, since for large $N$, $(c_{p}-\tau)s_{N}>\kappa$
and since $s_{N}=o(1)$, we have that with probability at least $1-\epsilon$,
$|R(\boldsymbol{\sigma}_{0,t_{N}}^{j},\mathscr{G}_{N,\kappa}(\boldsymbol{\sigma}_{0}^{i}))|\leq\delta_{N}^{\prime}$,
where $\delta_{N}^{\prime}>0$ is some sequence with $\delta_{N}^{\prime}=o(1)$.
Therefore, with probability at least $1-2\epsilon'$, for large $N$,
$|R(\boldsymbol{\sigma}_{0,t_{N}}^{j},\boldsymbol{\sigma}_{0}^{i})|\leq\delta_{N}^{\prime\prime}$,
for some sequence $\delta_{N}^{\prime\prime}>0$ with $\delta_{N}^{\prime\prime}=o(1)$.
By Theorem \ref{thm:Geometry} and Lemma \ref{lem:choastocrtpts},
(\ref{eq:13031}) follows. What remains is to explain how the proof
of \cite[Lemma 8]{pspinext} can be modified to prove Lemma \ref{lem:44}.

\subsubsection*{Sketch of proof of Lemma \ref{lem:44}}

Suppose $s_{N}^{\prime}>0$ is an arbitrary sequence such that $s_{N}^{\prime}=o(N)$
and $s_{N}^{\prime}\to\infty$ as $N\to\infty$ and set $t_{N}^{\prime}=s_{N}^{\prime}/N$.
Define $H_{N,s,r}^{+}\left(\boldsymbol{\sigma}\right)$ as in (\ref{eq:H++})
and note that $H_{N,s(1+r),0}^{+}\left(\boldsymbol{\sigma}\right)=H_{N,s,r}^{+}\left(\boldsymbol{\sigma}\right)$.
Therefore, setting $s_{N}=s_{N}^{\prime}(1+e_{N})$ where $e_{N}$
are defined in (\ref{eq:eN}), by (\ref{eq:H+}) we have that 
\begin{equation}
\frac{N}{N-s_{N}^{\prime}}H_{N,t_{N}^{\prime}}\left(\boldsymbol{\sigma}\right)=H_{N,s_{N},0}^{+}\left(\boldsymbol{\sigma}\right)=H_{N}\left(\boldsymbol{\sigma}\right)+\sqrt{\frac{2s_{N}}{N}}H_{N}^{\prime}\left(\boldsymbol{\sigma}\right)=:H_{N,s_{N}}^{+}\left(\boldsymbol{\sigma}\right).\label{eq:o1}
\end{equation}
Denote the set of critical points of $H_{N,s_{N}}^{+}\left(\boldsymbol{\sigma}\right)$
by $\mathscr{C}_{N,s_{N}}^{+}\left(\mathbb{R}\right)$ and note that
it coincides with the set of critical points of $H_{N,t_{N}^{\prime}}\left(\boldsymbol{\sigma}\right)$.
Thus, from (\ref{eq:o1}), by increasing $\delta_{N}$ if needed,
we have that in order to prove Lemma \ref{lem:44}, it is enough to
show the following. 
\begin{lem}
\label{lem:45}For any $s_{N}>0$ such that $s_{N}=o(N)$ and $s_{N}\to\infty$
as $N\to\infty$, we have the following. There exists a sequence $\delta_{N}>0$
converging to $0$ as $N\to\infty$, such that for any $\kappa>0$,
with probability that goes to $1$ as $N\to\infty$, there exists
a mapping $\mathscr{G}_{N,\kappa}:\mathscr{C}_{N}\left(m_{N}-\kappa,m_{N}+\kappa\right)\to\mathscr{C}_{N,s_{N}}^{+}\left(\mathbb{R}\right)$
such that, denoting $\boldsymbol{\sigma}'=\mathscr{G}_{N,\kappa}(\boldsymbol{\sigma})$,
for any $\boldsymbol{\sigma}\in\mathscr{C}_{N}\left(m_{N}-\kappa,m_{N}+\kappa\right)$
we have that
\begin{align}
\mbox{ val. pert.:\,\,} & \left|H_{N,s_{N}}^{+}\left(\boldsymbol{\sigma}'\right)-\left(H_{N}\left(\boldsymbol{\sigma}\right)+\sqrt{\frac{2s_{N}}{N}}H_{N}^{\prime}\left(\boldsymbol{\sigma}\right)-2s_{N}C_{0}\right)\right|\leq\delta_{N}s_{N},\label{eq:12039-3}\\
\mbox{ loc. pert.:\,\,} & R\left(\boldsymbol{\sigma},\boldsymbol{\sigma}'\right)\geq1-\delta_{N}.\label{eq:120310-3}
\end{align}

\end{lem}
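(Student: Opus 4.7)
The plan is to extend the flow-based construction used in Lemma 8 of \cite{pspinext} (where $s$ is bounded) to the regime $s_N \to \infty$, $s_N = o(N)$. For each $\sigma_0 \in \mathscr{C}_N(m_N - \kappa, m_N + \kappa)$ I will continue $\sigma_0$ as a critical point along the one-parameter family $\{H_{N,s}^+\}_{s \in [0, s_N]}$ via the implicit function theorem, obtaining a smooth flow $s \mapsto \sigma(s)$, and set $\mathscr{G}_{N,\kappa}(\sigma_0) = \sigma(s_N)$. Assertions (\ref{eq:120310-3}) and (\ref{eq:12039-3}) are then verified by controlling the position and value drifts uniformly for $s \in [0, s_N]$.

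\emph{Position control.} By Corollary \ref{cor:CrtMin} and Lemma \ref{lem:Hhat_expressions}, $\nabla^2 H_N(\sigma_0)$ has spectrum asymptotically in $[p(E_0 - E_\infty), p(E_0 + E_\infty)]$: its eigenvalues are a $pE_0$-shift of those of $\mathbf{G}(\sigma_0)$, the latter of operator norm $pE_\infty(1+o(1))$. Since $E_0 > E_\infty$, this Hessian is uniformly invertible. The perturbation $\sqrt{2s/N}\,\nabla^2 H_N'$ has operator norm $O(\sqrt{s}) = o(1)$ uniformly in a neighborhood of $\sigma_0$, so $\nabla^2 H_{N,s}^+$ remains uniformly invertible and the IFT yields the flow. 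Differentiating $\nabla H_{N,s}^+(\sigma(s)) = 0$ in $s$ and using $\|\nabla H_N'\|_2 = O(\sqrt{N})$ uniformly along the flow (via Borell--TIS plus a chaining estimate on $\nabla^2 H_N'$) gives $\|\sigma'(s)\|_2 = O(1/\sqrt{s})$, which integrates to $\|\sigma(s_N) - \sigma_0\|_2 = O(\sqrt{s_N})$. Hence $R(\sigma_0, \sigma(s_N)) = 1 - O(s_N/N) = 1 - o(1)$, establishing (\ref{eq:120310-3}).

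\emph{Value evolution.} By the envelope identity, $\tfrac{d}{ds} H_{N,s}^+(\sigma(s)) = \partial_s H_{N,s}^+(\sigma(s)) = (2sN)^{-1/2} H_N'(\sigma(s))$. Taylor-expanding $H_N'(\sigma(s))$ about $\sigma_0$ and substituting the leading-order ODE solution $\sigma(s) - \sigma_0 \approx -\sqrt{2s/N}\,\mathcal{H}^{-1} \nabla H_N'(\sigma_0)$, with $\mathcal{H} = \nabla^2 H_N(\sigma_0)$, integration gives
\begin{equation*}
H_{N,s_N}^+(\sigma(s_N)) - H_N(\sigma_0) = \sqrt{\tfrac{2s_N}{N}}\,H_N'(\sigma_0) - \tfrac{s_N}{N} Q + \mathrm{err}_N,
\end{equation*}
where $Q = \nabla H_N'(\sigma_0)^T \mathcal{H}^{-1} \nabla H_N'(\sigma_0)$. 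Since $H_N'$ is independent of $H_N$, $\nabla H_N'(\sigma_0) \sim \mathcal{N}(0, p\mathbf{I}_{N-1})$ is independent of $\mathcal{H}$, and the spectrum of $\mathcal{H}/\sqrt{p(p-1)}$ converges to $\nu^*$ shifted by $\gamma_p E_0$. The standard Stieltjes-transform identity $\int (\mu + \gamma_p E_0)^{-1} d\nu^*(\mu) = \tfrac{1}{2}(\gamma_p E_0 - \sqrt{(\gamma_p E_0)^2 - 4})$ then yields
\begin{equation*}
\mathbb{E}[Q/N \mid \mathcal{H}] \to \gamma_p \int \frac{d\nu^*(\mu)}{\mu + \gamma_p E_0} = \tfrac{\gamma_p^2}{2}\bigl(E_0 - \sqrt{E_0^2 - E_\infty^2}\bigr) = E_0 - c_p = 2C_0,
\end{equation*}
where the last equalities use $E_\infty = 2/\gamma_p$ and (\ref{eq:c_p}). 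Thus $-\tfrac{s_N}{N} Q$ produces the deterministic shift $-2s_N C_0$ of (\ref{eq:12039-3}) at leading order.

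\emph{Main obstacle: uniform concentration up to $o(s_N)$.} The principal technical step is to show that $|s_N Q/N - 2s_N C_0| = o(s_N)$ and $|\mathrm{err}_N| = o(s_N)$ hold with probability tending to $1$, uniformly over the (tight in $\kappa$ by Corollary \ref{cor:min}) family of deep critical points. Hanson--Wright conditional on $\mathcal{H}$ gives $|Q - p\,\mathrm{tr}(\mathcal{H}^{-1})| = O_{\mathbb{P}}(\sqrt{N})$, contributing $O(s_N/\sqrt{N}) = o(s_N)$. Concentration of the linear statistic $\mathrm{tr}(\mathcal{H}^{-1})/N$ around its limit on scale $o(1)$ follows from convergence of linear statistics of GOE matrices (cf.\ \cite[Example 9.3]{BaiSilverstein}, invoked already in Lemma \ref{lem:BaikLee}). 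The quadratic-Taylor remainder of $H_N'$ and the nonlinear correction to the ODE together give $|\mathrm{err}_N| = O(s_N^{3/2}/\sqrt{N}) = o(s_N)$. A union bound over the $O_{\mathbb{P}}(1)$-many critical points in $\mathscr{C}_N(m_N - \kappa, m_N + \kappa)$ propagates the estimates uniformly. Finally, injectivity of $\mathscr{G}_{N,\kappa}$ follows from Corollary \ref{cor:orth}: distinct deep critical points of $H_N$ are antipodal or nearly orthogonal, and the flow moves each by overlap $1 - o(1)$, so their images remain distinguishable.
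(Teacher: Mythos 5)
Your route is genuinely different from the paper's. The paper proves Lemma \ref{lem:45} by rerunning the machinery of \cite{pspinext}: it redefines the ``good point'' conditions (the functions $g_{i}$ and sets $B_{i}$, now involving the scales $\sqrt{s_{N}}$ and an auxiliary $\hat{s}_{N}$), verifies via Kac--Rice-type arguments that with high probability every point of $\mathscr{C}_{N}(m_{N}-\kappa,m_{N}+\kappa)$ is good, and then produces $\boldsymbol{\sigma}'$ by a variational comparison: the value of the perturbed field at a well-chosen interior point $Y_{\boldsymbol{\sigma}}$ lies strictly below the infimum over a sphere of radius $w_{0}N^{-1/2}\sqrt{s_{N}}$ around the critical point, forcing a local minimum of $H_{N,s_{N}}^{+}$ inside, whose value is read off from the approximation $\bar{f}_{\boldsymbol{\sigma},apx}^{+}$. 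You instead continue each critical point by a quantitative implicit-function-theorem flow in $s$ and expand to second order; your bookkeeping (displacement $O(\sqrt{s_{N}})$, hence $1-R=O(s_{N}/N)$, and error $O(s_{N}^{3/2}/\sqrt{N})=o(s_{N})$) is consistent, and your identification of the drift, $p\,\mathrm{tr}(\mathcal{H}^{-1})/N\to\gamma_{p}\int(\mu+\gamma_{p}E_{0})^{-1}d\nu^{*}(\mu)=E_{0}-c_{p}=2C_{0}$, is a clean independent explanation of why the constant $C_{0}=(E_{0}-c_{p})/2$ appears. What the paper's route buys is that it never needs invertibility of the Hessian along a path (only conditions at the original point, verified wholesale by Kac--Rice) and it reuses the estimates of \cite{pspinext} almost verbatim; what yours would buy is a more self-contained and transparent mechanism.

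However, two steps are genuinely under-justified as written. First, the uniform spectral gap: Corollary \ref{cor:CrtMin} only says that deep critical points are local minima (index $0$), and Lemma \ref{lem:Hhat_expressions} describes the Hessian law at the fixed point $\hat{\mathbf{n}}$, not at the random critical points, where the conditional law of $\mathbf{G}$ carries a $|\det|$ tilt. The statement you actually need --- $\lambda_{\min}(\nabla^{2}H_{N}(\boldsymbol{\sigma}_{0}))\geq p(E_{0}-E_{\infty})-\epsilon$ and $p\,\mathrm{tr}(\mathcal{H}^{-1})/N=2C_{0}+o(1)$ simultaneously for all $\boldsymbol{\sigma}_{0}\in\mathscr{C}_{N}(m_{N}-\kappa,m_{N}+\kappa)$ --- concerns events that are functions of $H_{N}$ evaluated at its own critical points, so your ``union bound over the $O_{\mathbb{P}}(1)$-many critical points'' is not a valid step for them; the correct device is a Kac--Rice/Markov bound as in Lemmas \ref{lem:15} and \ref{lem:16}, which does work here because the underlying GOE deviations (edge at speed $N$, cf.\ \cite{BDG}; empirical measure at speed $N^{2}$, cf.\ \cite{BAG97}) beat both the $e^{o(N)}$ number of critical points at depth near $-E_{0}N$ and the determinant tilt. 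The union bound is legitimate only for the $H_{N}'$-driven events (Hanson--Wright for $Q$, the bound on $\|\nabla H_{N}'(\boldsymbol{\sigma}_{0})\|$), after conditioning on $H_{N}$, since the deep critical points are then a fixed, tight finite set by Corollary \ref{cor:min}. Second, a slip that as stated undermines the flow's existence: $\|\nabla^{2}H_{N}'\|_{op}=O(1)$ uniformly on the sphere, so the Hessian perturbation $\sqrt{2s/N}\,\nabla^{2}H_{N}'$ has norm $O(\sqrt{s_{N}/N})$, not $O(\sqrt{s_{N}})$ --- the latter is not $o(1)$ since $s_{N}\to\infty$ and would not give uniform invertibility; the correct bound, together with a uniform third-derivative estimate (to control Hessian variation over the $O(\sqrt{s_{N}})$-neighborhood) and a bootstrap keeping the flow in that neighborhood, is what closes the argument. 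With these repairs --- all available from tools already in the paper or standard GOE estimates; note also that for the linear statistic an LLN-level concentration suffices, so the CLT of \cite[Example 9.3]{BaiSilverstein} is stronger than needed and harder to push through the conditioning --- your approach should go through, but as written these points are gaps rather than details.
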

In \cite[Lemma 8]{pspinext} the statement of Lemma \ref{lem:45}
is proved with $H_{N}^{+}\left(\boldsymbol{\sigma}\right):=H_{N,1/2}^{+}\left(\boldsymbol{\sigma}'\right)$,
i.e., with constant $s_{N}=1/2$. We now sketch how the proof of \cite[Lemma 8]{pspinext}
can be modified to obtain Lemma \ref{lem:45}. In the proof of \cite[Lemma 8]{pspinext},
`good' critical points are defined by several conditions related to
the perturbed $H_{N}^{+}\left(\boldsymbol{\sigma}\right)$ and unperturbed
fields $H_{N}\left(\boldsymbol{\sigma}\right)$ and approximations
of them (these are defined through the functions $g_{i}(\boldsymbol{\sigma})$
and sets $B_{i}$). In \cite[Lemmas 11, 12]{pspinext} it is proved
that the probability that all critical points in $\mathscr{C}_{N}\left(m_{N}-\kappa,m_{N}+\kappa\right)$
are good goes to $1$ as $N\to\infty$ and that on this event $\mathscr{G}_{N,\kappa}$
can be defined with the desired properties. The structure of the proof
for the modified case with $H_{N,s_{N}}^{+}\left(\boldsymbol{\sigma}\right)$
instead of $H_{N,1/2}^{+}\left(\boldsymbol{\sigma}\right)$ and all
the main arguments in the proof remain the same. What needs to be
modified are the definitions of the functions $g_{i}(\boldsymbol{\sigma})$
and sets $B_{i}$ through which good points are defined and several
inequalities involving them. The functions $g_{i}(\boldsymbol{\sigma})$
depend on additional functions and we also explain how these should
be modified. 

First, anywhere $\bar{f}_{\boldsymbol{\sigma}}^{\prime}(x)$, $\bar{f}_{\boldsymbol{\sigma}}^{\left(1\right)}\left(x\right)$
or $\bar{f}_{\boldsymbol{\sigma},lin}^{\prime}\left(x\right)$ appear
in the definitions of $g_{i}(\boldsymbol{\sigma})$ in \cite[Section 7.3]{pspinext}
and the definitions of $\bar{f}_{\boldsymbol{\sigma},apx}^{+}(x)$,
$Y_{\boldsymbol{\sigma}}$, $V_{\boldsymbol{\sigma}}$ and $\Delta_{\boldsymbol{\sigma}}$
in \cite[Section 7.2]{pspinext}, all of them should be multiplied
by $\sqrt{2s_{N}}$. Additionally, in the definitions of $g_{i}(\boldsymbol{\sigma})$
with $i=3,4$, the terms involving the trace should be multiplied
by $2s_{N}$. Lastly, the radii of balls and spheres in the suprema
and infima in the definitions of $g_{i}(\boldsymbol{\sigma})$ with
$i=5,...,8$ should be changed from $N^{-\alpha}$ to $w_{0}N^{-1/2}\sqrt{s_{N}}$,
where we assume that $w_{0}>0$ is some constant such that $2^{-3/2}K_{p,\delta}w_{0}^{2}-w_{0}>0$
where $K_{p,\delta}$ is the original constant in the definition of
$B_{7}$. After those changes $g_{i}(\boldsymbol{\sigma})$ are defined
appropriately. 

The sets $B_{i}$ should be modified as follows. Let $\hat{s}_{N}>0$
be some sequence such that $s_{N}=o(\hat{s}_{N})$ and $(\hat{s}_{N})^{3/2}=o(N^{1/2}s_{N})$
(and therefore, in particular $\hat{s}_{N}=o(N)$). Let $\mathcal{B}_{N}\left(\epsilon\right)\subset\mathbb{R}^{N}$
denote the Euclidean ball centered at $0$ with radius $\epsilon$.
Set $B_{2}=\mathcal{B}_{N-1}(N^{-1/2}\sqrt{\hat{s}_{N}})$, $B_{3}=\mathcal{B}_{1}\left(N^{-1/2}\hat{s}_{N}\right)$,
$B_{4}=s_{N}\left(C_{0}-\tau_{\delta}(N),C_{0}+\tau_{\delta}(N)\right)$,
$B_{5}=\mathcal{B}_{1}\left(CN^{-1/2}\hat{s}_{N}\sqrt{s_{N}}\right)$,
$B_{6}=\mathcal{B}_{1}\left(CN^{-1/2}(\hat{s}_{N})^{3/2}\right)$,
$B_{7}=\left(\frac{1}{4}K_{p,\delta}w_{0}^{2}\hat{s}_{N},\infty\right)$,
$B_{8}=\left(-\sqrt{\hat{s}_{N}s_{N}}w_{0},\infty\right)$, where
$\tau_{\delta}(N)$ is a sequence which is assumed to be large as
we need in the proofs but that still goes to $0$ as $N\to\infty$,
$\delta$ is small enough such that $K_{p,\delta}$ defined in \cite[Eq. (7.10)]{pspinext}
is positive, and $C>0$ is a constant that is assumed to be large
whenever needed in the proofs.

Once all the changes above have been applied, we can define the good
critical points of $H_{N}\left(\boldsymbol{\sigma}\right)$ as critical
points $\boldsymbol{\sigma}_{0}$ such that $g_{i}(\boldsymbol{\sigma}_{0})\in B_{i}$
for any $i=1,...,8$, exactly like in the original proof of \cite[Lemma 8]{pspinext}.
Then, an equivalent of \cite[Lemmas 11, 12]{pspinext} needs to proved;
i.e., we need to show for $\kappa>0$ fixed that with probability
going to $1$ as $N\to\infty$ all points in $\mathscr{C}_{N}\left(m_{N}-\kappa,m_{N}+\kappa\right)$
are good, and that on this event $\mathscr{G}_{N,\kappa}$ can be
defined appropriately. The proof of \cite[Lemma 12]{pspinext} follows
from \cite[Lemmas 14, 15, 16]{pspinext}. The statements of \cite[Lemmas 14, 15, 16]{pspinext}
in their original form, but with the modified $g_{i}(\boldsymbol{\sigma})$
and $B_{i}$, imply that for $\kappa>0$ fixed that with probability
going to $1$ as $N\to\infty$ all points in $\mathscr{C}_{N}\left(m_{N}-\kappa,m_{N}+\kappa\right)$
are good. The proofs of \cite[Lemmas 14, 15, 16]{pspinext} with the
modified $g_{i}(\boldsymbol{\sigma})$ and $B_{i}$ require minor
changes from the the original case. Those changes are only in formulas
and all the principal arguments should be left as they are. For example,
in the case involving $g_{2}(\boldsymbol{\sigma})$ in the proof of
\cite[Lemma 16]{pspinext} one needs to replace the expressions 
\[
\left\Vert Y_{\mathbf{n}}\right\Vert \leq\frac{c}{N}\left\Vert \nabla\bar{f}_{\mathbf{n}}^{\prime}\left(0\right)\right\Vert \,\:\mbox{and}\,\,\mathbb{P}\left\{ Q_{N-1}\geq N^{1-\alpha}\frac{c}{\sqrt{p}}\right\} 
\]
by
\[
\left\Vert Y_{\mathbf{n}}\right\Vert \leq\frac{c\sqrt{2s_{N}}}{N}\left\Vert \nabla\bar{f}_{\mathbf{n}}^{\prime}\left(0\right)\right\Vert \,\:\mbox{and}\,\,\mathbb{P}\left\{ Q_{N-1}\geq N^{-1/2}\sqrt{\frac{s_{N}^{\prime}}{s_{N}}}\frac{c}{\sqrt{p}}\right\} ,
\]
but other that this the proof remains the same.

Lastly, we point out what needs to be changed in the proof of \cite[Lemma 11]{pspinext}
that states that if all points in $\mathscr{C}_{N}\left(m_{N}-\kappa,m_{N}+\kappa\right)$
are good, then $\mathscr{G}_{N,\kappa}$ can be defined appropriately.
The last line in each of the first three displayed formulas in the
proof should be replaced by the following, respectively,
\begin{align*}
\sqrt{N}\bar{f}_{\boldsymbol{\sigma},apx}^{+}\left(Y_{\boldsymbol{\sigma}}\right) & =H_{N}\left(\sqrt{N}\boldsymbol{\sigma}\right)+\sqrt{\frac{2s_{N}}{N}}H_{N}^{\prime}\left(\sqrt{N}\boldsymbol{\sigma}\right)-2s_{N}C_{0}\left(1+o\left(1\right)\right),\\
\sqrt{N}\bar{f}_{\boldsymbol{\sigma}}^{+}\left(Y_{\boldsymbol{\sigma}}\right) & \leq\sqrt{N}\bar{f}_{\boldsymbol{\sigma},apx}^{+}\left(Y_{\boldsymbol{\sigma}}\right)+O\left(N^{-1/2}(s_{N}^{\prime})^{3/2}\right)\\
\inf_{x:\,\left\Vert x\right\Vert =w_{0}N^{-1/2}\sqrt{s_{N}}}\sqrt{N}\bar{f}_{\boldsymbol{\sigma}}^{+}\left(x\right) & \geq H_{N}\left(\sqrt{N}\boldsymbol{\sigma}\right)+\sqrt{\frac{2s_{N}}{N}}H_{N}^{\prime}\left(\sqrt{N}\boldsymbol{\sigma}\right)\\
 & +\left(\frac{1}{4}K_{p,\delta}w_{0}^{2}-\frac{1}{\sqrt{2}}w_{0}\right)s_{N}^{\prime}\left(1+o\left(1\right)\right).
\end{align*}
Other changes that are required are straightforward.\qed

\bibliographystyle{plain}
\bibliography{master}

\end{document}